\documentclass[12pt, letterpaper]{amsart}
\usepackage{amssymb,latexsym,graphics,enumerate,color}
\usepackage[margin=1in]{geometry}
\usepackage{slashed}
\usepackage{accents}
\usepackage{xcolor}

\title{Asymptotics of the radiation field for the massless Dirac--Coulomb system}
\author{Dean Baskin}
\author{Robert Booth}
\author{Jesse Gell-Redman}

%%% MARGINALIA

%%% AMS STUFF
\newtheorem{theorem}{Theorem}
\newtheorem{lemma}[theorem]{Lemma}
\newtheorem{proposition}[theorem]{Proposition}
\newtheorem{corollary}[theorem]{Corollary}
\theoremstyle{remark}
\newtheorem{definition}[theorem]{Definition}
\newtheorem{remark}[theorem]{Remark}

%%% STUFF FOR BASE OPERATOR
\newcommand{\dirac}{\slashed{\partial}}
\newcommand{\charge}{\mathbf{Z}}

%%% THIS IS THE FIRST ORDER OPERATOR L AND ITS MELLIN TRANSFORM,
%%% ALTERNATIVES, ETC 
\newcommand{\Ld}{{\mathcal L}}
\newcommand{\dop}{\Ld}
\newcommand{\tdop}{\widetilde{\dop}}
\newcommand{\dops}{\widehat{\dop}_\sigma}
\newcommand{\doph}{\widehat{\dop}_h}

%%% SECOND ORDER OPERATOR P AND ITS ASSOCIATES
\newcommand{\Pd}{{ P}}
\newcommand{\Ps}{\widehat{P}_\sigma}
\newcommand{\Ph}{\widehat{P}_h}
%\newcommand{\Pt}{\widetilde{P}}
%\newcommand{\Pts}{\widetilde{P}_\sigma}

%%% EASE OF USE: PARTIALS, NORMS, ETC
\newcommand{\pd}[1][]{\partial_{#1}}
\newcommand{\norm}[1]{\left\| #1\right\|}
\newcommand{\abs}[1]{\left\lvert{#1}\right\rvert}
\newcommand{\ang}[1]{\left\langle {#1} \right\rangle}

%%% POORLY NAMED MACROS FOR FONTS AND DECORATIONS
\newcommand{\bB}{\mathbf{B}}
\newcommand{\bJ}{\mathbf{J}}
\newcommand{\bL}{\mathbf{L}}
\newcommand{\bp}{\mathbf{p}}
\newcommand{\bR}{\mathbf{R}}
\newcommand{\br}{\mathbf{r}}
\newcommand{\bsigma}{\boldsymbol{\sigma}}
\newcommand{\bSigma}{\mathbf{\Sigma}}
\newcommand{\cB}{\mathcal{B}}
\newcommand{\cE}{\mathcal{E}}
\newcommand{\cH}{\mathcal{H}}
\newcommand{\cM}{\mathcal{M}}
\newcommand{\cV}{\mathcal{V}}
\newcommand{\cX}{\mathcal{X}}
\newcommand{\cY}{\mathcal{Y}}
\newcommand{\tG}{\widetilde{G}}
\newcommand{\tQ}{\widetilde{Q}}
\newcommand{\tR}{\widetilde{R}}
\newcommand{\tw}{\widetilde{w}}
\newcommand{\trho}{\widetilde{\rho}}
\newcommand{\Sigmadot}{\dot{\Sigma}}
\newcommand{\us}{\widetilde{u_\sigma}}
\newcommand{\fs}{\widetilde{f_\sigma}}
\newcommand{\uxi}{\underline{\xi}}
\newcommand{\ueta}{\underline{\eta}}
\newcommand{\usigma}{\underline{\sigma}}
\newcommand{\bOmega}{\mathbf{\Omega}}

%%% DECORATIONS
\newcommand{\bl}{\mathrm{b}}

%%% SOME STANDARD OBJECTS
\newcommand{\CI}{C^\infty}
\DeclareMathOperator{\Diff}{Diff}
\newcommand{\Diffh}{\Diff_h}
\DeclareMathOperator{\liptic}{ell}
\newcommand{\Hh}{H_{h}}
\newcommand{\module}{\cM}
\DeclareMathOperator{\Op}{Op}
\DeclareMathOperator{\supp}{supp}
\newcommand{\WF}{\operatorname{WF}}
\newcommand{\conormal}[1]{I^{(#1)}}
\newcommand{\phg}{\mathcal{A}_{\mathrm{phg}}}

%%% b-OBJECTS
\newcommand{\Diffb}{\Diff_{\bl}}

\newcommand{\ellbh}{\liptic_{\bl,h}}
\newcommand{\Hb}{H_{\bl}}
\newcommand{\Psib}{\Psi_\bl}
\newcommand{\Psibh}{\Psi_{\bl,h}}
\newcommand{\Sb}{{}^{\bl}S}
\newcommand{\Sbstar}{\Sb^*}
\newcommand{\specb}{\operatorname{spec}_\bl}
\newcommand{\sigmab}{\sigma_\bl}
\newcommand{\sigmabh}{\sigma_{\bl,h}}
\newcommand{\Tb}{{}^\bl T}
\newcommand{\Tbstar}{\Tb^*}
\newcommand{\WFb}{\WF_{\bl}}
\newcommand{\WFbh}{\WF_{\bl,h}}

%%% FACES OF THE COMPACTIFICATION
\newcommand{\scri}{\mathcal{I}}
\newcommand{\mf}{\operatorname{mf}}
\newcommand{\cf}{\operatorname{cf}}

%%% DECORATED COORDINATE NAMES
\newcommand{\xt}{\bar{x}}
\newcommand{\rhot}{\bar{\rho}}
\newcommand{\ubar}[1]{\underaccent{\bar}{#1}}
\newcommand{\rhott}{\ubar{\rho}}
\newcommand{\xtt}{\ubar{x}}

%%% SHORTCUTS THAT DB USES REGULARLY
\newcommand{\bo}{\mathcal{O}}
\newcommand{\ds}{\displaystyle}
\newcommand{\grad}{\nabla}
\newcommand{\Lap}{\Delta}
\DeclareMathOperator{\sgn}{sgn}
\newcommand{\complexes}{\mathbb{C}}
\newcommand{\integers}{\mathbb{Z}}
\newcommand{\naturals}{\mathbb{N}}
\newcommand{\reals}{\mathbb{R}}
\newcommand{\sphere}{\mathbb{S}}
\DeclareMathOperator{\id}{Id}

%%% A RELIC OF ANDRAS NAMING SYSTEM
\newcommand{\tow}{\mathrm{ftr}}
\newcommand{\away}{\mathrm{past}}

%%% I DON'T LIKE THE STANDARD RE/IM FONTS
\renewcommand{\Im}{\operatorname{Im}}
\renewcommand{\Re}{\operatorname{Re}}

\begin{document}

\begin{abstract}
  We consider the long-time behavior of solutions to the massless Dirac equation
  coupled to a Coulomb potential.  For nice enough initial data, we
  find a joint asymptotic expansion for solutions near the null and
  future infinities and characterize explicitly the decay rates seen
  in the expansion.

  This paper can be viewed as a successor to previous work on
  asymptotic expansions for the radiation
  field~\cite{BVW15,BVW18,BM19}.  The key new elements are propagation
  estimates near the singularity of the potential, building on work of
  the first author with Wunsch~\cite{BW20} and an explicit calculation
  with hypergeometric functions to determine the rates of decay.
\end{abstract}

\maketitle

\section{Introduction}
\label{sec:introduction}

We consider the long-time asymptotics of solutions of the massless
Dirac--Coulomb equation on $\reals \times \reals^{3}$.  We show that
if the initial data (or source term for the inhomogeneous equation) is sufficiently nice, the
solution has a complete asymptotic expansion near null and timelike
infinities.  The decay rates of the terms in the asymptotic expansion
are analogous to resonances and we compute them explicitly.  One way to
capture the leading order part of these asymptotics is through the
Friedlander radiation field, which is a rescaled restriction of the
solution to null infinity $\scri^{+}$.  We find, as in previous
work in related settings~\cite{BVW15,BVW18,BM19}, that the asymptotics of the radiation
field as the lapse
  function  $t-r=s \to \infty$ are given by
the resonance poles of a Dirac-type operator (in the sense that its square is
principally the Laplacian) on hyperbolic space.

The following theorem is the main result of this paper.  Notation
involving the Dirac equation will be explained in
Section~\ref{sec:dirac-coul-equat}.  Let $\eta$ denote the (mostly
plus) Minkowski metric on $\reals\times\reals^{3}$, whose coordinates
are $t=x^{0}$, $x^{1}$, $x^{2}$, $x^{3}$ and $\gamma^{\alpha}$ denote
the Dirac matrices, which satisfy $\gamma^{\alpha}\gamma^{\beta} +
\gamma^{\beta} \gamma^{\alpha} = -2\eta^{\alpha\beta}\id_{4}$.  We let
$r=r(x) = \left( (x^{1})^{2}+(x^{2})^{2}+(x^{3})^{2}\right)^{1/2}$
denote radius in the spatial coordinates.  

\begin{theorem}
  \label{thm:radfield}
  For $\charge \in \reals$, $\abs{\charge}<1/2$, consider the
  massless Dirac--Coulomb operator in $\reals \times \reals^{3}$:
  \begin{equation*}
    \dirac_{\charge/r} =  \gamma^{0} \left( \pd[t] +
      \frac{i\charge}{r}\right) +
    \sum_{j=1}^{3}\gamma^{j}\pd[j].
  \end{equation*}
    Let $\psi$ be the forward solution of $\dirac_{\charge/r}\psi = f$
  with $f\in \CI_{c}$. Then its Friedlander radiation field, given in terms of $s=t-r$ and
  $\theta \in \sphere^{2}$ by
  \begin{equation*}
    \mathcal{R}_{+}[\psi](s,\theta) = \lim_{r\to
      \infty}r^{1+i\charge}\psi(s + r, r\theta),
  \end{equation*}
  is a smooth function on $\mathbb{R}_s \times \mathbb{S}^2_\theta$.  

  Moreover, the radiation field admits an  asymptotic expansion as $s\to +\infty$:
  \begin{equation*}
    \mathcal{R}_{+}[\psi](s,\theta) \sim
    s^{i\charge}\sum_{j,k=1}^{\infty}s^{-j - \sqrt{k^{2}-\charge^{2}}} a_{jk}(\theta),
  \end{equation*}
  where $a_{jk}$ are smooth functions on $\sphere^{2}$.  
\end{theorem}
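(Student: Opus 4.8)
The plan is to follow the template of \cite{BVW15,BVW18,BM19} — compactify the spacetime, obtain conormal regularity and weighted decay for $\psi$ by microlocal propagation estimates, identify the normal operators that govern the expansion at null and timelike infinity, and then extract the asymptotic series by an inverse‑Mellin/resolvent argument in which the exponents are poles of a normal‑operator family — with the two genuinely new ingredients being a propagation estimate at the Coulomb worldline and an explicit special‑function computation of those poles. Concretely, I would first use the conformal covariance of the massless Dirac operator to pass to a compactification $M$ of the future domain of influence of $\supp f$ inside $\reals\times\reals^{3}$: radially compactify, blow up the corner so that $\scri^{+}$ and future timelike infinity $i^{+}$ become distinct boundary hypersurfaces, and additionally blow up the closure of the Coulomb worldline $\{r=0\}$ at $i^{+}$, producing a new front face $\mathrm{ff}$. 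On $M$, after conjugating $\dirac_{\charge/r}$ by appropriate powers of boundary defining functions, one gets an operator $\widetilde{\dirac}$ that is b‑elliptic off the lifted light cone, with radial sets over $\scri^{+}$. Because $\charge/r$ is exactly scaling‑critical for a first‑order operator it does not perturb the principal symbol, but it does modify the indicial data: it shifts the indicial root at $\scri^{+}$ by $i\charge$, and it produces at $\mathrm{ff}$ and at $i^{+}$ a ``static'' Dirac--Coulomb structure whose indicial roots in the angular sector $\kappa$ are $\pm\sqrt{\kappa^{2}-\charge^{2}}$, all real since $\abs{\charge}<1/2$.

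Next I would establish the a priori regularity: that $\psi$, so rescaled, lies in a weighted b‑Sobolev space on $M$ and has iterated regularity under the module $\module$ of b‑vector fields tangent to the lifted light cone. This combines finite speed of propagation together with the compact support of $f$; real‑principal‑type propagation along the null bicharacteristics; radial‑point estimates at the radial sets over $\scri^{+}$, which fix the threshold weight and the module regularity; and — the crucial new input — the propagation and regularity estimate near the Coulomb singularity of \cite{BW20}, valid precisely when $\abs{\charge}<1/2$, which allows the bicharacteristics meeting $\mathrm{ff}$ to be handled with no loss of regularity or decay. The output is a crude version of the statement: $\psi$ is conormal on $M$ and the radiation field decays at $\scri^{+}$ with a remainder that is $o(1)$ past the leading exponent.

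I would then identify the normal operator $N(\widetilde{\dirac})$ at $i^{+}$ with a spectral‑parameter family of Dirac‑type operators on hyperbolic space $\mathbb{H}^{3}$ carrying a Coulomb‑type potential $\sim 1/\sinh(\operatorname{dist})$ singular at the center, since the hyperboloids $\{t^{2}-r^{2}=\mathrm{const}\}$ foliating the forward cone are rescaled copies of $\mathbb{H}^{3}$ and the worldline $\{r=0\}$ limits to their centers. The expansion of $\mathcal{R}_{+}[\psi]$ as $s\to+\infty$ is the expansion at the corner $\scri^{+}\cap i^{+}$ approached from within $\scri^{+}$, so by the standard Mellin argument its exponents are the poles — resonances — of the meromorphically continued $N(\widetilde{\dirac})^{-1}$, and pushing the contour past successive poles with remainders bounded by the Step‑2 module regularity and polynomial resolvent estimates on the intermediate contours yields the series with smooth coefficients $a_{jk}$ on $\sphere^{2}$. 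To locate the poles I would separate the angular variables using the Dirac spin‑orbit operator $K=\beta(\bSigma\cdot\bL+1)$, with eigenvalue $\kappa$, $\abs{\kappa}=k\in\naturals$, reducing $N(\widetilde{\dirac})$ in each sector to a $2\times 2$ system on $(0,\infty)$ that can be brought to hypergeometric (equivalently Whittaker) form, with one index equal to $\sqrt{k^{2}-\charge^{2}}$ (the near‑origin indicial root) and the other a multiple of $i\charge$ (the Coulomb/Sommerfeld parameter); the poles of the relevant connection coefficients — ratios of Gamma functions — then sit on the lattice $\sigma = i\charge - j - \sqrt{k^{2}-\charge^{2}}$, $j,k\in\naturals$, which is exactly the set of exponents $s^{\sigma}$ in the statement. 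Thus the overall $s^{i\charge}$ is the long‑range logarithmic Coulomb phase, the $-j$ comes from the Gamma‑function poles (the ``radial'' quantum number), and the $-\sqrt{k^{2}-\charge^{2}}$ from the singular point.

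The main obstacle is the propagation/Fredholm theory at the Coulomb worldline: because $\charge/r$ is scaling‑critical, $\mathrm{ff}$ is not a perturbative face, and one must exploit the structure of \cite{BW20} to show the rescaled operator is Fredholm on the right weighted spaces with the correct threshold — this is exactly where $\abs{\charge}<1/2$ enters, keeping the indicial roots $\pm\sqrt{k^{2}-\charge^{2}}$ real and in the admissible range so that the solution stays in the appropriate space near $\mathrm{ff}$. A secondary difficulty is bookkeeping the several blow‑ups and their induced weights consistently across $\scri^{+}$, $i^{+}$, and $\mathrm{ff}$, and verifying that the hypergeometric computation captures every resonance with no accidental cancellation, so that the $a_{jk}$ are genuinely the complete set of expansion coefficients.
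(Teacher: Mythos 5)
Your proposal follows the same architecture as the paper: compactify to a manifold with corners that resolves the Coulomb worldline, conjugate by $\rho^{1+i\charge}$ to obtain a $\bl$-operator, propagate module/conormal regularity from past infinity to $S_+$ (using the radial-point estimates of Vasy and the Dirac--Coulomb diffractive theorem of Baskin--Wunsch precisely to get past the worldline), set up the Fredholm theory for the Mellin-transformed normal operator on variable-order spaces, deform the contour in the inverse Mellin transform, and finally separate variables in $K$ to reduce the normal-operator ODEs to hypergeometric form whose connection coefficients (ratios of Gamma functions) produce the exponent lattice $s^{i\charge - j - \sqrt{k^2-\charge^2}}$. Your identification of the normal operator at $i^+$ with a Dirac-type family on $\mathbb{H}^3$ with a $1/\sinh$-potential is consistent with the abstract's heuristic, but the paper does not literally work on hyperbolic space: it formulates the Fredholm problem for $\dops$ on $\mf\cong\sphere^3$ directly and does the hypergeometric reduction in the explicit coordinates $\rhot = 1/(t+r)$, $\xt = 2r/(t+r)$ near $\overline{C_+}$. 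Likewise your extra blow-up of the Coulomb worldline at $i^+$ is not performed in the paper — the corner $\mf\cap\cf$ is handled directly in the $\bl$-calculus by the commutator lemmas and the (bulk and semiclassical) elliptic/hyperbolic propagation estimates near $x=0$, with no additional front face. These are differences of geometric bookkeeping rather than of substance; your outline, including the role of $\abs{\charge}<1/2$ in keeping $\sqrt{k^2-\charge^2}$ real and the Hardy inequality applicable, and the remainder control via module regularity and polynomial resolvent bounds on intermediate contours, matches the paper's proof.
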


In fact, we prove the following somewhat stronger theorem, showing
that in fact $\psi$ is polyhomogeneous on a compactification $[M;S_+]$
of $\mathbb{R} \times \mathbb{R}^3$ which we describe in detail below
(the factor of $i$ in the index set is due to our convention described
below in Section~\ref{sec:mellin-transform}):
\begin{theorem}
  \label{thm:phg-full}
  If $\psi$ is the forward solution of
  $i\dirac_{\charge/r}\psi =g$ with $g\in \CI_{c}$, then $\psi$ is
  polyhomogeneous on $[M;S_{+}]$ with index sets
  \begin{equation*}
    \begin{cases}
      \emptyset & \text{at } C_{-}\cup C_{0},\\
      \{ (i -\charge + ik, 0) : k =0,1,2,\dots\} & \text{at }\scri^{+} ,\\
     \{ (i(1 + j + \sqrt{k^{2}-\charge^{2}}), 0) : j, k=1,2,\dots\} &\text{at }C_{+}.
    \end{cases}
  \end{equation*}
\end{theorem}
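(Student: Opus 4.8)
The plan is to show that $\psi$ is polyhomogeneous on $[M;S_{+}]$ by the two-part strategy familiar from the radiation-field literature \cite{BVW15,BVW18,BM19}: first establish that $\psi$ is \emph{conormal}, with appropriate weights, at each boundary hypersurface of $[M;S_{+}]$, and then upgrade conormality to polyhomogeneity by peeling off the asymptotic series one normal operator at a time. The genuinely new input, relative to the scalar-wave analogues, is the Coulomb singularity along the timelike worldline $\{r=0\}$, for which one needs the conic propagation estimates of \cite{BW20}. Concretely, one first passes from $\reals\times\reals^{3}$ to the compactification $[M;S_{+}]$ and conjugates $i\dirac_{\charge/r}$ by a suitable product of powers of boundary defining functions of $C_{\pm}$, $C_{0}$ and $\scri^{+}$, so that the resulting operator lies in the appropriate calculus --- a b-calculus near $C_{\pm}$ and $\scri^{+}$, together with an edge/conic structure transverse to $\{r=0\}$. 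After this conjugation the wave-type characteristic variety sits inside the b-cotangent bundle with the expected radial sets over $\scri^{+}$ and over $C_{+}$, while the term $\charge/r$ becomes, after the natural rescaling at $\{r=0\}$, a perturbation of the model conic Dirac operator of exactly the type the edge calculus controls.

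The next step is a priori conormality. Since $\psi$ is the forward solution and $g\in\CI_{c}$, finite speed of propagation shows that $\psi$ vanishes near $C_{-}\cup C_{0}$. Combining this with energy estimates, b-elliptic regularity off the characteristic variety, propagation of b-regularity along null bicharacteristics, the radial-point estimates at the radial sets over $\scri^{+}$ and $C_{+}$, and --- the crucial new ingredient --- the conic propagation estimate of \cite{BW20} carrying regularity across $\{r=0\}$, one obtains that $\psi$ lies in a weighted b-Sobolev space $\rho_{\scri^{+}}^{a}\rho_{C_{+}}^{b}\Hb^{\infty}$ near each face, i.e. $\psi$ is conormal and vanishes to infinite order at $C_{-}\cup C_{0}$; a boundary-pairing argument then pins down the sharp weights $a$ and $b$. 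The hypothesis $\abs{\charge}<1/2$ is exactly the Hardy-inequality threshold $\charge^{2}<1/4$ guaranteeing that the squared Dirac--Coulomb operator stays coercive near $\{r=0\}$, which is what makes both the energy estimates and the conic propagation estimates available.

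With conormality in hand, one computes the normal (indicial) operator at each face and iterates. At $\scri^{+}$ the normal operator is a first-order transport operator along the null generators whose indicial roots form the progression $1+i\charge+k$, $k=0,1,2,\dots$; the imaginary shift $i\charge$ is precisely the logarithmic long-range Coulomb phase, appearing as the indicial root of $\pd[s]+i\charge/s$ acting on the rescaled solution. At $C_{+}$ the normal operator is a Dirac-type operator on hyperbolic space $\mathbb{H}^{3}$ with a Coulomb potential; taking the Mellin transform in the defining function of $C_{+}$ and separating into spinor spherical harmonics on $\sphere^{2}$ reduces its resolvent to a radial ODE solved by hypergeometric (Whittaker) functions, whose poles lie at $1+j+\sqrt{k^{2}-\charge^{2}}$, $j,k=1,2,\dots$ --- this explicit special-function computation is what produces the relativistic-Coulomb exponent $\sqrt{k^{2}-\charge^{2}}$. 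Starting from the conormal bound, one inverts the $\scri^{+}$ normal operator to extract its series term by term, controlling remainders through the mapping properties of the inverse, and likewise at $C_{+}$ using the meromorphic continuation of the $\mathbb{H}^{3}$ Dirac--Coulomb resolvent; compatibility of the two expansions at the corner $\scri^{+}\cap C_{+}$, together with a final bootstrap, yields full polyhomogeneity with the stated joint index sets. Theorem~\ref{thm:radfield} then follows by restricting this joint expansion to $\scri^{+}$ and reading off its behavior at the corner with $C_{+}$.

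The main obstacle is the conormality step near $\{r=0\}$: the Coulomb potential is genuinely singular on the worldline, so the free Dirac propagation of singularities fails there, and one must install a conic propagation estimate in the spirit of \cite{BW20} which is moreover uniform up to the boundary faces $\scri^{+}$ and $C_{+}$, so that it interfaces cleanly with the b-normal-operator machinery of the later steps. The accompanying bookkeeping difficulty is to check that the exceptional exponents generated at $\{r=0\}$ by the conic structure --- which again involve $\sqrt{k^{2}-\charge^{2}}$ --- do not leak into the index sets at $\scri^{+}$ or $C_{+}$, and that the conic solutions are selected by the correct (Friedrichs) boundary condition; the coercivity afforded by $\abs{\charge}<1/2$ is what keeps this analysis under control.
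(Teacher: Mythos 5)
Your proposal follows essentially the same strategy as the paper: conjugate to a b-operator, propagate b-regularity from the past through the Coulomb singularity via estimates modelled on~\cite{BW20}, Mellin-transform the normal operator at $\mf$ and shift contours past the poles of $\dops^{-1}$ to extract the expansion at $C_+$, establish smoothness at $\scri^+$ by commuting the scaling vector field through the operator, combine the two via the Mazzeo--Melrose joint-polyhomogeneity lemma, and identify the exponents by separating variables into explicitly solvable ODEs. Two small corrections: the separated ODEs at $C_\pm$ are Gauss hypergeometric (${}_2F_1$) equations, not Whittaker/confluent ones; and the exponent $1+i\charge$ at $\scri^+$ enters directly from the conjugation $\psi=\rho^{1+i\charge}u$ with $u$ polyhomogeneous at $\scri^+$ with the smooth index set, rather than as an indicial root of a transport operator $\pd[s]+i\charge/s$.
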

In particular, the leading order behavior near the light cone is
given by
\begin{equation*}
  \frac{1}{(t+r)^{1+i\charge}(t-r)^{1+\sqrt{1-\charge^{2}}}} \text{ as
  }t,r\to +\infty.
\end{equation*}

The apparent difference between the leading order behavior of
$\mathcal{R}_+[\psi]$ as $s \to \infty$ and $\psi$ as
$t, r \to \infty$ is an consequence of our definition of the radiation
field below in equation~\eqref{eq:actual def of rad field}.  Indeed, as explained
there, $\mathcal{R}_+[\psi]$ is $\rho^{-1-i\charge}\psi$ restricted to
null infinity.  The expansion exponents of $\mathcal{R}_+[\psi]$ at
$C_+$ are therefore shifted in comparison to that of $\psi$ itself by
$1 + i \charge$.

A significant novelty of this paper is the \textit{explicit}
characterization of the decay rates of solutions off the light cone.
We compute these estimates by finding the ``resonant states''
associated to a family of operators at infinity.  The solutions are
given in terms of hypergeometric functions and we characterize the
exponents as poles of the inverse of this operator family.

Though we do not state it explicitly, one can also see that the
Friedlander radiation field in this context carries a particular
polarization; the forward radiation field always lies in the
$+1$-eigenspace of the Dirac matrix corresponding to Clifford
multiplication by $\pd[r]$.  (The backward radiation field lies in the
other eigenspace.)  This can be seen either through a modification of
the argument given in Section~\ref{sec:radiation-field} or explicitly
in terms of the hypergeometric functions of
Section~\ref{sec:char-expon}.

The proof, described in steps at the end of this section, follows the
same outline used to study the radiation field in other
settings~\cite{BVW15,BVW18,BM19}; arguments near the singularity of
the potential are modeled on the proof of the diffractive propagation
theorem for the Dirac--Coulomb system~\cite{BW20}.  The methods used
in this paper apply in higher dimensions (and, indeed, for conic Dirac
operators), but we specialize to the case of three spatial dimensions
for reasons of clarity and physical interest.  See
Section~\ref{sec:other-dimensions} for a discussion of the
higher-dimensional case.

The Dirac--Coulomb equation provides a model for spin-$\frac{1}{2}$
particles in the presence of a point charge $\charge$.  In the massive
setting, much of the literature about the system and its related
operators is focused on the characterization of the point spectrum.
In contrast, the massless case has purely continuous
spectrum\footnote{The essential self-adjointness of the Hamiltonian
  implies that the spectrum is entirely real; separation of variables
  and a simple ODE analysis shows that there are no $L^{2}$ solutions
  and hence no eigenvalues.} and so this description is insufficient
to characterize the asymptotic behavior of solutions of the
time-dependent equation.  Darwin~\cite{Da:28} used separation of
variables to characterize the generalized eigenfunctions of the
massive Hamiltonian; a similar approach applies to the massless case.
In principle, one could derive our theorem by a careful analysis of
the special functions involved but it would be delicate and our
methods apply more generally.  Indeed, our methods also treat
certain perturbations of the equation; specifically, one could add a
compactly supported first order term (compact support implying also
support away from the singularity $r = 0$) or even a compactly
supported leading order term which does not change the large scale structure
of the characteristic set and in particular does not introduce trapping.

We further remark that the restriction that $\abs{\charge} < 1/2$ owes
to our repeated use of the Hardy inequality in the propagation
arguments.  We conjecture that the theorem holds (though the proof
would require considerably more care in the construction of the commutants) for
the entire range of charges for which the Hamiltonian is essentially
self-adjoint, i.e., $\abs{\charge} < \sqrt{3}/2$.  That the results
of Section~\ref{sec:char-expon} hold for this range can be viewed as
partial evidence for this view.

Interest in the massless Dirac--Coulomb system as an evolution
equation has also arisen in the community surrounding dispersive
equations.  That work has largely focused on proving dispersive and
Strichartz estimates for solutions both via separation of variables
and by treating the components as solutions of systems of coupled wave
equations.  Notable here is the work of D'Ancona and
collaborators~\cite{DaFa07,BoDaFa11,CaDa13}, the work of
Cacciafesta--S{\'e}r{\'e}~\cite{CaSe16}, and the work of Erdo{\u
  g}an--Green--Toprak~\cite{ErGrTo19}.  In many respects this paper is
complementary to that work, as we are also concerned with the global
decay of solutions of the equation, but use very different methods.

A natural follow-up question to this work is whether similar results
hold for smooth potentials decaying like $1/r$ at infinity (i.e.,
potentials of critical decay).  Although the methods employed here do
not directly apply to treat more general potentials, we expect that
they can be combined with an adaptation of Vasy's second
microlocalization~\cite{Vasy:LA1, Vasy:LA2} to obtain related
results.  We expect to treat this setting in future work.

\subsection{Notation}
\label{sec:notation-1}

Norms without subscript decorations are always the relevant $L^{2}$
norm.

We use the notation $s-0$ to denote $s-\epsilon$ for all $\epsilon >
0$.  Our convention is that the natural numbers include zero, i.e.,
\begin{equation*}
  \naturals = \{ 0, 1, 2, \dots\}.
\end{equation*}
Some further notation is as follows:
\begin{itemize}
  \item $M$ is the radially compactified spacetime with the singularity of
    the potential blown up;  $\mf$ is the boundary at spacetime
    infinity.  Section~\ref{sec:set-up}.
\item $L^2(\mathbb{R}\times \mathbb{R}^4)$ is the standard $L^2$ space
  on $\mathbb{R}^4$ for the Euclidean measure $dtdz$, while $L^2(M)$
  is the weighted $L^2$ space based on the measure that is ``b'' at
  space time infinity.  Section~\ref{sec:homogeneous-version}.
\item The Sobolev spaces we use most on the bulk are $\Hb^{k,m,l}(M) =
  \Hb^{k,m,l}$, $k \in \{ 0, 1\}$.  Here $k$ is the standard Sobolev
  regularity order, $m$ is the $\bl$-Sobolev regularity order, and $l$
  describes a weight.  Section~\ref{sec:inter-with-diff}.  
\item For distributions $u$, $\WFb^{1,m,\ell}u$ and $\WFb^{0,m,\ell}u$
  are the notions of wavefront sets associated to $H^{1,m,l}$ and
  $H^{0,m,l}$.  Section~\ref{sec:inter-with-diff}.
  \item The functions $s = s_{\tow}, s^* = s_{\away}$ on the sphere at infinity are the
    regularity functions on the boundary used to analyze the Mellin
    transformed normal operators.  Section~\ref{sec:vari-order-sobol}.
  \item $\cX^{s}$ and $\cY^{s}$ and their semiclassical versions
    $\cX^{s}_h$ and $\cY^{s}_h$ are variable order Sobolev spaces of
    distributions on the sphere at infinity.  Section~\ref{sec:vari-order-sobol}, same for the dual spaces $s^*$.
\end{itemize}

The space $\Psib^m(M)$ is the space of b-pseudodifferential
operators of order $m$ on $M$ (Section \ref{sec:bl-calculus}).  Below
we often let
$$
\{ A_1, \dots, A_n \} \Psib^m(M)
$$
denote the (right) module of operators generated by
$A_1, \dots, A_n \in \Diff^*(\mathbb{R}\times (\mathbb{R}^3 \setminus
\{ 0 \})$ over $\Psib^m(M)$.

\subsection{Outline of proof}
\label{sec:outline-proof}

The proofs of Theorems~\ref{thm:radfield} and~\ref{thm:phg-full}
follow the same general contours of analogous results for the scalar
wave equation on asymptotically Minkowski spaces~\cite{BVW15,BVW18}
and on cones~\cite{BM19}.  In particular, the analysis is somewhat
round-about and has five major steps, which we describe now.

\subsubsection*{Set-up}
\label{sec:set-up}

We define a compactification $M$ of
$\reals \times (\reals^{3}\setminus \{0\})$ to a manifold with
corners; this has the effect of ``resolving'' the singularity of the
potential and making the use of Melrose's $\bl$-calculus more natural.
We let $\rho$ be a boundary defining function for the main face $\mf$ of the
boundary (at infinity).  In the region of greatest interest, one can
take $\rho = t^{-1}$.  We then consider the equation
\begin{equation*}
  i\dirac_{\charge/r}\psi = g,
\end{equation*}
but then rescale and conjugate to rewrite it as 
\begin{equation*}
  \dop u = f,
\end{equation*}
where
\begin{align*}
  \dop &\equiv \rho^{-2-i\charge}\gamma^{0}i\dirac_{\charge/r}\rho^{1+i\charge}, \\
  u = \rho^{-1-i\charge} \psi &\in  C^{-\infty}(M),  \quad f = \rho^{-2-i\charge}g \in \CI_{c}(M^{\circ}).
\end{align*}
The rescaling is helpful because $\dop$ is then a ``$\bl$-differential
operator'' in the parlance of Melrose~\cite{Melrose:APS}, and it enables the use
of the $\bl$-pseudodifferential calculus to obtain microlocal
estimates on $u$ near $\mf$.

\subsubsection*{Propagation of $\bl$-regularity}
\label{sec:prop-bl-regul}

We prove the propagation of $\bl$-regularity, i.e., microlocalized
conormal regularity with respect to the main face $\mf$ starting at
the backwards null cone, where by hypothesis the solution is trivial
(as the solution is zero for $t$ sufficiently negative).  The aim is to propagate this regularity until we
reach the intersection $S_{+}$ of the forward null cone with the
boundary of $M$,
where the relevant bicharacteristic flow has radial points and so one
needs subtler estimates.  Most of this step is essentially contained
in prior work~\cite{BVW15,BVW18}, though the propagation through the
singularity of the potential is new; we adapt the argument used by the
first author to prove a diffractive theorem for the Dirac--Coulomb
system~\cite{BW20} here.  As the argument is so similar, we leave many
of the proofs in this step to an appendix.

\subsubsection*{Fredholm estimates}
\label{sec:fredholm-estimates}

We then use a strategy developed by Vasy~\cite{Vasy13} to show that we
may set up a global Fredholm problem on $\mf$ for the family of
``reduced normal operators'' $\dops$.  This is the family of operators
given by freezing coefficients at $\rho=0$ and then conjugating $\dop$
by the Mellin transform in $\rho$.  To obtain a Fredholm problem, we
view $\dops$ as acting on spaces with varying degrees of regularity,
with more regularity mandated at the backward end of the flow lines
than the forward end.  The family $\dops^{-1}$ then only has finitely
many poles in any given horizontal strip in $\complexes$ and satisfies
polynomial estimates as $\abs{\Re \sigma}\to \infty$.  To propagate
the estimates near the singularity of the potential on $\mf$, we rely
on a semiclassical version of the diffractive theorem in the bulk.

\subsubsection*{Asymptotic expansions}
This portion of the argument is essentially identical to the setting
of asymptotically Minkowski spaces~\cite{BVW15,BVW18}.  To begin the
asymptotic development of $u$ (and therefore $\psi$) near $\mf$, one
cuts off near $\mf$ and takes the Mellin transform to obtain a
$\sigma$-dependent family
of equations of the form
\begin{equation*}
  \dops \us = \fs,
\end{equation*}
where $\us$ is known to be analytic in a half-plane
$\Im \sigma \geq \varsigma_{0}$ by the propagation of
$\bl$-regularity.  Inverting $\dops$, one then obtains the global
meromorphy of $\us$.  Applying the inverse Mellin transform turns the
poles $\us$ into terms in an asymptotic expansion, with poles at
$\sigma = z$ of degree $k$ becoming a term
$\rho^{iz}(\log \rho)^{k-1}$.  The coefficients in this expansion are
functions on $\mf$ that become worse in their regularity at $S_{+}$ as
$\Im z$ decreases (so as we gain more decay in $\rho$).  To obtain the
full expansion as in Theorems~\ref{thm:radfield}
and~\ref{thm:phg-full}, one then tests via shifts of the scaling
vector field at $S_{+}$.

\subsubsection*{Identification of the exponents}
\label{sec:identification-poles}

Having established the polyhomogeneity of the solution, we then
explicitly identify the poles $\sigma$ of $\dops^{-1}$.  By changing
coordinates, we find an expression for $\dops$ in which the ODEs
obtained after separating variables become a family of hypergeometric
equations, which we solve explicitly near past and future infinity.

\subsection{Other dimensions}
\label{sec:other-dimensions}

We now briefly describe how the theorem changes in $(n+1)$-dimensions
for $n > 3$.  Our approach applies nearly verbatim as long as the
Hardy inequality applies, i.e., for $\abs{\charge} < \frac{n-2}{2}$.
In this setting we write the Dirac--Coulomb operator as
\begin{equation*}
  \dirac_{\charge/r} = \gamma^{0}\left((\pd[t] + i
    \frac{\charge}{r}\right) + \gamma^{r}\left( \pd[r] +
    \frac{n-1}{2r}\right) + \frac 1r D_{S}.
\end{equation*}
Here the spinor bundle on $\reals\times\reals^{n}$ is trivial and 
$2^{k+1}$-dimensional, where $k = \lfloor (n-1)/2\rfloor$.  The spinor
bundle on $\sphere^{n-1}$ is similarly trivial and
$2^{k}$-dimensional.  In fact, over the sphere, we can identify the spinor bundle on
$\reals\times \reals^{n}$ with two copies of the spinor bundle on
$\sphere^{n-1}$.  The operator $D_{S}$ then enjoys the property that
$\gamma^{r}D_{S}$ acts as the spherical Dirac operator
$\dirac_{\sphere^{n-1}}$ on one copy and $-\dirac_{\sphere^{n-1}}$ on
the other.  The eigenvalues of $\gamma^{r}D_{S}$ (and hence $D_{S}$)
are then given by (see, e.g., B{\"a}r~\cite{Bar})
\begin{equation*}
  \pm \left( \frac{n-1}{2} + k\right), \quad k \in \naturals_{0}.
\end{equation*}
To prove the analogous theorem in higher dimensions, we then consider
\begin{equation*}
  \mathcal{L} = \rho^{-1- \frac{n-1}{2} - i \charge\gamma^{0}}i
    \dirac_{\charge/r}\rho^{\frac{n-1}{2} + i \charge},
\end{equation*}
and proceed as in the three-dimensional case.  Although the
propagation results for finite $t$ and $n >3$ are not in the
literature, the same proof as in $n=3$ applies and yield
Theorem~\ref{thm:phg-full} in $(n+1)$-dimensions, $n> 3$, with the
following index sets:
\begin{align*}
  \begin{cases}
    \emptyset & \text{at }C_{-}\cup C_{0},\\
    \left\{ \left( i \left(  \frac{n-1}{2} + i \charge +  k\right), 0 \right) : k = 0, 1, 2,
      \dots\right\} & \text{at }\scri^{+}, \\
    \left\{ \left(i \left( \frac{n+1}{2} + j + \sqrt{\left( \frac{n-1}{2} +
            k\right)^{2}- \charge^{2}}\right), 0\right) : j,k = 0, 1,
    2, \dots \right\} & \text{at }C_{+}.
  \end{cases}
\end{align*}

In $(2+1)$-dimensions, however, our basic approach fails due to the
failure of the Hardy inequality.  Indeed, the Dirac--Coulomb system
fails to be essentially self-adjoint for $\charge \neq 0$.  We
conjecture that a similar theorem could hold with
$\abs{\charge} < 1/2$ for the distinguished self-adjoint extension
found by Schmincke~\cite{Schmincke} (see also
W{\"u}st~\cite{Wust1,Wust2}, Nenciu~\cite{Nenciu} and
Klaus--W{\"u}st~\cite{Klaus-Wust}).  The failure of the Hardy
inequality in two dimensions, however, suggests that the propagation
arguments require significantly more care.  The authors believe that
they could be made to work by appealing to the ``very basic''
operators of Melrose--Vasy--Wunsch~\cite[Section 10]{MVW08} but this
discussion would take us too far afield.

\subsection{Structure of the paper}
\label{sec:structure-paper}

Section~\ref{sec:dirac-coul-equat} fixes our notation and conventions
for the massless Dirac--Coulomb system.  In
Section~\ref{sec:bl-geometry} we introduce the relevant
compactifications of our space time and recall some facts about
Melrose's $\bl$-calculus and its relationship with standard
differential operators.  Collected in Section~\ref{sec:analyt-prel}
are a number of results and definitions used in the main analysis.

The remainder of the paper is devoted to the proof of the theorems:
Section~\ref{sec:propagation-bulk} describes the needed results in the
bulk of the spacetime, while Section~\ref{sec:bdry-op} is devoted to
the proof that the operator on the boundary is Fredholm with finitely
many poles in any strip.  In Section~\ref{sec:polyhomogeneity} we show
the polyhomogeneity of the solution.  Finally, in
Section~\ref{sec:char-expon} we find the exponents explicitly.

\subsection{Acknowledgements}
\label{sec:acknowledgements}

This material is based largely upon work done during the Fall 2019
semester program ``Microlocal Analysis'' at the Mathematical Sciences
Research Institute in Berkeley, California, supported by National
Science Foundation under Grant No. DMS-1440140, where DB was in
residence.  DB was partially supported by NSF CAREER grant
DMS-1654056.  JGR was partially supported by the Australian Research
Council through the Discovery Project grants DP180100589 and DP210103242.

\section{The Dirac--Coulomb equation}
\label{sec:dirac-coul-equat}

\subsection{Notation}
\label{sec:notation}

We use coordinates $x^{\alpha}$, $\alpha = 0,\dots, 3$ on
$\reals\times \reals^{3}$.  When referring to spatial coordinates
(i.e., indices $1$, $2$, $3$) we use Latin rather than Greek
subscripts and superscripts.  When convenient we use $t=x^{0}$ and
spatial polar coordinates
$r\in (0,\infty)$, $\theta \in \sphere^{2}$.  (In
Section~\ref{sec:compactifications} we describe coordinate systems
that are better adapted to use ``near infinity''.)

The Dirac operator on $\reals \times\reals^{3}$ is given by
\begin{equation*}
  \dirac = \gamma^{\alpha}\pd[\alpha],
\end{equation*}
where $\gamma^{\alpha}$ are the $4\times 4$ Dirac matrices
\begin{equation*}
  \gamma^{0} =
  \begin{pmatrix}
    I & 0 \\ 0 & -I
  \end{pmatrix},
\end{equation*}
and
\begin{equation*}
  \gamma^{j} =
  \begin{pmatrix}
    0 & \sigma_{j} \\ -\sigma_{j} & 0 
  \end{pmatrix},
\end{equation*}
and $\sigma_{j}$ are the $2\times 2$ Pauli matrices,
\begin{equation*}
  \sigma_{1} =
  \begin{pmatrix}
    0 & 1 \\ 1 & 0
  \end{pmatrix},
  \sigma_{2} =
  \begin{pmatrix}
    0 & -i \\ i & 0 
  \end{pmatrix},
  \sigma_{3} =
  \begin{pmatrix}
    1 & 0 \\ 0 & -1
  \end{pmatrix}.
\end{equation*}

The $\gamma^\alpha$ satisfy the anticommutation relation
\begin{equation*}
  \gamma^{\alpha}\gamma^{\beta} + \gamma^{\beta}\gamma^{\alpha} = -2 \eta^{\alpha\beta}\id_{4},
\end{equation*}
where $\eta^{\alpha\beta}$ are the components of the Minkowski metric,
i.e.,
\begin{equation*}
  \eta^{\alpha\beta} =
  \begin{cases}
    -1 & \alpha =\beta=0 \\
    1 & \alpha = \beta \in \{1,2,3\} \\
    0 & \alpha \neq \beta
  \end{cases}.
\end{equation*}

Given a charge $\charge \in \mathbb{R}$, we couple the Dirac operator to a Coulomb
electric potential of charge $\charge$ via the ``minimal coupling''
convention:
\begin{equation*}
  \dirac_{\charge/r} = \gamma^{0}\left(\pd[t] + i
    \frac{\charge}{r}\right) + \gamma^{j}\pd[j].
\end{equation*}

The operator $\dirac_{\charge/r}$ is not symmetric with respect to the
standard flat inner product on functions $\phi, \psi \colon
\mathbb{R} \times \mathbb{R}^3 \longrightarrow  \mathbb{C}^4$ given by
$\langle \phi, \psi \rangle = \int \langle \phi, \psi \rangle_{\mathbb{C}^4}
dtdx^1dx^2dx^3$.  (Here $\langle \cdot, \cdot'
\rangle_{\mathbb{C}^4}$ is the pointwise hermitian inner product on
$\mathbb{C}^4$.)  Indeed, $\gamma_0 \partial_0$ is antisymmetric while the
$\gamma_j \partial_j$ are symmetric.  On the other hand, by the
anticommutation properties of the Dirac matrices, the operators $i
\gamma_0 \dirac_{\charge/r}$ and $i\dirac_{\charge/r} \gamma_0$ are
easily checked to be symmetric with respect to this inner product.

We employ several other notational conventions.  In keeping with
physics notation (see, e.g., Rose~\cite{Rose}), we write
\begin{equation*}
  \beta = \gamma^{0},
\end{equation*}
and let $\alpha_{j}$ be defined by
\begin{equation*}
  \gamma^{j}= \beta \alpha_{j},
\end{equation*}
i.e.,
\begin{equation*}
  \alpha_{j} =
  \begin{pmatrix}
    0 & \sigma_{j} \\ \sigma_{j} & 0
  \end{pmatrix}.
\end{equation*}

In spherical coordinates, we require the radial versions of the
various matrices and so we set
\begin{equation}
  \label{eq:r-versions-of-matrices}
  \sigma_{r} = \sum_{j=1}^{3}\frac{x_{j}}{r}\sigma_{j} , \quad
  \alpha_{r}= \sum _{j=1}^{3}\frac{x_{j}}{r}\alpha_{j}, \quad
  \gamma^{r} = \sum_{j=1}^{3}\frac{x_{j}}{r}\gamma^{j}.
\end{equation}

\subsection{Separation of variables}
\label{sec:separation-variables}

In this section we use the convention that a boldface letter
(such as $\bsigma$ or $\br$) denotes the associated $3$-vector of
matrices or operators (such as $(\sigma_{1},\sigma_{2},\sigma_{3})$ or
$\frac{1}{r}(x^{1},x^{2},x^{3})$).  We also set
\begin{equation*}
  \bSigma =
  \begin{pmatrix}
    \bsigma & 0 \\ 0 & \bsigma
  \end{pmatrix}.
\end{equation*}

We let
\begin{equation*}
  \bL = \br \times \bp,
\end{equation*}
denote the orbital angular momentum operators, where, as is standard,
\begin{equation*}
  \bp =
  \begin{pmatrix}
    \frac{1}{i}\pd[x^{1}] \\
    \frac{1}{i}\pd[x^{2}] \\
    \frac{1}{i} \pd[x^{3}]
  \end{pmatrix}.
\end{equation*}
We then let
\begin{equation*}
  \bJ = \bL + \frac{1}{2}\bSigma
\end{equation*}
denote the total angular momentum operators (so orbital angular
momentum and spin together).  We now introduce Dirac's $K$ operator
and set
\begin{equation*}
  K = \beta (1 + \bSigma \cdot \bL).
\end{equation*}

The remarkable property of $K$ is the following lemma found in many
physics texts (e.g., Rose~\cite[Section 12]{Rose}).
\begin{lemma}
  The following operators are mutually commuting:
  \begin{equation*}
    \dirac_{\charge/r}, J^{2} = \bJ \cdot \bJ, J_{3}, K.
  \end{equation*}
Moreover,
  \begin{equation*}
    [\beta , K] = 0.
  \end{equation*}
\end{lemma}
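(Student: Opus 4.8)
The plan is to verify the three mutual commutation relations by direct computation, exploiting the block structure of the operators and the anticommutation relations of the Dirac matrices. The commutativity of $J^2$, $J_3$, and $K$ among themselves is classical $SO(3)$/spinor algebra: since $\bJ$ generates rotations, $J^2$ and $J_3$ commute, and $\bSigma \cdot \bL = \bJ^2 - \bL^2 - \frac34$ (rewriting $\bSigma\cdot\bL = \frac12((\bL+\frac12\bSigma)^2 - \bL^2 - \frac14\bSigma^2)$ using $\bSigma^2 = 3\,\mathrm{Id}$), so $\bSigma\cdot\bL$ is a polynomial in operators that manifestly commute with $J^2$ and $J_3$; then $K = \beta(1 + \bSigma\cdot\bL)$ commutes with $J^2, J_3$ provided $\beta$ does, which is clear since $\beta$ acts blockwise and commutes with both the orbital part $\bL$ (which acts as scalars on $\mathbb{C}^4$) and with $\bSigma$ (both being built from $2\times 2$ blocks proportional to $I$ or $\bsigma$, and $\beta = \mathrm{diag}(I,-I)$ commutes with $\mathrm{diag}(\bsigma,\bsigma)$). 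This same observation gives $[\beta, K]=0$ immediately, which I would dispatch first as the easiest part.

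First I would record the key algebraic identities: $[\bSigma\cdot\bL, \bSigma\cdot\bL] $-type manipulations via $(\bsigma\cdot\bA)(\bsigma\cdot\bB) = \bA\cdot\bB + i\bsigma\cdot(\bA\times\bB)$, and the commutators $[L_j, x_k/r] = i\epsilon_{jkm}x_m/r$, $[L_j,\partial_k]=i\epsilon_{jkm}\partial_m$ expressing that $\bL$ rotates vectors. From these, $\bSigma\cdot\bL$ commutes with any rotationally invariant scalar operator built from $\br$ and $\bp$, in particular with $\partial_t + i\charge/r$ and with the full Laplacian-type pieces. The substantive content is then $[\dirac_{\charge/r}, K]=0$ and $[\dirac_{\charge/r}, J^2]=[\dirac_{\charge/r},J_3]=0$. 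For the angular momentum commutators, the point is that $\dirac_{\charge/r}$ is rotationally covariant: under the spinor representation of $SO(3)$ generated by $\bJ$, the operator $\gamma^0(\partial_t + i\charge/r) + \gamma^j\partial_j$ transforms as a scalar because $\gamma^j$ transforms as a vector (intertwined by $\frac12\bSigma$) that is contracted against $\partial_j$ (which transforms as a vector via $\bL$), and $\charge/r$ is rotation-invariant. Concretely one checks $[J_k, \gamma^j] = i\epsilon_{kjm}\gamma^m$ — this is the statement that $\frac12[\bSigma_k,\gamma^j] = i\epsilon_{kjm}\gamma^m$, verified from the Pauli matrix commutators $[\sigma_k,\sigma_j] = 2i\epsilon_{kjm}\sigma_m$ and the block forms — and combines with $[J_k,\partial_j]=[L_k,\partial_j]=i\epsilon_{kjm}\partial_m$ so that the $\gamma^j\partial_j$ term commutes with each $J_k$, hence with $J^2$ and $J_3$; the $\gamma^0(\partial_t + i\charge/r)$ term commutes with $\bJ$ since $\gamma^0$ commutes with $\bSigma$ and $\partial_t + i\charge/r$ commutes with $\bL$.

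The main obstacle, and the computation I would carry out most carefully, is $[\dirac_{\charge/r}, K] = 0$. Here I would use the classical identity (valid for the free Dirac operator and unaffected by the scalar $i\charge/r$ term, which commutes with $\bSigma\cdot\bL$ and anticommutes appropriately with $\gamma^0$ in the right combination) that Dirac's $K$ encodes the radial reduction: writing $\bSigma\cdot\bL$ and using $(\bSigma\cdot\bp)(\bSigma\cdot\br) + (\bSigma\cdot\br)(\bSigma\cdot\bp)$ relations together with $\alpha_r(1 + \bSigma\cdot\bL)$ anticommuting with $\gamma^0 \bSigma\cdot\bp$-type terms, one finds $K$ commutes with the spatial Dirac operator by a short but delicate manipulation of $\bsigma$-identities — essentially the content of the standard derivation that $\beta(1+\bSigma\cdot\bL)$ is conserved because $[\bgamma\cdot\bp, \bSigma\cdot\bL] = -i\,\bgamma\cdot\bp$ up to sign conventions, which cancels against the $\beta$-conjugation. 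I would also need to check that the Coulomb term $\gamma^0 i\charge/r$ commutes with $K = \beta(1+\bSigma\cdot\bL)$: since $1/r$ is rotation-invariant it commutes with $\bSigma\cdot\bL$, and $\gamma^0 = \beta$ commutes with $\beta(1+\bSigma\cdot\bL)$ trivially, so this term poses no difficulty — confirming that the Coulomb coupling preserves the full commuting family, which is precisely the feature that makes separation of variables work in Section~\ref{sec:separation-variables}.
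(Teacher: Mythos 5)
The paper does not prove this lemma; it cites it as a classical fact from the physics literature (Rose, Section~12), so your proposal is supplying the argument that the paper elides. Your overall plan — reduce everything to rotational covariance and Dirac-matrix algebra, with $[\beta,K]=0$ and the commutativity of $K$ with $J^2, J_3$ disposed of by the block structure and by $\bSigma\cdot\bL = J^2 - L^2 - \tfrac{3}{4}$ — is the standard and correct route, and the reduction of $[\dirac_{\charge/r},J_k]=0$ to $[J_k,\gamma^j]=i\epsilon_{kjm}\gamma^m$ contracting against $[L_k,\partial_j]=i\epsilon_{kjm}\partial_m$ is right. (Minor slip: your parenthetical ``$\bSigma\cdot\bL = \tfrac12((\bL+\tfrac12\bSigma)^2 - \bL^2 - \tfrac14\bSigma^2)$'' carries an extraneous factor of $\tfrac12$; the stated conclusion $\bSigma\cdot\bL = J^2 - L^2 - \tfrac{3}{4}$ is nonetheless correct.)

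The one place where the write-up is imprecise is exactly the step you identify as the crux, $[\dirac_{\charge/r},K]=0$. The clean identity is an \emph{anti}commutator, not the commutator you wrote: using $\bp\cdot\bL = \bL\cdot\bp = 0$ and $\bp\times\bL + \bL\times\bp = 2i\bp$ one gets
\[
\{\bSigma\cdot\bp,\ \bSigma\cdot\bL\} = -2\,\bSigma\cdot\bp,
\qquad\text{hence}\qquad
\{\balpha\cdot\bp,\ 1 + \bSigma\cdot\bL\} = 0 .
\]
Since $\balpha\cdot\bp$ also \emph{anti}commutes with $\beta$, the two sign flips cancel and $[\balpha\cdot\bp, \beta(1+\bSigma\cdot\bL)] = -\beta\{\balpha\cdot\bp, 1+\bSigma\cdot\bL\} = 0$, whence $[\gamma^j\partial_j, K]=0$. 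Your formula ``$[\bgamma\cdot\bp,\bSigma\cdot\bL] = -i\,\bgamma\cdot\bp$'' is not the relation that makes the argument close; the genuine commutator $[\balpha\cdot\bp,\bSigma\cdot\bL]$ is \emph{not} a multiple of $\balpha\cdot\bp$ (it involves $\bx\,p^2 - (\bx\cdot\bp)\bp$ terms), and the hedge ``up to sign conventions'' does not repair that. If you replace the commutator claim with the anticommutator identity above and make the two-anticommutation cancellation explicit, the argument is complete; the observation that the Coulomb term $\gamma^0\,i\charge/r$ commutes with all four operators then finishes the proof as you say.
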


In Section~\ref{sec:char-expon} below, we consider the action of a
rescaling of $\dirac_{\charge/r}$ on the common eigenfunctions of the
remaining operators in the lemma.  These eigenfunctions are described blockwise by
two component spinor spherical harmonics.  Following, e.g.,
Szmytkowski~\cite{Szmytkowski}, for $\theta \in \sphere^{2}$ we set
\begin{equation*}
  \Omega_{\kappa\mu} (\theta)=
  \begin{pmatrix}
    \sgn (-\kappa) \left( \frac{\kappa + 1/2-\mu}{2\kappa +
        1}\right)^{1/2}Y_{l, \mu - 1/2}(\theta) \\
    \left( \frac{\kappa + 1/2 + \mu}{2\kappa + 1}\right)^{1/2}Y_{l,
      \mu + 1/2}(\theta)
  \end{pmatrix},
\end{equation*}
where
\begin{align*}
  \kappa &\in \integers \setminus \{ 0 \}, \\
  \mu &\in \left\{ - \abs{\kappa} + 1/2, \dots,
        \abs{\kappa}-1/2\right\} , \\
  l &= \abs{\kappa + \frac{1}{2}} - \frac{1}{2},
\end{align*}
and $Y_{l,m}$ are the standard spherical harmonics.  The eigenvectors of $K$ are given by the span of 
\begin{equation*}
  \begin{pmatrix}
    \Omega_{\kappa\mu} \\ 0 
  \end{pmatrix},
  \quad
  \begin{pmatrix}
    0 \\ \Omega_{-\kappa\mu'}
  \end{pmatrix},
  \quad \mu, \mu' \in \{ - \abs{\kappa} + 1/2, \dots , \abs{\kappa}- 1/2\},
\end{equation*}
where both of these are understood to be $4$-vectors,
and the eigenvalue of $K$ on this eigenspace is $-\kappa$.

We further observe that
\begin{equation*}
  \alpha_{r}
  \begin{pmatrix}
    a \Omega_{\kappa\mu} \\ b \Omega_{-\kappa \mu'}
  \end{pmatrix}
  =
  \begin{pmatrix}
    -b \Omega_{\kappa \mu'} \\ -a \Omega_{-\kappa\mu}
  \end{pmatrix}.
\end{equation*}

The spherical Laplacian $\Lap_{\theta}$ and $K$ are related by
\begin{equation*}
I_{4}   \Lap_{\theta} = K^{2} - \beta K,
\end{equation*}
so that $\Lap_{\theta}$ commutes with $K$ and $\beta$, where $I_4$ is
the $4 \times 4$ identity matrix.

Writing
\begin{equation*}
  i \gamma^{0}\dirac_{\charge/r} = i \pd[t] - \cB, 
\end{equation*}
i.e.,
\begin{equation}\label{eq:stationary operator}
  \cB = \sum_{j=1}^{3}\frac{1}{i}\alpha_{j} \pd[j] + \frac{\charge}{r}.
\end{equation}
In polar coordinates, we then have
\begin{equation}
  \label{eq:polar-coords-def-of-B}
  \cB = -i \alpha_{r}\left( \pd[r] + \frac{1}{r} - \frac{1}{r}\beta
    K\right) + \frac{\charge}{r}.
\end{equation}

We recall for $\abs{\charge}< \sqrt{3}/2$, the operator $\cB$ is essentially
self-adjoint on $\reals^{3}$ with domain $H^{1}$.  Kato in his
book~\cite{Kato:book} established this result for $\abs{\charge}<1/2$
and Weidmann~\cite{Weidmann} later extended it to
$\abs{\charge}<\sqrt{3}/2$.  Beyond this range it is no longer
essentially self-adjoint.  Previous work~\cite{BW20} provided
another proof of this fact based on the structure of the indicial
operator of $\cB$.

\subsection{The radiation field}
\label{sec:radiation-field}

As our definition of the radiation field differs slightly from
Friedlander's~\cite{Friedlander}, we briefly recall its definition and construction.

Given a solution $\psi$ of $\dirac_{\charge/r}\psi = f$ with $f$ smooth and
compactly supported, we define the function
\begin{equation*}
  \varphi (s, \theta, \rho) = \rho^{-(1+i\charge)}\psi \left(s +
    \frac{1}{\rho}, \frac{1}{\rho}\theta\right),
\end{equation*}
i.e., a rescaling of $\psi$ written in terms of the coordinates $\rho
= 1/r$, $s = t-r$, and $\theta \in \sphere^{2}$, with $\sphere^{2}$
identified with the unit sphere in $\mathbb{R}^3$ so that $r\theta =
(x^1, x^2, x^3)$.

Because $\psi$ is a solution of $\dirac_{\charge/r}^{2}\psi = 0$ near
infinity, $\varphi$ is a solution of
\begin{equation*}
  \rho^{-(1+i\charge)}\left( 2(1 + i \charge)\rho \pd[s] -
    2\rho^{2}\pd[\rho]\pd[s] + \rho^{2}\Lap_{\theta} - \rho^{2}(\rho
    \pd[\rho])^{2} + \rho^{3}\pd[\rho] - \charge^{2}\rho^{2} + i
    \alpha_{r}\charge\rho^2\right)\rho^{1+i\charge}\varphi = 0
\end{equation*}
near $\rho = 0$.  Rewriting this equation yields
\begin{equation*}
  \rho^{2}\left( -2\pd[\rho]\pd[s] + \Lap_{\theta} - (\rho \pd[\rho] +
    1 + i\charge)^{2} + (\rho \pd[\rho] + 1 + i \charge) - \charge^{2}
    + i \alpha_{r}\charge\right)\varphi = 0.
\end{equation*}
In other words, $\varphi$ is the solution of a hyperbolic equation
that is non-degenerate near $\rho = 0$.  If $\psi$ vanishes
identically for $s \leq s_{0}$ (as is the case for the forward
solution,) 
then the argument of Friedlander~\cite[Section
1]{Friedlander} shows that $\varphi$ may be smoothly extended across
$\rho=0$.  In particular, $\varphi$ and its derivatives may be
restricted to $\rho=0$.

If $\psi$ is the forward solution of $i\dirac_{\charge/r}\psi = g$,
with $g$ smooth and compactly supported, we may therefore define the
(forward) \textit{radiation field} of $\psi$ by
\begin{equation*}
  \mathcal{R}_{+}[\psi](s,\theta) = \varphi (s,\theta, 0).
\end{equation*}

Note that our definition differs from Friedlander's original
construction in two important ways.  First, in our construction we
have conjugated by $\rho^{1+i\charge}$ rather than $\rho$ to account
for the additional oscillations introduced by the potential; this
modification is required to ensure that the initial-value formulation
of the radiation field is a translation representation of the
evolution semigroup.  Indeed, if $U(t)\psi_{0}$ is the solution
operator associated to the problem
\begin{equation*}
  \dirac_{\charge/r}\psi = 0, \quad \psi(0, x) = \psi_{0}(x),
\end{equation*}
and $\mathcal{R}_{+}(\psi_{0})(s,\theta) =
\mathcal{R}_{+}[\psi](s,\theta)$, then
\begin{equation*}
  \mathcal{R}_{+}(U(T)\psi_{0})(s,\theta) = \mathcal{R}_{+}(\psi_{0})(s+T,\theta),
\end{equation*}
i.e., the radiation field intertwines wave evolution and translation.

The second important difference is in the normalization of the
radiation field; Friedlander's construction includes a derivative to
ensure that the $L^{2}$ norm of the radiation field of a solution of
the wave equation is bounded by the energy of the initial data (indeed,
in that setting it is an isometry).  As the Dirac--Coulomb system is
first order, no derivative is warranted; it is straightforward to see
that
\begin{equation*}
  \norm{\mathcal{R}_{+}(\psi_{0})}_{L^{2}(\reals \times \sphere^{2})}
  \leq \norm{\psi_{0}}_{L^{2}(\reals^{3}\setminus \{0\})}.
\end{equation*}
The question of whether the map taking initial data to the radiation
field is an isometry is essentially a question of local energy decay
and is left to future work.

In Section~\ref{sec:compactifications} below, we realize the radiation
field as a rescaled restriction of $\psi$ to one face in a
compactification of our spacetime.

\section{$\bl$-geometry and the $\bl$-calculus}
\label{sec:bl-geometry}

\subsection{Compactifications}
\label{sec:compactifications}

As the operator $\dirac_{\charge/r}$ is singular at the spatial
origin, most
of the analysis to follow takes place on a compactification $M$ of $\reals
\times (\reals^{3}\setminus \{0\})$.  In particular, we treat
$\reals^{3}\setminus\{0\}$ as a conic manifold and compactify as in
previous work~\cite[Section 3]{BM19}.  Roughly speaking, we resolve
the singularity at the origin and consider the radial compactification
at infinity.

For clarity, we first discuss the setting where the underlying spatial
manifold is a half-line.  We compactify $\reals_{t}\times
(0,\infty)_{r}$ by stereographic projection to a (closed) quarter-sphere
$\sphere^{2}_{++}$ as depicted in Figure~\ref{fig:1-d-compact}.  The
map $\reals_{t}\times (0,\infty)_{r}\to \sphere^{2}\subset \reals^{3}$
given by
\begin{equation*}
  (t,r) \mapsto \frac{(t,r,1)}{\sqrt{1+t^{2}+r^{2}}}
\end{equation*}
sends $\reals \times (0,\infty)$ to the interior of the quarter-sphere
given by
\begin{equation*}
  \sphere^{2}_{++}= \{ (z_{1},z_{2},z_{3})\in \sphere^{2}\subset
  \reals^{3} \mid z_{2}\geq 0, z_{3}\geq 0\}.
\end{equation*}
Here $\sphere^{2}_{++}$ is a manifold with corners and
has two boundary hypersurfaces defined by $z_{2} = 0$ and
$z_{3} = 0$, respectively.  We let $\cf$ (or ``conic face'') denote the
hypersurface defined by
\begin{equation*}
  z_{2}= \frac{r}{\sqrt{1+r^{2}+t^{2}}} = 0,
\end{equation*}
while we use $\mf$ (or ``main face'') to denote the boundary
hypersurface defined by
\begin{equation*}
  z_{3} = \frac{1}{\sqrt{1+r^{2}+t^{2}}} = 0.
\end{equation*}

The above construction defines a smooth structure on the compactification
of $\reals \times (0,\infty)$.  We thereby obtain a compactification $M$ of $\reals
\times (\reals^{3}\setminus \{0\})$ is then given by
\begin{equation*}
  M = \sphere^{2}_{++}\times \sphere^{2}_{\theta},
\end{equation*}
where we use polar coordinates
$(r, \theta) \in (0,\infty)\times \sphere^{2}$ on
$\reals^{3}\setminus\{0\}$ and identify the interior of first factor
$\sphere^{2}_{++}$ with $t\in \reals$ and $r\in (0,\infty)$ via the
above construction.

\begin{figure}
  \centering
  \includegraphics{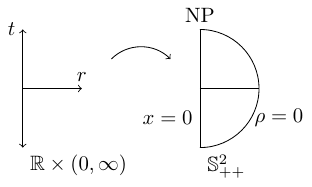}
  \caption{The compactification of $\reals \times (0,\infty)$ to
    $\sphere^2_{++}$}
  \label{fig:1-d-compact}
\end{figure}

Away from $\mf$, we use the coordinates $(t,r,\theta)$.  Near the
north pole 
(given by $(z_{1},z_{2},z_{3}) = (1,0,0)$), in lieu of the
boundary defining functions $z_{2}$ and $z_{3}$, it is convenient to
use the (homogeneous) functions
\begin{equation*}
  \rho = \frac{1}{t}, \quad x = \frac{r}{t}.
\end{equation*}
Similarly, near the south pole (given by $t < 0$, $z_{2}=0$,
$z_{3}=0$), we use
\begin{equation*}
  \rho = \frac{1}{\abs{t}}, \quad x = \frac{r}{\abs{t}}.
\end{equation*}
In the region between the two poles, (i.e., where $\abs{x} > 1$), we
choose our boundary-defining function $\rho$ so that it agrees with
the above definition in both polar regions and is homogeneous of degree
$-1$ in the scaling $(t, z) \mapsto (ct, cz)$ near $\mf$.  We extend $x$ to be smooth and strictly greater than
$1$ in this region.  

In our discussion of the radiation field, the smooth submanifolds
\begin{equation*}
  S_{\pm} = \{ \rho = 0, x = 1,\pm t > 0\} = \{ (z_1 = \pm 1/\sqrt{2}, z_2
  = 1/\sqrt{2} , 0) \} \times \sphere^2_\theta \subset \mf
\end{equation*}
play a crucial role.  These are defined by the functions $\rho$ and $v =
1-x$.  Lightlike geodesics on the interior all have limits at
$S_{\pm}$ in the future/past time directions. 

The complement of $S_{\pm}$ in $\mf$ consists of three open
components.  We denote by $C_{0}$ the region in $\mf$ where $x > 1$,
while the region where $x < 1$ has two components which we denote by
$C_{\pm}$ according to whether $\pm t > 0$ nearby.

The submanifold $S_{+}$ plays an additional role; in order to identify
the forward radiation field $\mathcal{R}_{+}$, we \textit{blow up}
$S_{+}$ in $M$ by replacing it with its inward pointing spherical
normal bundle, and in doing so we introduce a boundary hypersurface
$\scri^+$ which can be thought of as future null infinity and which
will serve as the domain of the radiation field.  Though we elide
detailed background on radial blow-ups, we briefly describe the
upshot here.\footnote{For more information about the blow-up
  construction, we refer the reader to Melrose's
  book~\cite{Melrose:APS}.}
This construction introduces a manifold with corners $[M; S_+]$ and a
``blow-down'' map
$$
\beta_{bd} \colon [M; S_+] \longrightarrow M
$$
such that $\scri^+ = \beta_{bd}^{-1}(S_+)$ is a boundary hypersurface
of $[M ; S_+]$ and ``cylindrical coordinates''
$$
\rho_{\scri^+} = (\rho^2 + (1-x)^2)^{1/2},\ \phi_{\scri^+} = (\rho /
\rho_{\scri^+}, 1-x / \rho_{\scri^+}), \ \theta \in \sphere^2_\theta
$$
give a smooth parametrization of a neighborhood of $\scri^+ =
\{\rho_{\scri^+} = 0\} \cap \{ t  > 0 \}.$  The map $\beta_{bd}$ is a diffeomorphism
from $[M; S_+] \setminus \scri^+$ to $M \setminus S_+$, i.e.\ the
construction only ``modifies'' $M$ near $S_+$.  The structure
of this manifold with corners depends only
on the submanifold $S_{+}$ and not on the particular choice of
defining functions $\rho$ and $v$, and
 in our setting, this is equivalent to
blowing up the point $(\frac{1}{\sqrt{2}}, \frac{1}{\sqrt{2}}, 0) \in
\sphere^{2}_{++}$ and then taking the product with $\sphere^{2}$.
See Figure~\ref{fig:blow-up}.  

The new space $[M;S_{+}]$ has four boundary hypersurfaces: the closure
of the lifts of the interiors of $C_{+}$ and $C_{0}\cup C_{-}$ by the
blow-down map, the lift of $\cf$, and a new boundary hypersurface
$\scri^+$ introduced by blow up.  By construction,
$\scri^{+}$ is naturally a fiber bundle over $S_{+}$ with fibers
diffeomorphic to intervals.  Indeed, given $v=1-x$ and $\rho$, the
fibers of the interior of $\scri^{+}$ in $[M;S_{+}]$ can be identified
with the cylinder $\reals_s \times \sphere^{2}$ by the coordinate $s =
v/\rho$.  

A simple computation (and the observation that, for fixed $s$,
$(1+s\rho)^{1+i\charge}\to 1$ as $\rho \to 0$) shows that for
solutions of $\dirac_{\charge/r}\psi = f$ with smooth, compactly
supported $f$, the Friedlander radiation field defined
above agrees with the restriction
\begin{equation}\label{eq:actual def of rad field}
  \mathcal{R}_{+}[\psi](s,\theta) = \rho^{-1-i\charge}\psi\vert_{\scri^{+}}.
\end{equation}

\begin{figure}
  \centering
  \includegraphics{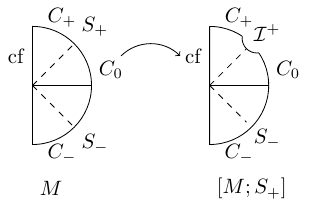}
  \caption{A schematic view of the forward radiation field blow-up.
    The lapse function $s$ increases along $\scri^{+}$ toward $C_{+}$.}
  \label{fig:blow-up}
\end{figure}

\subsection{The pseudodifferential $\bl$-calculus}
\label{sec:bl-calculus}

We describe now the homogeneous version of the $\bl$-calculus, which
we use in two separate cases.  We primarily use it in
our discussion of propagation on the compactification $M$ of bulk
spacetime to a manifold with corners.  We later use it briefly in our
discussion of the non-semiclassical aspects of the operator $\dops$ on
$\mf$.  In the discussion below, we therefore describe the
$\bl$-calculus on a manifold with corners $X$, though the explicit
examples are given only for $M$.

(Recall, briefly that a smooth
manifold with corners $X$ of dimension $m$ is locally diffeomorphic to
$\mathbb{R}_+^{k} \times \mathbb{R}^{m - k}$ and that $\partial X$ is
the union of the boundary hypersurfaces $\{ H_1, \dots, H_l \}$, which are
themselves manifolds with corners.  Given a particular boundary
hypersurface $H$ there exists a boundary defining function $\rho_H
\colon X \longrightarrow \mathbb{R}_+$, meaning $\rho_H$ is smooth,
non-negative, that $\{ \rho_H = 0 \} = H$, and $d\rho_H$ is
non-vanishing on $H$.)

To begin, recall the space of \textit{$\bl$-vector fields}, 
$\cV_{\bl}(X)$, that is, vector fields on $X$ which are tangent to the
boundary.  These are exactly those vector fields $V \in C^\infty(X;
TX)$, defined and smooth on the whole of
$X$, which over $\partial X$ point along the boundary, i.e.\ which
satisfy $V \rvert_{\partial X} \in C^\infty(\partial X, T(\partial X))$.  On $M$
these can be described easily enough; in a neighborhood of the
codimension $2$ corner $\mf \cap \cf$ they are generated over
$C^\infty(M)$ by the vector fields $\rho \pd[\rho], x \pd[x],$ and
$\pd[\theta]$, where here and below we abuse notation slightly by
allowing $\theta \in \sphere^2$ to denote local coordinates on
$\sphere^2$ (which can be accomplished locally on the sphere by
dropping on of the three components of $\theta$).   Thus still near
the corner,
$$
V = a(\rho, x, \theta) \rho \pd[\rho] + b(\rho, x, \theta)  x
\pd[x] + \sum_{k \in \{ 1, 2, 3\}} c_k (\rho, x, \theta)\pd[\theta_k].
$$
where the $\theta_k$ are whichever components of $\theta$ define local
coordinates on $\sphere^2$, and where smoothness on $M$ in this
neighborhood simply means the $a, b, c_k$ are smooth functions on $[0, 1)_\rho
\times [0, 1)_x \times \sphere^2$. Near $\mf$ but away
from $\mf \cap \cf$, they can be written in terms of by $\rho\pd[\rho]$
and the remaining coordinate vector fields (such as $\pd[x]$,
$\pd[\theta]$), while near $\cf$ away from $\mf$, it suffices to use
$r\pd[r]$, $\pd[t]$, and $\pd[\theta]$, always with coefficients which
are smooth up to the boundary.  It is straightforward to
check that $\cV_{\bl}$ is a Lie algebra, meaning for $V, W \in
\cV_{\bl}$, the commutator $[V, W] \in \cV_{\bl}$; its universal enveloping
algebra over $C^\infty(M)$ is (by definition) the algebra of
$\bl$-differential operators and is denoted
$\Diffb^{*}(X)$.  Near the codimension two corner $\mf \cap \cf$, an
operator $A\in \Diffb^{m}(M)$ has the form
\begin{equation}
  \label{eq:sample-diff-b-bulk}
  A = \sum_{|\alpha| + j + k \leq m}a_{jk\alpha}(\rho, x, \theta) (\rho
  D_{\rho})^{j}(x D_{x})^{k} D_{\theta}^{\alpha},
\end{equation}
where the coefficients $a_{jk\alpha} \in \CI(M)$.

The $\bl$-pseudodifferential operators $\Psib^{*}(X)$ are the
microlocalization of this algebra and formally consist of properly supported
operators of the form (for $\Psib^{*}(M)$ near $\mf \cap \cf$)
\begin{equation*}
  a(\rho, x, \theta, \rho D_{\rho}, x D_{x}, D_{\theta}),
\end{equation*}
where $a(\rho, x, \theta, \sigma, \xi, \eta)$ is a Kohn--Nirenberg
symbol.  

The space $\cV_{\bl}(X)$ is additionally the space of sections of the
$\bl$-tangent bundle $\Tb X$, which is a smooth vector bundle over
$X$ with the feature that for a given boundary hypersurface $H$ of
$X$ with boundary defining function $\rho_H$, the $\bl$-vector field $\rho_H
\pd[\rho_H]$ defines a \textit{non-vanishing} section of $\Tb X$ at $H$.
Its dual bundle is denoted $\Tbstar X$.  On $M$ near $\mf
\cap \cf$ it is locally spanned over $\CI (M)$ by $d\rho / \rho$, $dx /x$, and
$d\theta$ and we can write parametrize points in $\Tbstar M$ by writing

We may thus regard the symbols of operators in $\Psib^{*}(X)$ as
symbols on $\Tbstar X$, and the principal symbol map, denoted
$\sigmab$, maps the classical subalgebra of $\Psib^{m}$ to homogeneous
functions of order $m$ on $\Tbstar X$.\footnote{Recall that we can identify homogeneous functions on $\Tbstar X$ of
  a given order with smooth functions on $\Sbstar X$.  In an abuse of
  notation, we often view $\sigmab(A)$ as a smooth function on
  $\Sbstar X$.}  In the particular case of
$\bl$-differential operators on $M$, if $A$ is given as
above~\eqref{eq:sample-diff-b-bulk}, we have
\begin{equation*}
  \sigmab (A) = \sum_{\abs{\alpha}+j+k=m}a_{jk\alpha}(\rho, x, \theta) \sigma^{j}\xi^{k}\eta^{\alpha},
\end{equation*}
where $\sigma$, $\xi$, and $\eta$ are the canonical fiber coordinates
on $\Tbstar M$ defined by specifying that the canonical one-form is
given by
\begin{equation*}
  \sigma \frac{d\rho}{\rho} + \xi \frac{dx}{x} + \eta \cdot d\theta .
\end{equation*}

Up to this point all the operators we have discussed are
\textit{scalar}, meaning they act on functions.  To include operators on
vector valued functions one simply assumes that the coefficients
$a_{jk\alpha}$ above lie in $C^\infty(M; Mat_N)$ meaning they are
smooth functions values in $N \times N$ matrices, (for us typically $N
= 4)$.  It will be clear from context below whether we are considering
scalar or non-scalar operators.

As in previous work~\cite{BW20}, it is also convenient to identify a
subalgebra of $\Psib^{*}(X)$ essential for a commutator argument in
Section~\ref{sec:propagation-bulk}.  To do this, recall first that
there is an action of $SO(3)$ on the spatial variables; given $R \in
SO(3)$, then $R \cdot (t, x^{1}, x^{2}, x^{3}) = (t, R(x^{1}, x^{2},
x^{3}))$.  This induces a left action on functions, $R f = f \circ
R^{-1}$ and thus allows us to make the following definition.
\begin{definition}
  We say that a scalar pseudodifferential operator $A\in \Psib^{m}$ is
  \textit{invariant} if it is invariant with respect to the
  action of $SO(3)$ on functions, i.e.\ if for all $R \in SO(3)$ and
  all $f \colon C^\infty(M)$, $A (Rf) = R Af$.
\end{definition}
Any scalar symbol invariant under the lifted action of $SO(3)$ on
$\Tbstar M$ can be quantized to an invariant operator.  Invariant
operators commute with the angular operators $\Lap_{\theta}$ and $K$
\cite[Lemma 4]{BW20}.  

Accompanying the principal symbol map (which describes the leading
order behavior of elements of $\Psib^{*}(X)$ in terms of the
filtration), there is another collection of maps measuring the leading
order behavior of the operators at each boundary hypersurface.  In our
setting, we need only the map in each section: for $X=M$, we use the
map associated to $\mf$, while $X=\mf$ has only one boundary hypersurface.
Together with the principal symbol, these maps measure the obstruction
to compactness of $\bl$-operators.  We need this notion below only in
the case of $\bl$-differential operators, where it is simple to
describe.  This extra operator-valued symbol is the operator given by
freezing the coefficients of powers of $\bl$-vector fields at the
relevant boundary hypersurface.  In the case of $X= M$, if $A$ is
given by
\begin{equation*}
  \sum_{\abs{\alpha} + j + k \leq m} a_{jk\alpha}(\rho, x,
  \theta)(\rho D_{\rho})^{j}(x D_{x})^{k}D_{\theta}^{\alpha},
\end{equation*}
we then define the normal (or indicial) operator
\begin{equation*}
  N(A) = \sum _{\abs{\alpha} + j + k \leq m} a_{jk\alpha}(0, x,
  \theta) (\rho D_{\rho})^{j}(x D_{x})^{k} D_{\theta}^{\alpha}.
\end{equation*}
Recall that $N$ is a homomorphism and its conjugation by the Mellin
transform (described below in Section~\ref{sec:mellin-transform})
yields the reduced normal operator (also called the indicial family)
\begin{equation*}
  \widehat{N}(A) = \sum_{\abs{\alpha} + j + k \leq
    m}a_{jk\alpha}(0,x,\theta) \sigma^{j}(x D_{x})^{k}D_{\theta}^{\alpha}.
\end{equation*}
We then define the \textit{boundary spectrum of $A$} (in the case of
$X=M$; for $X=\mf$ we must change $\CI(\mf)$ to $\CI(\sphere^{2})$ below):
\begin{equation*}
  \specb (A) = \{ \sigma \in \complexes \mid \widehat{N}(A) \text{ is
    not invertible on }\CI(\mf)\}.
\end{equation*}
This set plays two important roles in our context: it is a key
ingredient in the identification of the domain of the essentially
self-adjoint Hamiltonian $\cB$ and, more centrally, it determines,
through its  relationship
with polyhomogeneity (described below), the exponents seen
in the asymptotic expansions of Theorem~\ref{thm:phg-full}.

Further associated to an operator $A \in \Psib^{m}(X)$ is its
microsupport
\begin{equation*}
  \WFb'(A) \subset \Sbstar M.
\end{equation*}
The microsupport is a closed subset and is the essential support of
the total symbol, just as in the usual pseudodifferential calculus.
It obeys the usual microlocality property
\begin{equation*}
  \WFb' (AB) \subset \WFb' (A) \cap \WFb'(B).
\end{equation*}
We also use the notion of $\bl$-ellipticity at a point, which is
equivalent (in the classical subalgebra) to the invertibility of the
principal symbol.  We postpone our discussion of the $\bl$-wavefront
set of distributions to later as we require a variant of it in our
estimates.

We also require a semiclassical version of the $\bl$-calculus on the
boundary hypersurface $\mf$.  We use $\Psibh^{*}(\mf)$ to denote this
space and refer the reader to previous work~\cite{BM19} and especially
to the excellent paper of Gannot--Wunsch~\cite[Section 3]{GW} for more
details.  Analogues of the constructions above exist for the
semiclassical calculus as well and are typically decorated with an $h$.

\subsection{Interaction with differential operators}
\label{sec:inter-with-diff}

The proofs of the propagation estimates near the singularity of the potential
rely on the understanding of the interaction between differential
operators and the $\bl$-calculus.  

\subsubsection{The homogeneous version}
\label{sec:homogeneous-version}

We let $L^{2}(M)$ denote the space of square integrable functions with
respect to a \textit{mixed $\bl$-metric density}, that is, 
a density with the metric-induced behavior near $\cf$ with the
$\bl$-induced behavior near $\mf$.  Concretely, if $\mu_{euc} =
dx^0dx^1dx^2dx^2$ denotes the Euclidean density, then setting 
$$
\mu = \rho^4 \mu_{euc},
$$
we set
\begin{equation}
L^2(M) = L^2(M, \mu) = L^2(\mathbb{R} \times \mathbb{R}^3 ; \rho^4
\mu_{euc}) = \rho^{-2} L^2(\mathbb{R} \times
\mathbb{R}^3),\label{eq:regular L2 space}
\end{equation}
where $L^2(\mathbb{R} \times \mathbb{R}^3)$ denotes the standard $L^2$
space.  Near $\mf\cap \cf$,
the density $\mu$ is given by
\begin{equation*}
  \frac{d\rho}{\rho}\, x^{2}\,dx\,d\theta .
\end{equation*}
As the (standard) Sobolev space $H^{1}$ is the domain of the various operators we
consider below, we use this space as the basis for the Sobolev spaces
on $M$.  We will use the space $\Hb^{1,0,0}(M)$
of distributions which are $H^1$ near the pole and $\Hb^1$ at $\mf$.
More precisely, $\Hb^{1,0,0}(M)$ is (by definition) equal to the
standard $\bl$-Sobolev space $\Hb^1(\overline{\mathbb{R}^4})$ where
$\overline{\mathbb{B}^4} = \overline{\mathbb{R}^4}$ is the radial
compactification and the $\bl$ refers to the behavior at $\partial
\overline{\mathbb{B}^4}$.  The identification of
$\Hb^1(\overline{\mathbb{R}^4})$ with a space of distributions on $M$
is realized by pullback via the map $M \longrightarrow
\overline{\mathbb{R}^4}$ which collapses the $\mathbb{S}^2_\theta$
factors over $\{x = 0\}$.

For
$\psi$ supported near $\mf \cap \cf$, the $\Hb^{1,0,0}$ norm can be
taken to be
\begin{equation*}
  \norm{\psi}_{\Hb^{1,0,0}}^{2} = \int \left( \abs{\rho \pd[\rho]
      \psi}^{2} + \abs{\pd[x]\psi}^{2} +
    \abs{\frac{1}{x}\grad_{\theta}\psi}^{2} + \abs{\psi}^2 \right)\, \frac{d\rho}{\rho}\,x^{2}\,dx\,d\theta.
\end{equation*}

For $m\geq 0$, we then let $\Hb^{1,m,0}(M)$ denote the Sobolev space of order $m$
relative to $\Hb^{1,0,0}(M)$, i.e., fixing $A\in \Psib^{m}(M)$
elliptic and invertible, we have $w\in \Hb^{1,m,0}(M)$ if $w\in \Hb^{1,0,0}(M)$ and
$Aw \in \Hb^{1,0,0}(M)$.  (This is independent of the choice of $A$.)
In particular, the $\Hb^{1,m,0}$ norm near $\mf\cap \cf$ is given by
\begin{equation*}
  \norm{\psi}_{\Hb^{1,m,0}}^{2}= \int \left( \abs{\rho \pd[\rho]
      A\psi}^{2} + \abs{\pd[x]A\psi}^{2} +
    \abs{\frac{1}{x}\grad_{\theta}A\psi}^{2} + \abs{A\psi}^{2}\right)
  \frac{d\rho}{\rho} x^{2}\,dx\,d\theta.
\end{equation*}
In the present manuscript we need only $m\geq 0$.  Finally, let
$\Hb^{1,m,l}(M) = \rho^{\ell}\Hb^{1,m,0}(M)$ denote the corresponding
weighted spaces.  A brief calculation in the region near the north
pole ($x < c < 1$) with $x=r/t$ and $\rho = 1/t$ shows that membership
in $H^{1}(\reals \times \reals^{3})$ is equivalent to membership in
$\Hb^{1,0,1}(M)\cap \rho^{2}L^{2}(M)$.

Note that away from the singularity, $\Hb^{1, m, l}$ regularity is equivalent to
$\bl$-regularity, meaning for $\chi(x)$ supported near $x = 0$,
$$
(1 - \chi(x)) \Hb^{1, m, l} \subset \Hb^{m + 1, l}(M).
$$
where $\Hb^{m', l}(M)$ is the ``standard'' $\bl-$Sobolev space.  To avoid
excessive notation below we avoid the notation $\Hb^{m', l}(M)$.

We now describe our microlocal characterization of regularity, the
wavefront set.  We define the notion only in the bulk $M$.  Although
it would be natural to define the analogous notions (both homogeneous
and semiclassical) on $\mf$, our propagation estimates are stated
explicitly in terms of operators in that region and so we omit those
definitions here.

On the bulk $M$, we use $\WFb^{1,m,\ell}$ to describe a failure to lie
in the space $\Hb^{1,m,\ell}$, while we use $\WFb^{0,m,\ell}$ to
describe the ``standard'' $\bl$-wavefront set with respect to the
underlying space $L^{2}(M)$ (with the metric volume form
$\mu$ above.)  In this direction we let $\Hb^{0, m, \ell}$ denote the
$\bl$-spaces with respect to $L^2(M)$, meaning $u \in \Hb^{0, m,
  \ell}$ if an only if for all $A \in \Psib^m(M)$, $\rho^{-\ell} Au
\in L^2(M)$.

\begin{definition}
  Suppose $u \in \Hb^{1,s,r}(M)$ for some $s,r$, and suppose $m,\ell
  \in \reals$.  We say $q\in \Sbstar M$ is \textit{not} in
  $\WFb^{1,m,\ell}(u)$ if there is some $A \in \rho^{-\ell}\Psib^{m}(M)$
  that is elliptic at $q$ and so that $Au \in \Hb^{1,m,\ell}$.

  Similarly, for $u \in \Hb^{0,s,r}$ and $m,\ell\in \reals$, we say
  that $q\in \Sbstar M$ is \textit{not} in $\WFb^{0,m,\ell}(u)$ if there is
  some $A\in \rho^{-\ell}\Psib^{m}(M)$ elliptic at $q$ with $Au \in
  L^{2}(M, \frac{d\rho}{\rho}\,x^{2}dx\,d\theta)$.
\end{definition}

Although the definition of $\WFb^{0,m,\ell}(u)$ is nearly the same as
that of the ``standard'' $\bl$-wavefront set, we keep the $0$ in the
notation as a reminder that the $L^{2}$ space is equipped with the
rescaled metric density.

Throughout the arguments in Section~\ref{sec:propagation-bulk} we rely
on the Hardy inequality, which allows us to estimate the $0$-th order
terms near the singularity of the potential.
\begin{lemma}
  \label{lem:bulk-hardy}
  If $u \in H^{1}(\reals^{n})$ with $n \geq 3$, then
  \begin{equation*}
    \frac{(n-2)^{2}}{4}\int \frac{\abs{u}^{2}}{r^{2}}\,dz \leq \int
    \abs{\grad u}^{2}\,dz.
  \end{equation*}
\end{lemma}
We use this inequality repeatedly in $\reals^{3}$ and its analogues
for distributions defined on $\mf$ near $x=0$ and for distributions
defined on $M$ near $\mf$, where it reads in both cases
\begin{equation*}
  \norm{x^{-1}u}\leq 2 \norm{\pd[x]u}.
\end{equation*}

The following lemma is essentially from~\cite[Lemma 8.6]{MVW08} (and similar
to~\cite[Lemma 2.8]{Vasy08}) will be used in the commutator
computations below.
\begin{lemma}
  \label{lem:comms-with-b-ops}
  If $A \in \Psib^{m}$ with principal symbol $a$, then
  \begin{equation*}
    \left[\frac{1}{x}, A\right] = C_{L}\frac{1}{x} = \frac{1}{x}C_{R},
  \end{equation*}
  where $C_{\bullet}\in \Psib^{m-1}$ with
  \begin{equation*}
    \sigmab(C_{\bullet}) = \frac{1}{i} \pd[\xi]a.
  \end{equation*}
  Moreover,
  \begin{equation*}
    \left[ D_{x}, A\right] = B + C_{L}D_{x},
  \end{equation*}
  with $C_{L}$ as above and 
  \begin{equation*}
    B \in \Psib^{m}, \quad \sigmab(B) = \frac{1}{i} \pd[\xi]a.
  \end{equation*}
\end{lemma}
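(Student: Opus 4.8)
This is a standard symbol-calculus computation for the $\bl$-calculus, so I would prove it by reducing the two commutators to ordinary pseudodifferential facts and tracking the principal symbols. The key observation is that the function $1/x$ and the vector field $D_x$ are, in the $\bl$-calculus near $\mf \cap \cf$, associated to the conjugate pair of fiber variables $x$ and $\xi$ in the canonical one-form $\sigma\,d\rho/\rho + \xi\,dx/x + \eta\cdot d\theta$: indeed $x D_x$ is the $\bl$-quantization of $\xi$, so $D_x = x^{-1}(xD_x)$ has ``$\bl$-symbol'' $\xi/x$, and $1/x$ is a multiplication operator whose behavior under commutators with $\Psib^m$ is governed by $\partial_\xi$ of the symbol, exactly as multiplication by a spatial variable is governed by $\partial_\xi$ in the ordinary calculus.

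First I would treat $[1/x, A]$. Write $A = a(\rho, x, \theta, \rho D_\rho, x D_x, D_\theta)$ (a quantization of a Kohn--Nirenberg symbol as in Section~\ref{sec:bl-calculus}). Since multiplication by $1/x$ is itself a $\bl$-operator (of order $0$, with symbol independent of the fiber variables), the composition $[1/x, A]$ lies in $\Psib^m$; the leading part of its symbol comes from the single application of $\partial_\xi$ paired with $\partial_x$ of $1/x$ — but $\partial_x(1/x) = -1/x^2$, which together with the overall factor shows that the commutator is of the form $C_L \cdot \frac{1}{x}$ with $C_L \in \Psib^{m-1}$ and $\sigmab(C_L) = \frac{1}{i}\partial_\xi a$. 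Concretely I would conjugate: $[1/x, A] = \frac{1}{x}(x A x^{-1} - A)$, note that $x A x^{-1} - A \in \Psib^{m-1}$ by the symbol expansion (conjugation by $x$ shifts $xD_x \mapsto xD_x$ but acts on the symbol via a $\partial_\xi$), and read off the principal symbol; symmetrically, $[1/x, A] = (A^{-1}xAx^{-1} \cdots)$ — more simply, writing it as $\frac{1}{x} C_R$ with $C_R \in \Psib^{m-1}$ follows from doing the conjugation on the other side, and the two principal symbols agree because $C_L$ and $C_R$ differ by a commutator, hence by a lower-order term. The identity $\sigmab(C_\bullet) = \frac{1}{i}\partial_\xi a$ is then just the standard first-order term in the composition formula.

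Next, for $[D_x, A]$: since $D_x = \frac{1}{x}\cdot (xD_x)$ and $xD_x \in \Diffb^1$, this commutator lies a priori in $x^{-1}\Psib^m = \Psib^m \cdot x^{-1} + \ldots$; but the $1/x$ singularity cancels. One clean way: write $[D_x, A] = [\frac{1}{x}(xD_x), A] = \frac{1}{x}[xD_x, A] + [\frac{1}{x}, A](xD_x)$. The first term: $[xD_x, A] \in \Psib^m$ (it is a commutator with a $\bl$-differential operator, hence stays in the calculus) with principal symbol $\frac{1}{i}\{\xi, a\}$ computed in the $\bl$-variables, and crucially $\frac{1}{i}\{\xi, a\} = \frac{1}{i}(\partial_x a \cdot \partial_\xi \xi - \ldots)$; dividing by $x$ and using Lemma's first part to move the $1/x$ past gives a contribution to $B$. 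The second term: by the first part of the lemma, $[\frac{1}{x}, A] = C_L \frac{1}{x}$, so $[\frac{1}{x}, A](xD_x) = C_L \frac{1}{x} x D_x = C_L D_x$, which produces exactly the $C_L D_x$ term in the statement with the same $C_L$. Collecting, $[D_x, A] = B + C_L D_x$ with $B \in \Psib^m$ and $\sigmab(B) = \frac{1}{i}\partial_\xi a$, where the form of $\sigmab(B)$ is forced by matching the $\xi$-dependence in the composition formula.

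I expect the main obstacle to be purely bookkeeping: making sure the apparent $x^{-1}$ blow-ups genuinely cancel to leave operators in $\Psib^m$ (not $x^{-1}\Psib^m$), and that the principal symbols of $C_L$ and $C_R$ (defined by left vs.\ right factorization) coincide rather than merely agreeing modulo the filtration — this follows because their difference is an explicit commutator of order $m-2$, so it does not affect $\sigmab(C_\bullet) = \frac{1}{i}\partial_\xi a$. Since this is a known result (the excerpt cites \cite[Lemma 8.6]{MVW08} and \cite[Lemma 2.8]{Vasy08}), I would in fact keep the argument brief, remarking that it is the transcription to the $\bl$-setting of the elementary identities $[x_j, A] = -\frac{1}{i}(\partial_{\xi_j}A)$ and $[D_{x_j}, A] = \frac{1}{i}(\partial_{x_j}A)$ for the standard calculus, with the only new point being the extra factor of $1/x$ tracking the conic structure at $\cf$.
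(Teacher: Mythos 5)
Your strategy matches the paper's: the paper's proof writes $[x^{-1},A]=x^{-1}[A,x]\,x^{-1}$, observes that $[A,x]\in\Psib^{m-1}$ since $x\in\Psib^{0}$, and evaluates $\sigma_{\bl,m-1}(i[A,x])=\{a,x\}=x\,\partial_\xi a$ via the Hamilton vector field in the coordinates dual to $\sigma\,d\rho/\rho+\xi\,dx/x+\eta\cdot d\theta$; dividing the explicit $x$ out of either side then gives $C_L,C_R\in\Psib^{m-1}$. Your conjugation argument is the same calculation re-parenthesized, and your Leibniz split of $[D_x,A]$ is exactly what the paper invokes by citation. However, there are three slips. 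First, a sign: $\frac{1}{x}(xAx^{-1}-A)=Ax^{-1}-x^{-1}A=-[\frac{1}{x},A]$; the correct factorizations are $[\frac{1}{x},A]=(x^{-1}Ax-A)\,x^{-1}=x^{-1}(A-xAx^{-1})$. Second, in the b-calculus the Poisson bracket is $\{\xi,a\}=x\partial_x a$, with an explicit factor of $x$, not the Euclidean expression you wrote; that $x$ is precisely what guarantees $\frac{1}{x}[xD_x,A]$ lands in $\Psib^m$ rather than $x^{-1}\Psib^m$, since the normal operator of $[xD_x,A]$ at $\cf$ vanishes (the indicial operator of $A$ there commutes with the dilation generator $xD_x$).

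Third, and most substantively, your endgame is internally inconsistent. Carrying the b-bracket through, $B=\frac{1}{x}[xD_x,A]$ has $\sigmab(B)=\frac{1}{x}\cdot\frac{1}{i}\,x\partial_x a=\frac{1}{i}\partial_x a$ — exactly what the Euclidean prototype $[D_{x_j},A]=\frac{1}{i}\partial_{x_j}A$, which you yourself quote at the end, predicts. Yet you then assert $\sigmab(B)=\frac{1}{i}\partial_\xi a$ ``to match the $\xi$-dependence,'' contradicting your own derivation. In fact the statement as typeset carries a typo: compare the semiclassical Lemma~\ref{lem:sc-easy-commutants}, which gives $\sigmabh(B)=\frac{1}{i}\partial_x a$, and note that $\partial_\xi a$ is homogeneous of degree $m-1$ in the fiber variables and so cannot be the principal symbol of an operator in $\Psib^{m}$. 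You should follow your own computation to $\sigmab(B)=\frac{1}{i}\partial_x a$ rather than reverse-engineer the printed formula.
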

\begin{proof}
  The proof uses standard tools from the $\bl$-calculus, and therefore
  we sketch only the main steps.  We discuss only the proof of the
  first statement, i.e.\ for the commutators with $1/x$, as the
  statement for $D_x$ follows exactly as in the references given.  

  Writing $[x^{-1}, A] = x^{-1} [A, x] x^{-1}$ and using the fact that
  $x \in \Psib^0(M)$ we obtain $[A, x] \in \Psib^{m-1}(M)$.  Using
  that $\sigma_{\bl, m-1}(i [A, x])= \{ A, x \}$, the Poisson bracket,
  and that in the coordinates $(\rho, x, \theta, \sigma,
  \xi, \eta)$ above the Hamilton vector field of a symbol $a$ is
  $$
(\pd[\sigma] a) \rho \pd[\rho] - (\rho \pd[\rho] a) \pd[\sigma]  +
(\pd[\xi] a) x \pd[x] - (x \pd[x] a) \pd[\xi] + \sum_k
(\pd[\eta_k] a) \pd[\theta_k] - (\pd[\theta_k] a ) \pd[\eta_k]
$$
we see that $\sigma_{\bl, m-1}(i [A, x]) = x \pd[\xi]a$.  It is now a
standard fact from the $\bl$-calculus that both $C_L = x^{-1}[A, x]$
and $C_R = [A, x] x^{-1}$ lie in $\Psib^{m-1}$ and have
$\sigma_{\bl, m-1}(C_\bullet) = x^{-1} \sigma_{\bl, m-1} ([A, x])$.  \end{proof}

It is also convenient to know we can microlocalize our estimates.  The
following lemma is essentially in previous work of the first author
with Wunsch~\cite[Lemma 9, Lemma 12]{BW20}.
\begin{lemma}
  \label{lem:elliptic-reg-bulk}
  If $A,G\in \Psib^{s}$ with $\WFb'(A)\subset \liptic G$, then for all
  $u$ with
  \begin{equation*}
    \WFb^{1,s,\ell}u \cap \WFb' G=\emptyset,
  \end{equation*}
  we may bound
  \begin{equation}\label{eq:elliptic bound bulk}
    \norm{Au}_{\Hb^{1,0,\ell}} \leq C\left( \norm{Gu}_{\Hb^{1,0,\ell}}
      + \norm{u}_{\Hb^{1,0,\ell}}\right).
  \end{equation}

      In particular, if $A \in \Psib^0$ then
      \begin{equation}
    \norm{Au}_{\Hb^{1,0,\ell}} \leq
    C\norm{u}_{\Hb^{1,0,\ell}}.\label{eq:b ops bounded}
      \end{equation}
\end{lemma}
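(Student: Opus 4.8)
The plan is to run the standard microlocal b-elliptic parametrix argument; the only new point relative to the references~\cite[Lemma 9, Lemma 12]{BW20} is the behaviour of the b-calculus against the $H^{1}$-type structure that $\Hb^{1,m,\ell}(M)$ carries near the conic face $\cf$. In fact, although~\eqref{eq:b ops bounded} is phrased above as a consequence of~\eqref{eq:elliptic bound bulk}, the boundedness of $\Psib^{0}(M)$ on $\Hb^{1,0,\ell}(M)$ is the substance of the proof, and once it is established~\eqref{eq:elliptic bound bulk} is immediate: since $\WFb'(A)\subset\liptic G$, the usual elliptic construction in the b-calculus yields $B\in\Psib^{0}(M)$ (its order is $s-s=0$) and a residual $E\in\Psib^{-\infty}(M)\subset\Psib^{0}(M)$ with $A=BG+E$, and then $Au=B(Gu)+Eu$ together with~\eqref{eq:b ops bounded} applied to $B$ and to $E$ gives $\norm{Au}_{\Hb^{1,0,\ell}}\le C\norm{Gu}_{\Hb^{1,0,\ell}}+C\norm{u}_{\Hb^{1,0,\ell}}$.

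I would therefore concentrate on~\eqref{eq:b ops bounded}. Away from $\cf$, $\Hb^{1,0,\ell}$ coincides with an ordinary weighted b-Sobolev space, on which operators in $\Psib^{0}$ are bounded by the standard calculus; near $\cf$ but away from $\mf$ the claim is classical $H^{1}$-boundedness of pseudodifferential operators; so it suffices to work near the corner $\mf\cap\cf$, where the $\Hb^{1,0,\ell}$ norm of $v$ is the $\rho^{\ell}L^{2}(M)$ norm of the four quantities $v$, $\rho\pd[\rho]v$, $\pd[x]v$, and $x^{-1}\grad_{\theta}v$. Boundedness of $A\in\Psib^{0}$ on $\rho^{\ell}L^{2}(M)$ is classical. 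Since $\rho\pd[\rho]$ and the components of $\grad_{\theta}$ are b-vector fields, $\rho\pd[\rho]A=A(\rho\pd[\rho])+[\rho\pd[\rho],A]$ with $[\rho\pd[\rho],A]\in\Psib^{0}$, and similarly for $\grad_{\theta}$; these contributions are thus controlled. The field $\pd[x]$ is not a b-vector field, so for it one uses Lemma~\ref{lem:comms-with-b-ops}: $[\pd[x],A]=B+C_{L}\pd[x]$ with $B\in\Psib^{0}$ and $C_{L}\in\Psib^{-1}$, whence $\pd[x](Av)=(A+C_{L})\pd[x]v+Bv$ is a sum of order $\le 0$ b-operators applied to $v$ and $\pd[x]v$. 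The delicate term is $x^{-1}\grad_{\theta}(Av)$: commuting $\grad_{\theta}$ past $A$ (error in $\Psib^{0}$) and then commuting $x^{-1}$ past the resulting operators via the identity $[x^{-1},A']=C_{L}x^{-1}$ of Lemma~\ref{lem:comms-with-b-ops}, one is reduced to order $\le 0$ b-operators applied to $v$, $\grad_{\theta}v$, $x^{-1}\grad_{\theta}v$, plus a single term of the shape $(\text{order}\le 0)\circ x^{-1}v$. This last term is the only point at which the b-calculus alone does not suffice; it is absorbed by the Hardy inequality, Lemma~\ref{lem:bulk-hardy} (in the form valid near $\mf\cap\cf$), which gives $\norm{x^{-1}v}\le 2\norm{\pd[x]v}\le C\norm{v}_{\Hb^{1,0,\ell}}$, the weight $\rho^{\ell}$ being inert here as $\rho$ and $x$ are independent near the corner. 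Assembling these estimates proves~\eqref{eq:b ops bounded}.

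The step I expect to be the real (mild) obstacle is exactly this corner bookkeeping: one must keep track of the fact that each commutator of a b-operator with $\pd[x]$ or $x^{-1}$ returns a b-operator of one lower order, possibly multiplied on the right by $\pd[x]$ or $x^{-1}$, so that after finitely many commutations every term is an order $\le 0$ b-operator acting on one of the four pieces of $\norm{v}_{\Hb^{1,0,\ell}}$ --- with the sole exception of an $x^{-1}v$ term, which the Hardy inequality removes. The remainder is the standard b-elliptic parametrix argument, carried out as in~\cite{BW20}.
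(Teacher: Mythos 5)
Your proposal is correct and follows the same route as the paper: establish the boundedness statement~\eqref{eq:b ops bounded} first, via the commutator formulas of Lemma~\ref{lem:comms-with-b-ops} together with the Hardy inequality for the lone $x^{-1}v$ term, and then obtain~\eqref{eq:elliptic bound bulk} from the standard small-calculus elliptic parametrix $A = BG + E$. The paper's own proof is just a remark deferring to~\cite{BW20} and naming these two ingredients; your bookkeeping of the four pieces of the $\Hb^{1,0,\ell}$ norm near $\mf\cap\cf$ is a faithful expansion of that sketch.
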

The proof is identical to that in the referenced paper.  The
boundedness statement in \eqref{eq:b ops bounded} follows from the
commutator formulas in Lemma
\ref{lem:comms-with-b-ops}.  Once boundedness is established, the small calculus elliptic parametrix
used to prove \eqref{eq:elliptic bound bulk} is also valid in on $M$.
(Note that there is no improvement in the weight $\ell$.)

\subsubsection{The semiclassical version}
\label{sec:semicl-vers}

As $\mf$ blows down to a sphere $\sphere^{3}$, we can appeal to the
standard notion of differential operators on $\mf$.   We denote by
$\Hh^{1}(\mf)$ the lift of the semiclassical Sobolev space
$\Hh^{1}(\sphere^{3})$ to $\mf$ via the blow-down map.  For $u \in
\Hh^1(\mf)$, in particular $u \in L^2(\mathbb{S}^3)$, and the
$\Hh^1(\mf)$ norm controls the $L^2$ norms of $h\pd[x] u$, and
$\frac{h}{x}\pd[\theta] u$.

The classical analogues of the following lemmas can be found in prior work~\cite{BW20}.
\begin{lemma}
  \label{lem:bdedness-semicl}
  If $A \in \Psibh^{0}$ then
  \begin{equation*}
    \norm{Av}_{\Hh^{\pm 1}} \leq C\norm{v}_{\Hh^{\pm 1}}.
  \end{equation*}
\end{lemma}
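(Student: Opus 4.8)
The plan is to reduce the statement to the two endpoint cases $\Hh^{0}(\mf)=L^{2}(\mf)$ and $\Hh^{1}(\mf)$, and to deduce the $\Hh^{-1}$ bound by duality. Since the semiclassical $\bl$-calculus on $\mf$ is closed under formal adjoints relative to the $L^{2}(\mf)$ inner product, $A^{*}\in\Psibh^{0}$, and $\Hh^{-1}(\mf)$ is the $L^{2}(\mf)$-dual of $\Hh^{1}(\mf)$, so boundedness of $A$ on $\Hh^{-1}$ is equivalent to that of $A^{*}$ on $\Hh^{1}$. Thus it suffices to treat $\Hh^{0}$ and $\Hh^{1}$.

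For $\Hh^{0}=L^{2}(\mf)$ I would invoke the semiclassical Calder\'on--Vaillancourt estimate in the $\bl$-calculus, uniform in $h$: for a large constant $C$ the operator $C-A^{*}A\in\Psibh^{0}$ is $\bl$-elliptic, and the semiclassical small-calculus parametrix gives the $L^{2}$ bound, exactly as in~\cite[Section 3]{GW} and in the homogeneous case~\cite{BW20}. Away from the conic face $\cf$, $\Hh^{1}(\mf)$ is just the ordinary semiclassical Sobolev space on the compact manifold $\sphere^{3}$, on which order-zero semiclassical pseudodifferential operators are classically bounded; inserting a partition of unity (using that a semiclassical operator has $O(h^{\infty})$ Schwartz kernel away from the diagonal) reduces matters to estimating $Av$ for $v$ supported near $\mf\cap\cf$.

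There I would use the explicit characterization of the norm: for $v$ supported near the corner, $\norm{v}_{\Hh^{1}}^{2}$ is comparable to $\norm{v}_{L^{2}}^{2}+\norm{h\pd[x]v}_{L^{2}}^{2}+\norm{\tfrac{h}{x}\pd[\theta]v}_{L^{2}}^{2}$, so it is enough to bound $\norm{Av}_{L^{2}}$, $\norm{h\pd[x](Av)}_{L^{2}}$, and $\norm{\tfrac{h}{x}\pd[\theta](Av)}_{L^{2}}$ by $C\norm{v}_{\Hh^{1}}$; the first is the $\Hh^{0}$ bound. For the second, the semiclassical form of Lemma~\ref{lem:comms-with-b-ops} gives $[h\pd[x],A]=hB+C_{L}(h\pd[x])$ with $B,C_{L}\in\Psibh^{0}$, so $h\pd[x](Av)=(A+C_{L})(h\pd[x]v)+hBv$, which the $\Hh^{0}$ bound controls by $\norm{h\pd[x]v}_{L^{2}}+h\norm{v}_{L^{2}}$. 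For the third term, since $\pd[\theta]$ is itself a $\bl$-vector field on $\mf$ one has $[h\pd[\theta],A]=h[\pd[\theta],A]$ with $[\pd[\theta],A]\in\Psibh^{0}$; commuting the $\tfrac{1}{x}$ past the order-zero operators via Lemma~\ref{lem:comms-with-b-ops} rewrites $\tfrac{h}{x}\pd[\theta](Av)$ as $(A+C_{L}')(\tfrac{h}{x}\pd[\theta]v)+\big([\pd[\theta],A]+C_{L}''\big)(\tfrac{h}{x}v)$ with $C_{L}',C_{L}''\in\Psibh^{0}$, and the $\Hh^{0}$ bound together with the rescaled Hardy inequality $\norm{\tfrac{h}{x}w}\le 2\norm{h\pd[x]w}$ (Lemma~\ref{lem:bulk-hardy}) bounds this by $\norm{\tfrac{h}{x}\pd[\theta]v}_{L^{2}}+\norm{h\pd[x]v}_{L^{2}}\le\norm{v}_{\Hh^{1}}$. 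Summing the three estimates gives the claim.

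The main obstacle will be the bookkeeping at the conic face: the $\Hh^{1}(\mf)$ norm is controlled by $\tfrac{h}{x}\pd[\theta]$ rather than by a genuine semiclassical $\bl$-vector field, so I must check that every leftover $\tfrac{1}{x}$ produced by the commutators comes multiplied by a power of $h$, which is exactly what makes it absorbable by the Hardy inequality. Once the semiclassical versions of Lemmas~\ref{lem:comms-with-b-ops} and~\ref{lem:bulk-hardy} are in hand---their proofs being word-for-word the ones already given---the rest is the same commutator computation that establishes~\eqref{eq:b ops bounded} in the homogeneous setting, and the remaining ingredients, namely the uniform $L^{2}$ bound and the estimates away from $\cf$, are routine semiclassical microlocal analysis on $\sphere^{3}$.
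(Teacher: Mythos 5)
Your proposal is correct and executes essentially the strategy the paper implicitly has in mind: the paper cites the classical analogues in~\cite{BW20} and, for the homogeneous bound~\eqref{eq:b ops bounded}, attributes the argument to the commutator formulas of Lemma~\ref{lem:comms-with-b-ops}; your proof carries out that same commutator computation in the semiclassical $\bl$-calculus near $\mf\cap\cf$, using Lemma~\ref{lem:sc-easy-commutants} and the Hardy inequality to absorb the leftover $\tfrac{h}{x}$ weights, with the $\Hh^{-1}$ case obtained by the standard $L^{2}$-duality reduction to $A^{*}\in\Psibh^{0}$ acting on $\Hh^{1}$. The only cosmetic slip is writing $[h\pd[x],A]=hB+C_{L}(h\pd[x])$ where Lemma~\ref{lem:sc-easy-commutants} in fact gives $[hD_{x},A]=hB+hC(hD_{x})$ with $C\in\Psibh^{-1}$; the extra $h$ on the last term is harmless (and only helps), so the estimate goes through unchanged.
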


\begin{lemma}
  \label{lem:semiclassical-elliptic-reg-sortof}
  Suppose $A, G\in \Psibh^{0}$ are supported near $x=0$ and $\WFbh'(A)
  \subset \ellbh (G)$.  For any $k,N$, there is a constant $C$ so that
  \begin{equation*}
    \norm{Au}_{\Hh^{\pm 1}} \leq C \norm{Gu}_{\Hh^{\pm 1}} +
    C h^{k}\norm{\chi u}_{\Hh^{\pm 1}} + Ch^{k}\norm{(1-\chi)u}_{\Hh^{-N}},
  \end{equation*}
  where $\chi \in C^{\infty}$ is identically $1$ on the support of $G$.
\end{lemma}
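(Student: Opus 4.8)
The plan is to run the standard semiclassical elliptic parametrix construction inside the b-calculus $\Psibh^{*}(\mf)$, following the classical ($h$-independent) analogue in~\cite{BW20} (and the general semiclassical b-calculus of~\cite[Section 3]{GW}), and then to turn the resulting microlocal invertibility into the quantitative estimate by combining it with the boundedness statement of Lemma~\ref{lem:bdedness-semicl}. Since $A$ and $G$ are supported near $x = 0$, all of the symbolic bookkeeping takes place in a single b-coordinate chart near the corner $\mf \cap \cf$, with the semiclassical parameter as an inert extra variable, so no patching between charts is required.

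First I would construct the parametrix. As $\WFbh'(A) \subset \ellbh(G)$ and $\ellbh(G)$ is open, the principal symbol of $G$ is elliptic on a neighborhood of $\WFbh'(A)$, so on that neighborhood one may divide by it and, iterating the construction finitely many times in powers of $h$ and of the order, produce an operator $B \in \Psibh^{0}(\mf)$, supported near $x = 0$ with $\WFbh'(B) \subset \ellbh(G)$, such that
\begin{equation*}
  A = B G + R, \qquad R \in h^{k}\Psibh^{-N-1}(\mf).
\end{equation*}
Here one only uses that $\Psibh^{*}(\mf)$ is closed under composition with the expected behavior of the error terms (one order and one power of $h$ gained at each step), which is exactly the content of the semiclassical b-calculus; carrying out enough steps makes the remainder as smoothing and as small in $h$ as we please.

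Then I would extract the estimate. Applying both sides to $u$ gives $Au = BGu + Ru$, so by the boundedness of order-zero semiclassical b-operators on $\Hh^{\pm 1}$ (Lemma~\ref{lem:bdedness-semicl}) we get $\norm{BGu}_{\Hh^{\pm1}} \leq C\norm{Gu}_{\Hh^{\pm1}}$. For the remainder, write $u = \chi u + (1-\chi)u$; once the three terms on the right-hand side are finite this is a decomposition in $\Hh^{-N}$, so $Ru$ is well defined, and since $R \in h^{k}\Psibh^{-N-1}$ maps $\Hh^{-N} \to \Hh^{1}$ and $\Hh^{-1} \to \Hh^{-1}$ with operator norm $O(h^{k})$, we obtain $\norm{R\chi u}_{\Hh^{\pm 1}} \leq Ch^{k}\norm{\chi u}_{\Hh^{\pm1}}$ (using also $\norm{\chi u}_{\Hh^{-1}} \le \norm{\chi u}_{\Hh^{1}}$ in the $+1$ case) and $\norm{R(1-\chi)u}_{\Hh^{\pm 1}} \leq Ch^{k}\norm{(1-\chi)u}_{\Hh^{-N}}$. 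Summing the three bounds gives the lemma. There is no genuinely hard step here; the only points demanding care are that the b-parametrix construction be uniform in $h$ and compatible with the degeneration of the calculus at $x = 0$ — which is guaranteed by the composition and mapping properties of $\Psibh^{*}(\mf)$ — and that the remainder be pushed to order $-N-1$ rather than $-N$ so that it lands in $\Hh^{1}$, not merely $\Hh^{0}$. Up to this bookkeeping the argument is identical to the classical statement in~\cite{BW20}.
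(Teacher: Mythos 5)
Your proof follows essentially the same route as the paper's: a semiclassical elliptic parametrix $B$ for $G$ over $\WFbh'(A)$, boundedness of order-zero operators on $\Hh^{\pm 1}$ (Lemma~\ref{lem:bdedness-semicl}), and a $\chi u$, $(1-\chi)u$ split to handle the remainder. The only cosmetic difference is that the paper writes $\operatorname{Id} = BG + h^k R$ on $\WFbh'(A)$ with $R \in \Psibh^{-\infty}$ and then multiplies by $A$, whereas you write $A = BG + R$ directly with $R \in h^k \Psibh^{-N-1}$; these are interchangeable.

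One small point worth tightening: the claim that $R \in h^{k}\Psibh^{-N-1}$ maps $\Hh^{-N}\to\Hh^{1}$ is not immediate in the b-calculus, since $\Hh^{\pm 1}$ is the \emph{ordinary} semiclassical Sobolev space on $\sphere^3$ and a $\Psibh$ operator of negative order gains regularity only in the b-scale uniformly down to $x=0$. The reason this is harmless here is that $(1-\chi)u$ is supported away from $x=0$, where b- and ordinary regularity agree, and the residual $h^{\infty}$ off-diagonal decay keeps the image away from the boundary in the relevant sense — which is exactly what the paper invokes when it says that away from $x=0$ one uses that $R$ has order $-\infty$. Taking $R\in h^k\Psibh^{-\infty}$, as the paper does, makes this cleaner because the residual calculus has the smoothing/mapping properties you want without any caveat about $x=0$; you lose nothing by iterating the parametrix construction all the way.
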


\begin{proof}
  The result follows from a standard elliptic parametrix construction:
  we can find $B\in \Psibh^{0}$ and $R \in \Psibh^{-\infty}$ so that
  on the microsupport of $A$, we have
  \begin{equation*}
     \operatorname{Id} = BG + h^{k}R.
  \end{equation*}
  and so
  \begin{equation*}
    \norm{Au}_{\Hh^{\pm 1}} \leq \norm{ABGu}_{\Hh^{\pm 1}} +
    h^{k}\norm{ARu}_{\Hh^{\pm 1}}.
  \end{equation*}
  Lemma~\ref{lem:bdedness-semicl} bounds the first term; to bound the
  second term we insert cutoff functions.  Near $x=0$ the term is
  bounded by $\norm{\chi u}_{\Hh^{\pm 1}}$ while away from $x=0$, we
  exploit that $R$ is an operator of order $-\infty$.
\end{proof}

As in the bulk setting, we repeatedly use the Hardy inequality on
$\mf$, where it reads 
\begin{equation*}
  \norm{h x^{-1}u} \leq 2\norm{h\pd[x]u}.
\end{equation*}
Clearly the $h$ factors out from both sides, but this phrasing of the
inequality emphasizes that $h/x$ will be estimated as a semiclassical
operator of order $1$.

We also need to understand the commutators of $1/x$ and $hD_{x}$ with
semiclassical $\bl$-pseudodifferential operators.
\begin{lemma}
  \label{lem:sc-easy-commutants}
  If $A \in \Psibh^{0}$, then
  \begin{equation*}
    \left[ \frac{1}{x}, A\right] = \frac{h}{x}C_{R} = C_{L}\frac{h}{x},
  \end{equation*}
  with
  \begin{equation*}
    C_{L}, C_{R} \in \Psibh^{-1}, \quad \sigmabh (C_{L}) = \sigmabh
    (C_{R}) = \frac{1}{i}\pd[\xi]\sigmabh(A) .
  \end{equation*}

  Moreover,
  \begin{equation*}
    \frac{1}{h}\left[ hD_{x}, A\right] = B + C hD_{x},
  \end{equation*}
  with
  \begin{align*}
    &B\in \Psibh^{0},   &C\in \Psibh^{-1}, \\
    &\sigmabh(B) = \frac{1}{i}\pd[x]a, & \sigmabh(C) = \frac{1}{i}\pd[\xi]a.
  \end{align*}
\end{lemma}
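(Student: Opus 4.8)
The plan is to treat Lemma~\ref{lem:sc-easy-commutants} as the semiclassical mirror of Lemma~\ref{lem:comms-with-b-ops} and run that argument essentially verbatim in $\Psibh^{*}(\mf)$, keeping track of the single extra power of $h$ that each semiclassical commutator produces. For the commutators with $1/x$, I would begin from the algebraic identity
\begin{equation*}
  \left[\frac{1}{x}, A\right] = x^{-1}[A,x]x^{-1},
\end{equation*}
which is valid since $x\in\Psibh^{0}$ and $1/x$ makes sense away from $\{x=0\}$. Because $x\in\Psibh^{0}$, the semiclassical calculus gives $\frac1h[A,x]\in\Psibh^{-1}$ with semiclassical principal symbol $\frac1i\{a,x\}$, where $a=\sigmabh(A)$. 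Using the expression for the $\bl$-Hamilton vector field recalled in the proof of Lemma~\ref{lem:comms-with-b-ops}, one gets $\{a,x\}=(\pd[\xi]a)\,x$, so the symbol of $\frac1h[A,x]$ vanishes at $\{x=0\}$. The standard division fact of the $\bl$-calculus then shows that $C_{L}:=\frac1h\,x^{-1}[A,x]$ and $C_{R}:=\frac1h\,[A,x]x^{-1}$ are genuine elements of $\Psibh^{-1}$ with $\sigmabh(C_{L})=\sigmabh(C_{R})=\frac1i\pd[\xi]a$. Dividing the displayed identity by $h$ and using that $h$ is scalar gives $[\frac{1}{x},A]=C_{L}\frac{h}{x}=\frac{h}{x}C_{R}$, as claimed.

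For the commutator with $hD_{x}$ I would use that $h$ is a scalar constant, so $\frac1h[hD_{x},A]=[D_{x},A]$, and then follow the computations of~\cite[Lemma 8.6]{MVW08} and~\cite[Lemma 2.8]{Vasy08} --- the references invoked for the classical version --- but carried out in the semiclassical $\bl$-calculus. Since $D_{x}$ is a (weighted) differential operator, $[D_{x},A]$ is again a semiclassical $\bl$-pseudodifferential operator; writing its symbol in the $\bl$-fiber coordinates --- equivalently, expressing the flat momentum dual to $dx$ as $\xi/x$, which is the semiclassical symbol of $hD_{x}$ --- separates $[D_{x},A]$ into a piece containing no factor of $hD_{x}$, namely $B\in\Psibh^{0}$ with $\sigmabh(B)=\frac1i\pd[x]a$, and a piece of the form $C\,hD_{x}$ with $C\in\Psibh^{-1}$ and $\sigmabh(C)=\frac1i\pd[\xi]a$. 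This is exactly the split appearing in the cited classical lemmas, only with semiclassical symbols, and the $O(h)$ errors from the symbol expansion are absorbed into $B$.

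The one step that requires genuine attention --- the closest thing to an obstacle --- is the division step: verifying that an element of $\Psibh^{-1}$ whose full symbol is divisible by the boundary defining function $x$ (respectively, by $\xi$) can be factored \emph{exactly} as $x$ (respectively, $hD_{x}$) composed on the appropriate side with an element of $\Psibh^{-1}$, not merely modulo an error of one lower order in $h$ or in the differential order. As in the homogeneous $\bl$-calculus this follows by inductively peeling off the terms of the symbol expansion, and the semiclassical argument is identical; since all operators in play are compactly microsupported (and, in the intended application, supported near $x=0$), there are no difficulties at fiber infinity. The remaining points --- the precise constants in the symbol formulas and the $\Hh^{\pm1}$-boundedness of $C_{L},C_{R},B,C$ via Lemma~\ref{lem:bdedness-semicl} --- are routine bookkeeping.
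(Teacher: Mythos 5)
The paper states Lemma~\ref{lem:sc-easy-commutants} without proof, since it is the semiclassical analogue of Lemma~\ref{lem:comms-with-b-ops}, whose proof is given. Your argument reproduces that proof in the semiclassical setting exactly as intended: the identity $[x^{-1},A]=x^{-1}[A,x]x^{-1}$, the computation $\{a,x\}=(\partial_\xi a)\,x$ via the $\bl$-Hamilton vector field, and the $\bl$-calculus division step are precisely the ingredients of the paper's classical proof, with the single extra factor of $h$ tracked correctly. This is the same approach, not a different one.
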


\subsection{The Mellin transform and polyhomogeneity}
\label{sec:mellin-transform}

Just as the Fourier transform is a key element in the study of
translation-invariant operators, we consider here the Mellin
transform, its analogue for dilation-invariant operators.  For our
purposes, we need only the Mellin transform associated to the single
boundary hypersurface $\mf$.  Suppose $u$ is a distribution on $M$
localized near $\mf$ (which is defined by the function $\rho$).  The
Mellin transform of $u$ associated to $\mf$ is defined by
\begin{equation*}
  \us\equiv \mathcal{M}_{\mf}u (\sigma, y) =
  \int_{0}^{\infty}\chi (\rho) u(\rho, y)\rho^{-i\sigma}\frac{d\rho}{\rho},
\end{equation*}
where $y$ denote the remaining coordinates near $\mf$ and $\chi$ is a
smooth compactly supported function localizing near $\rho = 0$.  The
Mellin transform has many rich properties analogous to those enjoyed by the
Fourier transform and many of its mapping properties can be deduced
from those of the Fourier transform by a change of variables.

The Mellin transform is particularly helpful in the study of
asymptotic expansions in powers of $\rho$ (the boundary defining
function for the hypersurface $\mf$) and $\log \rho$.  For simplicity,
we first discuss the case where our manifold has only a single boundary
hypersurface, i.e., when we have a  manifold with boundary $X$.  In
particular, we recall from Melrose~\cite[Section 5.10]{Melrose:APS}
the definition of a polyhomogeneous conormal distribution in this
setting.  If $u$ is a distribution on a manifold with boundary $X$, we write
\begin{equation*}
  u \in \phg^{E} (M) \quad (u\text{ is polyhomogeneous with index set }E)
\end{equation*}
if $u$ is conormal to $\pd X$ and
\begin{equation*}
  u \sim \sum_{(z,k)\in E}\rho^{iz}(\log \rho)^{k}a_{z,k}, 
\end{equation*}
where the $a_{z,k}$ are smooth functions on $\pd X$.  Here the
expansion should be interpreted as an asymptotic series as $\rho\to 0$
and $E$ is an \textit{index set} and must satisfy\footnote{As in the
  first author's previous works~\cite{BVW15,BVW18, BM19}, we adopt the
  convention of Melrose's unpublished book~\cite{Melrose:MWC} rather
  than the other reference~\cite{Melrose:APS}.}
\begin{itemize}
\item $\ds E \subset \complexes \times \{ 0, 1, 2, \dots \}$,
\item $E$ is discrete,
\item if $(z_{j},k_{j}) \in E$ with $\abs{(z_{j},k_{j})}\to \infty$,
  then $\Im z_{j} \to -\infty$,
\item if $(z,k) \in E$, then $(z,l)\in E$ for all $l = 0 , 1, \dots,
  k-1$, and 
\item if $(z,k)\in E$, then $(z-ij, k) \in E$ for all $j=1, 2, \dots$.
\end{itemize}
With these conventions, the functions that are smooth up to $\pd X$
are polyhomogeneous with index set
\begin{equation*}
  \cE_{0} = \{ (-ik, 0) \mid k = 0 , 1, 2, \dots\}.
\end{equation*}
The distributions in $\phg^{E}(X)$ can be characterized by the Mellin
transform, in which case the Mellin transform is meromorphic with
appropriate decay estimates in $\sigma$ and $(z,k) \in E$ if the
Mellin transform has a pole of order $k+1$ at $z$.  Polyhomogeneous
distributions can also be characterized by testing with radial vector
fields.  In the case of a manifold with boundary $X$, let $R$ denote
the radial vector field $\rho D_{\rho}$.  Then $u \in \phg^{E}(X)$ if
for all $A$, there is some $\gamma_{A}$ with $\gamma_{A}\to +\infty$
as $A\to +\infty$ so that
\begin{equation*}
  \left( \prod _{(z,k)\in E, \Im z > -A}(R - z)\right) u \in \rho^{\gamma_{A}}\Hb^{\infty}(X).
\end{equation*}
Here $\Hb^{\infty}(X)$ is the standard $\bl$-Sobolev space of order
$\infty$ and indicates iterated regularity under the application of
arbitrarily many $\bl$-vector fields.

Our main theorem concerns joint polyhomogeneity jointly at $\scri^{+}$
and $C_{+}$ in $[M; S_{+}]$, which is a manifold with codimension $2$
corners.  In this case one wants a polyhomogeneous distribution to
have compatible expansions at the two faces.  The index sets seen in
the expansions at the two faces are typically different and so we use
the notation $\cE = (E_{1}, E_{2})$ to denote the pair of index sets
and $\phg^{\cE}([M;S_{+}])$ to denote this space of distributions.

To test for polyhomogeneity at multiple boundary hypersurfaces, it
suffices to test \textit{individually} at each one with uniform
estimates at the other.  The following lemma is due independently to
Melrose~\cite[Chapter 4]{Melrose:MWC} and Mazzeo~\cite[Appendix]{Economakis}.
\begin{lemma}[Mazzeo, Melrose]
  \label{lem:double-phg}
  Suppose $H_{\ell}$, $\ell$ are boundary hypersurfaces of a manifold
  with corners $X$ and suppose $\rho_{\ell}$ defines $H_{\ell}$.  Let
  $R_{\ell}$ denote $\rho_{\ell}D_{\rho_{\ell}}$, the radial vector
  field at the $\ell$-th boundary hypersurface.  Suppose that for each
  $\ell$, there exists a $\gamma'$, and for all $A$ there is a
  $\gamma_{A}$ with $\lim_{A\to \infty}\gamma_{A}=+\infty$ such that
  \begin{equation}
    \left( \prod_{(z,k)\in E_{\ell}, \Im z > -A} (R_{\ell}-z)\right)u
    \in \rho_{\ell}^{\gamma_{A}}\rho^{\gamma'}\Hb^{\infty}(X),
  \end{equation}
  where $\rho^{\gamma'}$ denotes the multiproduct of the defining
  functions for $H_{j}$, $j\neq \ell$. Then $u \in \phg^{\cE}(M)$, where $\cE =
  (E_{1},E_{2}, \dots)$.  
\end{lemma}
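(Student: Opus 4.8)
The plan is to argue by induction on the number of boundary hypersurfaces of $X$, equivalently on the maximal codimension of its corners. When $X$ has a single boundary hypersurface, the statement is exactly the characterization of $\phg^{E}(X)$ by testing against the radial vector field recalled in Section~\ref{sec:mellin-transform} (Melrose~\cite[Section 5.10]{Melrose:APS}), which I take as known; since away from the corners of $X$ it is again that single-hypersurface statement which is in force, the content is entirely local near corners. For the inductive step it therefore suffices to treat a neighborhood of a codimension-two corner, the higher-codimension case then following by iterating the argument over successive pairs of faces.

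So suppose $H_{1}$ and $H_{2}$ meet in a corner, with defining functions $\rho_{1},\rho_{2}$ chosen (in product coordinates near the corner) so that $R_{1}=\rho_{1}D_{\rho_{1}}$ and $R_{2}=\rho_{2}D_{\rho_{2}}$ commute. First I would pass to the double Mellin transform $\widehat{u}(\sigma_{1},\sigma_{2})$ in $\rho_{1}$ and $\rho_{2}$, treating the remaining variables as smooth parameters. The hypothesis for $\ell=1$ then says that $\bigl(\prod_{(z,k)\in E_{1},\,\Im z>-A}(\sigma_{1}-z)\bigr)\widehat{u}$ continues holomorphically to $\{\Im\sigma_{1}>-\gamma_{A}\}\times\{\Im\sigma_{2}>-\gamma'\}$ with polynomial bounds in $\Re\sigma_{1},\Re\sigma_{2}$ --- holomorphy and the bound in $\sigma_{2}$ on the fixed half-plane coming from the $\rho_{2}^{\gamma'}\Hb^{\infty}$ factor --- and symmetrically for $\ell=2$. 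Letting $A\to\infty$, this exhibits $\widehat{u}$ as meromorphic in $\sigma_{1}$ with poles (of the correct orders) exactly at the points of $E_{1}$, uniformly in $\sigma_{2}$, and likewise in $\sigma_{2}$ with poles in $E_{2}$; moreover, multiplying by both clearing polynomials gives a function holomorphic on the union of the product half-space $\{\Im\sigma_{1}>-\gamma_{A}\}\times\{\Im\sigma_{2}>-\gamma'\}$ with the product half-space $\{\Im\sigma_{1}>-\gamma'\}\times\{\Im\sigma_{2}>-\gamma_{A'}\}$.

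The key step is to promote this \emph{separate} control to \emph{joint} holomorphy of the cleared function on the full product $\{\Im\sigma_{1}>-\gamma_{A}\}\times\{\Im\sigma_{2}>-\gamma_{A'}\}$ by a Hartogs-type removable-singularities argument, the polynomial bounds in the real directions $\Re\sigma_{1},\Re\sigma_{2}$ playing the role of compactness of the excluded region. Granting this, I would invert the double Mellin transform and shift both contours downward past every pole with $\Im z>-A$, respectively $\Im w>-A'$; the residues assemble into a joint asymptotic expansion $u\sim\sum_{(z,k)\in E_{1}}\sum_{(w,j)\in E_{2}}\rho_{1}^{iz}(\log\rho_{1})^{k}\rho_{2}^{iw}(\log\rho_{2})^{j}\,c_{zk,wj}$ with the coefficients $c_{zk,wj}$ smooth in the remaining variables (the polynomial bounds give their conormality) and with remainder in $\rho_{1}^{\gamma_{A}}\rho_{2}^{\gamma_{A'}}\Hb^{\infty}$. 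That is exactly joint polyhomogeneity near the corner with index family $(E_{1},E_{2})$, and together with the single-hypersurface behavior away from the corner it gives $u\in\phg^{\cE}(X)$; for corners of higher codimension one repeats the double-Mellin/Hartogs step over successive pairs of faces.

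The hard part --- and the reason the original accounts of Melrose~\cite[Chapter 4]{Melrose:MWC} and Mazzeo~\cite[Appendix]{Economakis} are not short --- is precisely this Hartogs-type step, together with the bookkeeping of the $\Re\sigma$-decay (equivalently, of weighted $\bl$-Sobolev orders) through the Mellin transform and the contour shifts: making rigorous the passage from ``separately meromorphic with uniform polynomial bounds'' to ``jointly controlled,'' and checking that the resulting coefficients are themselves polyhomogeneous on the faces. I would rely on those references for the details; the radial-vector-field phrasing in the statement is then just the translation of the joint-holomorphy conclusion back into physical space via the single-face characterization.
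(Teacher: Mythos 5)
The paper does not prove this lemma itself --- it is stated with attribution to Melrose~\cite[Chapter 4]{Melrose:MWC} and Mazzeo~\cite[Appendix]{Economakis} and used as a black box --- so there is no in-paper argument to compare against. Your double-Mellin plus several-complex-variables route is, as far as I know, genuinely different from the physical-space bootstrap in those references, and I would flag one real gap rather than a matter of detail left to the literature.

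The ``Hartogs-type removable-singularities argument'' is false as you state it. Holomorphy of the cleared function on the union of the two slabs
$\bigl(\{\Im\sigma_1 > -\gamma_A\}\times\{\Im\sigma_2 > -\gamma'\}\bigr)\cup\bigl(\{\Im\sigma_1 > -\gamma'\}\times\{\Im\sigma_2 > -\gamma_{A'}\}\bigr)$,
even with rapid decay in $\Re\sigma_1$, $\Re\sigma_2$, does \emph{not} imply holomorphy on the product $\{\Im\sigma_1 > -\gamma_A\}\times\{\Im\sigma_2 > -\gamma_{A'}\}$. For instance $(\sigma_1 + \sigma_2 + ic)^{-1}$ with $\max(\gamma_A,\gamma_{A'}) + \gamma' < c < \gamma_A + \gamma_{A'}$ is holomorphic and decaying on the union of the slabs but has a pole inside the product. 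The relevant geometry is that the union of these two tube domains is not pseudoconvex; Bochner's tube theorem extends a holomorphic function only to the tube over the \emph{convex hull} of the base, which for fixed $A,A'$ is the rectangle with a triangular corner removed, not the full rectangle. There is no ``compact excluded region'' here, so Hartogs' removable-singularities theorem does not apply, and the polynomial bounds in the real directions play no role in closing this gap.

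What saves your approach is the structure you are given but do not invoke at this step: the hypothesis holds for \emph{all} $A$ with $\gamma_A\to\infty$ while $\gamma'$ stays fixed, so as $A,A'$ increase the convex hulls of the unions of slabs exhaust any given product of half-planes (clearing the finitely many additional poles of $E_1, E_2$ that appear as $A$ grows, which is harmless since the index sets are fixed and discrete). Running Bochner's tube theorem on this exhaustion --- rather than a one-shot Hartogs argument --- gives joint meromorphy of $\widehat{u}$ with poles confined to $\{\sigma_1\in E_1\}\cup\{\sigma_2\in E_2\}$, after which your contour-shifting and residue bookkeeping can proceed. A shorter route, closer in spirit to the cited sources, stays entirely in physical space: expand at $H_1$ first; since $R_1$ and $R_2$ commute, the hypothesis at $H_2$ is inherited by each coefficient and by the remainder of that one-sided expansion; then induct. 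That avoids the SCV subtleties and the need to justify joint analyticity at all.
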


In other words, one can test for polyhomogeneity with radial vector
fields as in the setting of a manifold with boundary provided that the
remainder improves the decay at the hypersurface in question at no
cost to the growth or decay at the other boundary hypersurfaces.

\section{Analytic preliminaries}
\label{sec:analyt-prel}

\subsection{Related wave equations and domains}
\label{sec:relat-wave-equat}
We introduce two first order operators related to the Dirac operator via a conjugation
\begin{align*}
  \dop &= i \rho^{-2-i\charge}\gamma^{0}\dirac_{\frac{\charge}{r}}\rho^{1+i\charge} ,\\
  \tdop &= i \rho^{-2-i\charge}\dirac_{\frac{\charge}{r}}\gamma^{0}\rho^{1+i\charge},  
  \end{align*}
  as well as the conjugated second order operator
  \begin{equation*}
  \Pd = -\rho^{-3-i\charge}\dirac_{\frac{\charge}{r}}^{2}\rho^{1+i\charge}.
  \end{equation*}
To uncover the relationship between these three operators, we introduce three additional  ``$l$-based'' families of operators
\begin{align*}
  \dop _{\ell} = \rho^{-\ell}\dop \rho^{\ell}, \quad
  \tdop_{\ell} = \rho^{-\ell}\tdop \rho^{\ell}, \quad
  \Pd_{\ell}  = \rho^{-\ell}\Pd \rho ^{\ell},
\end{align*}
where $\ell \in \reals$.  The notational advantages will be evident
momentarily.  

Observe that, with respect to the volume form
$\frac{d\rho}{\rho}x^{2}\,dx\,d\theta$, as long as $\ell\in \reals$,
\begin{equation*}
  \dop_{\ell}^{*} = \dop_{1-\ell}, \quad \tdop_{\ell}^{*} = \tdop_{1-\ell}.
\end{equation*}
so that
\begin{align*}
  \Pd &= \tdop_{1}\dop = \tdop^{*}\dop, \\
  \gamma^{0}\Pd_{\ell} \gamma^{0} &= \dop_{\ell+1}\tdop_{\ell}.
\end{align*}
The latter relationship plays a role in the boundary section, as
\begin{equation}\label{eq:Pd dual}
  \Pd^{*} = \gamma^{0}\Pd \gamma^{0}.
\end{equation}

In the sequel, we will require explicit expressions for our operators near $C_{+}$.  For convenience, we record this below, with $x=r/t$ and $\rho = 1/t$:
\begin{align*}
  \dop &= i\rho^{-2-i\charge}\left( \pd[t] + \frac{i\charge}{r} +
         \alpha_{r}\left( \pd[r] + \frac{1}{r} - \frac{1}{r}\beta
         K\right)\right)\rho^{1+i\charge} \\
  &= -i \rho\pd[\rho] - ix\pd[x] -i +\charge - \frac{\charge}{x} +
    \alpha_{r}\left( i \pd[x] + \frac{i}{x} - \frac{i}{x}\beta
    K\right), \\
  \Pd &= \rho^{-3-i\charge}\left[- \left( \pd[t] + \frac{i\charge}{r}\right)^{2} + \sum
        \pd[j]^{2} - \frac{i\charge}{r^{2}}\alpha_{r}\right]\rho^{1+i\charge} \\
  &= \left( \rho\pd[\rho] + x \pd[x] + 1 + i \charge -
    \frac{i\charge}{x}\right)^{*}\left( \rho \pd[\rho] + x\pd[x] + 1 +
    i \charge-
    \frac{i\charge}{x}\right) - D_{x}^{*}D_{x} -
    \frac{1}{x^{2}}\Lap_{\theta} - \frac{i\charge}{x^{2}}\alpha_{r}.
\end{align*}

The corresponding Mellin transformed and semiclassical $\dop$ operator takes the forms

\begin{align*}
  \dops &= xD_{x} + \sigma - i + \charge - \frac{\charge}{x} - \alpha_{r} \left(
          D_{x}  - \frac{i}{x} + \frac{i}{x}\beta K\right), \\
  \doph &= h \dops = h xD_{x} + z - i h + h \charge - \frac{h\charge}{x} -
          \alpha_{r}\left( hD_{x} - \frac{ih}{x} + \frac{ih}{x}\beta K\right),
\end{align*}
where $h = \abs{\sigma}^{-1}$ and $z = \frac{\sigma}{\abs{\sigma}}$.

Similarly, the second-order operators are given by 
\begin{align*}
  \Ps &= \left( x D_{x} - \frac{\charge}{x} + \sigma + \charge - i
        \right)^{*}\left( xD_{x} - \frac{\charge}{x} + \sigma + \charge- i
        \right) + 2i (\Im \sigma)\left( x D_{x} - \frac{\charge}{x} +
        \sigma + \charge - i \right) \\
      &\quad \quad \quad- D_{x}^{*}D_{x} -
        \frac{1}{x^{2}}\Lap_{\sphere^{2}} -
        \alpha_{r}\frac{i\charge}{x^{2}}, \\
  \Ph &= h^{2}\Ps = \left( h xD_{x} - \frac{h \charge}{x} + z + h
        \charge - i
        h\right)^{*} \left( h xD_{x} - \frac{h \charge}{x} + z + h \charge- i
        h\right) \\
    &\quad\quad\quad + 2i (\Im z) \left( h xD_{x} - \frac{h\charge}{x}
      + z + h\charge
        - i h\right) - (hD_{x})^{*}(hD_{x}) -
    \frac{h^{2}}{x^{2}}\Lap_{\sphere^{2}} - \alpha_{r} \frac{ih^{2}\charge}{x^{2}}.
\end{align*}

We do not need the precise forms of the operators near $C_{-}$ until
Section~\ref{sec:char-expon}; as we use a different set of coordinates
in that section, we do not record the forms here.

We observe now that as the indicial operator of $\dops$ agrees with that of $\cB$ near
$x=0$, the domain of $\dops$ also consists of functions that are
$H^{1}$ near the singularity.  We provide a sketch of the proof in
Section~\ref{sec:fredholm-statement} below.

\subsection{The radial sets}
\label{sec:radial-sets-module}

Classical propagation of singularities arguments show that wavefront
set is propagated along integral curves of the Hamilton vector field
within the characteristic set; the arguments become more complicated
when the vector field is singular (as near the singularity of the
potential) or proportional to the radial vector field.  In the latter
case, we call the subset the radial set.

Our treatment of the estimates near the radial set is in terms of
the second-order operators $\Pd$ and $\Ps$.  In both settings it is
convenient to replace the coordinate $x$ with the coordinate $v =
1-x$.  

We consider first the homogeneous (bulk) version (i.e., for $\Pd$).
As the $\bl$-principal symbol of $\Pd$ on $\mf$ agrees with that of
the wave operator on Minkowski space, the set of radial points must
agree as well.  In particular, as in previous
work~\cite[Section~3.6]{BVW15}, the radial set $\mathcal{R}$ is
exactly
\begin{equation*}
  \mathcal{R} = \{ (\rho, v, \theta, \sigma, \gamma, \eta) : \rho = v
  = 0, \eta = 0, \sigma = 0\}.
\end{equation*}
As it projects to $S_{+}\cup S_{-}$, the set $\mathcal{R}$ naturally splits into two components
$\mathcal{R}_{\pm}$ according to whether the component lies over
$S_{+}$ or $S_{-}$.  The radial set propagation arguments are structured along
\textit{thresholds}: near the past radial set $S_{-}$, we can propagate
regularity \textit{out} of the radial set provided we have enough a
priori regularity of our solution.  (This will be easily achieved as
we consider the forward solution, which necessarily vanishes near
$S_{-}$.)  At the future radial set $S_{+}$, we can propagate
regularity \textit{in}, but only up to the threshold.

We now consider the operator on the boundary $\Ps$.  The analysis of $\Ps$ near the radial sets follows from previous work
on the wave operator on Minkowski space; specifically analysis of the
conjugated, rescaled, Mellin-transformed normal operator $\widehat N(\rho^{-3} \Box
\rho)(\sigma)$.  Indeed, the operator $\Ps$ has the same semiclassical
principal and subprincipal symbol as $\widehat N(\rho^{-3} \Box
\rho) Id$.  We can therefore use relevant results in
\cite{Vasy13,BVW15}.  In particular, the characteristic set of the
semiclassical differential operator $\widehat N(\rho^{-3} \Box
\rho)(\sigma)$ is not homogeneous, and is understood as a submanifold
of the fiber-compactified cotangent bundle
$\overline{T}^*(\mf)$.  (Note that away from the poles this is
canonically identified with $T^*(\mathbb{S}^3)$.) It is shown in \cite{BVW15} that the
characteristic set of $\widehat N(\rho^{-3} \Box
\rho)$ (and therefore of $\Ps$), admits smooth families of radial sets
given by
\begin{equation}
\Lambda^\pm := \partial
\overline{N}^{*}S_{\pm},\label{eq:boundary radial}
\end{equation}
Concretely, near the fiber boundary of $\overline{N}^{*}S_{\pm}$, one
has coordinates $(v, \theta, \nu, \tilde \eta)$ where $(v, \theta)$
are spatial coordinates on $\mathbb{S}^3$ near $S_\pm$ with $\theta$
coordinates on $S_\pm$ and $v$ defining $S_{\pm}$, $\nu =
1/|\gamma|$ and $\tilde \eta = \eta / |\gamma|$ where $\gamma$ is dual to
$v$ and $\eta$ dual to $\theta$.
Then
\begin{equation*}
  \Lambda^\pm= \{ (v, \theta,
  \nu, \eta) \mid \nu = 0, v = 0, \tilde
\eta =0\}.
\end{equation*}
The boundaries of $N^{*}S_{\pm}$, denoted $\partial N^{*}S_{\pm}$, in
the radial compactification of the fibers of $T^{*}\mf$ act as sources
or sinks for a rescaling of the Hamilton vector
field.\footnote{Whether they are sources or sinks depends on the sign
  of $\gamma$ nearby and so proving the estimates requires treating
  the two components of $N^{*}S_{\bullet}\setminus 0$ separately.  As
  this argument is identical to the one in Minkowski space, we will
  omit it and so do not provide the components with unique names.}

The global structure of the bicharacteristic flow of $\Ps$, which is
explained in detail in \cite{BVW15}, has the following general
structure.  The (classical) characteristic set $\tilde \Sigma$ of $\Ps$ lies in
$\overline{T}^*(\mf)$ and lies entirely above $C_0$.  It consists of
two components $\tilde \Sigma^{\pm}$; along $\tilde \Sigma^{\pm}$ the
Hamilton flow limits to $\Lambda^{\pm}$ as the flow parameter goes to
$\infty$, and to $\Lambda^{\mp}$ as it goes to $- \infty$.  

\subsection{Variable-order Sobolev spaces}
\label{sec:vari-order-sobol}

As in prior work~\cite{BVW15, BVW18, BM19}, we aim to show that the
operator $\dops$ on $\mf$ is Fredholm on appropriate spaces.  In order
to do this, we aim to propagate regularity from $S_{-}$ to $S_{+}$ and
so the spaces on which it is Fredholm should include functions that
are more regular than some threshold at the past radial set and less
regular than the threshold at the future radial set.  As the two
thresholds agree, we employ variable-order Sobolev spaces.
Complicating the definition is our desire to guarantee enough
regularity near the singularity so that we can carry out our
propagation estimates there.

We therefore define a smooth regularity function
$s_{\tow}: \mf \to \reals$.
\begin{enumerate}
\item The function $s_{\tow}$ is constant near $S_\pm$ and $s_{\tow}\equiv
  0$ in $\{ x < 1/4\}$;
\item Within $C_0$, $s_{\tow}$ is monotonically decreasing as a function
  of $t/r$, and constant in neighborhoods of $S_+$ and $S_-$  
\item $s_{\tow} \rvert_{S_+} < 1/2 + \Im \sigma$ and
  $s_{\tow} \rvert_{S_-} > 1/2 + \Im \sigma$, the threshold exponents
  at $\Lambda^{\pm}$.
\end{enumerate}
The monotonicity in (2) ensures that the lift of $s_{\tow}$ to
$T^*(\mf)$ is monotonically decreasing along the flow from
  $\Lambda^-$ to $\Lambda^+$ within the characteristic set of
  $\dops$.  (This means that it is \textit{increasing} as a function of
  the flow parameter on the component of $\tilde \Sigma$ for which
  $\Lambda^-$ is a source.)
As the (classical) characteristic set of $\dops$ lives above the closure of
$C_{0}$, the condition at $x=0$ is easy to satisfy.

We further define
$$s_{\away} = -s_{\tow},
$$
which satisfies related properties; it is monotonically
\textit{increasing} (hence decreasing along the flow in the opposite direction) and satisfies $\pm s_{\away} \rvert_{S_\pm} > \mp (1/2 + \Im \sigma)$.

Motivated by previous work~\cite[Appendix A]{BVW15}, we define the
variable order Sobolev spaces $H^{s_{\tow}}(\sphere^{3})$ and
$H^{s_{\away}}(\sphere^{3})$ on the sphere.  As
$L^{2}(\mf) = L^{2}(\sphere^{3})$ and $s_{\tow}=s_{\away}=0$ near the
poles, there is a canonical identification of the spaces
$H^{s_{\tow}}$ (or $H^{s_{\away}}$) and $L^{2}(\mf)$ locally near the
poles.  We may therefore identify the variable order Sobolev spaces on
the sphere as consisting of distributions on $\mf$.

We now define the $\cX$ and $\cY$ spaces based on the variable order
spaces.  In an abuse of notation, we use $\cX^{s}$ and $\cY^{s}$ to
denote spaces based on $H^{s_{\tow}}$ and $\cX^{s^{*}}$ and
$\cY^{s^{*}}$ to denote those based on $H^{s_{\away}}$.  We first
define the $\cY$ spaces.
\begin{equation*}
  \cY^{s} = H^{s_{\tow}}, \quad \cY^{s^{*}} = H^{s_{\away}},
\end{equation*}
equipped with the inherited norms.  The $\cX$ spaces are then defined by
\begin{align*}
  \cX^{s} &= \{ u \in \cY^{s} \mid \dops u \in \cY^{s}\}, \\
  \cX^{s^{*}} &= \{ u \in \cY^{s^{*}} \mid \dops^{*} u \in \cY^{s^{*}}\}.
\end{align*}
The norms on the $\cX$ spaces are the graph norms, i.e.,
\begin{equation*}
  \norm{u}_{\cX^{s}}^{2} = \norm{u}_{\cY^{s}}^{2} + \norm{\dops u}_{\cY^{s}}^{2}.
\end{equation*}

Similarly, we define semiclassical spaces
\begin{equation*}\begin{gathered}
    \cY^{s}_h = \Hh^{s_{\tow}}, \qquad
      \cX^{s}_h = \{ u \in \cY_h^{s} \mid \doph u \in \cY_h^{s}\},
    \end{gathered}\end{equation*}
  and similarly for $s^* = s_{\away}$.

\subsection{Compressed characteristic set}
\label{sec:characteristic-set}

As in previous work~\cite{BW20}, our treatment of the propagation of
$\bl$-regularity is strongly influenced by the work of Vasy on
manifolds with corners~\cite{Vasy08}.

In the bulk, the main propagation results near the singularity take
place inside the \textit{compressed characteristic set}, which is the
appropriate extension of the ordinary characteristic set to the
boundary setting.  Near $\cf$ but away from $\mf$, we refer the reader
to prior work~\cite{BW20} for a discussion of this set.  We limit our
discussion here to a neighborhood of $C_{+}\cap \cf$.

In coordinates associated to the canonical one-form
\begin{equation*}
  \usigma\, d\rho + \uxi \,dx + \ueta \cdot d\theta
\end{equation*}
on $T^{*}M$, the characteristic set $\Sigma$ is given by
\begin{equation*}
  \Sigma = \left\{ (\rho, x, \theta, \usigma, \uxi, \ueta) \mid \left(
    \rho \usigma + x\uxi\right)^{2} - \uxi^{2} -
  \frac{1}{x^{2}}\abs{\ueta}^{2} = 0 \right\}.
\end{equation*}
The \textit{compressed characteristic set} $\Sigmadot$, originally due
to Melrose--Sj{\"o}strand~\cite{Melrose-Sjostrand1,
  Melrose-Sjostrand2}, is the closure of the image of the
characteristic set under the natural map $T^{*}M \to \Tbstar M$.  In
the coordinates associated to the canonical one-form
\begin{equation*}
  \sigma \frac{d\rho}{\rho} + \xi \frac{dx}{x} + \eta \cdot d\theta
\end{equation*}
on $\Tbstar M$, $\Sigmadot$ has the following form over $x=0$:
\begin{equation*}
  \Sigmadot |_{x=0} = \left\{ (\rho, x=0, \theta, \sigma, \xi = 0,
    \eta = 0 ) \mid \theta \in \sigma^{2}, \sigma \neq 0\right\}.
\end{equation*}

On the boundary $\mf$, the characteristic sets of $\dops$ and $\Ps$
are bounded away from $\cf$ (as the operators are elliptic there).  On
the other hand, the operators $\doph$ and $\Ph$ are not
\textit{semiclassically} elliptic there.  We therefore consider the
analogous construction in the semiclassical setting on $\mf$.  In
terms of the coordinates associated to the canonical one-form
\begin{equation*}
  \uxi \, dx + \ueta \cdot d\theta
\end{equation*}
on $T^{*}\mf$, the semiclassical characteristic set near $x=0$ is
given by
\begin{equation*}
  \Sigma_{h} = \left\{ ( x, \theta, \uxi , \ueta) \mid (x \uxi +
    z)^{2} - \uxi^{2} - \frac{1}{x^{2}}\abs{\ueta}^{2} = 0 \right\}.
\end{equation*}
The semiclassical compressed characteristic set is again the closure
of the image of $\Sigma_{h}$ under the map $T^{*}\mf \to \Tbstar
\mf$.  In terms of the coordinates given by
\begin{equation*}
  \xi \frac{dx}{x} + \eta \cdot d\theta,
\end{equation*}
on $\Tbstar \mf$, $\Sigmadot_{h} = \{ x^2 (\xi + z)^2 - (\xi^2 + |\eta|^2)
=0  \}$, so over $x=0$ we get the simple expression
\begin{equation}\label{eq:char semicl x is zero}
  \Sigmadot_{h}|_{x=0} = \left\{ (x=0, \theta, \xi = 0 , \eta = 0)
    \mid \theta \in \sphere^{2}\right\}.
\end{equation}

\section{Propagation in the bulk}
\label{sec:propagation-bulk}

The aim of this section is to prove that the forward solutions lie in
an appropriate weighted Sobolev space and possess additional
regularity; this regularity is expressed in terms of iterated
application of ``module derivatives'', which we define now.
\begin{definition}\label{def:module}
  Let $\module\subset \Psib^{1}(M)$ denote the $\Psib^{0}(M)$-module
  of pseudodifferential operators with principal symbol vanishing on
  the radial set $\mathcal{R}$.
\end{definition}
The module $\module$ is closed under commutators and is generated over
$\Psib^{0}$ by $\rho \pd[\rho]$, $\rho \pd[v]$, $v\pd[v]$,
$\pd[\theta]$, and $\id$.

\begin{theorem}
  \label{thm:bulk-regularity}
  If $\psi$ is the forward solution of $i\dirac_{\charge/r} \psi = g$,
  where $g \in \CI_{c}(M)$, then there are $m, \gamma\in \reals$ with
  $1 + m+ \gamma < 1/2$ so that $\psi \in \Hb^{1,m,\gamma}$ and for each
  $N\in \naturals$ and $A\in \mathcal{M}^{N}$, we have $A\psi\in
  \Hb^{1,m,\gamma}$.
\end{theorem}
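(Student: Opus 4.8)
The plan is to transfer the problem to the rescaled operator $\dop$ (equivalently to $\Pd$), obtain a crude weighted estimate from an energy argument, and then upgrade it through a sequence of microlocal estimates running from the (trivial) backward region near $S_{-}$ to the forward radial set over $S_{+}$; the module regularity is then extracted by induction on the number of module derivatives. The two delicate points will be the passage through the singularity of the potential at $\cf$ and the radial points over $S_{+}$.

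\textbf{Reduction and a priori estimate.} As in Section~\ref{sec:set-up}, set $u = \rho^{-1-i\charge}\psi$, so that $\dop u = f$ with $f \in \CI_{c}(M^{\circ})$ and hence $\Pd u = \tdop^{*}f =: f'$, still smooth and compactly supported in $M^{\circ}$; in particular $\Pd u = 0$ near $\mf$. Since $\abs{\rho^{i\charge}} = 1$ it suffices to prove the stated conclusions for $u$ in place of $\psi$. Because $\psi$ is the forward solution of $i\dirac_{\charge/r}\psi = g$ with compactly supported $g$, finite speed of propagation shows $u$ is supported in a set of the form $\{ t \ge r - C\}$, so near $\mf$ it vanishes on $C_{-}$, on $C_{0}$, and in a neighborhood of $S_{-}$, and is supported in a neighborhood of $C_{+}\cup S_{+}$. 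An energy estimate for $i\gamma^{0}\dirac_{\charge/r}$ (which is symmetric for the flat inner product), with the Hardy inequality of Lemma~\ref{lem:bulk-hardy} used to absorb the $\charge/r$ and $\charge/r^{2}$ terms (licit since $\abs{\charge}<1/2$), propagated forward from the trivial data, gives $u \in \Hb^{1,0,\gamma_{0}}(M)$ for some $\gamma_{0}\in\reals$. Lowering the b-Sobolev order, $u \in \Hb^{1,m,\gamma_{0}}$ for every $m \le 0$; fixing such an $m$ with $1 + m + \gamma_{0} < 1/2$ and setting $\gamma = \gamma_{0}$ gives the first assertion.

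\textbf{Propagation and module regularity.} The real content is therefore $Au \in \Hb^{1,m,\gamma}$ for $A \in \module^{N}$: since module generators have order one, this says $u$ is conormal with respect to $\mathcal{R}$ and that applying module derivatives costs no $\Hb$-regularity. We argue by induction on $N$, the case $N=0$ being the estimate above. For the inductive step we run the usual chain of microlocal estimates for the fixed operator $\Pd$, applied to $\module^{N}$-derivatives of $u$: off the compressed characteristic set $\Sigmadot$, $\Pd$ is b-elliptic and Lemma~\ref{lem:elliptic-reg-bulk} applies; on $\Sigmadot\setminus\mathcal{R}$ we propagate b-regularity by real-principal-type propagation in the b-calculus (Melrose--Sj\"ostrand; Vasy~\cite{Vasy08}), starting in the region near $S_{-}$ where $u \equiv 0$ and using that all bicharacteristics over $\mf$ issue from $\mathcal{R}_{-}$ and limit to $\mathcal{R}_{+}$, the commutator and singular zeroth-order contributions being controlled via Lemmas~\ref{lem:comms-with-b-ops} and~\ref{lem:elliptic-reg-bulk} and the Hardy inequality; at the conic face $\cf$, where the bicharacteristics meet $r=0$, ordinary propagation fails and we instead invoke the diffractive estimate of Baskin--Wunsch~\cite{BW20}, adapted to the weighted b-setting near the corner $\mf\cap\cf$, to carry the (module) regularity across the singularity; and at $\mathcal{R}_{+}$, which lies over $S_{+}$ far from the singularity so that $\Pd$ there has the same b-principal and b-subprincipal symbol as the conjugated Minkowski wave operator, we invoke the radial-point estimate of~\cite[Section~3.6]{BVW15}, combined with module derivatives as in~\cite{BVW15,BVW18}. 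The hypothesis $1 + m + \gamma < 1/2$ is exactly the below-threshold condition at $\mathcal{R}_{+}$ that lets this regularity be propagated \emph{into} the radial set; the positive-commutator argument there, with commutant built from $A$ and a microlocal weight, bounds $\norm{Au}_{\Hb^{1,m,\gamma}}$ in terms of $\norm{u}_{\Hb^{1,0,\gamma}}$, $f'$, and finitely many module derivatives of order $\le N-1$, which are controlled by the inductive hypothesis. (The module generators $\rho\pd[\rho]$, $\rho\pd[v]$, $v\pd[v]$, $\pd[\theta]$ were chosen with principal symbols vanishing on $\mathcal{R}$ precisely so this argument closes.) Since $\Pd(Au) = Af' + [\Pd,A]u$ and $Af' = 0$ near $\mf$, this completes the induction.

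\textbf{Main obstacle.} The genuinely new difficulty, and the step I expect to be the bottleneck, is the propagation across $\cf$ --- through the singularity of the Coulomb potential --- where the Hamilton flow degenerates. Here one must port the diffractive argument of~\cite{BW20} into the weighted b-category near $\mf\cap\cf$, simultaneously tracking the module regularity and the weight $\gamma$, and repeatedly using the Hardy inequality to dominate the $\charge/r$ and $\charge/r^{2}$ terms absent in the pure wave setting of~\cite{BVW15,BVW18}. By contrast, the radial-point analysis at $S_{+}$ is essentially identical to the Minkowski case, since the potential is negligible there.
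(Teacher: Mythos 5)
Your proposal follows essentially the same route as the paper: an a priori bound from self-adjointness/energy conservation (Lemma~\ref{lem:bulk-hardy} to absorb the singular terms), followed by a chain of microlocal propagation estimates in the compressed characteristic set running from the trivial region near $S_{-}$ to the radial set over $S_{+}$, with the diffractive result of~\cite{BW20} adapted near $\mf\cap\cf$ (the paper's Theorem~\ref{thm:bulk-prop-near-np}, proved in Section~\ref{sec:bulk prop sing section }) handling the singularity, and the radial-point/module estimates of~\cite{BVW15,BVW18} (the paper's Proposition~\ref{prop:bulk-radial-pts}) closing the argument at $N^{*}S_{+}$ below the threshold. You correctly single out the propagation across the Coulomb singularity at the pole as the genuinely new step.

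One small inaccuracy worth flagging: you assert the energy estimate directly gives $u \in \Hb^{1,0,\gamma_{0}}(M)$ globally, but the argument in the paper only yields $\Hb^{1,0,1/2-\epsilon}$ locally near the pole $\{\rho=0,\ x=0\}$ from the $t$-weighted integration of the conserved energy, and then appeals to temperedness to conclude membership in $\Hb^{1,m,\gamma}$ for \emph{some} $m,\gamma$ globally; since you subsequently lower the order anyway, this does not affect your conclusion, but the global $\Hb^{1,0,\gamma_{0}}$ claim is not directly justified by the energy bound alone.
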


Before we proceed to the proof we recall some facts about the domain
of the Hamiltonian $\cB$ which are discussed in detail in
\cite{BW20}.  The operator $\cB$ is essentially self-adjoint (with
core domain $C^\infty_c(\mathbb{R}^3 \setminus \{ 0 \})$) and its unique self-adjoint
domain is
$$
\mathcal{D} = \mathcal{D}(\cB) = r^{-1/2} \Hb^1(\mathbb{R}^3),
$$
where $\Hb^1$ is the Sobolev space based on $L^2(\frac{dr}{r}
d\theta)$ which is ``b'' at $r = 0$ and ``scattering'' at infinity,
meaning $u \in \Hb^1$ if, for $\chi = \chi(r) \in
C^\infty_0(\mathbb{R}_+)$  is identically one near $r = 0$ then $\{1,
r \partial_r, \partial_{\theta_i} \} \chi u \in L^2$ while
$\{ 1, \partial_r, \frac{1}{r} \partial_{\theta_i} \} (1 - \chi) u \in L^2$.
The powers $\mathcal{D}^m = \text{Dom}(\id + \cB^2)^{m/2}$ are
preserved by the forward propagator.
Following \cite{BW20}, we call a solution $i\dirac_{\charge/r} \psi =
g$ \textbf{admissible} provided $\psi \in C(\mathbb{R};
\mathcal{D}^m)$ for some $m$.  (This statement is local in time.  For
our analysis near the poles we assume that $\psi$ is the forward solution.)

The first part of the statement follows from the following proposition:
\begin{proposition}
  If $\psi$ is the forward solution of $i\dirac_{\charge/r}\psi =
  g$, where $g \in \CI_{c}(M)$, then $\psi$ is admissible and $\psi \in
  \Hb^{1,m,\gamma}$ for some $m,\gamma$.  
\end{proposition}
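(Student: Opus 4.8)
The plan is to reduce the proposition to two essentially independent assertions: first, that the forward solution $\psi$ is \emph{admissible}, i.e.\ $\psi \in C(\reals; \mathcal{D}^m)$ for some $m$; and second, that admissibility together with finite-speed of propagation and the structure of the compactification $M$ implies $\psi \in \Hb^{1,m,\gamma}(M)$ for suitable $m, \gamma$. The first assertion is really a statement about the Cauchy problem for $i \dirac_{\charge/r}$ on $\reals \times (\reals^3 \setminus \{0\})$: since $g \in \CI_c$, we may solve forward from a Cauchy hypersurface lying to the past of $\supp g$, where $\psi \equiv 0$, hence $\psi(t_0, \cdot) = 0 \in \mathcal{D}^m$ for every $m$. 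Duhamel's formula then expresses $\psi(t, \cdot)$ as an integral of $U(t - t') \gamma^0 (-i) g(t', \cdot) \, dt'$ against the unitary group $U(t) = e^{-it\cB}$ generated by the essentially self-adjoint operator $\cB$ (using $i\gamma^0 \dirac_{\charge/r} = i\pd[t] - \cB$). Since $\gamma^0 g(t', \cdot) \in \CI_c(\reals^3 \setminus \{0\}) \subset \mathcal{D}^m$ for all $m$ and the $\mathcal{D}^m$ norm of $e^{-it\cB} v$ equals that of $v$ (as $(\id + \cB^2)^{m/2}$ commutes with $U(t)$), the integral converges in $\mathcal{D}^m$ uniformly for $t$ in compact sets, giving $\psi \in C(\reals; \mathcal{D}^m)$ for \emph{every} $m$. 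In particular $\psi$ is admissible.

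Next I would convert the admissibility statement into membership in a weighted b-Sobolev space on $M$. The key geometric input is that, by finite speed of propagation for the symmetric hyperbolic system $i\dirac_{\charge/r}$, the forward solution $\psi$ is supported in $\{ t \geq t_0 \} \cap \{ |x| \leq R + |t - t_0| \}$ for appropriate $R$; pushed to the compactification, this means $\supp \psi \cap \mf$ is contained in the region $\overline{C_- \cup S_+ \cup C_0 \cup S_-}$ bounded away from $C_+$'s far reaches — more precisely, near $\mf$ the solution lives in the backward light cone region, which does not meet $\cf$ except along the corner $\mf \cap \cf$. On any region of $M$ bounded away from the corner $\mf \cap \cf$, the space $\mathcal{D}^m = \mathrm{Dom}(\id + \cB^2)^{m/2}$ (a spatial Sobolev space $H^m$ away from the origin) combined with $C(\reals_t)$-regularity and finitely many $t$-derivatives from the equation itself gives ordinary $H^{k}_{\mathrm{loc}}$ regularity in spacetime, which translates directly to $\Hb^{1,m',l'}(M)$ regularity there for appropriate indices; the weight $\rho^l$ is supplied by the rescaling $u = \rho^{-1-i\charge}\psi$ and the fact that the Euclidean $L^2$ norm of $\psi$ relates to $\rho^2 L^2(M)$ as noted after \eqref{eq:regular L2 space}. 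Near the corner $\mf \cap \cf$ — which is where $r$ is comparable to $t$ and both are large — one uses the explicit form of $\dop$ near $C_+$ recorded in Section~\ref{sec:relat-wave-equat}, together with the fact (also from Section~\ref{sec:relat-wave-equat}) that the domain of $\dops$ consists of functions that are $H^1$ near $x = 0$, to propagate $\Hb^{1,0,l}$ regularity outward. Interpolating/bootstrapping with the module regularity then gives $\psi \in \Hb^{1,m,\gamma}$; choosing $m$ and $\gamma$ negative enough to satisfy $1 + m + \gamma < 1/2$ is harmless since we only claim \emph{some} such pair exists, and the admissibility gives room to spare in the weight via the energy estimate.

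The main obstacle, I expect, is the behaviour at the corner $\mf \cap \cf$ and, relatedly, making the translation between the self-adjoint domain language ($\mathcal{D}^m$, which is adapted to spatial slices and the essential self-adjointness of $\cB$ with its ``b at $r=0$, scattering at $\infty$'' structure) and the b-Sobolev spaces $\Hb^{1,m,\gamma}(M)$ on the compactified spacetime, whose defining measure is ``mixed'' — metric at $\cf$, b at $\mf$. Reconciling these requires care near $\{ r \sim t \to \infty \}$, where the scattering structure at spatial infinity and the b-structure at $\mf$ interact; this is precisely the region where one must invoke the explicit $C_+$-form of $\dop$ and the Hardy inequality of Lemma~\ref{lem:bulk-hardy} to control the $1/x$ and $1/x^2$ terms coming from the potential. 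The cleanest route is to first establish the bound in the region away from $\mf \cap \cf$ by elementary energy arguments and propagation of the $\mathcal{D}^m$-norm, then handle a collar of the corner by a direct computation using the displayed formula for $\dop$ near $C_+$, using that the $\alpha_r$-terms and $\charge/x$-terms are subordinate to $D_x$ and $x D_x$ via Lemma~\ref{lem:bulk-hardy}; the module regularity $A\psi \in \Hb^{1,m,\gamma}$ for $A \in \module^N$ then follows either by commuting the module generators through the equation or — more efficiently — by noting that the module generators $\rho\pd[\rho], \rho\pd[v], v\pd[v], \pd[\theta]$ are, up to $\Psib^0$ errors, built from the same b-vector fields whose iterated boundedness is what $\Hb^{1,\infty,\gamma}$-type regularity encodes, so that admissibility (which gives infinitely many spatial and temporal derivatives) already contains it away from the corner, and the corner case is closed by the explicit form of $\dop$ together with the observation that the domain of $\dops$ near $x = 0$ is $H^1$.
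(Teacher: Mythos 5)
Your admissibility argument via Duhamel is clean and correct --- in fact it is more explicit than the paper's own, which uses the same ingredients (self-adjointness of $\cB$, conservation of $\mathcal{D}^m$-norms under $e^{-it\cB}$) but never spells out the reduction $\psi(t_0,\cdot)=0$ together with $g(t',\cdot)\in\CI_c\subset\bigcap_m\mathcal{D}^m$.

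The passage from admissibility to $\psi\in\Hb^{1,m,\gamma}$, however, has a genuine gap: it is circular. You propose to "propagate $\Hb^{1,0,l}$ regularity outward" near $x=0$ and to "interpolate/bootstrap with the module regularity," but propagation near the singularity and module regularity are precisely the content of Theorems~\ref{thm:bulk-prop-near-np} and~\ref{thm:bulk-regularity}, and those theorems take \emph{this} proposition as their a priori hypothesis --- the commutator and wavefront-set arguments cannot even get started without knowing $\psi$ lies in some $\Hb^{1,m,\gamma}$ to begin with. So the argument as written assumes what it is meant to prove. There is also a geometric slip: you describe $\mf\cap\cf$ as the region where "$r$ is comparable to $t$ and both are large," but $\cf$ is $\{x=r/t\to 0\}$, so the corner is the locus $\rho=0,\ x=0$, i.e., $t\to\pm\infty$ with $r\ll t$; the region $r\sim t$ both large is $S_\pm$ in the interior of $\mf$, which is a separate matter (radial points). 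Your subsequent remarks about "$H^1$ near $x=0$" are at least aimed at the right place, but the mislabeling conflates two unrelated parts of the analysis.

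The paper's proof is far more elementary and sidesteps all of this. Self-adjointness gives a uniform-in-$t$ bound on $\|\psi(t,\cdot)\|_{L^2(\reals^3)}$, so $\int t^{-1-2\epsilon}|\psi|^2\,dt\,dz<\infty$; under $\rho=1/t,\ x=r/t$ this says $\psi\in\rho^{3/2-\epsilon}L^2(M)$. The domain characterization of $\cB$ likewise gives a uniform bound on $\|\psi(t,\cdot)\|_{H^1(\reals^3)}$, and integrating with a $t$-weight gives $\psi\in t^{1/2+\epsilon}H^1(\reals\times\reals^3)$, which near the pole is $\Hb^{1,0,1/2-\epsilon}$; temperedness then yields global membership in $\Hb^{1,m,\gamma}$ for some (possibly quite negative) $m,\gamma$. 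No propagation, no Hardy inequality, no module regularity --- just integration of the two conserved quantities with a weight and a change of coordinates. Replace your bootstrapping step with that direct computation and the proof closes.
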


\begin{proof}
  As the Hamiltonian is self-adjoint and the inhomogeneous term is
  compactly supported, the spatial $L^{2}$ norm of $\psi$ is bounded
  for all time, i.e., there is some constant (depending on $g$) so
  that
  \begin{equation*}
    \int_{\reals^{3}} \abs{\psi}^{2} r^{2}\,dr\,d\theta < C(g).
  \end{equation*}
  In particular, we have that
  \begin{equation*}
    \int_{\reals \times \reals^{3}} t^{-1-2\epsilon} \abs{\psi}^{2}
    r^{2}\,dr\,d\theta \,dt < \widetilde{C}(g, \epsilon).
  \end{equation*}
  In terms of $\rho = 1/t$ and $x = r/t$ (i.e., near the northern
  cap), we therefore have that
  \begin{equation*}
    \int \rho^{-3+2\epsilon} \abs{\psi}^{2}
    \frac{d\rho}{\rho}\,dx\,d\theta < \widetilde{C}(g,\epsilon),
  \end{equation*}
  i.e., $\psi \in \rho^{\frac{3}{2}-\epsilon}L^{2}$ (recall that we
  use $\frac{d\rho}{\rho}\,dx\,d\theta$ as the volume form for this
  space).  
  
  Note also that the characterization of the domain (described in
  Section~\ref{sec:relat-wave-equat}) of the operator shows that
  $\psi$ has bounded energy for all time, i.e., there is some bound
  (depending on $g$) so that
  \begin{equation*}
    \int_{\reals^{3}} \left( \abs{\pd[t]\psi}^{2} + \abs{\grad
        \psi}^{2}\right) \,dx < C(g)
  \end{equation*}
  for all time.  Integrating this bound (with a weight in $t$) shows
  that in fact $\psi \in t^{1/2+\epsilon}H^{1}(\reals\times
  \reals^{3})$ and therefore lies in $\Hb^{1,0,1/2-\epsilon}$ near the
  pole ($x=0, \rho = 0$).

  As $\psi$ is tempered and lies in $\Hb^{1,0,1/2-\epsilon}$ near the
  pole, we can conclude that $\psi \in \Hb^{1,m,\gamma}$ for some $m$
  and $\gamma$.
\end{proof}

As a corollary to the proof of the above proposition, we record a
result useful in the estimation of error terms below.
\begin{corollary}
  \label{prop:local-H1}
  Suppose $\psi$ the forward solution of $i\dirac_{\charge/r}\psi = g$ and
  $g \in \CI_{c}$.  If $\chi \in \CI(M)$ is supported in
  $\{x < 1\}$, then $\chi \psi \in \Hb^{1,0,1/2-\epsilon}$ for all
  $\epsilon > 0$.  
\end{corollary}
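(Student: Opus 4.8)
The plan is to observe that the preceding proposition's proof already yields the key input: the forward solution $\psi$ has spatially uniformly bounded energy for all time, so that $\psi \in t^{1/2+\epsilon} H^1(\reals \times \reals^3)$ near the pole. The point of the corollary is that this $\Hb^{1,0,1/2-\epsilon}$ membership in fact holds on all of $\{x < 1\}$ (i.e.\ throughout the forward light cone region $C_+$, not merely near the pole $x = 0$), so no radial-set propagation or module regularity is needed. First I would recall that $\{x < 1\}$ corresponds, in the coordinates $\rho = 1/t$, $x = r/t$, to the region $r < t$, and that by finite speed of propagation (applied to $\dirac_{\charge/r}^2$, which is a wave-type operator with a potential vanishing away from $r = 0$) together with the compact support of $g$, the solution $\psi$ restricted to a fixed time slice has its $H^1$ norm bounded uniformly in $t$ by the energy estimate coming from essential self-adjointness of $\cB$ and the characterization of $\mathcal{D}(\cB)$ in Section~\ref{sec:relat-wave-equat}.

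The second step is the weighted integration in $t$. Writing $\chi \psi$ with $\chi$ supported in $\{x < 1\}$ and using $t^{-1-2\epsilon}$ as an integrating weight, the bound
\begin{equation*}
  \int_{\reals^3} \left( \abs{\pd[t]\psi}^2 + \abs{\grad \psi}^2 + \abs{\psi}^2 \right) dx < C(g)
\end{equation*}
uniformly in $t$ integrates to give $\chi \psi \in t^{1/2+\epsilon} H^1(\reals \times \reals^3)$. I would then translate this into $\bl$-language: as computed in Section~\ref{sec:inter-with-diff}, near the pole membership in $H^1(\reals \times \reals^3)$ corresponds to $\Hb^{1,0,1}(M) \cap \rho^2 L^2(M)$ with the relevant $\rho = 1/t$, and multiplying by the weight $t^{1/2 + \epsilon} = \rho^{-1/2-\epsilon}$ shifts the boundary exponent to give $\chi \psi \in \Hb^{1,0,1/2-\epsilon}(M)$. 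Since this is a statement about a fixed weight and fixed (finite) regularity order, and $\chi$ is supported away from $\cf$ (where $x$ is bounded below), the argument is uniform over the whole region $\{x < 1\}$; the behavior near $S_+ = \{x = 1\}$ does not enter because $\chi$ is cut off strictly inside.

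The main (and only) subtlety is making sure the energy estimate genuinely holds \emph{uniformly in $t$} on the whole region $\{x < 1\}$ rather than merely near $x = 0$; this is where finite speed of propagation for $\dirac_{\charge/r}^2$ and the compact support of $g$ must be invoked, so that the interaction of $\psi$ with the potential singularity at $r = 0$ is controlled by the self-adjoint functional calculus for $\cB$ (as recalled in the passage on admissible solutions) and does not degrade the $H^1$ bound. Once that uniformity is in hand, everything else is the routine weighted-integration-in-time computation carried out in the proof of the preceding proposition, now localized by $\chi$ rather than by a cutoff near the pole. I do not expect any genuine obstacle here; the corollary is essentially a sharpening of the proposition's proof with the observation that the cutoff $\chi$ stays away from the radial set.
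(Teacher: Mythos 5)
Your proposal is essentially correct and follows the same route the paper intends: the corollary is stated as a byproduct of the preceding proposition's proof, and the key observation---that the uniform-in-time energy bound coming from self-adjointness of $\cB$ and the characterization of $\mathcal{D}(\cB)$ integrates against a $t^{-1-2\epsilon}$ weight to give membership in $t^{1/2+\epsilon} H^1(\reals\times\reals^3)$, hence in $\Hb^{1,0,1/2-\epsilon}$---is exactly what is needed, and the cutoff $\chi$ supported in $\{x<1\}$ keeps the argument away from $S_+$ where the radial-set analysis becomes relevant.

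One small simplification: the invocation of finite speed of propagation for $\dirac_{\charge/r}^2$ is unnecessary. The energy estimate
\begin{equation*}
  \int_{\reals^3}\left(\abs{\pd[t]\psi}^2 + \abs{\grad\psi}^2\right)\,dz < C(g)
\end{equation*}
is already a bound over \emph{all} of $\reals^3$ at each fixed $t$, so its validity throughout $\{x<1\}$ (not just near the pole) is automatic; no localization argument is needed, and the weighted integration in $t$ goes through on the full region at once. Also, strictly speaking $\{x<1\}$ includes both the future cap $C_+$ (where $r<t$) and the past cap $C_-$ (where $r<-t$), but since the forward solution vanishes identically for $t$ sufficiently negative this distinction is immaterial. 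The translation into $\bl$-language via the identification $H^1(\reals\times\reals^3) = \Hb^{1,0,1}(M)\cap\rho^2 L^2(M)$ and the weight shift by $\rho^{-1/2-\epsilon}$ is correctly carried out.
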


Once we know that $\psi$ lies in some $\Hb^{1,m,\gamma}$, we can
decrease the weight to guarantee that $m+\gamma < 1/2$.  The remainder
of this section is devoted to the proof of module regularity, which
proceeds by propagation of singularities arguments.  In regions where
the operator $\dop$ is microlocally elliptic, classical microlocal
estimates suffice.  Similarly, where it is microlocally hyperbolic we
use the $\bl$-version of H{\"o}rmander's theorem to propagate the
regularity from one region to another.  We therefore focus our
attention on the radial points $N^{*}S_{\pm}$ (where the operator is
neither elliptic nor hyperbolic) and on the regions near the
singularity of the Coulomb potential where the operator is singular.

\subsection{The radial set}
\label{sec:radial-set}

At $N^{*}S_{+}$, the Hamilton vector field of the operator $P$ is
radial and so we appeal to radial points estimates of Vasy~\cite{Vasy13}.
The following proposition has the same proof as in previous work of
the first author~\cite{BVW18}.  To match prior work, we phrase the
result first in terms of $\dop$ rather than $\dirac_{\charge/r}$.
\begin{proposition}[{c.f.~\cite[Proposition~5.4]{BVW18}}]
  \label{prop:bulk-radial-pts}
  Suppose $u \in \Hb^{1,-\infty, \ell}$ for some $\ell$ and
  $\dop u \in \Hb^{0,m,\ell}$.  If we further assume that
  $u \in \Hb^{1,m - 1,\ell}$ on a punctured neighborhood $U\setminus
  N^{*}S_{+}$ of $N^{*}S_{+}$ in $S^{*} M$, then for $m' \leq m$
  with $m' + \ell < 1/2$, $u \in \Hb^{1,m' - 1,\ell}$ at
  $N^{*}S_{+}$.  Further, for $N \in \naturals$ with $m' + N \leq m$
  and for $A\in \module^{N}$, $Au \in \Hb^{1,m' - 1,\ell}$ at
  $N^{*}S_{+}$ as well.

  In particular if $\dop u \in \Hb^{0, \infty, \ell}$ and $u \in
  \Hb^{1, \infty, \ell}$ on the punctured neighborhood, then we
  conclude that $u$ has infinite order module regularity, i.e.\ that
  $N$ in $A \in \module^{N}$ is arbitrary.
\end{proposition}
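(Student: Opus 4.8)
Following the cue in Section~\ref{sec:radial-sets-module}, the plan is to deduce the statement from the analogous radial-point and module estimate for the \emph{second-order} operator $\Pd$, which is exactly \cite[Proposition~5.4]{BVW18}; the only genuinely new content is to check that the reduction is legitimate and that the Coulomb and matrix terms are harmless near $N^{*}S_{+}$. First I would reduce to $\Pd$. Since $\Pd=\tdop_{1}\dop=\tdop^{*}\dop$, from $\dop u\in\Hb^{0,m,\ell}$ one gets $\Pd u=\tdop^{*}(\dop u)\in\Hb^{0,m-1,\ell}$ microlocally near $S_{+}$ (one $\bl$-order is lost to the first-order factor $\tdop^{*}$, which is a genuine $\bl$-differential operator there since the Coulomb singularity of $\tdop$ sits over $x=0$ while $N^{*}S_{+}$ lies over $x=1$). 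Inspecting the formula for $\Pd$ near $C_{+}$ recorded above, every coefficient — the shift $1+i\charge-i\charge/x$, the terms $x^{-2}\Lap_{\theta}$ and $x^{-2}\alpha_{r}$ — is smooth up to $\mf$ near $x=1$ and the matrix $\alpha_{r}$ is bounded, so $\Pd$ differs there from the rescaled, conjugated d'Alembertian $-\rho^{-3-i\charge}\Box\,\rho^{1+i\charge}$ (acting componentwise) only by a first-order $\bl$-differential operator with coefficients smooth up to $\mf$. Hence $\Pd$ and the scalar Minkowski model share the same $\bl$-principal symbol, the same characteristic and radial sets $\mathcal{R}_{+}=N^{*}S_{+}$, and the same linearized Hamilton flow there; and the hypothesis $u\in\Hb^{1,m-1,\ell}$ on the punctured neighborhood supplies precisely the a priori regularity the second-order estimate requires.

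Next I would run Vasy's positive-commutator argument at $\mathcal{R}_{+}$ verbatim as in \cite{BVW15,BVW18}: choose a commutant $B$, microlocalized near $\mathcal{R}_{+}$ and incorporating the weight $\rho^{-\ell}$ together with a power of a defining function for $\mathcal{R}_{+}$ tied to the target order $m'$, so that $i[\Pd,B^{*}B]$ has a definite sign modulo a term supported away from $\mathcal{R}_{+}$ (controlled by the punctured-neighborhood hypothesis) and strictly lower-order terms; then pair $\langle i[\Pd,B^{*}B]u,u\rangle$ against $2\Re\langle Bu,B\Pd u\rangle$, using $\Pd^{*}=\gamma^{0}\Pd\gamma^{0}$ from \eqref{eq:Pd dual} to handle the adjoint, and absorb errors by Cauchy--Schwarz. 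Since a priori only $u\in\Hb^{1,-\infty,\ell}$, the pairing is made rigorous by inserting the standard order-lowering regularizers and letting the regularization parameter tend to $0$. The inequality closes precisely when $m'+\ell<1/2$, which is the same threshold as in the scalar Minkowski case: the real Coulomb exponent $i\charge$ contributes only a unit-modulus oscillation through the conjugation by $\rho^{1+i\charge}$ and so produces no shift, while the zeroth-order matrix term $-i\charge\,x^{-2}\alpha_{r}$ is of strictly lower order near $x=1$ and enters only the error. This yields $u\in\Hb^{1,m'-1,\ell}$ at $N^{*}S_{+}$.

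The iterated (module) statement I would then prove by induction on $N$, again as in \cite{BVW15,BVW18}, using the structure of $\module$ from Section~\ref{sec:radial-sets-module}: it is a finitely generated $\Psib^{0}$-module, closed under commutators, and commuting $\Pd$ past a module generator returns module elements times first-order operators modulo a term of the form $(\text{order}\le 1)\cdot\Pd$. Writing $A=A_{1}A'$ with $A_{1}$ a generator and $A'\in\module^{N-1}$, the inductive hypothesis gives $A'u\in\Hb^{1,m'-1,\ell}$ near $N^{*}S_{+}$ and $\Pd(A'u)=A'\Pd u+[\Pd,A']u$ in the correct space, so the single-derivative estimate applies to $A_{1}(A'u)$ (legitimate since $m'+N\le m$) and gives $Au\in\Hb^{1,m'-1,\ell}$ at $N^{*}S_{+}$. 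The concluding ``in particular'' is then immediate: with $\dop u\in\Hb^{0,\infty,\ell}$ and $u\in\Hb^{1,\infty,\ell}$ on the punctured neighborhood, $m$ may be taken arbitrarily large, so for fixed $m'$ with $m'+\ell<1/2$ the order $N\in\naturals$ of the module derivative is unconstrained.

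The hard part is not any single estimate — the positive-commutator machinery is standard and the geometry near $N^{*}S_{+}$ is that of Minkowski space — but rather the bookkeeping flagged above: verifying that the reduction $\dop\rightsquigarrow\Pd$, the $4\times4$ matrix structure, and the Coulomb terms together leave the $\bl$-principal symbol, the radial-set geometry, and the threshold near $S_{+}$ unchanged, so that the arguments of \cite{BVW15,BVW18} transfer without modification.
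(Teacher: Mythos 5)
Your proposal is essentially the paper's own argument: the paper's proof is a one-liner observing that $\Pd u = \tdop^{*}(\dop u)\in\Hb^{0,m-1,\ell}$ and then citing \cite[Proposition~5.4]{BVW18}. Your additional verification that the Coulomb and matrix terms are smooth near $x=1$ and leave the $\bl$-principal symbol, radial-set geometry, and threshold unchanged is precisely the (unstated) content that makes that citation legitimate, so the two proofs coincide.
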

\begin{proof}
From the assumptions, we have that $P u \in \Hb^{0, m - 1, \ell}$, and
the proposition follows from the cited source.
\end{proof}
\begin{remark}
  Since the conclusion of the theorem is drawn away from the
  singularity of the potential, the regularity at the poles is
  irrelevant, and one could equivalently assume that
  $u \in \Hb^{0 , m, \ell}$ on the punctured neighborhood and
  conclude that $u \in \Hb^{0 , m, \ell}$ at $N^{*}S_{+}$.
\end{remark}

\subsection{Near the singularity of the potential}
\label{sec:near-sing-potent}

Previous work of the first author~\cite{BW20} establishes the
following propagation of singularities estimate.  Note that this is a
statement at finite times.

\begin{theorem}[{\cite[Theorem~22]{BW20}}]
  \label{thm:dirac-coulomb-prop}
  If $\psi$ is an admissible solution of $i\dirac_{\charge/r}\psi = g
  \in \CI_{c}(\reals \times (\reals^{3}\setminus 0))$
  and $\abs{\charge}<1/2$.  For each $m$, $\WFb^{1,m}\psi\subset
  \Sigmadot$.  Away from $r=0$, $\WFb^{1,m}\psi$ is invariant under bicharacteristic flow.

  For $q_{0} = \{ (t_{0}, r=0, \theta \in \sphere^{2}, \tau_{0},
  \xi_{r}=0, \eta = 0)\}\subset \Sigmadot$ and let $U$ denote a
  neighborhood of $q_{0} \in \Sigmadot$.  If
  \begin{equation*}
    U \cap \{ \xi_{r} / \tau > 0\} \cap \WFb^{1,m}\psi =\emptyset,
  \end{equation*}
  then
  \begin{equation*}
    q_{0} \cap \WFb^{1,m}\psi = \emptyset.  
  \end{equation*}
\end{theorem}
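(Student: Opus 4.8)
The plan is to prove the three assertions of the theorem in turn, using the Hardy inequality of Lemma~\ref{lem:bulk-hardy} throughout to tame the singular coefficient $\charge/r$; this is exactly where $\abs{\charge}<1/2$ enters, since the sharp three-dimensional Hardy constant is $1/4 > \charge^{2}$, so $\norm{\charge r^{-1}u}^{2}\le 4\charge^{2}\norm{\grad u}^{2} < \norm{\grad u}^{2}$ and every error generated by the potential can be absorbed into the ``good'' first-derivative terms of a commutator or energy estimate. For the parts of the statement involving the degenerate (radial) structure I would pass to the conjugated, rescaled second-order operator $\Pd = \tdop^{*}\dop$, since the relevant wavefront statements for $\psi$ follow from the corresponding statements for $u = \rho^{-1-i\charge}\psi$, which solves $\Pd u = \tilde g$ with $\tilde g \in \CI_{c}(M^{\circ})$, and the geometry of radial points and degenerate flow for $\Pd$ is that of the wave operator. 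The inclusion $\WFb^{1,m}\psi \subset \Sigmadot$ is then $\bl$-elliptic regularity: where $\sigmab(\Pd)$ is invertible one builds a microlocal parametrix in $\Psib^{*}(M)$, the error from $\charge/r^{2}$ being absorbed via Hardy and the admissibility hypothesis $\psi\in C(\reals;\mathcal{D}^{m})$ supplying membership in some $\Hb^{1,m,\gamma}$. Away from $r=0$ the operator is a genuine (non-singular) $\bl$-differential operator of real principal type, so $\WFb^{1,m}\psi$ is bicharacteristically invariant there by the $\bl$-version of H\"ormander's propagation theorem (Melrose--Sj\"ostrand, in the manifold-with-corners formulation of Vasy~\cite{Vasy08}); module regularity propagates along with it because $\module$ is closed under commutators and its generators commute with $\Pd$ up to terms lying in $\module\cdot\Psib^{1} + \Pd\cdot\Psib^{0}$.

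The core of the argument is the diffractive statement at $q_{0} = (t_{0}, r=0, \theta, \tau_{0}, \xi_{r}=0, \eta=0)$. Here $q_{0}$ lies over the conic face, where the compressed characteristic set is just $\{\xi = \eta = 0\}$ and the $\bl$-Hamilton vector field degenerates, so one cannot simply propagate; instead I would run a positive-commutator estimate of the type used for radial points by Vasy~\cite{Vasy13}, adapted to the conic singularity exactly as in~\cite{BW20}. One chooses a self-adjoint commutant $B\in\Psib^{m-1}(M)$ that is $\bl$-elliptic at $q_{0}$, with $\WFb'(B)$ contained in a small neighborhood $U$ of $q_{0}$ whose intersection with $\{r>0\}$ lies in the incoming region $\{\xi_{r}/\tau>0\}$, where by hypothesis $\psi$ already has $\WFb^{1,m}$-regularity. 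Pairing $i[\Pd,B^{*}B]u$ with $u$ and integrating by parts yields a main term elliptic at $q_{0}$, a term microsupported in the a priori-controlled incoming region, and error terms: those from $\tfrac{\charge}{r^{2}}\alpha_{r}$ and $\tfrac{\charge}{r}$ are dominated by Hardy, while the terms concentrated at $r=0$ are handled using the explicit normal (model) operator $N(\Pd)$ there, whose indicial roots are real for $\abs{\charge}<1/2$ and whose mapping properties on the relevant $\bl$-Sobolev spaces are favorable, so that, combined with the admissibility regularity near the cone tip, those contributions are absorbable. This gives $q_{0}\notin\WFb^{1,m}\psi$, and repeating the computation with generators of $\module$ inserted (their commutators with $\Pd$ and with $B^{*}B$ remaining within the module) upgrades the conclusion to $A\psi$ for every $A\in\module^{N}$.

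The main obstacle is precisely this radial-point estimate at the conic tip, where the degeneracy of the Hamilton flow---as in a Vasy radial-point argument, but now over a codimension-two corner rather than at an ordinary radial set---must be reconciled with the singular potential: one must track carefully which commutator errors are absorbable by the sharp Hardy inequality and which require the indicial structure of $N(\Pd)$, and a time-localization exploiting admissibility may be needed so that only the incoming region, and not the diffracted outgoing rays emanating from the tip, contributes to the a priori term. A secondary but nontrivial point is the bookkeeping of the module orders: one must ensure that every commutator arising when module generators are inserted into the positive-commutator identity stays controllable, so that the regularity statement upgrades uniformly over all $A\in\module^{N}$.
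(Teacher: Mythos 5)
The statement you are asked to prove is not proved in this paper at all: it is quoted verbatim (as Theorem 22) from the earlier work~\cite{BW20} of the first author with Wunsch, and the paper explicitly frames it as a ``finite times'' result imported wholesale. There is therefore no in-paper proof against which to compare; what you have written is a plausible reconstruction of the argument of~\cite{BW20}, and the most useful feedback is to flag where your sketch drifts away from that argument and from the precise geometric situation of the theorem.

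Two points of substance. First, the positive-commutator estimate in~\cite{BW20} (and in the analogue proved in Section~\ref{sec:bulk prop sing section } of this paper, Theorem~\ref{thm:bulk-prop-near-np}, which adapts it to the corner $\mf\cap\cf$) is run with the \emph{first-order} operator: one computes $\tfrac{1}{i}[\dop,A^{*}A]$ using Lemma~\ref{lemma:careful-bulk-commutator-general-new} and pairs against $u$, exploiting that $\dop^{*}=\dop-i$ up to lower order. The second-order operator $\Pd=\tdop_{1}\dop$ enters only in the elliptic pairing (as in Lemma~\ref{lem:integrate-by-parts-bulk}), not in the diffractive commutator. Your proposal to commute $B^{*}B$ through $\Pd$ is not what the cited proof does, and it would change the bookkeeping: the key cancellation at the tip comes from the near-homogeneity of $\dop$ in $r$, which produces the term $A_{0}\bigl(\alpha_{r}(i\pd[x]+\tfrac{i}{x}-\tfrac{i}{x}\beta K)-\tfrac{\charge}{x}\bigr)$ that is then rewritten as $\tR\dop+Q^{*}Q$ plus controllable errors. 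Second, this theorem lives at finite times, over the interior of the codimension-one boundary face $\cf$; the point $q_{0}=(t_{0},r=0,\dots)$ is \emph{not} at a codimension-two corner, the wavefront set $\WFb^{1,m}\psi$ carries no weight index $\gamma$, and the module $\module$ (which is anchored at $N^{*}S_{\pm}$ near spacetime infinity) plays no role in its statement or proof. Your remarks about ``a codimension-two corner'', the weight exponent, and the module upgrade are all features of the \emph{other} propagation results in this paper (Proposition~\ref{prop:bulk-radial-pts} and Theorem~\ref{thm:bulk-prop-near-np}), and conflating the three different propagation statements is the main thing to correct. The invocation of Hardy's inequality to absorb the potential term under $\abs{\charge}<1/2$ and the use of admissibility to supply the a priori regularity are both on target and are indeed the mechanism by which~\cite{BW20} (and Section~\ref{sec:bulk prop sing section } here) control the singular terms.
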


Note that, as $\WFb^{1,m}\psi$ is closed, the second part of the
theorem yields regularity at the outgoing points ($\xi_{r} /\tau < 0$)
sufficiently near $\rho_{0}$.

The analogous statement near the poles ($\rho =0$, $x=0$) has the form:
\begin{theorem}
  \label{thm:bulk-prop-near-np}
  If $\psi$ is the forward solution of $i \dirac_{\charge/r}\psi
  = g$, $g\in \CI_{c}(\reals\times(\reals^{3}\setminus 0))$, and
  $\abs{\charge}<1/2$, then $\WFb^{1,m,\ell}\psi \subset \Sigmadot$.  For
  \begin{equation*}
    q_{0} = \{ (\rho=0, x=0, \theta \in \sphere^{2}, \sigma_{0}, \xi =
    0, \eta = 0)\} \subset \Sigmadot \cap \{\rho = 0\}
  \end{equation*}
  and let $U$ denote a neighborhood of $q_{0} \in \Sigmadot$.  If
  \begin{equation*}
    U \cap \{ \xi / \sigma > 0\} \cap \WFb^{1,m,\ell}\psi = \emptyset,
  \end{equation*}
  then
  \begin{equation*}
    q_{0} \cap \WFb^{1,m}\psi = \emptyset.
  \end{equation*}
\end{theorem}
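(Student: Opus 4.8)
The plan is to follow the finite-time diffractive propagation argument of~\cite[Theorem~22]{BW20} (restated as Theorem~\ref{thm:dirac-coulomb-prop} above), adapting it to the corner $\mf\cap\cf$ so that it also controls the $\bl$-behavior at $\mf$ and respects the weight $\ell$; throughout one works with the conjugated second-order operator $\Pd=\tdop^{*}\dop$, using $\Pd^{*}=\gamma^{0}\Pd\gamma^{0}$ from~\eqref{eq:Pd dual} when integrating commutators by parts. The containment $\WFb^{1,m,\ell}\psi\subset\Sigmadot$ follows from microlocal elliptic regularity: at a point $q\notin\Sigmadot$ lying over $\mf$ away from $\cf$ the operator is $\bl$-elliptic and Lemma~\ref{lem:elliptic-reg-bulk} applies (with $m$ arbitrary, since $\dop\psi=f\in\CI_{c}(M^{\circ})$); at a point over $\cf$ away from $\mf$ the cone-calculus elliptic parametrix of~\cite{BW20} applies; near the corner $\mf\cap\cf$ one combines the two calculi as in Section~\ref{sec:bl-calculus}.

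The substance of the proof is the positive-commutator estimate at $q_{0}$. Since $q_{0}$ lies over $x=0$, the operator $\Pd$ is a cone operator rather than a $\bl$-operator there, with angular part governed by $K$ and $\Lap_{\theta}$; because invariant operators commute with $K$ and $\Lap_{\theta}$, one decomposes into joint eigenspaces and is reduced to a family of one-dimensional radial problems, with the $\bl$-structure in $\rho$ (encoded by $\rho\pd[\rho]$) riding along uniformly in the eigenvalue. One then constructs an invariant commutant $A\in\rho^{-\ell}\Psib^{m}(M)$, elliptic at $q_{0}$ and microsupported near it, adapted to the Hamilton flow of $\sigmab(\Pd)$, which after the conic rescaling $\xi\mapsto\xi/x$, $\eta\mapsto\eta/x$ becomes nondegenerate at $x=0$ (so that $q_{0}$ is a ``conic'' point, not a radial point of $\Pd$, and no threshold condition intervenes). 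The commutant is arranged so that the ``wrong sign'' contribution to $i[\Pd,A^{*}A]$ is microsupported in the incoming region $\{\xi/\sigma>0\}$, where $\WFb^{1,m,\ell}\psi=\emptyset$ by hypothesis. Pairing $i[\Pd,A^{*}A]\psi$ with $\psi$, the principal term dominates $\norm{A\psi}_{\Hb^{1,0,\ell}}^{2}$ up to the incoming term (finite by assumption), the a~priori regularity of $\psi$ near $x=0$ supplied by Corollary~\ref{prop:local-H1}, and the error terms below; after the usual regularization and an iteration over orders up to $m$ this yields $q_{0}\cap\WFb^{1,m,\ell}\psi=\emptyset$, with no gain in the weight $\ell$.

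Two families of error terms require care. The commutator errors $[1/x,A]$ and $[D_{x},A]$ are handled by Lemma~\ref{lem:comms-with-b-ops}, which identifies their orders and symbols. More delicate are the terms $\charge^{2}/x^{2}$ and $\charge\alpha_{r}/x^{2}$ appearing in $\Pd$ from the Coulomb coupling: these are singular of exactly critical strength and must be absorbed into the positive principal term using the Hardy inequality in the form $\norm{x^{-1}u}\le 2\norm{\pd[x]u}$ of Lemma~\ref{lem:bulk-hardy}, which leaves a strictly positive remainder precisely because $\charge^{2}<1/4$, i.e.\ $\abs{\charge}<1/2$. The main obstacle, exactly as in~\cite{BW20}, is to organize the commutator so that this Hardy bound controls the full zeroth-order error uniformly up to the corner $\mf\cap\cf$ while the $\rho\pd[\rho]$ component of the Hamilton flow is simultaneously handled by standard $\bl$-propagation; once the argument of~\cite[Theorem~22]{BW20} is in place, the extra bookkeeping needed to carry the weight and the $\mf$-localization through is routine.
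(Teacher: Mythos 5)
Your high-level plan is the same as the paper's: microlocal elliptic regularity away from $\Sigmadot$, a positive-commutator estimate at the conic point $q_{0}$ exploiting the near-homogeneity in $x$, and the Hardy inequality to absorb the critically singular Coulomb terms because $|\charge|<1/2$. That matches. But two of your specific choices diverge from what the paper actually does, and neither is obviously workable without further justification.

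First, you run the hyperbolic commutator through the second-order operator, pairing $i[\Pd, A^{*}A]\psi$ with $\psi$. The paper's propagation step instead commutes with the first-order operator $\dop$ (Lemma~\ref{lem:careful-bulk-comm-detail-with-a}), using the identity $\dop^{*}=\dop - i$ to get $\bigl|\langle [\dop, A^{*}A]u,u\rangle\bigr| \le 2\norm{Au}\norm{A\dop u}+\norm{Au}^{2}$ essentially for free; the factorization $\Pd=\tdop_{1}\dop$ and the relation $\Pd^{*}=\gamma^{0}\Pd\gamma^{0}$ are reserved for the integration-by-parts estimate of Lemma~\ref{lem:integrate-by-parts-bulk}. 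Working directly with $\Pd$ would require a second-order version of Lemma~\ref{lemma:careful-bulk-commutator-general-new} with more singular cross-terms and a messier adjoint; if you want to do that you must actually carry it out, since the paper deliberately avoided it. Second, you propose to decompose into joint $K$, $\Lap_{\theta}$ eigenspaces and reduce to radial ODEs. The paper does not do this: the invariance of the commutant is used only to ensure the $\eta$-derivative terms in the commutator (the $\bB_{2},\bB_{3}$ terms, assembled into $E''$) are microsupported in $\supp\pd[\eta]c$, hence away from $\Sigmadot$, and so are absorbed by elliptic regularity. Your eigenspace reduction would need uniform-in-eigenvalue positive-commutator estimates and then a mechanism to reassemble a microlocal statement on $\Sbstar M$; since the symbol localizes in $\hat\xi^{2}+|\hat\eta|^{2}$ relative to the $\bl$-frequency $\sigma$, the angular eigenvalue and $\sigma$ interact, and this uniformity/reassembly is precisely what the abstract commutator lemmas were designed to avoid. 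Either justify that step carefully or replace it by the microsupport argument.
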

As $g$ is smooth and compactly supported, $\WFb^{0,m}g = \emptyset$; accounting for the presence of
non-compactly supported inhomogeneous terms would require adding
statements analogous to those in
Theorem~\ref{thm:dirac-coulomb-prop}.  

Recall that, as in previous work~\cite{BW20},
Theorem~\ref{thm:bulk-prop-near-np} suffices to show that (in the
notation of the theorem)
\begin{equation*}
  U \cap \{ \xi / \sigma < 0\} \cap
  \WFb^{1,m}\psi = \emptyset.
\end{equation*}

As the proof of Theorem \ref{thm:bulk-prop-near-np} follows
\cite{BW20} closely, we defer this proof to the appendix.

\section{The boundary operator}
\label{sec:bdry-op}

The aim of this section is to show that the normal operator $\dops$ is
Fredholm, a key step in the iterative argument below.  \textbf{For the
remainder of this section, we let $s =  s_{\tow}, s^* = s_{\away} \in C^\infty(\mf)$ be
as in Section \ref{sec:vari-order-sobol}.}  We will prove the following theorem:
\begin{theorem}
  \label{thm:bdry-fredholm}
  The family $\dops$ enjoys the following mapping properties:
  \begin{enumerate}
  \item $\ds \dops : \cX^s \to \cY^s$ and $\dops^* : \cX^{s^*} \to \cY^{s^*}$
    are Fredholm.
  \item The operators $\dops$ form a holomorphic Fredholm family on
    these spaces in the strips
    \begin{equation*}
      \complexes_{s_{+}, s_{-}} = \left\{ \sigma \in \complexes \mid
        s_{+} < \frac{1}{2} + \Im \sigma < s_{-} \right\},
    \end{equation*}
    with $s_{\tow} |_{S_{\pm}} = s_{\pm}$.  The formal adjoint
    $\dops^{*}$ is antiholomorphic in the same region.  
  \item $\ds \dops^{-1}$ has only finitely many poles in each strip $a
    < \Im \sigma < b$.
  \item For all $a$ and $b$, there is a constant $C$ so that
    \begin{equation*}
      \norm{\dops^{-1}}_{\cY_{|\sigma|^{-1}}^{s}\to
        \cX_{|\sigma|^{-1}}^{s}} \leq C \langle \Re \sigma\rangle^{-1}
    \end{equation*}
    for $a < \Im\sigma < b$, $\ang{\Re\sigma} > C$, with a similar
    estimate holding for $(\dops^{*})^{-1}$.
  \end{enumerate}
\end{theorem}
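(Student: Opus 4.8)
The plan is to follow the strategy of Vasy \cite{Vasy13} as adapted in \cite{BVW15,BVW18,BM19}, upgrading the scalar arguments to the $4\times 4$-matrix-valued operator $\dops$ and inserting the new propagation estimate at the singularity of the potential on $\mf$. The skeleton is the usual one: (1) prove elliptic, real-principal-type, and radial-point estimates for $\dops$ and $\dops^*$ on the variable-order spaces $\cX^s, \cY^s$ and $\cX^{s^*}, \cY^{s^*}$; (2) patch these together with the new estimate near $x=0$ to get an a priori estimate of the form $\norm{u}_{\cX^s} \le C(\norm{\dops u}_{\cY^s} + \norm{u}_{\cY^{s'}})$ with $s' < s$ everywhere (compactly included), and dually for $\dops^*$; (3) conclude Fredholmness from the a priori estimate plus its adjoint version by the standard functional-analytic argument; (4) obtain holomorphy/antiholomorphy in the strip $\complexes_{s_+,s_-}$ from the fact that $\dops$ depends holomorphically on $\sigma$ and the Fredholm index is locally constant; (5) get discreteness of poles from analytic Fredholm theory once invertibility is established somewhere in the strip (which it is, by the large-$\abs{\Re\sigma}$ estimate in (4) of the theorem, or by the high-energy estimate below); and (6) prove the polynomial high-energy bound $\norm{\dops^{-1}}_{\cY_h^s\to\cX_h^s}\lesssim \ang{\Re\sigma}^{-1}$ by rescaling $\sigma$ to the semiclassical parameter $h=\abs{\sigma}^{-1}$, $z=\sigma/\abs{\sigma}$, and running the same chain of estimates for the semiclassical operator $\doph$ on $\cX^s_h, \cY^s_h$.

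In more detail: away from $\cf$ and away from the characteristic set $\tilde\Sigma$ of $\dops$ (which, as recorded in Section \ref{sec:characteristic-set}, lies over $C_0$ and is bounded away from $\cf$), $\dops$ is elliptic, so elliptic regularity in $\Psib(\mf)$ applies; near $\cf$ and $x=0$ the operator is elliptic as a $\bl$-operator but its normal operator is that of $\cB$, so the Hardy inequality of Lemma \ref{lem:bulk-hardy} in the form $\norm{x^{-1}u}\le 2\norm{\pd[x]u}$ controls the Coulomb term and membership in the domain forces $H^1$-regularity near the singularity. Along $\tilde\Sigma^\pm$ the Hamilton flow of the (scalar) principal symbol of $\dops$ is the same as that of $\widehat N(\rho^{-3}\Box\rho)$, so real-principal-type propagation carries regularity; at the radial sets $\Lambda^\pm$ one invokes Vasy's radial-point estimates, and here the variable order function $s_\tow$ is engineered (conditions (1)--(3) in Section \ref{sec:vari-order-sobol}, with $s_\tow|_{S_+} < 1/2 + \Im\sigma < s_\tow|_{S_-}$) precisely so that one may propagate \emph{into} $\Lambda^+$ and \emph{out of} $\Lambda^-$, which is where the threshold $1/2 + \Im\sigma$ enters. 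The adjoint $\dops^* = $ (by \eqref{eq:Pd dual}-type computations, $\gamma^0$-conjugation) has the reversed inequalities on $s^* = -s_\tow$, so the dual estimate runs with the roles of $\Lambda^\pm$ swapped. The one genuinely new ingredient is the propagation estimate through the point $x=0$ on $\mf$: this is the semiclassical/boundary analogue of Theorem \ref{thm:bulk-prop-near-np} and of \cite{BW20}, using Lemma \ref{lem:semiclassical-elliptic-reg-sortof}, Lemma \ref{lem:sc-easy-commutants}, the semiclassical Hardy inequality $\norm{hx^{-1}u}\le 2\norm{h\pd[x]u}$, and a positive-commutator argument with a weight built from $x$; the upshot (mirroring the last display of Section \ref{sec:near-sing-potent}) is that if regularity holds on the incoming side $\{\xi/\sigma > 0\}$ near the compressed characteristic point over $x=0$, then it holds at that point and hence, by closedness, on the outgoing side.

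Stitching these microlocal estimates together along the global flow structure of $\tilde\Sigma$ described in Section \ref{sec:radial-sets-module} — from the source $\Lambda^-$, through $C_0$ possibly across $x=0$, to the sink $\Lambda^+$ — yields the a priori estimate of step (2); the error term $\norm{u}_{\cY^{s'}}$ with $s' < s$ is relatively compact into $\cY^s$ by Rellich, so $\dops:\cX^s\to\cY^s$ has finite-dimensional kernel and closed range, and applying the same to $\dops^*:\cX^{s^*}\to\cY^{s^*}$ (whose spaces are exactly the duals relevant to $\cY^s$) gives finite-dimensional cokernel; this proves (1). Holomorphy of the family and local constancy of the index give (2); analytic Fredholm theory together with invertibility for $\abs{\Re\sigma}$ large gives the discreteness in each strip, which is (3). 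Finally (4) and the high-energy polynomial bound come from repeating the whole argument with $\dops$ replaced by $\doph = h\dops$ on the semiclassical spaces $\cX^s_h, \cY^s_h$: the semiclassical elliptic, propagation, and radial-point estimates (with the semiclassical characteristic set $\Sigmadot_h$ of \eqref{eq:char semicl x is zero}) yield $\norm{u}_{\cX^s_h}\le C\norm{\doph u}_{\cY^s_h}$ for $h$ small, and undoing the rescaling $\doph = h\dops$, $h = \ang{\Re\sigma}^{-1}$ (up to the harmless difference between $\abs{\sigma}$ and $\ang{\Re\sigma}$ in the strip) produces the stated $\ang{\Re\sigma}^{-1}$ decay, with the $(\dops^*)^{-1}$ estimate following identically. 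I expect the main obstacle to be the semiclassical propagation estimate through $x=0$ on $\mf$ — i.e., making the positive-commutator argument of \cite{BW20} work uniformly in $h$ with the matrix structure $\alpha_r \beta K$ present — since everything else is a now-routine adaptation of the scalar Minkowski analysis; the matrix terms $-\alpha_r(D_x - i/x + (i/x)\beta K)$ in $\dops$ must be shown not to spoil the sign of the commutator, which is handled exactly as the corresponding step in the diffractive theorem of \cite{BW20} because $\alpha_r$, $\beta$, $K$ commute appropriately with the relevant invariant operators.
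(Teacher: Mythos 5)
Your plan follows the paper's strategy almost exactly: microlocal estimates (elliptic, hyperbolic/real-principal-type, radial-point at $\Lambda^{\pm}$, and a new elliptic estimate near $x=0$) patched together into an a priori estimate with compact error, Fredholmness from the pair of estimates for $\dops$ and $\dops^*$, holomorphy plus analytic Fredholm theory for parts (2)--(3), and a semiclassical rerun of the same estimates for $\doph$ to obtain part (4). Your identification of the genuinely new work — the semiclassical elliptic and positive-commutator propagation estimates near $x=0$, where $\doph$ is not semiclassically elliptic even though $\dops$ is, and the role of the Hardy inequality with $\abs{\charge}<1/2$ to absorb the Coulomb term — matches Propositions~\ref{prop:elliptic-est-semic} and~\ref{prop:main-hyp-semic-statement}. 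The one slight mismatch is in the non-semiclassical estimate near $x=0$: you suggest a Hardy/pairing argument there, whereas the paper's Lemma~\ref{lem:elliptic-property-etc} instead builds a large-calculus $\bl$-parametrix using the fact that the indicial roots of $x\cB$ miss the interval $[1/2,3/2]$; both routes work, but the parametrix route gives the $H^1$-to-$L^2$ estimate with the compact remainder more directly. This is a cosmetic difference and does not affect the correctness of your plan.
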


The proof of the first two parts of Theorem~\ref{thm:bdry-fredholm}
follows from estimates of the form
\begin{align*}
  \norm{u}_{\cX^{s}} &\leq C\norm{\dops u}_{\cY^{s}} + C\norm{u}_{H^{-N}},
  \\
  \norm{u}_{\cX^{s^{*}}} &\leq C \norm{\dops^{*}u}_{\cY^{s^{*}}} + C \norm{u}_{H^{-N}}, 
\end{align*}
where $N$ is sufficiently large.  At elliptic and hyperbolic points
away from the singularity of the potential, microlocalizations of
these estimates follow from standard techniques.
Section~\ref{sec:fredholm-statement} is therefore devoted to the proof
of these estimates near the singularity and near the radial points
$\Lambda^\pm$.

The last two parts of Theorem~\ref{thm:bdry-fredholm} follow from the
semiclassical versions of these estimates, namely
\begin{align*}
  \norm{u}_{\cX^{s}_{h}} &\leq C \left( \frac{1}{h}\norm{\doph u}_{\cY^{s}_{h}}
                           +  h \norm{u}_{\Hh^{-N}}\right), \\
  \norm{u}_{\cX^{s^{*}}_{h}} & \leq  C \left( \frac{1}{h} \norm{\doph
                               ^{*}u}_{\cY^{s^{*}}_{h}} +  h \norm{u}_{\Hh^{-N}}\right).
\end{align*}
These estimates again follow from standard techniques at semiclassical
elliptic and hyperbolic points away from the singularity of the
potential.  Section~\ref{sec:semicl-stat} is devoted to these
estimates near the radial set and the singularity of the potential.
Though the analysis near the radial points is essentially identical to
the non-semiclassical version, the estimates near the singularity are
a bit more complicated due to the failure of semiclassical ellipticity
near the singularity.

\subsection{The Fredholm statement}
\label{sec:fredholm-statement}

We first show that $\dops$ is Fredholm on the desired spaces.
Throughout this section we use the notation $H^{1}$ to denote the
standard Sobolev space $H^{1}$ on the sphere obtained by blowing down
the lift of $\cf$ within $\mf$.  

Away from the radial sets and the singularity at the poles, standard
elliptic parametrix arguments and hyperbolic propagation arguments
apply.  Near the singularity at the poles,
the following lemma yields the desired estimate.
\begin{lemma}
  \label{lem:elliptic-property-etc}
  Fix $\chi \in C^{\infty}$ supported in $\{ x < 1/4\}$.  For any $N$,
  there is a $C$ so that
  \begin{equation*}
    \norm{\chi u}_{H^{1}} \leq C\left( \norm{\dops (\chi
        u)}_{L^{2}} + \norm{\chi u}_{H^{-N}_{g}}\right).
  \end{equation*}
  In particular, as distributions in $\cX^{s}$ lie in $L^2$ near the
  pole, for any $N$, we may estimate
  \begin{equation*}
    \norm{\chi u}_{\cX^{s}} \leq C\left( \norm{\dops (\chi
        u)}_{\cY^{s}} + \norm{\chi u}_{H^{-N}}\right).
  \end{equation*}
\end{lemma}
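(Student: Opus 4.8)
The plan is to prove this as a local elliptic-type estimate at the conic point $x=0$ of $\mf$ (the blow-down of $\cf\cap\mf$ inside $\mf\cong\sphere^{3}$), following closely the corresponding estimate for the bulk Hamiltonian $\cB$ in~\cite{BW20}. Since $\dops$ is a differential operator and $\chi u$ is supported in $\{x<1/4\}$, so is $\dops(\chi u)$; writing $v=\chi u$, it therefore suffices to prove $\norm{v}_{H^{1}}\le C(\norm{\dops v}_{L^{2}}+\norm{v}_{H^{-N}})$ for $v$ supported in $\{x<1/4\}$. On any annular region $\{\delta<x<1/4\}$ the operator $\dops$ is a classically elliptic first-order system on $\sphere^{3}$ — its symbol agrees with that of the rescaled, Mellin-transformed Dirac operator, which is elliptic off the light cone $\{x=1\}$ — so interior elliptic regularity there gives the $H^{1}$ bound with an $L^{2}$ error. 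The only genuine issue is as $x\to 0$, where the Coulomb term $\charge/x$ is of the \emph{same} conic order as the angular term $\frac{1}{x}\beta K$ and so cannot be treated perturbatively; I would handle $\{x<2\delta\}$ separately and glue with a partition of unity.

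The core of the argument is the coercivity estimate $\norm{\dops v}_{L^{2}}^{2}\ge c\,\norm{v}_{H^{1}}^{2}-C\norm{v}_{L^{2}}^{2}$ for $v$ supported near $x=0$. I would obtain this by expanding $\langle\dops v,\dops v\rangle$ using the explicit form of $\dops$ near $C_{+}$ recorded in Section~\ref{sec:relat-wave-equat}, integrating by parts in $x$, and using the anticommutation relations of the $\alpha_{j}$ together with $\Lap_{\theta}=K^{2}-\beta K$ to identify the nonnegative contributions $\norm{\pd[x]v}^{2}$, $\norm{\tfrac{1}{x}\grad_{\theta}v}^{2}$, and $\norm{\tfrac{1}{x}v}^{2}$; the $\sigma$-dependent terms $xD_{x}+\sigma-i+\charge$ are bounded on $H^{1}$ locally and hence harmless. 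The dangerous cross terms, namely those pairing $\charge/x$ against $\pd[x]v$ and against $\tfrac{1}{x}\beta K v$, are controlled by the Hardy inequality on $\mf$, $\norm{x^{-1}v}\le 2\norm{\pd[x]v}$; the hypothesis $\abs{\charge}<1/2$ is exactly what makes the resulting quadratic form positive — this is the same mechanism by which $\cB$ is essentially self-adjoint on $H^{1}$ in this range (\cite{BW20}, and originally \cite{Kato:book,Weidmann}). Equivalently, one may separate variables in the $K$- and $\Lap_{\theta}$-eigenspaces, observe that $\dops$ near $x=0$ is, up to sign and terms bounded on $H^{1}$, the operator $\cB$, and check that the indicial roots of the resulting family of $2\times 2$ ODE systems in $x$ avoid the critical line dictated by the $L^{2}(x^{2}\,dx\,d\theta)$ weight; a cone-calculus parametrix then produces the estimate with a compact, hence $H^{-N}$, error. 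This coercivity step is the main obstacle, being the place where the constraint on $\charge$ is essential and where the geometry of the cone point enters.

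Granting the coercivity bound, combining it with the elliptic estimate on $\{\delta<x<1/4\}$ yields $\norm{v}_{H^{1}}\le C(\norm{\dops v}_{L^{2}}+\norm{v}_{L^{2}})$, and I would then upgrade the $L^{2}$ error to $H^{-N}$: since $L^{2}$ interpolates between $H^{-N}$ and $H^{1}$, Young's inequality gives $\norm{v}_{L^{2}}\le\epsilon\norm{v}_{H^{1}}+C_{\epsilon}\norm{v}_{H^{-N}}$, and absorbing $\epsilon\norm{v}_{H^{1}}$ into the left side completes the first estimate. For the second displayed inequality, I would use that $s_{\tow}\equiv 0$ on $\{x<1/4\}$ (Section~\ref{sec:vari-order-sobol}), so that near the pole $\cY^{s}=L^{2}$ and, since distributions in $\cX^{s}$ lie in $L^{2}$ there, $\norm{\chi u}_{\cX^{s}}^{2}\approx\norm{\chi u}_{L^{2}}^{2}+\norm{\dops(\chi u)}_{L^{2}}^{2}$; bounding $\norm{\chi u}_{L^{2}}\le\norm{\chi u}_{H^{1}}$ and invoking the first part gives $\norm{\chi u}_{\cX^{s}}\le C(\norm{\dops(\chi u)}_{\cY^{s}}+\norm{\chi u}_{H^{-N}})$.
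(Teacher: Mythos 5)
Your proposal is essentially correct, but your primary route differs from the paper's. The paper proves the $H^1$ estimate by constructing a large-calculus $\bl$-parametrix for $\dops$ near $x=0$: it observes that the reduced normal operator of $\dops$ at $x=0$ coincides with that of the stationary Hamiltonian $\cB$, checks that $[1/2,3/2]$ misses $\specb(x\cB)$, and invokes the parametrix construction from~\cite{Melrose:APS} to get $I = G\dops + R$ with $G, R : r^{-3/2}L^2_\bl \to r^{-1/2}\Hb^1$, then unwinds the weights. Your primary route is instead a direct quadratic-form coercivity argument — expand $\norm{\dops v}^2$, integrate by parts, control the dangerous $\charge/x$ cross terms by Hardy, use $\abs{\charge}<1/2$ — which is exactly the strategy the paper uses for the \emph{semiclassical} analogue (Proposition~\ref{prop:semi-class-go-up-reg}), except that the paper works there with the scalar second-order operator $\Ph$ via $\Re\ang{\Ph v, v}$ rather than with $\norm{\doph v}^2$ directly. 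If you pursue the first-order version, be aware that $\dops^*\dops$ does not equal $\Ps = \widehat{N}(\tdop_1)\dops$ and does not have the clean ``difference of squares plus angular Laplacian'' structure after integration by parts; in particular $\norm{\tfrac{1}{x}v}^2$ does not simply appear as a manifestly nonnegative contribution, so you will have to track the matrix cross-terms (the $\alpha_r$--$\beta K$ anticommutation produces additional pieces) and show they can be absorbed via Hardy. Passing through $\Ps$ as the paper does in Proposition~\ref{prop:semi-class-go-up-reg} avoids that bookkeeping. You do also correctly name the indicial-root/parametrix route as an alternative, which is what the paper actually uses here, and its advantage is that it sidesteps any explicit positivity computation. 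Your treatment of the second displayed inequality and the $H^{-N}$ absorption by interpolation matches the paper.
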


In the above lemma it is convenient to use the blow-down map to
identify classical Sobolev spaces on the boundary as we can then use
the compactness of the inclusion of standard Sobolev spaces.

\begin{proof}
  In this proof we use $L^{2}_{\bl}(x^{-1}dxd\theta)$ to denote the $L^{2}$ space
  associated to the $\bl$-density $\frac{dx}{x}d\theta$, while $L^{2}$
  continues to denote $L^{2}(x^2dxd\theta)$.  

  The operator $\dops$ is elliptic on $\{ x < 1/4\}$ and so we may use
  the $\bl$-calculus to construct a good (large calculus) parametrix
  $G$ for $\dops$.  (Indeed, following \cite[Sect.\ 4.1]{BW20}, the
  reduced normal operator of $\dops$ agrees with that of the
  stationary elliptic operator $\cB$ from \eqref{eq:stationary
    operator} (acting in $x$).  One can check that $[1/2, 3/2]$ does
  not intersect the boundary spectrum of $x\cB$, and thus there is a
  large calculus parametrix $Gx^{-1}$ for $x \dops$.)  From \cite{Melrose:APS}, we
  conclude that there is an operator $G$ such that
  \begin{equation*}
    I = G\dops + R,
  \end{equation*}
  where $G,R: r^{-3/2}L^{2}_{\bl} \to r^{-1/2}\Hb^{1}(x^{-1}dxd\theta)$.  As
  $r^{-3/2}L^{2}_{\bl} = L^{2}$ and $r^{-1/2}\Hb^{1}(x^{-1}dxd\theta)\cap
  L^{2} = H^{1}$, we may estimate
  \begin{equation*}
    \norm{\chi u}_{H^{1}} \leq C\left( \norm{\dops (\chi
        u)}_{L^{2}} + \norm{u}_{L^{2}}\right).  
  \end{equation*}
  Interpolation inequalities on the sphere then allow us to bound
  \begin{equation*}
    \norm{\chi u}_{L^{2}} \leq \epsilon \norm{\chi u}_{H^{1}}
    + C\epsilon^{-N}\norm{\chi u}_{H^{-N}},
  \end{equation*}
  finishing the proof.
\end{proof}

Near the radial points $\Lambda^\pm$ (see Section
\ref{sec:radial-sets-module}), we use the following estimates.
\begin{lemma}[{\cite[Proposition 2.3 and 2.4]{Vasy13}}]
  \label{lem:radial-point-estimates-bdry-vasy}
  For all $N$ and for $s_{0}\geq m > \frac{1}{2}+\Im \sigma$, and for
  all $A, B, G \in \Psib^{0}(\mf)$ supported near $\Lambda^-$ with
  $A,G$ elliptic at $\Lambda^-$ and bicharacteristics from the
  microsupport of $B$ tend to $\Lambda^-$ in one direction with closure in the
  elliptic set of $G$, we have
  \begin{equation*}
    \text{If }Au \in H^{m} \text{ then }\norm{Bu}_{H^{s_{0}}} \leq C
    \left( \norm{G\Ps u}_{H^{s_{0}-1}} + \norm{u}_{H^{-N}}\right).
  \end{equation*}

  For $s_{0} < \frac{1}{2}+\Im \sigma$ and all $A, B, G \in
  \Psib^{0}(\mf)$ supported near $\Lambda^+$ with $B, G$ elliptic at
  $\Lambda^+$ and bicharacteristics from $\WF'(B)\setminus
  \Lambda^+$ reach the microsupport of $A$ in one direction while
  remaining in the elliptic set of $G$, we have
  \begin{equation*}
    \norm{Bu}_{H^{s_{0}}} \leq C \left( \norm{G\Ps u}_{H^{s_{0}-1}} +
      \norm{Au}_{H^{s_{0}}} + \norm{u}_{H^{-N}}\right).
  \end{equation*}
\end{lemma}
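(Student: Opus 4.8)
The plan is to deduce both estimates from Vasy's radial point propagation results, namely \cite[Propositions 2.3 and 2.4]{Vasy13} (the radial \emph{source} and radial \emph{sink} estimates, respectively), by verifying that $\Ps$ satisfies their hypotheses near $\Lambda^{\pm}$. The key structural input, already recorded in Section~\ref{sec:radial-sets-module}, is that $\Ps$ has the same (semiclassical, hence in particular classical) principal and subprincipal symbol near the radial sets as the scalar operator $\widehat N(\rho^{-3}\Box\rho)(\sigma)\,\id_{4}$. It follows that the characteristic set of $\Ps$ over the closure of $C_{0}$ is $\tilde\Sigma=\tilde\Sigma^{+}\cup\tilde\Sigma^{-}$, that its radial sets are the submanifolds $\Lambda^{\pm}=\partial\overline N^{*}S_{\pm}$ of \eqref{eq:boundary radial}, that along one component of $\tilde\Sigma$ the rescaled Hamilton flow has $\Lambda^{-}$ as a source and $\Lambda^{+}$ as a sink, and that --- reading off the subprincipal symbol --- the threshold regularity at $\Lambda^{\pm}$ is exactly $1/2+\Im\sigma$. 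With these identifications, the two displayed inequalities are literally \cite{Vasy13} applied to $\Ps$, and the corresponding estimates for the adjoint family, needed for the Fredholm argument of Theorem~\ref{thm:bdry-fredholm}, follow identically.

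Concretely, I would run the standard positive commutator argument. For the source estimate at $\Lambda^{-}$, construct a commutant $\Op(c)^{*}\Op(c)$ with $c$ a scalar symbol of order $s_{0}-1/2$ microsupported near $\Lambda^{-}$, arranged so that the principal symbol of $i[\Ps,\Op(c)^{*}\Op(c)]$ near $\Lambda^{-}$ is a definite-sign multiple of $|c|^{2}$ plus a term microsupported off $\Lambda^{-}$; the hypothesis $m>1/2+\Im\sigma$ is exactly what fixes the sign of the relevant coefficient (computed from the subprincipal symbol), and pairing with $u$, applying the sharp G\aa rding inequality, and using Cauchy--Schwarz on the $\langle\Ps u,\cdot\rangle$ term then upgrades the a priori bound $Au\in H^{m}$ to $\|Bu\|_{H^{s_{0}}}\le C(\|G\Ps u\|_{H^{s_{0}-1}}+\|u\|_{H^{-N}})$ for any $s_{0}\ge m$. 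For the sink estimate at $\Lambda^{+}$ one builds instead a commutant supported along the bicharacteristics incoming to $\Lambda^{+}$ and uses $s_{0}<1/2+\Im\sigma$ to place the coefficient in the favorable regime, obtaining the stated bound with the extra $\|Au\|_{H^{s_{0}}}$ term arising from the cutoff to the punctured neighborhood of $\Lambda^{+}$; no a priori control at $\Lambda^{+}$ itself is required. This is verbatim Vasy's argument, and runs exactly as the scalar version in \cite{BVW15} and the bulk statement Proposition~\ref{prop:bulk-radial-pts}.

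The one point requiring genuine care --- and the main obstacle --- is that $\Ps$ is $4\times4$ matrix-valued, and in particular contains the zeroth-order matrix term $-\alpha_{r}i\charge/x^{2}$, which at $\Lambda^{\pm}$ (where $x=1$) is the nonzero skew-Hermitian matrix $-i\charge\alpha_{r}$. I would argue that this term does not perturb the radial point analysis: having order $0$, it enters neither the principal symbol (order $2$) nor the subprincipal symbol (order $1$), so it changes neither the radial-set geometry nor the threshold $1/2+\Im\sigma$; and in the commutator computation, since $\Op(c)$ has scalar principal symbol, the contribution of $[-i\charge\alpha_{r},\Op(c)^{*}\Op(c)]$ to $i[\Ps,\Op(c)^{*}\Op(c)]$ is one order below the main positive term and is therefore absorbed, exactly as the analogous matrix error terms are handled for $\Pd$ in \cite{BVW18}. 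The real content beyond citing \cite{Vasy13} is thus (i) the routine but necessary check that the subprincipal symbol of $\Ps$ produces the threshold $1/2+\Im\sigma$, and (ii) the bookkeeping that keeps these matrix-valued errors under control throughout the positive commutator argument.
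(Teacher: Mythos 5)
Your proposal is correct and matches the paper's approach: the lemma is cited directly from \cite{Vasy13}, and the justification is exactly the observation recorded in Section~\ref{sec:radial-sets-module} that $\Ps$ shares its principal and subprincipal symbol near $\Lambda^{\pm}$ with the scalar operator $\widehat N(\rho^{-3}\Box\rho)\,\id$, so the threshold $1/2+\Im\sigma$ and the source/sink structure of the rescaled Hamilton flow carry over unchanged. One trivial bookkeeping point: since $-i\charge\alpha_{r}/x^{2}$ is order $0$ and the commutant $\Op(c)^{*}\Op(c)$ has order $2s_{0}-1$, its commutator contribution is two orders (not one) below the main positive term, which only makes the absorption you describe easier.
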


Because $\Ps =
\widehat{N}\left(\rho^{-2}\tdop\rho
\right) \dops$, estimates for $\Ps$ in $H^{s^{0}-1}$ immediately yield
estimates for $\dops$ in $H^{s_{0}}$, i.e.\ we conclude that,
assumptions as in the lemma, that
\begin{equation}\label{eq:nonsemi above thresh L}
  \text{If }Au \in H^{m} \text{ then }\norm{Bu}_{H^{s_{0}}} \leq C
  \left( \norm{G\dops u}_{H^{s_{0}}} + \norm{u}_{H^{-N}}\right).
\end{equation}
and
\begin{equation}\label{eq:nonsemi below thresh L}
  \norm{Bu}_{H^{s_{0}}} \leq C \left( \norm{G\dops u}_{H^{s_{0}}} +
    \norm{Au}_{H^{s_{0}}} + \norm{u}_{H^{-N}}\right).
\end{equation}
Given $A, B, G$ as in the lemma, this follows directly from the lemma
by choosing a $G' \in \Psib^0(\mf)$ for which $B$ is still
microsupported in the elliptic set of $G'$ and with
$\WF' G' \subset \ell G$ and applying the lemma to $A, B, G'$.  Then
the fact that $G' \dops \in \Psib^1$ and elliptic regularity give
\eqref{eq:nonsemi above thresh L} and \eqref{eq:nonsemi below thresh
  L}.

Although similar estimates hold for $\Ps^{*}$, these do not
immediately give the desired estimates for $\dops^{*}$.  To obtain
these, we recall the formulas for the ``$\ell$-based'' operators
(i.e., those obtained via conjugation by $\rho^{\ell}$) in Section
\ref{sec:relat-wave-equat}.  Specifically, recall that
\begin{equation*}
(\Pd_1)^* =   \gamma^{0}\Pd_{-1} \gamma^0= \dop \tilde \dop_{-1}.
\end{equation*}
From this we can obtain estimates for $\dops^{*}$ by taking the
adjoint of the Mellin-transformed normal operator:
\begin{equation*}
\widehat{N}_\sigma(\Pd_1^*) =  \dops \widehat{N}_\sigma(\dop_{-1}),
\end{equation*}
so using that in general for $\bl$-operators $\widehat{N}_\sigma(B^*)=
\widehat{N}_{\overline{\sigma}}(B)$, we conclude that
\begin{equation*}
  \widehat{N}_{\overline{\sigma}-i}(P) =   \widehat{N}_{\sigma +
    i}(P^*)^{*}  = \widehat{N}_{\sigma}(P_1^*)^{*}= \left(\widehat{\tdop}_{-1,\sigma}\right)^{*}\dops^{*}.
\end{equation*}
The radial estimates for $\widehat{P}_{\overline{\sigma}-i}$ are read
off in the obvious way (i.e.\ by exchanging $\overline{\sigma} - i$
for $\sigma$) in the two Lemmas above. This immediately yields estimates for
$\dops^{*}$.  We may therefore conclude that the adjoint $\dops^{*}$
satisfies analogous radial point estimates with thresholds
$\frac{1}{2}-1-\Im \sigma = - \left( \frac{1}{2} + \Im \sigma
\right)$, i.e., in the dual spaces.  Concretely, for $A, B, G$ as in
the theorem, if $s_0^* > -(1/2 + \Im \sigma)$ then
  \begin{equation}\label{eq:nonsemi above thresh L dual}
    \text{If }Au \in H^{m} \text{ then }\norm{Bu}_{H^{s_{0}^*}} \leq C
    \left( \norm{G\dops^* u}_{H^{s_0^*}} + \norm{u}_{H^{-N}}\right).
  \end{equation}
  and is $s_0^* < -(1/2 + \Im \sigma)$ then
    \begin{equation}\label{eq:nonsemi below thresh L dual}
    \norm{Bu}_{H^{s_0^*}} \leq C \left( \norm{G\dops^* u}_{H^{s_0^*}} +
      \norm{Au}_{H^{s_0^*}} + \norm{u}_{H^{-N}}\right).
  \end{equation}

Taking microlocal partitions of unity as appropriate, we thus have the
two estimates
\begin{align*}
  \norm{u}_{\cX^{s}} &\leq C\left( \norm{\dops u}_{\cY^{s}} +
                       \norm{u}_{H^{-N}}\right), \\
  \norm{u}_{\cX^{s^{*}}} &\leq C\left(
                                  \norm{\dops^{*}u}_{\cY^{s^{*}}}
                                  + \norm{u}_{H^{-N}}\right).
\end{align*}
As the inclusions $\cX^{s}, \cX^{s^{*}} \hookrightarrow
H^{-N}$ are compact for sufficiently large $N$, the operators $\dops$
and $\dops^{*}$ are Fredholm on the stated spaces, proving the first
part of the theorem.  The second part of the theorem follows by
inspection of the coefficients together with the observation that we
may choose $N$ uniformly on these strips.

\subsection{The semiclassical statements}
\label{sec:semicl-stat}

The other two statements of the theorem follow from a semiclassical
estimate of the form
\begin{equation}
  \label{eq:semiclass-est}
  \norm{u}_{\cX^{s}_{h}} \leq \frac{C}{h}\norm{\doph u}_{\cY^{s}_{h}}
  + \bo (h) \norm{u}_{H^{-N}_{h}},
\end{equation}
with a corresponding estimate for $\doph^{*}$, where $h =
\abs{\sigma}^{-1}$.  This estimate immediately implies that $\doph$ is
invertible for small $h$ and provides a bound on the norm, proving the
last two statements of the theorem.

For the rest of this section, recall that $\Psibh$ (without a
superscript) denotes the space of semiclassical
$\bl$-pseudodifferential operators with \textit{compactly supported}
symbols.

\subsubsection{The radial set}
\label{sec:radial-set-bdry}

The estimates near the radial sets $\Lambda^\pm$
follow as in earlier work~\cite{BM19, BVW15, BVW18, Vasy13}:
\begin{proposition}[{c.f.~\cite[Propositions~2.8 and~2.9]{Vasy13}}]
  \label{prop:sc-radial-est}
  For $s|_{S_-} > m> \frac{1}{2}+\Im \sigma$ and $A, B, G
  \in \Psibh^{0}$ supported near $\Lambda^-$ with $A,G$ elliptic at
  $\Lambda^-$ and so that semiclassical
  bicharacteristics from the microsupport of $B$ tend to $\Lambda^-$ in one direction with closure in the elliptic
  set of $G$, we have
  \begin{equation*}
    \text{If }Au\in H^{m}\text{ then }\norm{Bu}_{\cX^{s}_{h}} \leq
    \frac{C}{h}\norm{G\doph u}_{\cY^{s}_{h}} + Ch\norm{u}_{\Hh^{-N}}.
  \end{equation*}

  For $s|_{S_+}< \frac{1}{2}+\Im \sigma$, and for all $A,B,G\in
  \Psibh^{0}$ supported near $\Lambda^+$ with
  $B,G$ elliptic at $\Lambda^+$ and so that
  semiclassical bicharacteristics from $\WFbh'(B)\setminus \Lambda^+$ reach the microsupport of $A$ in one
  direction while remaining in the elliptic set of $G$, we have
  \begin{equation*}
    \norm{Bu}_{\cX^{s}_{h}} \leq \frac{C}{h}\norm{G\doph
      u}_{\cY^{s}_{h}} + C\norm{Au}_{\cX^{s}_{h}} + Ch\norm{u}_{\Hh^{-N}}.
  \end{equation*}
\end{proposition}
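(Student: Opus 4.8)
The plan is to deduce the estimates for the first-order operator $\doph$ from the corresponding semiclassical radial-point estimates for the second-order operator $\Ph = h^{2}\Ps$, and to obtain the latter as a direct application of the semiclassical versions of Vasy's radial-point propagation results~\cite[Propositions 2.8 and 2.9]{Vasy13}. As recorded in Section~\ref{sec:radial-sets-module}, $\Ph$ has the same semiclassical principal and subprincipal symbol as the scalar model $\widehat{N}(\rho^{-3}\Box\rho)\,\id$, so its semiclassical characteristic set near the poles coincides with that of the model wave operator, the radial sets are the smooth submanifolds $\Lambda^{\pm}=\partial\overline{N}^{*}S_{\pm}$, which act as a source/sink pair for the rescaled Hamilton flow, and the threshold regularity at $\Lambda^{\pm}$ is $\tfrac12+\Im\sigma$. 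This is exactly the exponent occurring in the hypotheses: it is produced by the first-order term $2i(\Im z)\bigl(hxD_{x}-\tfrac{h\charge}{x}+z+h\charge-ih\bigr)$ in $\Ph$, since on $\Lambda^{\pm}$ the higher-order factor of that product vanishes. Because the order function $s_{\tow}$ of Section~\ref{sec:vari-order-sobol} is constant near $S_{\pm}$ and satisfies $s_{\tow}|_{S_{+}}<\tfrac12+\Im\sigma<s_{\tow}|_{S_{-}}$, the only role of the variable order near $\Lambda^{\pm}$ is to place us below threshold at $\Lambda^{+}$ and above threshold at $\Lambda^{-}$, and it is handled as in~\cite[Appendix A]{BVW15}.

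The content of the radial estimates for $\Ph$ is the usual positive-commutator argument, which I would carry out exactly as in the references. Near the source $\Lambda^{-}$ one takes a commutant in $\Psibh^{0}$ microsupported in a small neighborhood of $\Lambda^{-}$, with principal symbol a suitable power of a defining function of $\Lambda^{-}$ cut off to the component of the semiclassical characteristic set on which $\Lambda^{-}$ is a source; pairing $i[\Ph,\,\cdot\,]$ against $u$, the principal term is sign-definite, the threshold term contributes with a favorable sign precisely when the Sobolev order exceeds $\tfrac12+\Im\sigma$, the a priori hypothesis $Au\in H^{m}$ with $m$ above threshold controls the one error term that is not absorbed, and the remaining errors are handled by the semiclassical elliptic and hyperbolic propagation estimates on the surrounding bicharacteristics, where $G$ is elliptic. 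The mirror-image argument at the sink $\Lambda^{+}$ gives the estimate only below threshold, at the cost of the a priori term $\norm{Au}_{\cX^{s}_{h}}$ at the microsupport of $A$, which lies upstream along the flow into $\Lambda^{+}$. Since the symbolic data of $\Ph$ match the model, the arguments of~\cite[Propositions 2.8 and 2.9]{Vasy13} apply essentially verbatim and yield, for a cutoff $G'$ with $\WFbh'(G')\subset\ellbh(G)$,
\begin{equation*}
  \norm{Bu}_{\cX^{s}_{h}}\le\frac{C}{h}\,\norm{G'\Ph u}_{\Hh^{s_{\tow}-1}}+C h\norm{u}_{\Hh^{-N}}\quad\text{(resp.\ with the extra term $C\norm{Au}_{\cX^{s}_{h}}$ near $\Lambda^{+}$).}
\end{equation*}

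To finish I would convert from $\Ph$ to $\doph$. Using $\Ps=\widehat{N}(\rho^{-2}\tdop\rho)\,\dops$ from Section~\ref{sec:fredholm-statement} we may factor $\Ph=h^{2}\Ps=Q_{h}\,\doph$, where $Q_{h}=h\,\widehat{N}_{\sigma}(\rho^{-2}\tdop\rho)$ with $h=\abs{\sigma}^{-1}$ is a semiclassical $\bl$-operator of order $0$ (its symbol is bounded). Boundedness of semiclassical $\bl$-operators (Lemma~\ref{lem:bdedness-semicl}), together with Lemma~\ref{lem:semiclassical-elliptic-reg-sortof} to move microsupports inside $\ellbh(G)$ at the cost of an $h^{k}$-error, then bounds $\norm{G'\Ph u}_{\Hh^{s_{\tow}-1}}$ by $C\norm{G\doph u}_{\cY^{s}_{h}}+Ch^{k}\norm{u}_{\Hh^{-N}}$, which gives the estimates as stated. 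The corresponding bounds for $\doph^{*}$ on the dual spaces $\cX^{s^{*}}_{h},\cY^{s^{*}}_{h}$ follow from the $\ell$-based adjoint identities of Section~\ref{sec:relat-wave-equat}, which replace $\sigma$ by $\overline{\sigma}-i$ and thus flip the threshold to $-(\tfrac12+\Im\sigma)$, exactly as in the non-semiclassical argument of Section~\ref{sec:fredholm-statement}. The only real work here is bookkeeping: tracking the powers of $h$ through the passage from $\Ph$ to $\doph$ and through the variable-order spaces, and arranging the commutants compatibly with $s_{\tow}$. None of the genuine semiclassical difficulties appear in this proposition, since $\Lambda^{\pm}$ is disjoint from the singularity of the potential; those difficulties arise only in the estimates near that singularity, which are treated separately.
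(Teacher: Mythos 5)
Your proposal takes essentially the same route as the paper, which gives no explicit proof here but defers to the argument laid out for the non-semiclassical version in Section~\ref{sec:fredholm-statement}: apply Vasy's radial point estimates to the second-order operator (using that $\Ph$ shares semiclassical principal and subprincipal symbol with the scalar model $\widehat{N}(\rho^{-3}\Box\rho)\id$), then transfer to the first-order operator through the factorization $\Ph = Q_h\doph$, and obtain the adjoint estimates from the $\ell$-based identities with $\sigma\mapsto\overline\sigma-i$ flipping the threshold. One small inaccuracy: $Q_h = h\,\widehat{N}_\sigma(\tdop_1)$ contains terms such as $hxD_x$ and $h/x$, so it is a semiclassical $\bl$-operator of order $1$, not $0$; the correct boundedness statement to invoke is $Q_h\colon \Hh^{s_{\tow}}\to\Hh^{s_{\tow}-1}$ rather than Lemma~\ref{lem:bdedness-semicl}, but this still yields $\norm{Q_h\doph u}_{\Hh^{s_{\tow}-1}}\le C\norm{\doph u}_{\Hh^{s_{\tow}}}$, so the conclusion is unaffected.
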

Analogous estimates hold for $\dops^{*}$ on the dual spaces as well;
these follow as in Section~\ref{sec:fredholm-statement} above.

\subsubsection{Commutators with semiclassical $\bl$-pseudodifferential
  operators}
\label{sec:comm-with-semicl}

\begin{lemma}
  \label{lem:ellip-semic-comm}
  If $C \in \Psibh^{0}$ is invariant with real-valued scalar principal
  symbol, then
  \begin{equation*}
    \frac{1}{h}\left[ \Ph , C\right] \in \left\{ \frac{h^2}{x^2}
      \Delta_\theta ,h^2 D_x^2, \frac{h^2}{x}
      D_x,
      \frac{h^2}{x^2} \right\} \Psibh^{-1} + \left\{hD_x, \frac hx
    \right\} \Psibh^{0} +  \Psibh^{1},
  \end{equation*}
  The possibility that the commutator of $C$ and $K$ is non-vanishing
  leads to a different result for the first-order operator:
  \begin{align*}
    \frac{1}{h} \left[ \doph , C\right]\in \left\{ \frac hx \beta K,
    \frac hx, h D_x \right\}\Psibh^{0}.
  \end{align*}
\end{lemma}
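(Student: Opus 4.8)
The proof follows the pattern of its classical counterpart in~\cite{BW20}: expand $\doph$ and $\Ph$ into the explicit sums recorded in Section~\ref{sec:relat-wave-equat}, distribute the commutator with $C$ over those sums by the Leibniz rule, and simplify each resulting commutator using the invariance hypothesis together with the commutator formulas of Lemma~\ref{lem:sc-easy-commutants}. The algebraic input is as follows. Since $C$ has scalar principal symbol and $\beta$ is a constant matrix, $[C,\beta]=0$; since $C$ is invariant, $[C,K]=0$ and $[C,\Lap_\theta]=0$. The Clifford structure (Section~\ref{sec:dirac-coul-equat}) gives $[\beta,K]=0$, $\beta^{2}=I_{4}$, $\alpha_{r}\beta=-\beta\alpha_{r}$, $[\alpha_{r},K]=0$, and $I_{4}\Lap_\theta=K^{2}-\beta K$. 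Finally, $\alpha_{r}$ is a bounded smooth matrix-valued function, so $[C,\alpha_{r}]\in h\Psibh^{-1}$, and the auxiliary operators $C_{L},C_{R},B$ produced by Lemma~\ref{lem:sc-easy-commutants} are again invariant scalar operators --- in particular they commute with $K$ and $\Lap_\theta$, by the Jacobi identity and the fact that $x$ and $D_{x}$ do. Consequently $[C,\alpha_{r}]$ commutes with $K$ as well (Jacobi, using $[\alpha_{r},K]=0=[C,K]$), which, together with $\alpha_{r}\beta=-\beta\alpha_{r}$, is what allows the lower-order matrix-valued remainders below to be moved to the right of $\beta K$.

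For the first-order statement I would commute $C$ past each summand of $\doph=hxD_{x}+z-ih+h\charge-\frac{h\charge}{x}-\alpha_{r}(hD_{x}-\frac{ih}{x}+\frac{ih}{x}\beta K)$. The constants contribute nothing. For $hxD_{x}$, Lemma~\ref{lem:sc-easy-commutants} gives $\frac1h[hxD_{x},C]=[xD_{x},C]\in\Psibh^{0}$. For $\frac{h\charge}{x}$ the same lemma gives $\frac1h[\frac{h\charge}{x},C]=\charge\frac hx C_{R}\in\{\frac hx\}\Psibh^{-1}$; for the $K$-dependent building block, since $C$ commutes with $\beta K$, $\frac1h[\frac hx\beta K,C]=\frac hx C_{R}\beta K=\frac hx\beta K\,C_{R}\in\{\frac hx\beta K\}\Psibh^{-1}$. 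For the $\alpha_{r}$-terms one writes $[C,\alpha_{r}(\cdots)]=[C,\alpha_{r}](\cdots)+\alpha_{r}[C,(\cdots)]$: the first summand is lower order because $[C,\alpha_{r}]\in h\Psibh^{-1}$, and is absorbed after moving it past $\beta K$ using $[C,\alpha_{r}]\beta K=-\beta K[C,\alpha_{r}]$; the second summand reuses the previous computations, with the leftover $\alpha_{r}$ pushed to the right of $\beta K$ via $\alpha_{r}\beta K=-\beta K\alpha_{r}$ and past $hD_{x}$ freely since $\alpha_{r}\in\Psibh^{0}$. Collecting the pieces places $\frac1h[\doph,C]$ in $\{\frac hx\beta K,\frac hx,hD_{x}\}\Psibh^{0}$.

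For the second-order statement the same procedure applies to $\Ph$, now with at most quadratic products of the building blocks $hxD_{x}$, $hD_{x}$, $\frac hx$, $\Lap_\theta$, $\alpha_{r}$. The crucial simplifications: $[C,\Lap_\theta]=0$, so $\frac1h[\frac{h^{2}}{x^{2}}\Lap_\theta,C]=h[\frac{1}{x^{2}},C]\Lap_\theta$, and expanding $[\frac1{x^{2}},C]=\frac1x[\frac1x,C]+[\frac1x,C]\frac1x$ through Lemma~\ref{lem:sc-easy-commutants} and commuting the invariant $\Psibh^{-1}$ remainders past $\frac1x$ and $\Lap_\theta$ lands this in $\{\frac{h^{2}}{x^{2}}\Lap_\theta\}\Psibh^{-1}$; the quadratic $hD_{x}$-terms produce the $h^{2}D_{x}^{2}$ and $\frac{h^{2}}{x}D_{x}$ generators, the $\frac1{x^{2}}$-term the $\frac{h^{2}}{x^{2}}$ generator, the mixed $hxD_{x}$-products land in $\Psibh^{1}$, and the single $\alpha_{r}$-term $-\alpha_{r}\frac{ih^{2}\charge}{x^{2}}$ is handled as in the first-order case. (If one instead derives this from the first-order result via the factorization of $\Ph$ into first-order factors recorded in Section~\ref{sec:relat-wave-equat}, then the identity $K^{2}=\Lap_\theta+\beta K$ is what converts the product $(\beta K)^{2}$ of the two $\frac hx\beta K$ pieces into the allowed generators $\frac{h^{2}}{x^{2}}\Lap_\theta$ and $\frac hx\beta K$.)

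The main obstacle is the bookkeeping. One must check at every step that the auxiliary operators from Lemma~\ref{lem:sc-easy-commutants} remain invariant --- so that they may be moved across $K$, $\Lap_\theta$, and $\beta$ up to controlled errors --- and that each matrix-valued lower-order remainder, whether from $[C,\alpha_{r}]$ or from re-commuting $\Psibh^{\leq-1}$ factors past $x^{-1}$, is absorbed into the correct summand of the target module with the right powers of $x$ and $h$; in particular that nothing escapes $\{\frac{h^{2}}{x^{2}}\Lap_\theta,h^{2}D_{x}^{2},\frac{h^{2}}{x}D_{x},\frac{h^{2}}{x^{2}}\}\Psibh^{-1}+\{hD_{x},\frac hx\}\Psibh^{0}+\Psibh^{1}$. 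Each individual step is routine; the accounting of weights is where the care lies.
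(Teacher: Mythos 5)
The paper does not give an explicit proof of this lemma; it is stated as a semiclassical analogue of \cite[Lemma 25]{BW20}, and your strategy --- expand $\doph$ and $\Ph$ using the explicit formulas in Section~\ref{sec:relat-wave-equat}, distribute the commutator via Leibniz, and simplify with Lemma~\ref{lem:sc-easy-commutants} plus the invariance and Clifford identities --- is exactly the intended route.  The algebraic inputs you list (invariance gives $[C,K]=[C,\Lap_\theta]=0$, constancy of $\beta$, $\alpha_r\beta=-\beta\alpha_r$, $[\alpha_r,K]=0$, $[C,\alpha_r]\in h\Psibh^{-1}$, and the invariance of the auxiliary operators $C_L,C_R,B$) are all correct and are precisely what one needs.

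The one place your argument does not close is the first--order claim.  After distributing, the term $\frac1h[hxD_x,C]$ lands in $\Psibh^0$ with principal symbol $-\frac1i\, x\pd[x]c$, and similarly $-\alpha_r\cdot\frac1h[hD_x,C]$ contributes an $\alpha_r B$ with $B\in\Psibh^0$; neither carries a prefactor $\frac hx$, $hD_x$, or $\frac hx\beta K$.  You record the first of these as ``$\in\Psibh^0$'' but then conclude that the whole commutator lies in $\{\frac hx\beta K,\frac hx,hD_x\}\Psibh^0$ without explaining how these terms are absorbed.  A concrete test case makes the problem visible: taking $C=\chi(x)$ a smooth cutoff gives $\frac1h[hxD_x,\chi]=\frac1i x\chi'(x)$, which is $O(1)$, whereas every element of the stated right module carries an overall $h$.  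Comparing with the paper's bulk analogue Lemma~\ref{lemma:easy-bulk-commutator}, which explicitly records a $+\Psib^m$ term, and with Lemma~\ref{lem:general-semic-comm-L}, whose expansion exhibits exactly the offending $B_0$ and $\alpha_r B_1$ in $\Psibh^0$, the first--order statement evidently needs a $+\Psibh^0$ on the right (or the module must be read as containing $\Psibh^0$, contrary to the convention the bulk lemma suggests).  Your proof should either flag this and append the $\Psibh^0$ remainder, or explain why it is harmless for the downstream use; as written it silently asserts the commutator lands in a module from which part of it escapes.  The second--order statement is not affected, since the trailing $+\Psibh^1$ absorbs all such remainders, and your treatment of the $\Lap_\theta$ term via $[\frac1{x^2},C]=\frac1x[\frac1x,C]+[\frac1x,C]\frac1x$ and the invariance of $C_R$ is correct.
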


\begin{lemma}
  \label{lem:general-semic-comm-L}
  If $C \in \Psibh^{0}(X)$ is invariant with real-valued scalar
  principal symbol $c$, then
  \begin{equation*}
    \frac{1}{ih}\left[\doph, C\right] = A_{0}\left( \alpha_{r} \left(
        ih \pd[x] + \frac{ih}{x} -\frac{ih}{x}\beta K\right)  - \frac{h\charge}{x} \right)
    + B_{0} + \alpha_{r}B_{1} +
    \bB_{2}\frac{1}{x} +\bB_{3}hD_{x} + h\bB_{4}
  \end{equation*}
  where
  \begin{itemize}
  \item $\ds A_{0}\in \Psibh^{-1}$, with $\sigmabh(A_{0}) = -
    \pd[\xi](c)$,
  \item $\ds B_{0} \in \Psibh^{0}$, with $\sigmabh(B_{0}) = - x\pd[x](c)$,
  \item $\ds B_{1} \in \Psibh^{0}$, with $\sigmabh(B_{0}) = - \pd[x](c)$
  \item $\ds \bB_{2} \in \Psibh^{0}$, with $\supp \sigmabh(\bB_{2})
    \subset \supp \pd[\eta](c)$,
  \item $\ds \bB_{3} \in \Psibh^{-1}$, with $\supp\sigmabh (\bB_{3})
    \subset \supp\pd[\eta](c)$,
  \item $\ds \bB_{4} \in \left\{ \frac hx, hD_x \right\} \Psibh^{-2}$.
  \end{itemize}
\end{lemma}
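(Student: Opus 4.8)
The proof is a direct computation, refining the one behind Lemma~\ref{lem:ellip-semic-comm} by tracking principal symbols. Using the explicit formula for $\doph$ from Section~\ref{sec:relat-wave-equat}, write $\doph = h\dops$ and
\[
  \doph = hxD_x + (z - ih + h\charge) + \mathcal{S},\qquad
  \mathcal{S} := -\frac{h\charge}{x} - \alpha_r Q,\quad Q := hD_x - \frac{ih}{x} + \frac{ih}{x}\beta K,
\]
so that $\mathcal{S}$ is precisely the parenthesized expression in the statement and $\frac{1}{ih}[\doph, C] = \frac{1}{i}[\dops, C]$. Distributing the commutator over this decomposition, the scalar term $z - ih + h\charge$ is a multiple of $I_4$ with constant coefficients and commutes with $C$; the term $hxD_x$ is a semiclassical $\bl$-differential operator of order one with $\bl$-principal symbol $\xi$ (in the coordinates of Section~\ref{sec:bl-calculus}), so $\frac{1}{i}[xD_x, C]\in\Psibh^0$ with principal symbol $-x\pd[x]c$; this is the term $B_0$.

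For $\frac{1}{ih}[\mathcal{S}, C]$ I would expand via Leibniz, $[\alpha_r Q, C] = \alpha_r[Q, C] + [\alpha_r, C]Q$, and feed in the two identities of Lemma~\ref{lem:sc-easy-commutants}: $[1/x, C] = \frac{h}{x}C_R = C_L\frac{h}{x}$ with $C_L, C_R\in\Psibh^{-1}$ and $\sigmabh(C_L) = \sigmabh(C_R) = \frac{1}{i}\pd[\xi]c$, and $\frac{1}{h}[hD_x, C] = B + C'hD_x$ with $B\in\Psibh^0$, $\sigmabh(B) = \frac{1}{i}\pd[x]c$ and $C'\in\Psibh^{-1}$, $\sigmabh(C') = \frac{1}{i}\pd[\xi]c$. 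Two structural facts organize the calculation. First, since $C$ is invariant it commutes with $K$ (\cite[Lemma 4]{BW20}) and, being scalar, with $\beta$; hence $[\beta K, C] = 0$ and $[\frac{1}{x}\beta K, C] = [\frac{1}{x}, C]\beta K$, so the $\beta K$ factor in $Q$ never produces a commutator with $C$ directly. Second, $\alpha_r = \sum_j \frac{x_j}{r}\alpha_j$ with the $\alpha_j$ constant and $\frac{x_j}{r}$ a function of the angular variable alone, so $[\alpha_r, C] = \sum_j \alpha_j[\frac{x_j}{r}, C]\in h\Psibh^{-1}$ with principal symbol supported in $\supp\pd[\eta]c$ (the only surviving part of the $\bl$-Hamilton vector field applied to $\frac{x_j}{r}$ is the angular one); this is the origin of the $\bB_2$ and $\bB_3$ terms.

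The final step is to collect the pieces. The key observation is that $C'$ and $C_R$ share the principal symbol $\frac{1}{i}\pd[\xi]c$, so $A_0 := -iC'$ (equivalently, modulo $h\Psibh^{-2}$, $-iC_R$) lies in $\Psibh^{-1}$ with $\sigmabh(A_0) = -\pd[\xi]c$, and the contributions of $\alpha_r[Q, C]$ coming from the $hD_x$, $-\frac{ih}{x}$ and $\frac{ih}{x}\beta K$ parts of $Q$, together with $\frac{1}{ih}[-\frac{h\charge}{x}, C]$, reproduce $A_0\mathcal{S}$ up to: (i) commutators $[\alpha_r, C']$, $[\alpha_r, C_R]$ and the term $[\alpha_r, C]Q$, which by the angular fact land in $\bB_2\frac{1}{x}$ and $\bB_3 hD_x$; (ii) commutators of $C'$, $C_R$, $A_0$ past $\frac{1}{x}$ and $\alpha_r$, each of which by Lemma~\ref{lem:sc-easy-commutants} gains an extra factor of $\frac{h}{x}$ (or $h$) and hence lands in $h\bB_4$ with $\bB_4\in\{\frac{h}{x}, hD_x\}\Psibh^{-2}$; and (iii) the difference $C' - C_R\in h\Psibh^{-2}$. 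What is left over, $-\frac{1}{i}\alpha_r B$, is the $\alpha_r B_1$ term.

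The main obstacle is precisely this last bookkeeping: verifying that every error produced in moving the common left factor $A_0$ past the singular factors $\alpha_r$, $\frac{1}{x}$ and $\beta K$ genuinely falls into the admissible classes $\bB_2\frac{1}{x}$, $\bB_3 hD_x$, $h\bB_4$. The delicate cases pair the first-order angular operator $\beta K$ with a factor $\frac{1}{x}$: here one uses that $\frac{1}{x}$ commutes with $\beta K$ (they act on disjoint sets of variables), that each application of Lemma~\ref{lem:sc-easy-commutants} gains a power of $h$ or $\frac{h}{x}$, and the semiclassical Hardy inequality $\norm{\frac{h}{x}u}\le 2\norm{h\pd[x]u}$, which lets one treat $\frac{h}{x}$ (and $\frac{h}{x}\beta K$) as bounded semiclassical $\bl$-operators; keeping the orders straight in the semiclassical $\bl$-calculus near $x = 0$ is exactly the computation already carried out, in its coarser symbol-free form, for Lemma~\ref{lem:ellip-semic-comm}.
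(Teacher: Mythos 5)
Your proof is correct and follows essentially the same strategy as the paper's: decompose $\doph$ into the pieces $hxD_x$, the constant term, and the near-homogeneous singular part, then apply Lemma~\ref{lem:sc-easy-commutants} term by term and collect the common left factor $A_0$ (possible because the commutator of $C$ with each of $hD_x$, $1/x$, and $\frac{1}{x}\beta K$ produces the same principal symbol $\frac{1}{i}\pd[\xi]c$). One place where your argument is actually slightly sharper than the paper's: you correctly note that $[\beta K, C]=0$ for invariant scalar $C$ (since $C$ commutes with $\bL$ and hence with $K$, and with the constant matrix $\beta$), whereas the paper's proof writes out a $\frac{1}{ih}[h\beta K,C]$ term and only observes it is microsupported in $\supp\pd[\eta]c$; that term vanishes identically, which is cleaner. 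A small caution on bookkeeping in your step (ii): when commuting $A_0$ past $\alpha_r$ in the $\frac{ih}{x}\beta K$ piece, the error $[\alpha_r,A_0]\frac{h}{x}\beta K$ retains the first-order operator $\beta K$ on the right, which effectively costs a power of $h$; tracking semiclassical orders, that error is most naturally absorbed into $\bB_2\frac{1}{x}$ (where the paper puts the $[\alpha_r,C]$-type commutators) rather than $h\bB_4$. This does not affect the validity of the statement, only the partition of error terms, and the paper's own proof is equally brief on this point.
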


\begin{proof}
  Recall that we may write
  \begin{equation*}
    \doph = h xD_{x} + z + h\charge  -ih - \frac{h\charge}{x} - \alpha_{r} \left(
      h D_{x} - \frac{ih}{x} + \frac{ih}{x}\beta K\right).
  \end{equation*}

  We first consider the angular term and write
  \begin{equation*}
    \frac{1}{ih}\left[ -\frac{ih}{x} \alpha_{r} \beta K, C \right] = -
    \frac{i}{x}\alpha_{r}\frac{1}{ih}\left[ h \beta K, C\right] -
    \frac{i}{x}\frac{1}{ih}\left[\alpha_{r}, C\right] -
    \frac{1}{ih}\left[ \frac{i}{x}, C\right]\alpha_{r}h\beta K.
  \end{equation*}
  As $C$ is invariant and $\alpha_{r}$ and $K$ have only angular
  dependence, the first and second terms are microsupported in the
  support of $\pd[\eta](c)$ and contribute to $\bB_{2}$, while the
  third term yields the angular part of the term involving $A_{0}$.
  (Indeed, we take $A_{0} = \frac{1}{ih}[1/x, C]$ as the \textit{definition} of $A_{0}$.)
  Lemma~\ref{lem:sc-easy-commutants} then shows that $\sigmabh(A_{0}) =
  - \pd[\xi](c)$.

  We now consider the term involving $i\alpha_{r}\left( h\pd[x] +
    \frac{h}{x}\right)$.  Because $\alpha_{r}$ depends only on the
  angular variables, its commutator with $C$ is again microsupported
  in $\supp \pd[\eta](c)$ and therefore contributes to the $\bB_{2}$
  and $\bB_{3}$ terms.  Now, by Lemma~\ref{lem:sc-easy-commutants} the
  commutator of $-hD_{x}$ with $C$ yields a term of the form
  $A_{1}\alpha_{r}\pd[x]$ as well as a term of the form
  $\alpha_{r}B_{1}$, where $\sigmabh(A_{1}) = \sigmabh(A_{0})$ and
  $\sigmabh (B_{1})=\pd[x](c)$.  At the cost of a contribution to the
  $\bB_{4}$ term, we may replace $A_{1}$ by $A_{0}$.  The commutator
  of $ih/x$ with $C$ yields the corresponding part of the $A_{0}$
  term.

  Finally, the commutator of $-h\charge/x$ with $C$ yields the
  corresponding piece of the $A_0$ term, while the commutator of
  $hxD_{x}$ with $C$ yields the $B_{0}$ term.  The $h\charge$ part of
  the operator commutes with $A$ and so does not contribute.
\end{proof}

\subsubsection{Elliptic estimates near the singularity}
\label{sec:ellipt-estim-near-bdry}

We now establish the semiclassical elliptic estimates near the conic
singularity.  \textit{Throughout this section, we assume that all
  pseudodifferential operators and distributions are supported in $\{
  x < 1/4\}$.}  We further present arguments only near the ``north
pole'' (i.e., in $C_{+}$) as the proofs are nearly identical near the
``south pole'' (i.e., in $C_{-}$).

Recall that $s$ continues to satisfy the assumptions of Section
\ref{sec:vari-order-sobol}.  The core of the semiclassical elliptic
estimate near the singularity is the
following proposition: 
\begin{proposition}
  \label{prop:elliptic-est-semic}
  Suppose $\abs{\charge}<1/2$, $A \in \Psibh^{0}$ is invariant and
  satisfies $\WFbh '(A) \cap \Sigmadot_{h} = \emptyset$.  For any
  $G \in \Psibh^{0}$ with $\WFbh'(A) \subset \ellbh (G)$, there is a
  constant $C$ so that
  \begin{equation*}
    \norm{Au}_{\cX^{s}_{h}} \leq C \norm{G\doph u}_{\cY^{s}_{h}} + C
    h^{1/2}\norm{Gu}_{\cX^{s}_{h}} + \bo (h^{\infty})
    \norm{u}_{\cX^{s}_{h}} .
  \end{equation*}
\end{proposition}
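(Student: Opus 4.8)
The estimate is the semiclassical analogue of the elliptic estimate in Lemma~\ref{lem:elliptic-property-etc}, and the plan is to follow that proof as closely as the extra parameter $h$ allows. The one genuinely new feature is the competition between $h$ and the conic scale at $x=0$, which is what forces the $h^{1/2}$ loss: since $\WFbh'(A)\cap\Sigmadot_h=\emptyset$ and, by~\eqref{eq:char semicl x is zero}, $\Sigmadot_h$ meets $\{x=0\}$ only along $\{\xi=0,\eta=0\}$, the operator $x\doph$ is $\bl$-elliptic (classically and semiclassically) on $\WFbh'(A)$; but the coefficient $h/x$ occurring in $\doph$ is only an order-one semiclassical operator near $x=0$ — by the Hardy inequality $\|hx^{-1}v\|\le 2\|h\pd[x]v\|$ on $\mf$ — and is \emph{not} $\bo(h)$-small, so the naive elliptic parametrix leaves an $\bo(1)$ error there that must be improved. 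Throughout I would work with $\doph$ directly rather than with $\Ph$, since the commutator identities of Section~\ref{sec:comm-with-semicl}, in particular Lemma~\ref{lem:general-semic-comm-L}, are phrased for it, and since the hypothesis that $A$ is invariant makes the $\beta K$ terms harmless: then $[K,A]=0$, so $K$ enters only through the angular part of $\sigmabh(A)$, which is controlled by $\pd[\eta]\sigmabh(A)$.

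Concretely, I would split $u=\chi u+(1-\chi)u$ with $\chi=\chi(xh^{-1/2})$ equal to $1$ near $x=0$, separating $\{x\gtrsim h^{1/2}\}$ from $\{x\lesssim h^{1/2}\}$. On $\supp(1-\chi)$ one has $h/x\le h^{1/2}$, $\doph$ is an honest semiclassical $\bl$-operator elliptic on $\WFbh'(A)$, and a standard semiclassical $\bl$-elliptic parametrix (cf. the proof of Lemma~\ref{lem:semiclassical-elliptic-reg-sortof}) gives $A(1-\chi)u = E\doph(1-\chi)u + \bo(h^\infty)\|u\|_{\cX^s_h}$; one then commutes $(1-\chi)$ back through $\doph$ via Lemma~\ref{lem:general-semic-comm-L}, absorbs the $h/x$ contribution into the $Ch^{1/2}\|Gu\|_{\cX^s_h}$ term, and uses Lemmas~\ref{lem:bdedness-semicl} and~\ref{lem:semiclassical-elliptic-reg-sortof} for the rest. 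On $\supp\chi$ one is within the uncertainty-principle scale $\sim h^{1/2}$ of the cone point, where semiclassical microlocalization is unavailable, so I would fall back on the \emph{non-semiclassical} $\bl$-parametrix for $x\dops$ exactly as in Lemma~\ref{lem:elliptic-property-etc}: since $[1/2,3/2]$ does not meet $\specb(x\cB)$ there is $G_0$ with $\id=G_0\dops+R_0$ near $x=0$ and $R_0$ $\bl$-smoothing, while $G_0\doph = hG_0\dops$. The two regimes are matched along $\{x\sim h^{1/2}\}$, where the commutator $[\doph,\chi]$ produces a factor $h\,\pd[x]\chi(xh^{-1/2}) = h^{1/2}\chi'(xh^{-1/2})$ supported exactly at that scale — this is precisely where the $h^{1/2}$ enters.

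To get the full $\cX^s_h$ norm (not just $\cY^s_h=L^2$ near $x=0$) I would also bound $\|\doph Au\|_{\cY^s_h}\le\|A\doph u\|_{\cY^s_h}+\|[\doph,A]u\|_{\cY^s_h}$: the first term is handled by the same parametrix, and for the second one expands $\frac{1}{ih}[\doph,A]$ by Lemma~\ref{lem:general-semic-comm-L} into $A_0\bigl(\alpha_r(ih\pd[x]+\frac{ih}{x}-\frac{ih}{x}\beta K)-\frac{h\charge}{x}\bigr) + B_0 + \alpha_r B_1 + \bB_2\frac{1}{x} + \bB_3 hD_x + h\bB_4$ and controls each piece — modulo $\cX^s_h$- and $h^{1/2}\|Gu\|_{\cX^s_h}$-type terms — using the Hardy inequality, the invertibility of $x-\alpha_r$ near $x=0$ (so that $\doph$ controls $h\pd[x]$ up to lower order), and microlocality of $A$. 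The hard part will be the accounting on $\supp\chi$: showing that the $\bl$-smoothing error $R_0\chi u$ and the term $hG_0\dops\chi u = G_0\doph\chi u$, neither of which is uniformly $\bo(h^\infty)$ as $x\to0$, nonetheless land in $C\|G\doph u\|_{\cY^s_h}+Ch^{1/2}\|Gu\|_{\cX^s_h}$. For this one uses both the support gain from $\{x\le h^{1/2}\}$ and the fact that $hD_x$ is characteristic on $\Sigmadot_h\cap\{x=0\}$, so that the part of $\chi u$ microlocalized near $\Sigmadot_h$ is annihilated by $hD_x$ up to $\bo(h)$ while the part away from $\Sigmadot_h$ is controlled by $G\doph u$ through the elliptic parametrix. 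This is the semiclassical counterpart of the delicate estimate already present in~\cite{BW20}, and is what ``failure of semiclassical ellipticity near the singularity'' refers to.
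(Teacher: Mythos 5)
Your proposal takes a genuinely different route from the paper, and unfortunately it has a gap at its centre. The paper does not split $u$ at the scale $x\sim h^{1/2}$ at all; instead it localizes $A$ to $\{x<\delta\}$ (for a small but $h$-independent $\delta$), passes to the second-order operator $\Ph=\widetilde N(\tdop_1)\doph$, and runs a quadratic-form argument: Lemma~\ref{lem:semic-pair-arg} estimates the real part of $\langle Au,\Ph Au\rangle$, producing the square-completing identity $\norm{hD_xAu}^2+\norm{\frac hx\grad_\theta Au}^2-\norm{(hxD_x-\frac{h\charge}{x}+z+h\charge-ih)Au}^2$ up to errors of size $\epsilon\norm{Au}_{\Hh^1}^2+Ch\norm{Gu}_{\Hh^1}^2+\cdots$. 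The hypothesis $\WFbh'(A)\cap\Sigmadot_h=\emptyset$ is then used \emph{symbolically} — via the inequality $2((\xi+z)^2+1)<C(\xi^2+|\eta|^2)$ on $\WFbh'(A)$ — to trade $\norm{(hxD_x+z)Au}^2+\norm{Au}^2$ for $C\delta^2\norm{Au}_{\Hh^1}^2+Ch\norm{Gu}_{\Hh^1}^2$ using that $A$ is supported in $\{x<\delta\}$. Adding, applying Hardy to absorb $\norm{\frac{h\charge}{x}Au}^2\le4\charge^2\norm{hD_xAu}^2$ (this is precisely where $|\charge|<1/2$ is used), and then shrinking $\epsilon,\delta$ yields the $\Hh^1/L^2$ estimate; the $\cX^s_h/\cY^s_h$ version follows because those norms are equivalent to $\Hh^1/L^2$ on the support of $A,G$. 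The factor $h^{1/2}$ in the statement is $\sqrt{h}$ from the commutator bound $\norm{[\Ph,A]u}_{\Hh^{-1}}\lesssim h^{1/2}\norm{Gu}_{\Hh^1}$ supplied by Lemma~\ref{lem:ellip-semic-comm}; it has nothing to do with a transition scale $x\sim h^{1/2}$.

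The concrete gap in your version: on $\supp\chi=\{x\lesssim h^{1/2}\}$ you invoke the classical (non-semiclassical) large-calculus parametrix $\id=G_0\dops+R_0$ and rewrite $G_0\doph=hG_0\dops$. Unwinding this gives $\chi u=\frac1hG_0\doph\chi u+R_0\chi u$, so the elliptic estimate you would obtain there is $\norm{A\chi u}\lesssim\frac1h\norm{G\doph u}_{L^2}+\cdots$, which is \emph{worse} by a full factor of $h^{-1}$ than the claimed bound $C\norm{G\doph u}_{\cY^s_h}$. You flag this as ``the hard part,'' but the mechanism you propose to save it — that $hD_x$ is characteristic on $\Sigmadot_h\cap\{x=0\}$ and annihilates the part of $\chi u$ near the characteristic set — does not help: $A$ already kills that part to $\bo(h^\infty)$, and the $h^{-1}$ loss sits on the part of $\chi u$ microlocalized \emph{away} from $\Sigmadot_h$, exactly where you need the estimate. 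I also note that you mention Hardy only in passing for the $\bB_2\frac1x$ commutator terms; in the paper the Hardy inequality together with $|\charge|<1/2$ is the load-bearing absorption step, and without it the estimate fails. Your instinct to work with $\doph$ directly, because the commutator lemmas are phrased for it, is understandable but goes against the grain here: the pairing $\langle Au,\Ph Au\rangle$ is precisely what produces the sum of squares matching the $\Hh^1$ norm, and that structure is not visible from $\doph$ alone.
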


In fact, by enlarging the microsupport of $G$, we can improve the
factor of $h$ in the estimate:
\begin{corollary}
  \label{prop:better-h-factor-ellip}
  If $\abs{\charge}<1/2$, $A \in \Psibh^{0}$ is invariant and
  satisfies $\WFbh'(A) \cap \Sigmadot_{h}= \emptyset$, then for any
  $N$ and $G\in \Psibh^{0}$ with $\WFbh'(A)\subset \ellbh(G)$, there
  is a constant $C$ so that
  \begin{equation*}
    \norm{Au}_{\cX^{s}_{h}} \leq C\norm{G\doph u}_{\cY^{s}_{h}} +
    Ch^{N} \norm{Gu}_{\cX^{s}_{h}} + \bo(h^{\infty})\norm{u}_{\cX^{s}_{h}}.
  \end{equation*}
\end{corollary}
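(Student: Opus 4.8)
The plan is to deduce the corollary from Proposition~\ref{prop:elliptic-est-semic} by iterating it along a finite chain of nested microlocal cutoffs, trading one extra cutoff for each additional half-power of $h$. Since the standing assumption of this section places all operators and distributions in $\{x<1/4\}$, and since $\WFbh'(A)$ is a compact subset of the open set $\ellbh(G)\setminus\Sigmadot_h$, I would first choose invariant operators $G_1,\dots,G_{2N}\in\Psibh^0$ with $\WFbh'(G_j)\cap\Sigmadot_h=\emptyset$ and $\WFbh'(G_j)\subset\ellbh(G)$ for every $j$, forming an elliptic chain
\[
  \WFbh'(A)\subset\ellbh(G_1),\quad \WFbh'(G_j)\subset\ellbh(G_{j+1})\ (1\le j\le 2N-1),\quad \WFbh'(G_{2N})\subset\ellbh(G).
\]
Such a chain exists because invariant symbols can be quantized to invariant operators and because one can interpolate finitely many open neighbourhoods between a compact set and an open set containing it.

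Next I would apply Proposition~\ref{prop:elliptic-est-semic} successively to the pairs $(A,G_1),(G_1,G_2),\dots,(G_{2N-1},G_{2N})$. Writing $G_0:=A$, this yields for $0\le j\le 2N-1$
\[
  \norm{G_j u}_{\cX^s_h}\le C\norm{G_{j+1}\doph u}_{\cY^s_h}+Ch^{1/2}\norm{G_{j+1}u}_{\cX^s_h}+\bo(h^\infty)\norm{u}_{\cX^s_h},
\]
and substituting these from $j=2N-1$ down to $j=0$ telescopes the $h^{1/2}$ factors into $h^N$ on the term $\norm{G_{2N}u}_{\cX^s_h}$, while each term $\norm{G_j\doph u}_{\cY^s_h}$ is multiplied by a nonnegative power of $h^{1/2}$ (so only helps) and the error terms remain $\bo(h^\infty)\norm{u}_{\cX^s_h}$ throughout. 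The outcome is
\[
  \norm{Au}_{\cX^s_h}\le C\sum_{j=1}^{2N}h^{(j-1)/2}\norm{G_j\doph u}_{\cY^s_h}+Ch^N\norm{G_{2N}u}_{\cX^s_h}+\bo(h^\infty)\norm{u}_{\cX^s_h}.
\]

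Finally I would collapse the right-hand side using semiclassical $\bl$-elliptic regularity. Since each $\WFbh'(G_j)\subset\ellbh(G)$, writing $G_j=E_jG+h^\infty R_j$ microlocally (as in Lemma~\ref{lem:semiclassical-elliptic-reg-sortof}) and invoking the boundedness of zeroth-order semiclassical $\bl$-operators (Lemma~\ref{lem:bdedness-semicl}) gives $\norm{G_j\doph u}_{\cY^s_h}\le C\norm{G\doph u}_{\cY^s_h}+\bo(h^\infty)\norm{u}_{\cX^s_h}$; the same parametrix together with the commutator structure of Lemma~\ref{lem:general-semic-comm-L} (and the Hardy inequality, used to control $h/x$ by $hD_x$ near the singularity) gives $\norm{G_{2N}u}_{\cX^s_h}\le C\norm{Gu}_{\cX^s_h}+\bo(h^\infty)\norm{u}_{\cX^s_h}$, using that membership of $Gu$ in $\cX^s_h$ controls its $H^1_h$-regularity near the pole. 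Absorbing everything into the two main terms and the $\bo(h^\infty)$ remainder yields the corollary. The corollary is essentially a soft consequence of the proposition, and the only point needing genuine care is this last collapsing step: verifying that commuting $\doph$ through the parametrix pieces costs no power of $h$ and that $\norm{G_{2N}u}_{\cX^s_h}$ is controlled by $\norm{Gu}_{\cX^s_h}$ rather than by a larger cutoff — but this is routine once the chain of microsupports has been arranged as above.
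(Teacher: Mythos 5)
The paper gives no proof of this corollary (it simply says ``by enlarging the microsupport of $G$''), and your iteration of Proposition~\ref{prop:elliptic-est-semic} along a nested chain of invariant cutoffs, telescoping $2N$ factors of $h^{1/2}$ into $h^N$, is exactly the intended argument; the construction of the chain and the collapse of the $\norm{G_j\doph u}_{\cY^s_h}$ terms via the microlocal parametrix $G_j = (G_jB)G + \bo(h^\infty)$ are fine.

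The one step where your write-up overreaches is the final bound $\norm{G_{2N}u}_{\cX^s_h}\leq C\norm{Gu}_{\cX^s_h}+\bo(h^\infty)\norm{u}_{\cX^s_h}$. You invoke Lemma~\ref{lem:general-semic-comm-L} to control $[\doph, G_{2N}B]$, but that lemma requires the commuted operator to be \emph{invariant} with scalar real principal symbol, and the parametrix composite $G_{2N}B$ (with $B$ a parametrix for $G$) is not invariant in general since $G$ is arbitrary. One can of course argue directly that $\frac{1}{h}[\doph, C]$ is $\Hh^1\to L^2$-bounded near $x=0$ for any $C\in\Psibh^0$ (the loss of invariance only produces extra terms in $\frac{h}{x}\Psibh^0$ involving $[\beta K, C]$, which are still controlled by $\Hh^1$ via Hardy), but a cleaner route bypasses the commutator entirely: since $G_{2N}$ is itself invariant, has $\WFbh'(G_{2N})\cap\Sigmadot_h=\emptyset$, and satisfies $\WFbh'(G_{2N})\subset\ellbh(G)$, you may simply apply Proposition~\ref{prop:elliptic-est-semic} once more to the pair $(G_{2N},G)$ to get
\begin{equation*}
  \norm{G_{2N}u}_{\cX^s_h}\leq C\norm{G\doph u}_{\cY^s_h}+Ch^{1/2}\norm{Gu}_{\cX^s_h}+\bo(h^\infty)\norm{u}_{\cX^s_h},
\end{equation*}
after which substituting back and collapsing the remaining $\norm{G_j\doph u}_{\cY^s_h}$ terms finishes the proof (in fact with the slightly better exponent $h^{N+1/2}$).
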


The following proposition provides a convenient way to estimate error
terms of the form $\norm{u}_{\Hh^{1}}$:

\begin{proposition}
  \label{prop:semi-class-go-up-reg}
  If $\chi$ is a smooth radial cut-off function supported in $\{
  x < \delta\}$ for $\delta > 0$ sufficiently small, then there is a
  $C$ so that for all $u \in \cX^{s}_h$, $\chi u \in \Hh^{1}$ and
  \begin{equation*}
    \norm{\chi u }_{\Hh^{1}} \leq C \norm{\chi u}_{\cX^{s}_{h}}.
  \end{equation*}
\end{proposition}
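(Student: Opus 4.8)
\emph{Proof proposal.} The plan is to reduce the statement to the classical $\bl$-elliptic estimate near the singularity, after first isolating a $\sigma$-independent model operator so that all constants are uniform in $h=\abs{\sigma}^{-1}$. First, since $\chi$ is supported in $\{x<\delta\}$ with $\delta$ small (in particular $\delta<1/4$), the regularity function $s_{\tow}$ vanishes identically on $\supp\chi$, so $\cY^{s}_{h}$ agrees with $L^{2}$ near $\supp\chi$ and
\begin{equation*}
  \norm{\chi u}_{\cX^{s}_{h}}^{2}=\norm{\chi u}_{L^{2}}^{2}+\norm{\doph(\chi u)}_{L^{2}}^{2}.
\end{equation*}
The key algebraic observation is that near $C_{+}$, using the formulas of Section~\ref{sec:relat-wave-equat} together with $h\sigma=z$, one has the splitting $\doph=z\,\id+h\,\cB''$, where $\cB''=\dops-\sigma\,\id$ is a \emph{fixed}, $\sigma$-independent, first-order $\bl$-differential operator: explicitly, $\cB''=xD_{x}+\charge-i-\frac{\charge}{x}-\alpha_{r}\bigl(D_{x}-\frac{i}{x}+\frac{i}{x}\beta K\bigr)$. (As always, the argument near $C_{-}$ is identical in the appropriate coordinates.)

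Next I would invoke the classical $\bl$-ellipticity of $\cB''$ on $\{x<1/4\}$. Its principal symbol and reduced normal operator near $x=0$ coincide with those of $\dops$ (the $\sigma\,\id$ term carries a factor $x$ and so does not contribute to the reduced normal operator), which in turn agree up to sign with those of $x\cB$; as recalled in the proof of Lemma~\ref{lem:elliptic-property-etc}, $[1/2,3/2]$ avoids the boundary spectrum of $x\cB$, so the large $\bl$-calculus parametrix construction there applies verbatim to $\cB''$, producing $G,R$ with $\id=G\cB''+R$, $R$ smoothing, and $G,R\colon L^{2}\to r^{-1/2}\Hb^{1}(x^{-1}dx\,d\theta)$. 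Since $u\in\cX^{s}_{h}$ gives $u,\doph u\in L^{2}$ near $\supp\chi$, and $[\cB'',\chi]$ is a bounded multiplication operator there, we get $\cB''(\chi u)=\chi\cB'' u+[\cB'',\chi]u\in L^{2}$; hence $\chi u=G\cB''(\chi u)+R(\chi u)\in r^{-1/2}\Hb^{1}(x^{-1}dx\,d\theta)\cap L^{2}=H^{1}$ and
\begin{equation*}
  \norm{\chi u}_{H^{1}}\leq C\bigl(\norm{\cB''(\chi u)}_{L^{2}}+\norm{\chi u}_{L^{2}}\bigr),
\end{equation*}
with $C$ depending only on the fixed operator $\cB''$, hence independent of $h$.

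Finally I would pass from $H^{1}$ to $\Hh^{1}$, tracking powers of $h$. By the description of the $\Hh^{1}(\mf)$ norm in Section~\ref{sec:semicl-vers}, near $\supp\chi$ one has $\norm{v}_{\Hh^{1}}^{2}\leq\norm{v}_{L^{2}}^{2}+Ch^{2}\norm{v}_{H^{1}}^{2}$, so $\norm{v}_{\Hh^{1}}\leq C(\norm{v}_{L^{2}}+h\norm{v}_{H^{1}})$. Combining with the previous estimate and using $\cB''(\chi u)=h^{-1}(\doph(\chi u)-z\,\chi u)$ with $\abs{z}=1$, so that $h\norm{\cB''(\chi u)}_{L^{2}}\leq\norm{\doph(\chi u)}_{L^{2}}+\norm{\chi u}_{L^{2}}$, gives
\begin{equation*}
  \norm{\chi u}_{\Hh^{1}}\leq C\bigl(\norm{\chi u}_{L^{2}}+h\norm{\chi u}_{H^{1}}\bigr)\leq C\bigl(\norm{\chi u}_{L^{2}}+\norm{\doph(\chi u)}_{L^{2}}\bigr)=C\norm{\chi u}_{\cX^{s}_{h}},
\end{equation*}
which is the claim.

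The step I expect to require the most care is the uniformity of the constant in $h$: applying an elliptic estimate directly to $\doph$ (or to $\dops$, whose zeroth-order term $\sigma\,\id$ is large) would produce constants depending on $\sigma$. The device that fixes this is precisely the splitting $\doph=z\,\id+h\,\cB''$ into an invertible scalar of unit modulus plus $h$ times a fixed operator; this is also where the compensating power of $h$ originates, canceling the $h^{-1}$ in $\cB''=h^{-1}(\doph-z)$. Beyond this, one must (routinely) check that $\delta$ can be taken small enough that $\cB''$ is $\bl$-elliptic with invertible reduced normal operator on $\supp\chi$ and that $[\cB'',\chi],[\doph,\chi]$ are genuinely $\cY^{s}_{h}$-bounded there, and record that the $H^{1}$ in use—obtained by blowing down $\cf$ within $\mf$—controls $\norm{D_{x}v}_{L^{2}}$ and $\norm{x^{-1}\grad_{\theta}v}_{L^{2}}$, which is what makes the comparison $\norm{v}_{\Hh^{1}}\lesssim\norm{v}_{L^{2}}+h\norm{v}_{H^{1}}$ valid.
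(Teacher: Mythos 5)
Your proof is correct, but it is a genuinely different argument from the one in the paper. The paper works directly with the semiclassical second-order operator $\Ph$: it computes $\Re\langle \Ph v, v\rangle$ by integration by parts, then absorbs the $\frac{h\charge}{x}$-term via the Hardy inequality (this is where $\abs{\charge}<1/2$ enters explicitly) and the $h x D_x$-term via the smallness of $\delta$, arriving at $\norm{v}_{\Hh^1}^2 \leq C(\norm{v}^2 + \norm{\Ph v}_{\Hh^{-1}}^2)$, which is then converted into a $\doph$-estimate through the factorization $\Ph = (\abs{\sigma}^{-1}\widehat{N}(\tdop_1))\doph$. You instead split $\doph = z\,\id + h\cB''$ and reduce the whole problem to the non-semiclassical $\bl$-elliptic parametrix argument for the fixed operator $\cB''=\dops - \sigma\id$ (the one already established in Lemma~\ref{lem:elliptic-property-etc} for $\dops$, which has the same principal symbol and, after multiplication by $x$, the same reduced normal operator). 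Your route cleanly isolates the $h$-uniformity as a consequence of $\cB''$ being $\sigma$-independent, and it avoids both the second-order operator $\Ph$ and the explicit Hardy-inequality absorption (those are hidden inside the parametrix hypothesis about the boundary spectrum of $x\cB$). The paper's route is more self-contained and does not rely on having already built the large-calculus parametrix, and it makes the role of $\abs{\charge}<1/2$ and of the smallness of $\delta$ explicit; yours is a shorter reduction given that Lemma~\ref{lem:elliptic-property-etc} is available. Both are valid; the verification that $\delta$ need only be small enough that $\supp\chi$ lies in the classically elliptic region $\{x<1/4\}$ (and that $s_{\tow}\equiv 0$ there) is the only hypothesis on $\delta$ your argument uses, which is slightly weaker than what the paper's absorption step requires.
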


\begin{proof}
  The proposition is a reflection of the observation that $\dops$ is
  elliptic near $x=0$.  Recall that the $\cX^{s}_{h}$ norm is given by
  \begin{equation*}
    \norm{v}_{\cX^{s}_{h}}^{2} = \norm{v}^{2}_{\Hh^{s}} + \norm{\doph
      v}_{\Hh^{s}}^{2}. 
  \end{equation*}
  and thus by our assumptions on $s$, if $v$ is supported in the region $\{ x < \delta\}$
  with $\delta \leq 1/4$, it has the form
  \begin{equation*}
    \norm{v}_{\cX^{s}_{h}} ^{2} = \norm{v}^{2} + \norm{\doph v}^{2},
  \end{equation*}
  where both norms are the $L^{2}$ norm (taken with respect to the
  volume form $x^{2}\,dx\,d\theta$).

  Consider now the real part of the pairing $\langle \Ph v, v \rangle$.  Using
  the form of $\Ph$ in Section \ref{sec:relat-wave-equat}, 
  integrating by parts shows that this is equal to
  \begin{align*}
    \norm{h D_{x} v}^{2} + \norm{\frac{h}{x}\grad_{\theta}v}^{2} -
    \norm{\left( h xD_{x} - \frac{h\charge}{x} + z + h\charge - ih\right)v}^{2} +
    2(\Im z)(h + 2\Im z)\norm{v}^{2}. 
  \end{align*}
  In particular, as
  \begin{equation*}
    \abs{\Re \ang{\Ph v,v}} \leq \epsilon' \norm{v}_{\Hh^{1}}^{2}+ \frac{1}{\epsilon'}\norm{\Ph}_{\Hh^{-1}}^{2},
  \end{equation*}
  we may bound  
  \begin{equation*}
    \norm{v}_{\Hh^{1}}^{2}\leq \frac{C}{\epsilon}
    \norm{h xD_{x}v}^{2} + \frac{C}{\epsilon} \norm{v}^{2} + (1+\epsilon)
    \norm{\frac{h\charge}{x}v}^{2} + \frac{C}{\epsilon'}\norm{\Ph
      v}_{\Hh^{-1}}^{2} + \epsilon' \norm{v}_{\Hh^{1}}^{2}.
  \end{equation*}
  For $\abs{\charge}<1/2$, the Hardy inequality yields
  \begin{equation*}
    \norm{\frac{h\charge}{x}v}^{2} \leq 4\charge^{2}\norm{h D_{x}v}^{2},
  \end{equation*}
  so, for $\epsilon > 0$ sufficiently small and fixed, the third term
  on the right can be absorbed into the left hand side.
  Similarly, if $v$ is supported in $\{ x < \delta\}$, the first term
  on the right is bounded by $(C/\epsilon)\delta^{2} \norm{hD_{x}
    v}^{2}$, so for $\delta$ sufficiently small this can also be
  absorbed on the left, as can the last term for $\epsilon'$ small, leaving
  \begin{equation*}
    \norm{v}_{\Hh^{1}}^{2} \leq C \left( \norm{v}^{2} + \norm{\Ph v}_{\Hh^{-1}}^{2}\right).
  \end{equation*}
  Recall that
  \begin{equation*}
    \Ph = \left(\frac{1}{\abs{\sigma}}\widehat{N}(\tdop_{1})\right)\doph,
  \end{equation*}
   and so we obtain the bound
   \begin{equation*}
     \norm{v}_{\Hh^{1}}^{2} \leq C\left( \norm{v}^{2} +
       \norm{\doph v}^{2}\right).
   \end{equation*}
   As $v$ is supported in $\{ x < \delta\}$, the right side is a
   multiple of $\norm{v}_{\cX_{h}^{s}}^{2}$.
\end{proof}

A more careful look at the real part of the pairing yields the 
following: 
\begin{lemma}
  \label{lem:semic-pair-arg}
  Suppose $A, G\in \Psibh^{0}$ with
  $A$ invariant, $\sigmab (A)$ scalar and real-valued,
  $\WFbh'(A) \subset \ellbh (G)$ and $\delta$ as in
  Proposition~\ref{prop:semi-class-go-up-reg}.  There is a constant
  $C$ so that
  \begin{align*}
    &\abs{\norm{hD_{x}Au}^{2} + \norm{\frac{h}{x}\grad_{\theta}Au}^{2}
    - \norm{\left( h xD_{x} + z - \frac{h\charge}{x} +h\charge -
      ih\right)Au}^{2}} \\
    &\quad\quad \leq \epsilon \norm{Au}_{\Hh^{1}}^{2} +
      \frac{C}{\epsilon}\norm{G\doph u}_{L^{2}_{g}}^{2} + Ch
      \norm{Gu}_{\Hh^{1}}^{2} + \bo (h^{\infty})\norm{\chi
      u}_{\Hh^{1}}^{2} + \bo (h^{\infty}) \norm{(1-\chi)u}_{\Hh^{-N}},
  \end{align*}
  where $\chi$ is a cut-off function supported in $x < \delta$.
\end{lemma}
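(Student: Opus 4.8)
The plan is to recognize the displayed combination as, up to a small remainder, the real part of the pairing $\ang{\Ph Au, Au}$, and then to estimate that pairing by commuting $A$ through $\Ph$. Since $A$ is supported in $\{x<1/4\}$ and $u\in\cX^s_h$, the distribution $Au$ lies in $\Hh^1$ near the singularity (by semiclassical ellipticity of $\doph$ there, exactly as in the proof of Proposition~\ref{prop:semi-class-go-up-reg}), so the integration-by-parts computation from that proof applies with $v$ replaced by $Au$ and yields
\begin{align*}
  &\norm{hD_x Au}^2 + \norm{\tfrac hx\grad_\theta Au}^2 - \norm{(hxD_x + z - \tfrac{h\charge}{x}+h\charge - ih)Au}^2 \\
  &\qquad = \Re\ang{\Ph Au, Au} - 2(\Im z)(h+2\Im z)\norm{Au}^2 .
\end{align*}
Because $\sigma$ ranges over a fixed horizontal strip, $\Im z = h\Im\sigma = \bo(h)$, so the last term is $\bo(h^2)\norm{Au}^2 \le \epsilon\norm{Au}_{\Hh^1}^2$ once $h$ is small; it therefore suffices to bound $\abs{\Re\ang{\Ph Au, Au}}$ by the right-hand side of the lemma.

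Second, I would split $\ang{\Ph Au, Au} = \ang{A\Ph u, Au} + \ang{[\Ph, A]u, Au}$. For the first term, recall from Section~\ref{sec:relat-wave-equat} (and the proof of Proposition~\ref{prop:semi-class-go-up-reg}) that $\Ph = \big(\tfrac1{\abs\sigma}\widehat N(\tdop_1)\big)\doph =: Q_h\doph$ with $Q_h$ a first-order semiclassical $\bl$-differential operator, so $A\Ph u = AQ_h(\doph u)$; Cauchy--Schwarz and Young's inequality then bound $\abs{\ang{AQ_h\doph u, Au}}$ by $\tfrac C\epsilon\norm{AQ_h\doph u}_{\Hh^{-1}}^2 + \epsilon\norm{Au}_{\Hh^1}^2$. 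Since $AQ_h$ has order one with $\WFbh'(AQ_h)\subset\WFbh'(A)\subset\ellbh(G)$, and $\doph u$ is supported in $\{x<1/4\}$, semiclassical elliptic regularity (Lemma~\ref{lem:semiclassical-elliptic-reg-sortof}, applied after factoring out an order-one elliptic operator) bounds $\norm{AQ_h\doph u}_{\Hh^{-1}}$ by $C\norm{G\doph u}_{L^2_g}$ plus $\bo(h^\infty)\norm{\chi u}_{\Hh^1}+\bo(h^\infty)\norm{(1-\chi)u}_{\Hh^{-N}}$, where in the error terms one uses that $\doph$ is first order in order to move it onto $u$ at the cost of a derivative.

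For the commutator term, Lemma~\ref{lem:ellip-semic-comm} (applicable since $A$ is invariant with real scalar principal symbol) writes $[\Ph, A]$ as $h$ times an element of $\{\tfrac{h^2}{x^2}\Lap_\theta, h^2D_x^2, \tfrac{h^2}{x}D_x, \tfrac{h^2}{x^2}\}\Psibh^{-1} + \{hD_x, \tfrac hx\}\Psibh^0 + \Psibh^1$, in which (inspecting the construction, cf.\ Lemma~\ref{lem:general-semic-comm-L}) every pseudodifferential factor has symbol supported in $\supp\sigmabh(A)\subset\ellbh(G)$. Pairing each summand against $Au$, integrating by parts to balance the $D_x$'s, and invoking the Hardy inequality $\norm{\tfrac hx w}\le 2\norm{hD_x w}$ to absorb each factor of $1/x$ into an $\Hh^1$-norm, every such pairing is bounded by $Ch(\norm{Bu}_{\Hh^1}^2 + \norm{Au}_{\Hh^1}^2)$ for some $B$ microsupported in $\ellbh(G)$ (of order $\le 0$, or of order $\le 1$ and paired into $\Hh^{\mp1}$); the surplus power of $h$ is exactly the one furnished by Lemma~\ref{lem:ellip-semic-comm}. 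Replacing $\norm{Bu}_{\Hh^1}$ by $\norm{Gu}_{\Hh^1}$ (plus $\bo(h^\infty)$ errors in $\norm{\chi u}_{\Hh^1}$ and $\norm{(1-\chi)u}_{\Hh^{-N}}$) via Lemma~\ref{lem:semiclassical-elliptic-reg-sortof}, and noting $Ch\norm{Au}_{\Hh^1}^2\le\epsilon\norm{Au}_{\Hh^1}^2$ for $h$ small, bounds $\abs{\ang{[\Ph,A]u, Au}}$ by the right-hand side of the lemma. Combining the three contributions and relabeling $\epsilon$ and $C$ finishes the proof.

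The hard part is not any single estimate but the bookkeeping just indicated: one must check that every term produced by the commutator expansion carries exactly one extra power of $h$ relative to the squared $\Hh^1$-norms, so that it can be absorbed for $h$ small. This relies on the precise module structure in Lemma~\ref{lem:ellip-semic-comm}, on using the Hardy inequality to count each $1/x$ as a semiclassical operator of order one, and on semiclassical elliptic regularity to microlocalize all the pseudodifferential factors into $\ellbh(G)$.
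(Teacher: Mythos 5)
Your proposal follows the same route as the paper: identify the displayed combination as the real part of $\ang{\Ph Au, Au}$ (after absorbing the $\bo(h^2)\norm{Au}^2$ term), split off $[\Ph, A]$, use the factorization $\Ph = \tilde L \doph$ with $\tilde L \in \Diffh^1$ to peel off $\doph u$, and control the commutator via Lemma~\ref{lem:ellip-semic-comm} together with the Hardy inequality and semiclassical elliptic regularity (Lemma~\ref{lem:semiclassical-elliptic-reg-sortof}). The argument is correct and essentially identical to the paper's proof, with the small bonus that you make explicit the absorption of the $2(\Im z)(h+2\Im z)\norm{Au}^2$ term and the $\Hh^1$-regularity of $Au$ used to justify the pairing.
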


\begin{proof}
  Considering the real part of the pairing $\ang{Au, \Ph Au}$ shows
  that we may bound 
  \begin{align*}
    &\abs{\norm{h D_{x} Au}^{2} +
      \norm{\frac{h}{x}\grad_{\theta}Au}^{2} - \norm{\left( h xD_{x} -
          \frac{h \charge}{x} + z +h\charge - ih\right)Au}^{2} + 2(\Im z)(h +2 (\Im
    z))\norm{Au}^{2}} \\
    &\quad\leq \abs{\ang{Au, \Ph Au}}.
  \end{align*}
  Using the factorization $\Ph = \widetilde{L}\doph$ with
  $\widetilde{L} \in \Diffh^{1}$, the term on the right is bounded by
  \begin{equation*}
    \epsilon \norm{Au}_{\Hh^{1}}^{2} + \frac{1}{\epsilon}\norm{A\widetilde{L}\doph
      u}_{\Hh^{-1}} + \frac{1}{\epsilon}\norm{[\Ph , A] u}_{\Hh^{-1}}^{2},
  \end{equation*}
  and so, by Lemma~\ref{lem:ellip-semic-comm} and
  Lemma~\ref{lem:semiclassical-elliptic-reg-sortof}, can be estimated by
  \begin{equation*}
    \epsilon \norm{Au}_{\Hh^{1}}^{2} + \frac{C}{\epsilon}\norm{A\doph 
      u} + \frac{C}{\epsilon}h\norm{Gu}_{\Hh^{1}}^{2} +
    \bo(h^{\infty})\norm{\chi u}_{\Hh^{1}} + \bo
    (h^{\infty})\norm{(1-\chi)u}_{\Hh^{-N}},
  \end{equation*}
  where $\chi$ is a radial cut-off function supported near $x = 0$. 
\end{proof}

\begin{proof}[Proof of Proposition~\ref{prop:elliptic-est-semic}]
  For $x > 0$, the proposition follows from standard proofs of
  semiclassical elliptic regularity.  We may therefore assume that
  the support and microsupport of $A$ are contained in $\{ x <
  \delta\}$.  

  We begin by establishing a version of the estimate with
  $\cX^{s}_{h}$ replaced by $\Hh^{1}$ and $\cY^{s}_{h}$ replaced by
  $L^{2}$.  Because $\WFbh'(A) \cap \Sigmadot_h = \emptyset$, we may
  assume there is a constant $C > 0$ so that
  \begin{equation*}
    2 \left( (\xi + z)^{2} + 1 \right) < C \left( \xi^{2} + \abs{\eta}^{2}\right)
  \end{equation*}
  on the microsupport of $A$ (see \eqref{eq:char semicl x is zero}).  We may therefore bound
  \begin{align}
    \label{eq:bounding-hdx-term}
    \norm{\left( h xD_{x} + z\right)Au}^{2} + \norm{Au}^{2} &\leq \ang{C \Op_h
                                                              (\xi^{2} +
                                                              \abs{\eta}^{2})Au,
                                                              Au} + Ch
                                                              \norm{Gu}^{2}_{\Hh^{1}}
    \\
                                                            &= C \left( \norm{h x D_{x}Au}^{2} +
                                                              \norm{x (\frac{h}{x}\grad_{\theta}Au)}^{2}\right) + C h
                                                              \norm{Gu}_{\Hh^{1}}^{2} \notag \\
                                                            &\leq C \delta^{2} \norm{Au}_{\Hh^{1}}^{2} +C h \norm{Gu}_{\Hh^{1}}^{2},\notag
  \end{align}
where in the last line we have used that $A$ is supported in $\{ x <
  \delta\}$.

Adding
  $\norm{\left( h xD_{x} + z - \frac{h\charge}{x} - ih\right)Au}^{2} +
  \norm{Au}^{2}$ to both sides of the estimate in
  Lemma~\ref{lem:semic-pair-arg} then yields
  \begin{equation*}\begin{gathered}
    \norm{Au}_{\Hh^{1}}^{2} \leq \epsilon \norm{Au}_{\Hh^{1}}^{2} +
    \frac{C}{\epsilon}\norm{G\doph u}^{2} 
    + Ch \norm{Gu}_{\Hh^{1}} +
    \norm{Au}^{2} + \norm{(hxD_{x} + z - \frac{h\charge}{x} +h\charge -
    ih)Au}^{2} \\ \qquad \qquad +  
    \bo (h^{\infty})\norm{\chi u}_{\Hh^{1}}^{2} + \bo
    (h^{\infty})\norm{u}_{\Hh^{-N}}^{2}
\end{gathered}  \end{equation*}

  Using the triangle and Hardy inequalities together with the estimate
  \eqref{eq:bounding-hdx-term} yields
  \begin{equation*}
    \norm{Au}_{\Hh^{1}} \leq \left( 4\charge^{2} + \epsilon +
      C\delta^{2}\right)\norm{Au}_{\Hh^{1}} + C\norm{G\doph u}^{2} +
    Ch\norm{Gu}_{\Hh^{1}}^{2} + \bo(h^{\infty})\left( \norm{\chi
        u}_{\Hh^{1}}^{2} + \norm{u}_{\Hh^{-N}}^{2}\right).
  \end{equation*}
  Provided that $\abs{\charge}<1/2$ and $\epsilon, \delta$ are
  sufficiently small, the first term can be absorbed into the left,
  finishing the proof of the estimate.  
  
  To finish the proof of the proposition, we note that on the support
  of $A$ and $G$, the $\cX^{s}_{h}$ norm is equivalent to the
  $\Hh^{1}$ norm while the $\cY^{s}_{h}$ norm is equivalent to the
  $L^{2}_{g}$ norm.  Finally, the error terms are both controlled by
  the $\cX^{s}_{h}$ norm provided that $N$ is sufficiently large.
\end{proof}

\subsubsection{Hyperbolic estimates near the singularity}
\label{sec:hyperb-estim-near-bdry}

In this section we establish the semiclassical propagation result for
$\doph$ in the hyperbolic regime near the singularity.  The result for
the adjoint problem is essentially identical.  In particular, we prove
the following:
\begin{proposition}
  \label{prop:main-hyp-semic-statement}
  If $G\in \Psibh$ is elliptic at $\Sigmadot_h \cap \{ x=0\}$ then
  there are $Q, Q_{1} \in \Psibh$ with $Q$ also elliptic at $\Sigmadot_h
  \cap \{ x=0\}$ and
  \begin{align*}
    \WFbh' (Q) &\subset \ellbh(G), \\
    \WFbh'(Q_{1}) &\subset \ellbh(G) \cap \{ - \xi > 0\},
  \end{align*}
  so that for all $u \in \cX^{s}_{h}$,
  \begin{equation*}
    \norm{Qu}_{\cX^{s}_{h}} \leq \frac{C}{h}\norm{G\doph
      u}_{\cY^{s}_{h}} + C_{1}\norm{Q_{1}u}_{\cX^{s}_{h}} +
    Ch^{1/2}\norm{Gu}_{\cX^{s}_{h}} + \bo(h^{\infty})\norm{u}_{\cX^{s}_{h}}.
  \end{equation*}
\end{proposition}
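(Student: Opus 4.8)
The plan is to establish Proposition~\ref{prop:main-hyp-semic-statement} by a semiclassical positive commutator argument localized near the cone tip $x=0$, adapting the proof of the diffractive estimate of the first author and Wunsch~\cite{BW20}; in fact this proposition is the semiclassical analogue of Theorem~\ref{thm:bulk-prop-near-np} restricted to the hyperbolic part of $\Sigmadot_h$ over $x=0$, and the argument runs in parallel. First I would dispose of the region $x>0$: at hyperbolic points of $\doph$ bounded away from the singularity the estimate is the standard semiclassical propagation of singularities theorem, so after a microlocal partition of unity it suffices to take $Q$, $Q_{1}$, and $G$ supported in $\{x<\delta\}$ for a small $\delta$, where by Proposition~\ref{prop:semi-class-go-up-reg} the $\cX^{s}_{h}$ norm of distributions supported near $x=0$ is equivalent to the $\Hh^{1}$ norm and the $\cY^{s}_{h}$ norm to the $L^{2}$ norm. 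It therefore suffices to prove the inequality with $\cX^{s}_{h}$ and $\cY^{s}_{h}$ replaced by $\Hh^{1}$ and $L^{2}$.

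For the commutator I would choose an invariant $C\in\Psibh^{0}$, invariant under the lifted $SO(3)$ action and hence commuting with $K$ and $\Lap_{\theta}$ --- this is essential since $\doph$ contains the angular term $\frac{h}{x}\beta K$, which resists ordinary microlocalization on the cross-section $\sphere^{2}$ --- with real-valued scalar principal symbol $c$ built as a product of a radial cutoff $\chi_{0}(x)$ supported in $\{x<\delta\}$, a cutoff to a small conic neighborhood of $\Sigmadot_h$ in the $(\xi,\eta)$-fiber, and an escape factor $b(\xi)$ chosen so that $-\partial_\xi c\ge 0$, with strict positivity on the microsupport of $Q$ and the negative part of the flow derivative confined to the incoming region $\{-\xi>0\}$ on which $Q_{1}$ furnishes the a priori control, precisely as in the radial-point lemmas and in \cite[Section~5]{BW20}. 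One then expands the pairing $\langle \doph u, C^{*}C u\rangle$, handling the failure of self-adjointness of $\doph$ through its relationship with $\tdop$ recorded in Section~\ref{sec:relat-wave-equat} and \eqref{eq:Pd dual} (equivalently, integrating by parts in $x$), and applies Lemma~\ref{lem:general-semic-comm-L}: the term $A_{0}\left(\alpha_{r}\left(ih\partial_x+\frac{ih}{x}-\frac{ih}{x}\beta K\right)-\frac{h\charge}{x}\right)$, with $\sigmabh(A_{0})=-\partial_\xi c$, produces the main positive commutator, while $B_{0}$ (with $\sigmabh(B_{0})=-x\partial_x c$), the $\alpha_{r}$-terms, and the $\bB_{j}$-terms --- microsupported in $\supp\partial_\eta c$, away from $\Sigmadot_h$ --- are the error terms.

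The errors are then absorbed as follows. The pieces microsupported off $\Sigmadot_h$ are controlled by the semiclassical elliptic estimate near the singularity, Proposition~\ref{prop:elliptic-est-semic} and Corollary~\ref{prop:better-h-factor-ellip}, at the cost of $\bo(h^{N})\norm{Gu}_{\cX^{s}_{h}}$ and $\bo(h^{\infty})\norm{u}_{\cX^{s}_{h}}$. The genuinely singular error --- the Coulomb term $\frac{h\charge}{x}$ together with the $\frac{h}{x}$-module contributions of $A_{0}$ and $\bB_{4}$ --- is handled by the Hardy inequality $\norm{\frac{h}{x}v}\le 2\norm{h\partial_x v}$; since $\abs{\charge}<1/2$ the resulting term is a multiple $4\charge^{2}<1$ of $\norm{h\partial_x(\cdot)}^{2}$, hence absorbable into the left-hand side, exactly as in the proof of Proposition~\ref{prop:elliptic-est-semic}, once the $x\partial_x c$-terms have been made $\delta$-small by shrinking the support of $\chi_{0}$. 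Lower-order terms and the remaining $\Hh^{-N}$ errors contribute the $\bo(h^{\infty})$ terms via Lemma~\ref{lem:semiclassical-elliptic-reg-sortof}, while tracking the powers of $h$ through Lemma~\ref{lem:general-semic-comm-L} produces the $\frac{1}{h}$ in front of $\norm{G\doph u}_{\cY^{s}_{h}}$ and the $h^{1/2}$ in front of $\norm{Gu}_{\cX^{s}_{h}}$. The main obstacle I expect to be the same as in \cite{BW20}: the boundary term at $x=0$ generated by the integration by parts that forms the commutator must be shown to carry a favorable sign, or to be dominated by the bulk terms, for the invariant commutant --- this is the diffractive heart of the estimate, where the conic geometry, the non-scalar $\alpha_{r}$-term, and the sharp Hardy bound available only for $\abs{\charge}<1/2$ all interact delicately.
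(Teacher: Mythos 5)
Your plan correctly identifies the global structure---a semiclassical positive commutator estimate with an $SO(3)$-invariant commutant, absorbing the Coulomb and $\frac{h}{x}$-module errors via the Hardy inequality under $\abs{\charge}<1/2$, and quoting the already-established elliptic estimate (Proposition~\ref{prop:elliptic-est-semic}) for the pieces microsupported off $\Sigmadot_h$. But there is a gap in the central step: you assert that the $A_0$ term from Lemma~\ref{lem:general-semic-comm-L} ``produces the main positive commutator'' because $-\pd[\xi]c\ge 0$. That sign condition alone cannot yield positivity, because the \emph{coefficient} of $A_0$ in the commutator is not a nonnegative operator: it is the full degree-$(-1)$ part of $\doph$, namely $\alpha_r(ih\pd[x]+\frac{ih}{x}-\frac{ih}{x}\beta K)-\frac{h\charge}{x}$, which involves the non-scalar $\alpha_r$, the angular $K$-term, and the singular $\frac{h\charge}{x}$ with a sign determined by $\charge$. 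Having $\sigmabh(A_0)$ of one sign does not make the product $\langle A_0(\alpha_r(\cdots)-\frac{h\charge}{x})u,u\rangle$ have a definite sign; this is exactly the obstruction the paper must (and does) circumvent.

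The missing idea is a \emph{trading} step exploiting the near-homogeneity of $\doph$ in $x$. One rewrites the $A_0$ term as $\tR\doph + A_1\bigl(hxD_x+z+h\charge-ih\bigr)A_1$, i.e.\ replaces the singular degree-$(-1)$ part of $\doph$ in the coefficient by the full operator minus its degree-$0$ remainder. The pieces of $\sigmabh(A_1^2)=\pd[\xi](a^2)$ where the derivative lands on the monotone cutoff $\chi_0$ give a definite-sign factor $Q^2$ (this is where your $-\pd[\xi]c\ge 0$ observation is used, correctly); but the positivity of the resulting inner product $\langle(hxD_x+z+h\charge-ih)Qu,Qu\rangle$ is extracted from the \emph{semiclassical rescaling}: since $z=\sigma/\abs{\sigma}=\pm1+\bo(h)$, the constant $z$ dominates the $hxD_x$ term on the small support ($x<2\beta\delta$), yielding a lower bound $\abs{z}\norm{Qu}^2-c\delta\norm{Qu}^2-\bo(h)\norm{Qu}^2$. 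Without this, the $B_0$, $B_1$ terms (with symbol $\sim\beta^{-1}q^2$) have nothing to be absorbed against. A secondary point: the difficulty you flag---a sign-carrying boundary term at $x=0$ from the integration by parts---is not where the argument actually lives; the relevant control is through the Hardy inequality and the membership $u\in\Hh^1$ near $x=0$ (Proposition~\ref{prop:semi-class-go-up-reg}), which suppresses boundary contributions, and the failure of self-adjointness is handled simply by $\doph^*=\doph-2\Im z+2ih=\doph+\bo(h)$ rather than via the $\tdop$ duality you invoke.
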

As the characteristic set near the singularity is in a bounded region of
phase space, we need only pseudodifferential operators with
\textit{compactly supported} symbols in this section.  Recall that we
use the notation $\Psibh$ (without a subscript) to denote this space.

In an approach similar to the one taken in
Section~\ref{sec:hyperb-prop}, we introduce an operator
$A \in \Psibh$ with compactly supported symbol given by
\begin{equation*}
  a = \chi_{0} (2-\phi / \delta) \chi_{1}(2 - \xi/\delta)
  \chi_{2}(\xi^{2} + \abs{\eta}^{2}),
\end{equation*}
where $\chi_{\bullet}$ are the same functions as in that section and
here $\phi = - \xi + \frac{1}{\beta^{2}\delta}x^{2}$.  For convenience
we recall the relevant properties of the $\chi_{\bullet}$:
\begin{itemize}
\item $\ds \chi_{\bullet}$ and $\ds \chi_{\bullet}'$ have smooth square
  roots,
\item $\ds \chi_{0}$ is supported in $[0,\infty)$ with $\chi_{0}(s) =
  \exp (-1/s)$ for $s > 0$,
\item $\ds \chi_{1}$ is supported in $[0,\infty)$ with $\chi_{1}(s)=1$ for
  $s \geq 1$ and $\chi_{1}' \geq 0$, and 
\item $\ds \chi_{2}$ is supported in $[-2c_{1},2c_{1}]$ and is equal
  to $1$ on $[-c_{1},c_{1}]$.
\end{itemize}
We think of the symbol $a$ as being determined by the three localizing
parameters $c_{1}$, $\beta$, and $\delta$.

We further choose an invariant operator $Q\in \Psibh$ with compactly
supported symbol given by
\begin{equation}
  \label{eq:definition-of-qh}
  q = \frac{\sqrt{2}}{\sqrt{\delta}}\left( \chi_{0}\chi_{0}'\right)^{1/2}\chi_{1}\chi_{2},
\end{equation}
where the arguments of $\chi_{\bullet}$ are the same as those in the
definition of $a$.  The symbol $q$ will appear when derivatives land
on the $\chi_{0}$ term in $a$.

\begin{lemma}
  \label{lem:semic-comm-comp}
  For $A$ and $Q$ defined as above,
  \begin{equation*}
    \frac{1}{ih}\left[ \doph , A^{*}A\right] = \tR \doph +Q \left(
      hxD_{x} + z +h\charge - ih\right) Q + B_{0} + \alpha_{r}B_{1} + E' + E'' +
    h\bR,
  \end{equation*}
  where
  \begin{itemize}
  \item All listed pseudodifferential operators have compact support,
  \item $Q\in \Psibh$ is invariant, self-adjoint, and has principal
    symbol $q$ defined in equation~\eqref{eq:definition-of-qh},
  \item $\ds \tR \in \Psibh$, with $\sigmabh (\tR) = -
    \pd[\xi](a^{2})$,
  \item $B_{0}, B_{1}\in \Psibh$ have principal symbols bounded by $C\beta^{-1}q^{2}$,
  \item $\ds E' \in \Psibh$, with $\WFbh'(E')\subset \{ \delta \leq
    \xi \leq 2\delta, x^{2}< 4\delta^{2}\beta^{2}\}$, 
  \item $\ds E'' \in \frac{1}{x}\Psibh$, with $\WFbh'(E'')\cap
    \Sigmadot_{h}=\emptyset$, and
  \item $\ds \bR \in \frac{1}{x}\Psibh$.
  \end{itemize}
\end{lemma}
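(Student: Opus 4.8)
The plan is to expand $\frac{1}{ih}[\doph, A^{*}A]$ by commuting $A^{*}A$ past each summand of
$$
\doph = hxD_{x} + z + h\charge - ih - \frac{h\charge}{x} - \alpha_{r}hD_{x} + \frac{ih}{x}\alpha_{r} - \frac{ih}{x}\alpha_{r}\beta K
$$
and then sorting the output into the seven listed families. The scalar constants $z$, $h\charge$, $-ih$ commute with $A^{*}A$, so the only contributions come from the commutators with $hxD_{x}$, $-\alpha_{r}hD_{x}$, $\tfrac{ih}{x}\alpha_{r}$, $-\tfrac{ih}{x}\alpha_{r}\beta K$, and $-\tfrac{h\charge}{x}$. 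Since $a = \chi_{0}(2-\phi/\delta)\chi_{1}(2-\xi/\delta)\chi_{2}(\xi^{2}+\abs{\eta}^{2})$ depends on the angular variables only through $\abs{\eta}^{2}$ it is $SO(3)$-invariant, so $A$ and $A^{*}A$ are invariant and $[K, A^{*}A] = 0$; and by Lemma~\ref{lem:general-semic-comm-L} (or directly, $\alpha_{r}$ and $K$ having only angular dependence) the commutators $[\alpha_{r}, A^{*}A]$ and $[\alpha_{r}\beta, A^{*}A]$ are microsupported in $\supp \pd[\eta](a^{2})$, which by \eqref{eq:char semicl x is zero} is disjoint from $\Sigmadot_{h}$ near $x=0$. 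As these angular commutators enter multiplied by $\tfrac{1}{x}$, they account for $E''$ together with the $O(h)$ remainder $h\bR$.

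Next I would isolate the principal structure. By Lemma~\ref{lem:sc-easy-commutants}, $\tfrac{1}{ih}[hxD_{x}, A^{*}A]$ has $\bl$-principal symbol $-x\pd[x](a^{2})$, and $\tfrac{1}{ih}[-\alpha_{r}hD_{x}, A^{*}A] = \alpha_{r}\pd[x](a^{2}) + \alpha_{r}\tfrac{\xi}{x}\pd[\xi](a^{2})$ modulo the off-characteristic $\tfrac{1}{x}$-term just discussed. Let $\tR \in \Psibh$ have $\sigmabh(\tR) = -\pd[\xi](a^{2})$; the term $\alpha_{r}\tfrac{\xi}{x}\pd[\xi](a^{2})$ equals $\tR\cdot(-\alpha_{r}hD_{x})$ at principal order, and I would rewrite this as $\tR\doph$ by adding back $\tR\cdot(hxD_{x} + z + h\charge - ih - \tfrac{h\charge}{x} + \tfrac{ih}{x}\alpha_{r}(1 - \beta K))$, compensating each added term: $\tR\cdot(hxD_{x} + z)$ is cancelled at principal order by $Q(hxD_{x} + z)Q$ up to a remainder supported on $\supp \chi_{1}' \cup \supp \chi_{2}'$ (which goes into $E' + E''$), while $\tR\cdot(h\charge - ih)$, $\tR\cdot\tfrac{h\charge}{x}$ and $\tR\cdot\tfrac{ih}{x}\alpha_{r}(1-\beta K)$ are $O(h)$, carry at worst a $\tfrac{1}{x}$, and go into $h\bR$. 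Here $Q$ is the invariant self-adjoint operator with principal symbol $q = \sqrt{2/\delta}(\chi_{0}\chi_{0}')^{1/2}\chi_{1}\chi_{2}$ from \eqref{eq:definition-of-qh}; its appearance is forced by the splitting $\pd[\xi](a^{2}) = q^{2} + (\text{a }\chi_{1}'\text{-term}) + (\text{a }\chi_{2}'\text{-term})$ together with the fact that $\chi_{0}\chi_{0}'$ and $\chi_{1}'$ have smooth square roots.

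It then remains to dispatch the radial $\pd[x]$ pieces and the residue. A derivative landing on $\chi_{0}$ in the $x$ direction brings down $\pd[x](2-\phi/\delta) = -\tfrac{2x}{\beta^{2}\delta^{2}}$, so $\pd[x](a^{2}) = -\tfrac{2x}{\beta^{2}\delta}q^{2}$; since $\supp q \subset \{x^{2} \le 4\delta^{2}\beta^{2}\}$, both $x\,\pd[x](a^{2})$ and $\alpha_{r}\pd[x](a^{2})$ are bounded by $C\beta^{-1}q^{2}$, and these are precisely $B_{0}$ (from $hxD_{x}$) and $\alpha_{r}B_{1}$ (from $-\alpha_{r}hD_{x}$). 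Derivatives on $\chi_{1}(2-\xi/\delta)$ force $\delta \le \xi \le 2\delta$ and, through $\chi_{0}$, $x^{2} < 4\delta^{2}\beta^{2}$ — this is $E'$; derivatives on $\chi_{2}(\xi^{2}+\abs{\eta}^{2})$ are supported where $\xi^{2}+\abs{\eta}^{2}\ge c_{1}$, hence off $\Sigmadot_{h}$, and (carrying the $\tfrac{1}{x}$ from the coefficient of $hD_{x}$) contribute to $E''$. Finally $\tfrac{1}{ih}[-\tfrac{h\charge}{x}, A^{*}A] = i\charge\,\tfrac{h}{x}C_{R}$ with $C_{R}\in\Psibh^{-1}$ by Lemma~\ref{lem:sc-easy-commutants}, which is of the form $h\bR$ with $\bR\in\tfrac{1}{x}\Psibh$, as are the subprincipal remainders generated by the symmetrizations and reorderings above.

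The main obstacle is the interplay of the $4\times 4$ matrix structure with the singular coefficient $\tfrac{1}{x}$: I must verify that after each use of the anticommutation relations among $\alpha_{r}$, $\beta$, $K$ and each reordering, nothing worse than $\tfrac{1}{x}\Psibh$ survives — in particular no $\tfrac{1}{x^{2}}$ — since only a single power of $\tfrac{1}{x}$ will be absorbable, via the semiclassical Hardy inequality, in the estimate of Proposition~\ref{prop:main-hyp-semic-statement}. This is exactly where the first-order nature of $\doph$ enters and where the bookkeeping is heavier than for the second-order $\Ph$ in Lemma~\ref{lem:ellip-semic-comm}; otherwise the computation parallels the corresponding step in~\cite{BW20}, the only genuinely new ingredient being the explicit compactly supported localizer $a$, whose $\chi_{0}$-factor — built from $\phi = -\xi + \beta^{-2}\delta^{-1}x^{2}$ — is designed to yield the positive-commutator term $Q(\cdots)Q$ with the correct monotonicity for propagation out of $\Sigmadot_{h}\cap\{x=0\}$.
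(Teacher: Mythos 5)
Your proposal is essentially the paper's proof carried out from first principles rather than via the intermediate Lemma~\ref{lem:general-semic-comm-L}: you expand $\tfrac{1}{ih}[\doph,A^*A]$ summand by summand, identify the leading piece $\alpha_r\tfrac{\xi}{x}\pd[\xi](a^2)$ as $\tR(-\alpha_r hD_x)$, trade for $\tR\doph$, split $\pd[\xi](a^2)$ into the $\chi_0$-, $\chi_1$-, $\chi_2$-derivative pieces to produce $Q(\cdots)Q$, $E'$, $E''$, and bound $B_0,B_1$ by the $\pd[x](a^2) = -\tfrac{2x}{\beta^2\delta}q^2$ identity. The paper instead applies Lemma~\ref{lem:general-semic-comm-L} (which already packages the singular-coefficient bookkeeping you flag as the ``main obstacle'') to get $A_0\cdot(\text{full degree}-(-1)\text{ part of }\doph) + \cdots$, and then performs exactly the same trade and splitting; your version therefore has more compensating terms to track (the $\tR\cdot\tfrac{h\charge}{x}$ and $\tR\cdot\tfrac{ih}{x}\alpha_r(1-\beta K)$ pieces), all of which you correctly identify as $h\cdot\tfrac{1}{x}\Psibh$ and absorb into $h\bR$. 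Two minor points worth cleaning up: your $Q(hxD_x + z)Q$ differs from the lemma's $Q(hxD_x + z + h\charge - ih)Q$ by $Q(h\charge-ih)Q \in h\Psibh$, which must be added to and subtracted from $h\bR$ to match the stated form; and your bound $\abs{x\pd[x](a^2)}\le C\delta q^2$ for $B_0$ is actually sharper than the stated $C\beta^{-1}q^2$ (the paper notes the same — ``in fact it is even smaller'') and suffices for Proposition~\ref{prop:main-hyp-semic-statement} but is not literally of the claimed form unless $\delta\lesssim\beta^{-1}$.
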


\begin{proof}
  We carefully apply Lemma~\ref{lem:general-semic-comm-L} with $C=
  A^{*}A$.  The main term in Lemma~\ref{lem:general-semic-comm-L} (the
  one involving $A_{0}$) is due to the near-homogeneity of the
  operator $\doph$ in $x$.  The principal symbol of $A_{0}$ there is
  $-\pd[\xi](a^{2})$; we use that its coefficient includes all of the
  homogeneous terms of degree $-1$ in $\doph$ and trade it for one of
  the form
  \begin{equation*}
    \tR\doph + A_{1} \left( h xD_{x} + z +h\charge - ih\right) A_{1},
  \end{equation*}
  where $\sigmabh(A_{1}^{2})= \pd[\xi](a^{2})$.  We now split $A_{1}$
  into three terms according to where the $\xi$-derivative lands.
  Those terms where the derivative falls on $\chi_{0}$ we write as
  $Q^{2}$ modulo a lower order error (which we absorb into $h\bR$).
  Those where the derivative falls on $\chi_{1}$ are absorbed into
  $E'$ and those for which it falls on $\chi_{2}$ are absorbed into
  $E''$.  

  The $B_{1}$ term arising in Lemma~\ref{lem:general-semic-comm-L}
  has principal symbol
  \begin{equation*}
    -\pd[x](a^{2}) = -2\chi_{0}\chi_{0}' \chi_{1}^{2}\chi_{2}^{2}\cdot
    \left( \frac{-1}{\delta} \right)\left(\frac{2}{\beta^{2}\delta}x\right).
  \end{equation*}
  As $0\leq x \leq 2\beta\delta$ on the support of $a$, this term can
  be estimated by a multiple of
  $\beta^{-1}\delta^{-1}\chi_{0}'\chi_{0}\chi_{1}^{2}\chi_{2}^{2}$ and
  hence by a multiple of $\beta^{-1}q^{2}$.  The $B_{0}$ term there is
  estimated similarly (and in fact is even smaller owing to the
  additional factor of $x$.)

  The $\bB_{2}$ and $\bB_{3}$ terms in
  Lemma~\ref{lem:general-semic-comm-L} have symbols proportional to
  $\pd[\eta](a^{2})$ and so are absorbed into $E''$.  The remaining
  terms constitute the $\bR$ term.
\end{proof}

We now record a few consequences of the symbol calculus:
\begin{lemma}
  \label{lem:sc-bound-remainder}
  With $A$, $Q$, $B_{0}$, and $B_{1}$ as above, there are positive
  constants $C$ and $c$ so that the following estimates hold:
  \begin{align*}
    \norm{Au} &\leq C\sqrt{\delta}\norm{Qu} +
                \bo(h^{\infty})\norm{u}_{\cX^{s}_{h}}, \\
   \abs{\ang{B_{0}u,u}} &\leq C\beta^{-1}\norm{Qu}^{2}+
                \bo(h^{\infty})\norm{u}_{\cX^{s}_{h}}^{2} ,\\
    \abs{\ang{B_{1}u,u}} &\leq C \beta^{-1}\norm{Qu}^{2} +
                    \bo(h^{\infty})\norm{u}_{\cX^{s}_{h}}^{2} , \\
    \abs{\ang{(hxD_{x}+ z + h\charge - ih)Qu, Qu}} &\geq \abs{z}\norm{Qu}^{2} -
    c\delta\norm{Qu}^{2} - \bo(h)\norm{Qu}^{2} -
                \bo(h^{\infty})\norm{u}_{\cX^{s}_{h}} .
  \end{align*}
\end{lemma}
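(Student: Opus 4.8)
The plan is to obtain all four bounds directly from the explicit symbols $a$ of $A$ and $q$ of $Q$ and from the symbol identities in the proof of Lemma~\ref{lem:semic-comm-comp}, comparing in each case the operator at hand to a multiple of $Q^{*}Q$ by a sharp-G{\aa}rding-type argument. First I would record the symbolic input. Since $\chi_{0}(r)=\exp(-1/r)$ for $r>0$ one has $\chi_{0}\chi_{0}'=r^{-2}\chi_{0}^{2}$, so, writing $r=2-\phi/\delta$, $q=\sqrt{2/\delta}\,r^{-1}\chi_{0}\chi_{1}\chi_{2}=\sqrt{2/\delta}\,r^{-1}a$, equivalently $a=\sqrt{\delta/2}\,r\,q$; in particular $q\geq 0$, the ratio $a/q=\sqrt{\delta/2}\,r$ extends smoothly, and $\supp a=\supp q$. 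On that set $\chi_{1}(2-\xi/\delta)\neq 0$ forces $\xi\leq 2\delta$, while $\chi_{0}\neq 0$ forces $\phi=-\xi+\tfrac{1}{\beta^{2}\delta}x^{2}<2\delta$, hence $\xi>-2\delta$; thus $\abs{\xi}<2\delta$, $r=2+\xi/\delta-\tfrac{x^{2}}{\beta^{2}\delta^{2}}\in(0,4)$, and $0\leq x<2\beta\delta$ there. Consequently $\abs{a}\leq\sqrt{8\delta}\,q$ on $\Tbstar\mf$, and, reading off the proof of Lemma~\ref{lem:semic-comm-comp}, the principal symbols of $B_{0}$ and $B_{1}$ are $g_{0}q^{2}$ and $g_{1}q^{2}$ with $g_{0}=\tfrac{2x^{2}}{\beta^{2}\delta}$, $g_{1}=\tfrac{2x}{\beta^{2}\delta}$ smooth and $\leq C\beta^{-1}$ on $\supp q$.

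Granting this, the first bound follows from $16\delta\,q^{2}-\abs{a}^{2}=\delta(16-\tfrac12 r^{2})q^{2}\geq 0$ (it is $\geq 8\delta q^{2}$ on $\supp q$ and vanishes elsewhere): since $q\geq 0$ and all the $\chi_{\bullet}$ have smooth square roots, this difference has a smooth square root, so $16\delta\,Q^{*}Q-A^{*}A=W^{*}W+hR$ with $W\in\Psibh$ and $R\in\Psibh$ microsupported in $\WFbh'(Q)$; pairing against $u$, controlling the remainder by the semiclassical elliptic and sharp-G{\aa}rding estimates already available near the pole (Lemma~\ref{lem:semiclassical-elliptic-reg-sortof}, Proposition~\ref{prop:semi-class-go-up-reg}) together with $\norm{Qu}\leq C\norm{u}_{\cX^{s}_{h}}$ (Lemma~\ref{lem:bdedness-semicl}), gives $\norm{Au}\leq C\sqrt\delta\norm{Qu}+\bo(h^{\infty})\norm{u}_{\cX^{s}_{h}}$. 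The same comparison applied to the self-adjoint parts of $B_{0}$ and $B_{1}$, now with comparison constant $2C\beta^{-1}$ (so $2C\beta^{-1}-g_{j}\geq C\beta^{-1}>0$ on $\supp q$), gives the two pairing bounds. For the last bound I would split $hxD_{x}+z+h\charge-ih=z\id+hxD_{x}+h(\charge-i)$; the semiclassical principal symbol of $hxD_{x}$ is $\xi$, which on $\supp q$ satisfies $\abs{\xi}<2\delta$, so comparing $Q^{*}(hxD_{x})^{*}(hxD_{x})Q$ to $5\delta^{2}Q^{*}Q$ exactly as above yields $\norm{hxD_{x}Qu}\leq\sqrt5\,\delta\norm{Qu}+\bo(h^{\infty})\norm{u}_{\cX^{s}_{h}}$, while $\norm{h(\charge-i)Qu}\leq Ch\norm{Qu}$; the reverse triangle inequality then gives the stated lower bound with $c=\sqrt5$.

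I expect the only delicate points to be bookkeeping ones. The first is the extraction of the pointwise symbol inequalities above from the explicit cutoffs and from $\phi$: it is precisely the support constraint $\phi<2\delta$ coming from $\chi_{0}$, together with $\phi\geq-\xi$, that simultaneously yields $\abs{a}\lesssim\sqrt\delta\,q$, the smallness $x\lesssim\beta\delta$ behind $\sigmabh(B_{j})\lesssim\beta^{-1}q^{2}$, and the smallness $\abs{\xi}\lesssim\delta$ behind the bound on $hxD_{x}$ over $\supp q$ --- these three gains are what ultimately supply the small prefactors $\sqrt\delta$, $\beta^{-1}$ and $\delta$ in the lemma. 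The second is tracking the lower-order remainders of the symbol calculus so that, after invoking the semiclassical elliptic estimates near the singularity, everything collapses to the $\bo(h)\norm{Qu}^{2}$ and $\bo(h^{\infty})\norm{u}_{\cX^{s}_{h}}$ terms recorded above; this is exactly the kind of argument carried out in~\cite{BW20}, so it should be routine here.
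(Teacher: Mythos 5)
The paper does not actually supply a proof of Lemma~\ref{lem:sc-bound-remainder}; it is stated as a consequence of the symbol calculus, so your task was genuinely to reconstruct the intended argument, and you have done so correctly. The crucial symbolic input is exactly what you identify: the exponential cutoff gives $\chi_{0}\chi_{0}'=r^{-2}\chi_{0}^{2}$, hence the \emph{exact} relation $a=\sqrt{\delta/2}\,r\,q$ with $r=2-\phi/\delta$, and the support constraints from $\chi_{0}$, $\chi_{1}$ yield $r\in(0,4)$, $\abs{\xi}<2\delta$ and $0\le x<2\beta\delta$ on $\supp q$. These give the pointwise bounds $\abs{a}\le\sqrt{8\delta}\,q$, $\abs{\sigmabh(B_{1})}\le (4/\beta)q^{2}$, $\abs{\sigmabh(B_{0})}\le 8\delta\,q^{2}$, and $\abs{\sigmabh(hxD_{x}\cdot Q)}\le 2\delta\,q$, and upgrading these to operator bounds by the smooth-square-root/sharp-G{\aa}rding argument (licensed by the hypotheses that $\chi_{\bullet}$ and $\chi_{\bullet}'$ have smooth square roots) is precisely the mechanism that produces the small prefactors $\sqrt{\delta}$, $\beta^{-1}$ and $\delta$ in the lemma. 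Your fourth estimate, splitting off $z\id$ and applying Cauchy--Schwarz plus the comparison for $hxD_{x}Q$, is also the right route.

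Two small imprecisions worth flagging, neither fatal. First, the bound $\abs{\sigmabh(B_{0})}\le 8\delta\,q^{2}$ is a $\delta$-bound, not a $\beta^{-1}$-bound; it becomes $\lesssim\beta^{-1}q^{2}$ only under the implicit regime $\delta\beta\lesssim 1$ in which the proof of Proposition~\ref{prop:main-hyp-semic-statement} is run. This imprecision is inherited from the statement of Lemma~\ref{lem:semic-comm-comp}, not yours. Second, and more substantively, the sharp-G{\aa}rding/square-root step produces a remainder of size $\bo(h)$, microsupported in a neighborhood of $\supp q$; it is not literally $\bo(h^{\infty})\norm{u}_{\cX^{s}_{h}}$ as you (and the lemma) write, since $Q$ fails to be elliptic on the boundary of its own support, so the remainder cannot be re-absorbed into $\norm{Qu}$ at infinite order. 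The honest form of the first three estimates has an extra term of the shape $\bo(h)\norm{Gu}_{\cX^{s}_{h}}$ for a cutoff $G$ elliptic on $\supp q$. This does no harm downstream: in the proof of Proposition~\ref{prop:main-hyp-semic-statement} there is already a $Ch^{1/2}\norm{Gu}_{\cX^{s}_{h}}$ term that absorbs it. But you should not claim $\bo(h^{\infty})$ from a one-step square-root argument alone; either track the $\bo(h)\norm{Gu}$ term explicitly or note that it is harmless for the application.
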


We now finish the proof of
Proposition~\ref{prop:main-hyp-semic-statement}.  

\begin{proof}
  We first observe that $\doph^{*} = \doph -2\Im z + 2ih = \doph + \bo
  (h)$.

  Given $u \in \cX^{s}$, we apply Lemma~\ref{lem:semic-comm-comp} and write
  \begin{align*}
    -\frac{2}{h}\Im\ang{A\doph u, Au} &= \frac{1}{ih}\ang{\left[\doph,
                                        A^{*}A\right]u, u} + \bo (1)
                                        \norm{Au}^{2} \\
    &= \ang{\tR \doph u, u} + \ang{(h xD_{x} + z - ih)Qu, Qu} +
      \ang{(B_{0}+\alpha_{r}B_{1})u, u} \\
    &\quad + \ang{E'u,u} + \ang{E''u,u} + h\ang{\bR
      u,u} + \bo(1)\norm{Au}^{2}.
  \end{align*}
  Because $z = \pm 1 + \bo (h)$, applying
  Lemma~\ref{lem:sc-bound-remainder} shows that
  \begin{align*}
    \norm{Qu}^{2} &\leq \frac{C}{h}\abs{\ang{A\doph u, Au}} + \left(
                    C\delta+ C\beta^{-1} + \bo
                    (h)\right)\norm{Qu}^{2} \\
    &\quad + \abs{\ang{\tR\doph u,u}} + \abs{\ang{E'u,u}} +
      \abs{\ang{E''u,u}} + h\abs{\ang{\bR u,u}} + \bo(h^{\infty})\norm{u}_{\cX_{h}^{s}}.
  \end{align*}

  We turn our attention to the second line.  The first term can be
  estimated by
  \begin{equation*}
    \frac{C}{h^{2}}\norm{G\doph u}^{2} + Ch \norm{Gu}^{2} +
    \bo(h^{\infty})\norm{u}_{\cX^{s}_{h}}^{2},
  \end{equation*}
  while the second term is bounded by
  \begin{equation*}
    \norm{Q_{1}}^{2} + Ch\norm{Gu}^{2} +\bo(h^{\infty})\norm{u}_{\cX^{s}_{h}}^{2}.
  \end{equation*}
  The last term can be similarly estimated by $Ch\norm{Gu}^{2} +
  \bo(h^{\infty})\norm{u}_{\cX^{s}_{h}}^{2}$.  We finally consider the
  term involving $E''$.  Writing
  \begin{equation*}
    E'' = \frac{1}{x}E_{0},
  \end{equation*}
  with $E_{0} \in \Psibh$ microsupported away from $\Sigmadot_h$, the
  Hardy inequality yields
  \begin{align*}
    \abs{\ang{\frac{1}{x}E_{0}u,u}} &\leq C\norm{\tG u}^{2} +
    C\norm{\frac{1}{x}\tG u}^{2} +
                                      \bo(h^{\infty})\norm{u}_{\cX^{s}_{h}}^{2} \\
    &= \frac{C}{h^{2}}\norm{\frac{h}{x}\tG u}^{2} + C\norm{\tG u}^{2} +
      \bo(h^{\infty})\norm{u}_{\cX^{s}_{h}}^{2} \\
    &\leq \frac{C}{h^{2}}\norm{\tG u}_{\cX^{s}_{h}}^{2} + \bo(h^{\infty}\norm{u}_{\cX^{s}_{h}}^{2},
  \end{align*}
  where $\tG$ is elliptic on the microsupport of $E_{0}$.  As $E_{0}$
  is microsupported in the elliptic set, we can guarantee that $\tG$
  is also microsupported in the elliptic set.
  Corollary~\ref{prop:better-h-factor-ellip} then applies to yield
  \begin{equation*}
    \abs{\ang{E'' u,u}} \leq \frac{C}{h^{2}}\norm{G\doph
      u}_{\cY^{s}_{h}}^{2} + Ch\norm{Gu}_{\cX^{s}_{h}}^{2} + \bo(h^{\infty})\norm{u}_{\cX^{s}_{h}}^{2}.
  \end{equation*}
  Taking $\delta$ small and $\beta$ large then provides the desired
  bound for $\norm{Qu}$.  
\end{proof}

\section{Polyhomogeneity}
\label{sec:polyhomogeneity}

In this section we show that the forward solution $\psi$ of
$i\dirac_{\charge/r}\psi = g$ with $g \in \CI_{c}$ is polyhomogeneous
at $\scri^{+}$ and $\mf$.

In particular we prove the following theorem:
\begin{theorem}
  \label{thm:phg-first-pass}
  Let $\cE$ denote the index set arising from the poles (with multiplicity) of
  $\dops^{-1}$ from Theorem~\ref{thm:bdry-fredholm}, i.e.,
  \begin{equation*}
    \cE = \{ (\sigma, m) : \sigma \text{ is a pole of
    }\dops^{-1}\text{ of order m}\},
  \end{equation*}
  set $\cE (\varsigma)$ to consist of those elements $(\sigma,m)\in
  \cE$ with $\Im \sigma < -\varsigma$, 
  and let
  $\cE_{0} = \{ (k,0) \in \complexes\times \naturals : k \in
  \naturals\}$ denote the index set describing smooth functions.
  If $\psi$ is the forward solution of
  $i\dirac_{\charge/r}\psi =g$ with $g\in \CI_{c}$, then there is a
  $\varsigma$ so that $\psi$ is
  polyhomogeneous on $[M;S_{+}]$ with index sets
  \begin{equation*}
    \begin{cases}
      \emptyset & \text{at } C_{-}\cup C_{0}\\
      -i(1 + i\charge) +\cE_{0} & \text{at }\scri^{+} \\
      -i(1+i\charge) + \cE(\varsigma) &\text{at }C_{+}
    \end{cases}.
  \end{equation*}
\end{theorem}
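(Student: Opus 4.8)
The plan is to run the Mellin-transform and contour-shift argument of~\cite{BVW15,BVW18,BM19}, now fed by the bulk module regularity of Theorem~\ref{thm:bulk-regularity} and the Fredholm theory of Theorem~\ref{thm:bdry-fredholm}; the one genuinely new piece of bookkeeping is tracking the exponents through the conjugation by $\rho^{1+i\charge}$.

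First I would pass to the rescaled problem $\dop u = f$ with $u = \rho^{-1-i\charge}\psi$ and $f = \rho^{-2-i\charge}g \in \CI_{c}(M^{\circ})$, as in Section~\ref{sec:set-up}. By Theorem~\ref{thm:bulk-regularity}, $u$ lies in some weighted space $\Hb^{1,m,\gamma'}(M)$ and $Au$ lies there too for every $A$ in every power of $\module$, so $u$ has a fixed a priori growth rate at $\mf$ together with infinite-order module regularity. Next I would cut off to a collar of $\mf$ by $\chi$, so that $\dop(\chi u) = \chi f + [\dop,\chi]u$ with the commutator term supported away from $\mf$; applying the Mellin transform $\mathcal{M}_{\mf}$ and using $N(\dop) = \dops$ gives $\dops\,\widehat{\chi u}(\sigma) = \widehat{\chi f}(\sigma) + \widehat{[\dop,\chi]u}(\sigma)$, where $\widehat{\chi u}$ is holomorphic and valued in $\cX^{s}$ in a half-plane $\Im\sigma > \varsigma_{0}$ above the a priori threshold, while the right-hand side is entire with rapid decay in $\Re\sigma$ apart from $\widehat{\chi f}$. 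Inverting with $\dops^{-1}$ (Theorem~\ref{thm:bdry-fredholm}) and using uniqueness of holomorphic continuation, $\widehat{\chi u}$ continues meromorphically to $\complexes$ with poles contained in the pole set $\cE$ of $\dops^{-1}$ and polynomial bounds in $\Re\sigma$ on horizontal strips.

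Then I would write $\chi u$ as an inverse Mellin integral over a high horizontal line and shift the contour downward past the finitely many poles in $\cE$ with $\Im z > -A$, for arbitrary $A$: a pole at $z$ of order $k$ contributes terms $\rho^{iz}(\log\rho)^{j}$, $0 \le j \le k-1$, with coefficients in $\cX^{s}$, and the residual integral at height $-A$ lies in $\rho^{A-0}\Hb^{1,\infty}(M)$ modulo the variable-order loss at $S_{+}$ coming from $s = s_{\tow}$. This yields the asymptotic expansion of $u$ at $\mf$, with coefficients smooth on $\mf\setminus S_{+}$ and of decreasing regularity at $S_{+}$ as $\Im z$ decreases. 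By finite propagation speed (the Coulomb term being of order zero) together with the forward and compact-support hypotheses, $\psi$, hence $u$, vanishes near $\mf$ over $C_{-}\cup C_{0}$, so only exponents projecting over a neighborhood of $S_{+}$ survive, and these lie in $\cE(\varsigma)$ with $\varsigma = -\varsigma_{0}$.

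To upgrade this ``$\mf$-only'' expansion to joint polyhomogeneity on $[M;S_{+}]$, I would pull the module regularity back by the blow-down map: the generators $\rho\pd[\rho]$, $\rho\pd[v]$, $v\pd[v]$, $\pd[\theta]$ of $\module$ become $\bl$-vector fields on $[M;S_{+}]$ tangent to both $\scri^{+}$ and $C_{+}$, and in the cylindrical coordinates of Section~\ref{sec:compactifications} they recover iterated regularity under the radial fields $\rho_{\scri^{+}}D_{\rho_{\scri^{+}}}$ and $\rho_{C_{+}}D_{\rho_{C_{+}}}$. Feeding the $\mf$-expansion and this regularity into Lemma~\ref{lem:double-phg} — testing at each face with the appropriate product $\prod(R_{\bullet}-z)$ and checking that each remainder improves decay at its own face at no cost at the other — gives polyhomogeneity on $[M;S_{+}]$ with index set $\cE_{0}$ at $\scri^{+}$ (only smooth exponents appear there, reflecting the smoothness of the radiation field across $\scri^{+}$ established in Section~\ref{sec:radiation-field}), $\cE(\varsigma)$ at $C_{+}$, and $\emptyset$ at $C_{-}\cup C_{0}$. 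Undoing $\psi = \rho^{1+i\charge}u$ then shifts both nonempty index sets by $-i(1+i\charge)$, producing the stated result. The hard part is this last upgrade: translating bulk module regularity into exactly the iterated radial-vector-field regularity demanded by Lemma~\ref{lem:double-phg}, with uniform control at the opposite boundary face — which requires matching the generators of $\module$ against the cylindrical coordinates $\rho_{\scri^{+}}$, $\phi_{\scri^{+}}$ and tracking the behavior of the variable-order spaces $\cX^{s}$ under the blow-up of $S_{+}$, exactly as in the scalar wave settings~\cite{BVW15,BVW18,BM19} but now for the first-order Dirac system.
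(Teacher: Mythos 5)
Your overall strategy---conjugate, Mellin transform, invert $\dops$, shift contours, then test with radial vector fields via Lemma~\ref{lem:double-phg}---matches the paper's. The gap is in the last step, which you correctly flag as the hard part but then underestimate.

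For the index set $\cE_0$ at $\scri^+$, the appeal to ``smoothness of the radiation field across $\scri^+$ established in Section~\ref{sec:radiation-field}'' does not suffice: that construction gives, for each fixed $s$, a $C^\infty$ extension of the rescaled solution across $\rho = 0$, but gives no uniform control as $s \to \infty$, i.e.\ it says nothing near the corner $C_+\cap\scri^+$, which is exactly where the joint expansion must be established. Nor does module regularity alone do the job: the lifts of the generators of $\module$ to $[M;S_+]$ are $\bl$-vector fields there, and iterated application gives conormality at $\scri^+$, not polyhomogeneity; the decay-improving remainder required by Lemma~\ref{lem:double-phg} is an additional structure. The mechanism that actually supplies it is the operator identity $\Pd + 2D_v(\rho D_\rho + vD_v)\in\module^2$ (Lemma~\ref{lem:Psigma-in-terms-of-module}) together with the factorization $\Pd=\tdop_1\dop$: commuting powers of the scaling field $\rho D_\rho + vD_v$ through $\Pd$, extracting a $D_v$ via this identity, and integrating from $v=-\rho/2$ (where $u$ vanishes by forward support) yields the extra power of $v+C\rho$ in equation~\eqref{eq:polyhom-at-scri}. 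That step is a structural fact about the Dirac--Coulomb operator that has to be verified, not a coordinate-matching exercise. Similarly, passing from Proposition~\ref{prop:first-asymptotic} to the $C_+$ testing in equation~\eqref{eq:polyhom-at-C} needs Lemma~\ref{lem:conversion-of-module} to trade module derivatives of the remainder for powers of $v$, which is what makes that estimate uniform up to $\scri^+$. Your proposal supplies neither ingredient, so the joint polyhomogeneity does not follow from what you have written.
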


Conjugating and rescaling, we work instead
with $\dop u = f$, where $u = \rho^{-1-i\charge}\psi$ and
$f = \rho^{-2-i\charge}\gamma^{0}g \in \CI_{c}(M)$.  The argument in this
section closely mirrors the ``short-range'' case for the radiation
field in previous work~\cite{BVW18} and so we provide only an
abbreviated version.  In fact, the argument here is simplified in
comparison with~\cite{BVW15, BVW18}, owing to
the scale-invariance of the operator $\dop$ in $\rho$.  Indeed, because
$N(\dop) = \dop$, we are able to avoid the complications of the
remainder terms arising in the contour deformation argument.

We apply the Mellin transform (in $\rho$) to $\dop u = f$ to obtain
$\dops \us = \fs$, where $\dops = N(\dop)$ is the indicial
operator of $\dop$.  As we are considering the forward problem, $\psi$
is supported in the forward light cone from $g$, and we may therefore
assume that $\us$ is supported in $\overline{C_{+}}$.

The conormal spaces we consider are adapted to our variable-order
spaces $\cY^{s}$: 
\begin{definition}
  For a distribution $u$ on $\mf$, we say that $u \in
  \conormal{s}(S_{+})$ if $u \in \cY^{s}$ and $A_{1}\dots A_{k}u \in
  \cY^{s}$ for all $A_{j}\in \module$ with $\module$ as in Definition
  \ref{def:module} and $k \in \naturals$.
\end{definition}
In particular, elements of
$\conormal{s}(S_{+})$ lie in $\Hb^{1,\infty}$ away from $S_{+}$ and
are therefore smooth away from $S_{+}$ and $x=0$.  The order $s$
influences the regularity microlocally at $N^{*}S_{+}$, in the sense
that, if $u \in \conormal{s}(S_{+})$ and $s' > s$ then $\WFb^{1,s'}(u)
\subset N^{*}S_{+}$.

To assist in bookkeeping, we recall a bit of notation from previous
work~\cite{BVW18}:
\begin{definition}
  For $\varsigma, s \in \reals$, we let $\complexes_{\varsigma}$
  denote the upper half-plane $\Im \sigma > - \varsigma$ and define
  \begin{equation*}
    \cB(\varsigma, s) = \cH (\complexes_{\varsigma}) \cap
    \ang{\sigma}^{-\infty}L^{\infty}_{\Im \sigma}L^{2}(\reals, \conormal{s}(S_{+})).
  \end{equation*}
\end{definition}

As in \cite[Def.\ 2.1]{BVW15}, for any Fr\'echet
  space $\mathcal{F}$ and $k \in \mathbb{R}$, $\cH (\complexes_{\varsigma}) \cap
    \ang{\sigma}^{k}L^{\infty}_{\Im \sigma}L^{2}(\reals,
    \mathcal{F})$ is the space of holomorphic functions $g$
    of $\sigma$ 
    with values in $\mathcal{F}$ such that for each fixed line $\Im
    \sigma = C > - \varsigma$, with $\mu = \Re \sigma$, for each seminorm $| \cdot
    |_\bullet$ of $\mathcal{F}$, we have $\langle \mu \rangle^k |g(\mu + i
    C)|_{\bullet} \in L^2(d\mu)$ with $L^2$-norm bounded uniformly in
    $C$.

Given $f \in \CI_{c}(M)$, $\fs \in \cB(\varsigma, s)$ for all $\varsigma$
and $s$.  Our main use of Section~\ref{sec:propagation-bulk} is
summarized in the following lemma:
\begin{lemma}[{c.f.~\cite{BVW15,BVW18,BM19}}]
  \label{lem:output-of-bulk-sec}
  There are $\varsigma_{0}$ and $s_{0}$ so that
  \begin{equation*}
    \us \in \cB (\varsigma_{0}, s_{0}-0).
  \end{equation*}
\end{lemma}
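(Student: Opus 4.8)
The plan is to obtain the lemma as a direct translation, via the Mellin transform in $\rho$, of the bulk module regularity already established in Theorem~\ref{thm:bulk-regularity}, following the pattern of~\cite{BVW15, BVW18, BM19} (with the variable-order bookkeeping as in~\cite[Appendix~A]{BVW15}). First I would recast Theorem~\ref{thm:bulk-regularity} in terms of $u=\rho^{-1-i\charge}\psi$: since multiplication by $\rho^{-1-i\charge}$ carries $\Hb^{1,m,\gamma}$ to $\Hb^{1,m,\gamma-1}$ and preserves module regularity of every order (the relevant commutators with the generators of $\module$ being bounded), the theorem gives $u\in\Hb^{1,m,\gamma-1}$ with $Au\in\Hb^{1,m,\gamma-1}$ for all $A\in\module^{N}$ and all $N$. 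I would then fix a cutoff $\chi=\chi(\rho)$ supported near $\rho=0$ and identically $1$ in a smaller neighbourhood of $\mf$, and set $\us=\mathcal{M}_{\mf}(\chi u)$; the commutators of $\chi$ with $\dop$ and with $\module$ are supported away from $\rho=0$, so they contribute only terms holomorphic on all of $\complexes$ with values in $\CI$, which are harmless.

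Next I would run the standard Mellin dictionary on $\chi u$. \emph{Holomorphy:} since $\chi u\in\rho^{\gamma-1}\Hb^{1,0,0}$ is supported near $\rho=0$, Cauchy--Schwarz in $\rho$ shows $\us$ is holomorphic with uniform $L^{2}$-in-$\Re\sigma$ bounds on every line $\Im\sigma=C>1-\gamma$, so we may take $\varsigma_{0}=\gamma-1$, i.e.\ $\complexes_{\varsigma_{0}}=\{\Im\sigma>1-\gamma\}$. \emph{Decay in $\Re\sigma$:} $\rho\pd[\rho]\in\module$, so infinite module regularity gives $(\rho\pd[\rho])^{N}(\chi u)\in\rho^{\gamma-1}\Hb^{1,0,0}$ for every $N$, and since $\mathcal{M}_{\mf}(\rho\pd[\rho]\,\cdot)=i\sigma\,\mathcal{M}_{\mf}(\cdot)$ we obtain $\ang{\Re\sigma}^{N}\us$ bounded in the relevant norm uniformly on horizontal lines for every $N$, which is exactly membership in $\ang{\sigma}^{-\infty}L^{\infty}_{\Im\sigma}L^{2}(\reals,\cdot)$. \emph{Conormality at $S_{+}$:} the generators $v\pd[v]$ and $\pd[\theta]$ of $\module$ commute with $\mathcal{M}_{\mf}$, while $\rho\pd[\rho]$ becomes $i\sigma$ and $\rho\pd[v]$ only improves the $\rho$-weight, so infinite $\module$-regularity of $\chi u$ relative to $\Hb^{1,m,\gamma-1}$ translates, line by line in $\Im\sigma$, into $\us(\sigma)\in\conormal{s_{0}-0}(S_{+})$ for each $\sigma\in\complexes_{\varsigma_{0}}$, where $s_{0}$ is the conormal order on $\mf$ determined by the bulk base order $m$ (recall that away from $x=0$, $\Hb^{1,m,\cdot}$-regularity is ordinary $\bl$-Sobolev regularity of order $m+1$, which passes under $\mathcal{M}_{\mf}$ to $H^{m+1}$-type regularity on $\mf$); the $-0$ absorbs the arbitrarily small loss inherited from the $1/2-\epsilon$ weights in Theorem~\ref{thm:bulk-regularity} and from the strict threshold inequality $s_{\tow}|_{S_{+}}<1/2+\Im\sigma$ at $\Lambda^{+}$. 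Near $x=0$ the bulk $\Hb^{1}$-regularity of $u$ gives more than the $\cY^{s}=L^{2}$ regularity required there, so nothing further need be checked.

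Finally, since $\psi$ is the forward solution with $g$ compactly supported, $\psi$ vanishes in a neighbourhood of $\mf$ outside the forward light cone of $\supp g$, which near $\mf$ lies in $\overline{C_{+}}$; hence $\us(\sigma)$ is supported in $\overline{C_{+}}$ for every $\sigma$. Combining the three points yields $\us\in\cB(\varsigma_{0},s_{0}-0)$.

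The genuinely hard work is behind us: propagating module regularity out of the past radial set, into $N^{*}S_{+}$, and through the singularity of the Coulomb potential at $x=\rho=0$ is precisely the content of Theorem~\ref{thm:bulk-regularity} and Section~\ref{sec:propagation-bulk}. The one point that still requires care is making the Mellin dictionary between $\rho$-weighted $\bl$-module regularity and holomorphy-plus-conormality precise in the presence of the \emph{variable-order} structure near $\Lambda^{\pm}$ and of the conic point $x=0$; this is done exactly as in~\cite[Appendix~A]{BVW15}, so I would only indicate the necessary changes and refer there for details.
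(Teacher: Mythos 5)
Your proposal is correct and follows essentially the same route as the paper's proof, which itself simply defers to~\cite[Section~9]{BVW15}: conjugate by $\rho^{-1-i\charge}$, cut off near $\mf$, and run the Mellin dictionary, reading holomorphy off the $\rho$-weight, $\ang{\Re\sigma}^{-\infty}$ decay off iterated $\rho\pd[\rho]$-regularity, and conormality at $S_{+}$ off the remaining module generators. The only point the paper emphasizes that you compress into the ``$-0$ / refer to BVW15'' remark is the interpolation needed to pass from integer-order module regularity to the (variable, non-integer) order $s_{0}-0$, but since you explicitly cite~\cite[Appendix~A]{BVW15} for exactly this step, the argument is complete.
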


The proof is essentially identical to one provided in previous
work~\cite[Section~9]{BVW15} and follows from the interpolation of the
propagation and module regularity estimates in
Section~\ref{sec:propagation-bulk} together with the mapping
properties of the Mellin transform.

We know already that $\dops \us =\fs$ and $\fs$ is entire.  As
$\fs \in \cY^{s}$ and $\us \in \cX^{s}$, we may apply
Theorem~\ref{thm:bdry-fredholm} to conclude that $\us$ is a
meromorphic function on the strip $s_{+}< \Im\sigma< s_{-}$ taking
values in $\cX^{s}$ provided that $s_{\tow}|_{S_{+}} \leq s_{+}$
and $s_{\tow}|_{S_{-}} > s_{-}$.  Shifting the contour, applying
the residue theorem, and interpolating then yields
\begin{equation*}
  \us \in \cB \left(\varsigma_0 + N, \min (s_{0}-0, \frac{1}{2}+
    \varsigma_{0}-N-0)\right) + \sum _{ (\sigma_{j}, m_{j}) \in \cE,
    \Im \sigma_{j}> -\varsigma_{0}-N} (\sigma - \sigma_{j})^{-m_{j}}a_{j},
\end{equation*}
for any $N$, where
\begin{equation*}
  a_{j} \in \cB(\varsigma_{0}+N, \frac{1}{2} + \Im \sigma_{j}-0).
\end{equation*}
Here the regularity of the $a_{j}$ follows from Cauchy's integral
formula.  Here $\cE$ denotes the set of poles of $\dops^{-1}$ lying
below our initial contour $\Im \sigma = -\varsigma_{0}$.  Inverting
the Mellin transform then yields an asymptotic expansion:
\begin{proposition}
  \label{prop:first-asymptotic}
  With $\cE$ the resonance index set and $\varsigma_{0}$ chosen to
  ignore those resonances where $\us$ is known to be holomorphic, we
  have
  \begin{equation*}
    u = \sum_{(\sigma_{j}, m)\in \cE, \Im \sigma_{j}> -\ell}
    \rho^{i\sigma_{j}}(\log \rho)^{m}a_{jm} + u',
  \end{equation*}
  where for $C = s_{0} + \varsigma_{0}$,
  \begin{equation*}
    u' \in \rho^{\ell} \Hb^{1,\min (C- \ell - 1 - 0, -1/2 -
      \varsigma_{0}-\ell-0), \gamma}.
  \end{equation*}
  The coefficients $a_{jm}$ are smooth functions of $\rho$ taking
  values in $\conormal{1/2-\Re(i\sigma_{j})-0}$.
\end{proposition}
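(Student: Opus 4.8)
The plan is to read off the expansion by applying the inverse Mellin transform in $\rho$ to the decomposition of $\us$ recorded just above; all of the substantive analysis — the meromorphic continuation of $\dops^{-1}$ from Theorem~\ref{thm:bdry-fredholm}, the contour shift, the residue calculus, and the interpolation controlling the remainder — has already been carried out at the level of $\us$, so the remaining work is a residue computation plus the standard mapping properties of the Mellin transform. Concretely, after cutting $u$ off near $\mf$ with the function $\chi$ entering the definition of $\mathcal{M}_{\mf}$, the distribution $u$ is the inverse Mellin transform of $\us$ along the line $\Im\sigma=-\varsigma_{0}$; this is legitimate because $\us\in\cB(\varsigma_{0},s_{0}-0)$ by Lemma~\ref{lem:output-of-bulk-sec}, which in particular gives holomorphy of $\us$ for $\Im\sigma>-\varsigma_{0}$ together with the decay in $\Re\sigma$ needed to justify the inversion.

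Fix $N$ and set $\ell=\varsigma_{0}+N$ (any $\ell$ arises this way, and decreasing $\ell$ only coarsens the statement). I would move the contour from $\Im\sigma=-\varsigma_{0}$ down to $\Im\sigma=-\ell$. By Cauchy's theorem this replaces $u$ by the inverse Mellin transform along $\Im\sigma=-\ell$ of the holomorphic part of the decomposition — which lies in $\cB(\ell,\min(s_{0}-0,\tfrac12+\varsigma_{0}-N-0))$ — plus the sum over the finitely many poles $(\sigma_{j},m_{j})\in\cE$ with $\Im\sigma_{j}>-\ell$ of the residues of $\rho^{i\sigma}\us(\sigma)$ at $\sigma_{j}$. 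For such a residue, expanding $\rho^{i\sigma}=\rho^{i\sigma_{j}}\sum_{l\ge0}\frac{(i\log\rho)^{l}}{l!}(\sigma-\sigma_{j})^{l}$ and combining with the Laurent expansion of $\us$ at $\sigma_{j}$ shows the residue is $\rho^{i\sigma_{j}}$ times a polynomial in $\log\rho$ of degree $m_{j}-1$ whose coefficients are, up to combinatorial constants, the Laurent coefficients of $\us$ at $\sigma_{j}$. Since those coefficients are small-circle integrals of $(\sigma-\sigma_{j})^{k-1}\us(\sigma)$ about $\sigma_{j}$, Cauchy's integral formula transfers to them the regularity of the holomorphic numerator $a_{j}$ on nearby horizontal lines, namely membership in $\conormal{1/2+\Im\sigma_{j}-0}=\conormal{1/2-\Re(i\sigma_{j})-0}$. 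This yields precisely the terms $\rho^{i\sigma_{j}}(\log\rho)^{m}a_{jm}$ of the statement; the only $\rho$-dependence of the $a_{jm}$ is the cutoff $\chi(\rho)$, and by the dilation invariance $N(\dop)=\dop$ one could take the $a_{jm}$ $\rho$-independent away from $\supp d\chi$.

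It remains to identify $u'$, the inverse Mellin transform of the holomorphic part along $\Im\sigma=-\ell$ (the contribution of $u$ from the region away from $\mf$ is smooth and enters $u'$ trivially). Here I would invoke the mapping properties of the Mellin transform as in~\cite[Appendix A]{BVW15} (see also~\cite{BVW18,BM19} and Section~\ref{sec:mellin-transform}): the inverse transform of an element of $\cB(\ell,\,\cdot\,)$ lies in $\rho^{\ell}$ times a $\bl$-Sobolev space whose order at $S_{+}$ is dictated by the conormal order of that element together with the conventions of Section~\ref{sec:vari-order-sobol}. Tracking these through gives $u'\in\rho^{\ell}\Hb^{1,\min(C-\ell-1-0,\,-1/2-\varsigma_{0}-\ell-0),\gamma}$ with $C=s_{0}+\varsigma_{0}$: the first entry of the minimum records the a priori module regularity of order $s_{0}$ transported from the bulk by Lemma~\ref{lem:output-of-bulk-sec}, shifted by the weight $\ell$ and by the order convention, while the second records the threshold regularity $1+m+(\text{weight})<1/2$ at the future radial set $\Lambda^{+}$, which caps the attainable $S_{+}$-regularity once the power $\rho^{\ell}$ is extracted. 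The weight $\gamma$ is the one supplied by Theorem~\ref{thm:bulk-regularity}, shifted by the conjugation $u=\rho^{-1-i\charge}\psi$; since $\abs{\rho^{-i\charge}}=1$ this affects only the weight and not the orders.

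The step I expect to cost the most care is the last one: matching the conormal/module orders of the $\cB$-spaces appearing in the decomposition of $\us$ with the $\Hb^{1,m,\gamma}$ orders claimed for $u'$ — in particular keeping straight how extracting $\rho^{\ell}$ interacts with the radial-point threshold at $\Lambda^{+}$, so that the two entries of the minimum come out as stated. Everything else is essentially bookkeeping, following~\cite[Section~9]{BVW15} and~\cite{BVW18}, and is in fact lighter here because $N(\dop)=\dop$ removes the contour-deformation remainder terms that complicate those references.
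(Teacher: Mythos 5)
Your proposal follows the paper's own route almost step for step: the paper derives the decomposition of $\us$ by contour shift, residue theorem, and interpolation (with the regularity of the Laurent coefficients obtained from Cauchy's integral formula, as you also observe), and then reads off the proposition by inverting the Mellin transform — which is precisely the content of your argument. Your presentation performs the contour shift directly on the inverse Mellin integral rather than first at the level of $\us$, but this is an equivalent packaging, not a different method, and your identification of the residues as $\rho^{i\sigma_j}$ times polynomials in $\log\rho$ of degree $m_j-1$ and of $u'$ via the Mellin mapping properties of~\cite[Appendix~A]{BVW15} matches what the paper leaves implicit.
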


\begin{remark}
  Because $\us$ is supported in $\overline{C_{+}}$, the $a_{jm}$ are
  also supported in $\overline{C_{+}}$ for $\rho$ near $0$.  We see
  in Section~\ref{sec:char-expon} below that in fact all $m=0$.
\end{remark}

One consequence of Proposition~\ref{prop:first-asymptotic} is that
\begin{equation*}
  u' = \left( \prod_{(\sigma_{j},m) \in \cE(\varsigma_{0}),
     \Im\sigma_{j} > -\ell} (\rho D_{\rho} - \sigma_{j})\right) u \in
 \rho^{\ell}\Hb^{1,\min (C- \ell-1-0, -1/2-\varsigma_{0}-\ell-0),\gamma}.
\end{equation*}
By Theorem~\ref{thm:bulk-regularity}, $u$ enjoys module regularity
with respect to $\rho^{\ell}\Hb^{1,s',\gamma}$ for some $s'$ and so we
can interpolate to show that in fact $u'$ enjoys module regularity
with respect to the space 
$\rho^{\ell}\Hb^{1,\min(C-\ell-1-0,-1/2-\varsigma_{0}-\ell-0),\gamma}$.

The module $\module$ includes a basis for $\cV_{\bl}$ with the
exception of $\pd[v]$, though $v\pd[v]$ lies in $\module$, leading to
\begin{lemma}
  \label{lem:conversion-of-module}
  For all $N$ and $\ell$, if $\module^{N+\ell}w\in \Hb^{1,p,\gamma}$, then $\module^{N}v^{\ell}w\in
  \Hb^{1,p+\ell,\gamma}$.  
\end{lemma}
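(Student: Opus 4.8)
\emph{Proof plan.} The statement is local and, away from $S_{+}\cup S_{-}$, essentially trivial: there $v=1-x$ is a nonvanishing smooth function, $v^{\ell}$ is an invertible order-zero operator, and $\module$-regularity relative to $\Hb^{1,p,\gamma}$ is just $\Hb^{1,p,\gamma}$-regularity (with room to spare), so the inclusion holds with $p+\ell$ replaced by anything. The same remark disposes of a neighborhood of the cone face, where $v\approx 1$. Thus I would localize near $S_{+}$ (the argument near $S_{-}$ is word-for-word identical, since $\module$ is conormal at both components of $\mathcal{R}$) and work in the coordinates $(\rho,v,\theta)$, in which $\module$ is generated over $\Psib^{0}(M)$ by $\rho\pd[\rho]$, $\rho\pd[v]$, $v\pd[v]$, $\pd[\theta]$, $\id$, while $\pd[v]\notin\module$. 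The main structural tool is the characterization $u\in\Hb^{1,q+1,\gamma}$ iff $u$ and $Vu$ lie in $\Hb^{1,q,\gamma}$ for every $V\in\cV_{\bl}(M)$ (valid for non-integer $q$ after fixing an elliptic calibrator in $\Psib^{q}$), together with the single identity
\begin{equation*}
  \pd[v](vw) = w + v\pd[v]w,
\end{equation*}
which expresses the one $\bl$-direction missing from $\module$ in terms of $w$ and one genuine module derivative $v\pd[v]w$ of $w$.

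Next comes the induction. Writing $v^{\ell}w = v\cdot(v^{\ell-1}w)$ and applying the case ``$\ell-1$'' with $N+1$ in place of $N$ to $v^{\ell-1}w$ (the hypothesis $\module^{(N+1)+(\ell-1)}w = \module^{N+\ell}w\subset\Hb^{1,p,\gamma}$ is exactly what is available) reduces everything to the case $\ell=1$: given $\module^{N+1}w\subset\Hb^{1,q,\gamma}$, show $\module^{N}(vw)\subset\Hb^{1,q+1,\gamma}$. This is proved by induction on $N$. For $N=0$ one checks $vw\in\Hb^{1,q+1,\gamma}$ directly from the characterization above: $\rho\pd[\rho](vw) = v\,\rho\pd[\rho]w$ and $\pd[\theta](vw) = v\,\pd[\theta]w$ lie in $\Hb^{1,q,\gamma}$ because $v\in\CI(M)$ and $\module^{1}w\subset\Hb^{1,q,\gamma}$, while $\pd[v](vw) = w + v\pd[v]w$ lies in $\Hb^{1,q,\gamma}$ since both $w$ and the module derivative $v\pd[v]w$ do; hence all of $v w$ and $\cV_{\bl}(vw)$ lie in $\Hb^{1,q,\gamma}$.

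For the step $N\to N+1$ (so the hypothesis is $\module^{N+2}w\subset\Hb^{1,q,\gamma}$) I would take $A\in\module^{N+1}$, write it as a sum of products of at most $N+1$ of the generators with $\Psib^{0}(M)$ coefficients, and commute the factor $v$ in $vw$ to the far left through $A$, using $[\rho\pd[\rho],v]=[\pd[\theta],v]=0$, $[v\pd[v],v]=v$, $[\rho\pd[v],v]=\rho$, and (the analogue of Lemma~\ref{lem:comms-with-b-ops} with $v$ in place of $1/x$) the commutation of $\Psib^{0}$ with $v$ up to operators in $\Psib^{-1}v$. This rewrites $A(vw)$ as $v\cdot B w$ with $B\in\module^{N+1}$, plus finitely many terms of the form $v\cdot B'w$ or $\rho\cdot B'w$ with $B'\in\module^{N}$ (dressed by $\Psib^{0}$ and $\Psib^{-1}$ factors), i.e.\ terms in which one generator of $A$ has been ``spent.'' The term $v\cdot Bw$ is handled by feeding $\module^{1}(Bw)\subset\module^{N+2}w\subset\Hb^{1,q,\gamma}$ into the $N=0$ step; the terms $v\cdot B'w$ likewise via the $N=0$ step applied to $B'w$; and the whole assertion is then assembled with a microlocal partition of unity gluing to the trivial estimate away from $S_{\pm}$, using the elliptic regularity Lemma~\ref{lem:elliptic-reg-bulk}.

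The one point that needs genuine care — and which I expect to be the crux — is the bookkeeping around the generator $\rho\pd[v]$: it lies in $\module$ (unlike $\pd[v]$), and $[\rho\pd[v],v]=\rho$, so the commutation step produces terms of the type $\rho\cdot B'w$ carrying a \emph{weight} gain ($\gamma\rightsquigarrow\gamma+1$) rather than a \emph{regularity} gain, whereas the target space $\Hb^{1,q+1,\gamma}$ demands the regularity gain. These terms are harmless, but only because each factor of $\rho$ produced is matched by the expenditure of one module derivative: the hypothesis $\module^{N+2}w\subset\Hb^{1,q,\gamma}$ then forces $w$ (as a distribution with that much conormal regularity at $\mathcal{R}_{\pm}$) to be regular enough at $S_{+}$ that $\rho\cdot B'w$ does in fact land in $\Hb^{1,q+1,\gamma}$ — concretely, one tracks that the polyhomogeneous-type exponents of $w$ at $\mathcal{R}_{\pm}$ permitted by the module hypothesis are bounded in terms of $q$ and $N$, exactly as in the model computation on $\reals_{v}$ where multiplying a compactly supported distribution that is $H^{q}$ plus $(N+2)$-fold conormal at $\{v=0\}$ by $v$ gains one order of Sobolev regularity at the cost of one conormal derivative. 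The weight $\gamma$ itself is otherwise inert, since $v$ is bounded on $M$.
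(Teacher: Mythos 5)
The paper states this lemma with no proof at all; the one sentence preceding it ("The module $\module$ includes a basis for $\cV_{\bl}$ with the exception of $\pd[v]$, though $v\pd[v]$ lies in $\module$") is the entirety of the justification offered. So there is no paper proof to compare against; your double induction on $\ell$ and $N$, built on the identity $\pd[v](vw) = w + v\pd[v]w$ together with commutation of $v$ through $\module$, is the natural argument and is presumably what the authors have in mind.

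The one place your plan is not yet tight is exactly the point you flag at the end, and the resolution is much simpler than what you sketch. The terms $\rho\cdot B'w$ produced by $[\rho\pd[v],v]=\rho$ require neither tracking "polyhomogeneous-type exponents" of $w$ nor any appeal to a model conormal computation on $\reals_{v}$. The mechanism is literally the same as for $v$: the function $\rho$ also satisfies $\rho\pd[v]\in\module$ (its symbol $\rho\gamma$ vanishes on $\mathcal{R}=\{\rho=v=0,\ \sigma=0,\ \eta=0\}$), so your $N=0$ step holds verbatim with $\rho$ in place of $v$. Concretely, if $\module^{1}w'\subset\Hb^{1,q,\gamma}$, then $\rho w'\in\Hb^{1,q+1,\gamma}$, because $\pd[v](\rho w')=\rho\pd[v]w'\in\module^{1}w'$, while the remaining generators $\rho\pd[\rho]$ and $\pd[\theta]$ of $\cV_{\bl}$ lie in $\module$ and satisfy $V\rho\in\rho\,\CI(M)$, so the Leibniz terms $(V\rho)w'$ are multiples of $\rho w'$. (The same works for any $\sigma\in\CI(M)$ vanishing on $S_{\pm}$; what matters is precisely that $\sigma\pd[v]$ has symbol vanishing on $\mathcal{R}$, i.e.\ $\sigma|_{S_\pm}=0$.) With this substitution the $\rho\cdot B'w$ terms are disposed of by the same $N=0$ step as the $v\cdot B'w$ terms, your counting of spent module derivatives is exactly as you say, and the argument closes cleanly.
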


We may then commute $\rho^{-\ell}$ through the module factors to see
that
\begin{equation*}
  \module^{N}\rho^{-\ell}v^{\ell}\left( \prod_{(\sigma_{j},m)\in \cE
      (\varsigma_{0}), \Im \sigma_{j} > -\ell} (\rho D_{\rho} -
    \sigma_{j})\right)u \in \Hb^{1,\min (C-1-0,-1/2-\varsigma_{0}-0), \gamma}.
\end{equation*}
In other words, if we set $\varpi = \abs{\rho / v}$ to be the defining
function of the faces $C_{+}$ and $C_{0}$ in the blow-up $[M ;
S_{+}]$, then we have the following result:
\begin{proposition}
  \label{prop:first-expansion-on-blow-up}
  On $C_{+}$, uniformly up to the corner $C_{+}\cap \scri^{+}$ in $[M;
  S_{+}]$, $u$ enjoys an asymptotic expansion with powers given by the
  resonance index set:
  \begin{equation}
    \label{eq:polyhom-at-C}
    \left( \prod_{(\sigma_{j}, m)\in \cE (\varsigma_{0}), \Im
        \sigma_{j} > - \ell}(\varpi D_{\varpi}-\sigma_{j})\right)u \in
    \varpi^{\ell}\Hb^{1,\infty, *, *}([M;S]),
  \end{equation}
  where the $*$'s represent fixed (independent of $\ell$) growth
  orders at $C_{+}$ and $\scri^{+}$.
\end{proposition}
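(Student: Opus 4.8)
The plan is to observe that, granted the analytic work already done, Proposition~\ref{prop:first-expansion-on-blow-up} is precisely the reinterpretation on $[M;S_{+}]$ of the module-regularity estimate established just above for $u$ on $M$. Recall from the discussion following Proposition~\ref{prop:first-asymptotic} that, writing $u'=\bigl(\prod_{(\sigma_{j},m)\in\cE(\varsigma_{0}),\,\Im\sigma_{j}>-\ell}(\rho D_{\rho}-\sigma_{j})\bigr)u$, one has $u'\in\rho^{\ell}\Hb^{1,p_{\ell},\gamma}(M)$ with $p_{\ell}=\min(C-\ell-1-0,\,-\tfrac12-\varsigma_{0}-\ell-0)$, and that interpolating against the module regularity of Theorem~\ref{thm:bulk-regularity} upgrades this to \emph{module} regularity, $\module^{N}u'\subset\rho^{\ell}\Hb^{1,p_{\ell},\gamma}(M)$ for every $N$. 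Trading each of the $\ell$ missing module factors $\pd[v]$ for a power of $v$ via Lemma~\ref{lem:conversion-of-module}, and commuting $\rho^{-\ell}$ through the remaining module factors (which only shifts the scaling generator by constants, $\rho^{-\ell}(\rho D_{\rho})\rho^{\ell}=\rho D_{\rho}+\ell\id$), one arrives at the last display of that discussion: $v^{\ell}\rho^{-\ell}u'$ has module regularity with respect to the single, $\ell$-independent space $\Hb^{1,p,\gamma}(M)$ with $p=\min(C-1-0,\,-\tfrac12-\varsigma_{0}-0)$. What remains is to explain the ``in other words'', i.e.\ why this is the statement of the proposition.

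First I would set up the coordinate dictionary near the corner $C_{+}\cap\scri^{+}$: there $[M;S_{+}]$ carries coordinates $(\varpi,v,\theta)$ in which $\varpi=\rho/v$ defines the lift of $C_{+}$, $v$ defines $\scri^{+}$, and $\rho=\varpi v$; in particular $v^{\ell}\rho^{-\ell}=\varpi^{-\ell}$, and the scaling vector field lifts as $\rho\pd[\rho]=\varpi\pd[\varpi]$, so $\bigl(\prod(\varpi D_{\varpi}-\sigma_{j})\bigr)u=u'$ on $[M;S_{+}]$. The key geometric point is that the $\bl$-vector fields generating $\cV_{\bl}([M;S_{+}])$ near this corner — namely $\varpi\pd[\varpi]$, $v\pd[v]$, $\pd[\theta]$ — are carried by the blow-down map $\beta_{bd}$ into $\module$: respectively to $\rho\pd[\rho]$, to $v\pd[v]+\rho\pd[\rho]$, and to $\pd[\theta]$ (and, conversely, the remaining generator $\rho\pd[v]$ of $\module$ is the $\beta_{bd}$-image of $\varpi\bigl(v\pd[v]-\varpi\pd[\varpi]\bigr)\in\Diffb^{1}([M;S_{+}])$, so the lift of $\module$ is exactly $\cV_{\bl}([M;S_{+}])$ there). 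Consequently iterated application of $\Diffb^{\bullet}([M;S_{+}])$ to $\varpi^{-\ell}u'$ equals, modulo the bounded smooth coefficients on $[M;S_{+}]$, the $\beta_{bd}$-pullback of the corresponding iterated application of $\module^{\bullet}$ to $v^{\ell}\rho^{-\ell}u'$, and hence lies in $\beta_{bd}^{*}\Hb^{1,p,\gamma}(M)$ for every order. Thus module regularity on $M$ near $S_{+}$ becomes full $\bl$-regularity of infinite order on $[M;S_{+}]$ near $C_{+}\cap\scri^{+}$; away from $S_{+}$ the map $\beta_{bd}$ is a diffeomorphism and $\module$ is the entire calculus (the radial set $\mathcal{R}$ is not met), so there the assertion is immediate.

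It then remains only to identify the weights of the target space, which I would do by a one-line computation of the pullback of the mixed-$\bl$ density: since $\tfrac{d\rho}{\rho}\wedge dv=\tfrac{d\varpi}{\varpi}\wedge dv=v\,\tfrac{d\varpi}{\varpi}\wedge\tfrac{dv}{v}$, we get $\beta_{bd}^{*}\mu_{M}=v\,\mu_{[M;S_{+}]}$ and therefore $\beta_{bd}^{*}\bigl(\rho^{\gamma}L^{2}(M)\bigr)=\varpi^{\gamma}v^{\gamma-1/2}L^{2}([M;S_{+}])$; the resulting weights $\gamma$ at $C_{+}$ and $\gamma-\tfrac12$ at $\scri^{+}$ are independent of $\ell$. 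Combining this with the previous paragraph gives $\varpi^{-\ell}u'\in\Hb^{1,\infty,\gamma,\gamma-1/2}([M;S_{+}])$, that is, $\bigl(\prod_{(\sigma_{j},m)\in\cE(\varsigma_{0}),\,\Im\sigma_{j}>-\ell}(\varpi D_{\varpi}-\sigma_{j})\bigr)u\in\varpi^{\ell}\Hb^{1,\infty,*,*}([M;S_{+}])$ with the starred growth orders fixed; uniformity up to the corner $C_{+}\cap\scri^{+}$ is built into the definition of the $\bl$-Sobolev space on the manifold with corners $[M;S_{+}]$. This is the proposition.

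There is no deep obstacle here — the propagation estimates, the Fredholm theory of Section~\ref{sec:bdry-op}, and the Mellin-transform asymptotics are all upstream — so the one point that genuinely needs care is that the $v^{\ell}$-for-$\pd[v]$ exchange of Lemma~\ref{lem:conversion-of-module} really does leave \emph{both} the $\bl$-Sobolev order and the $\scri^{+}$-weight bounded as $\ell\to\infty$. This is exactly why that lemma is phrased in terms of powers of $v=\rho_{\scri^{+}}$ rather than powers of $\rho$, and it is what makes the output usable — via Lemma~\ref{lem:double-phg}, with $\varpi D_{\varpi}$ as the radial vector field at $C_{+}$ — as the ``polyhomogeneity at $C_{+}$ with uniform control at $\scri^{+}$'' half of the joint polyhomogeneity claim of Theorem~\ref{thm:phg-first-pass}.
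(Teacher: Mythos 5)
Your proposal is correct and follows essentially the same route as the paper: Proposition~\ref{prop:first-asymptotic} plus interpolation with Theorem~\ref{thm:bulk-regularity} gives module regularity of $u'$, Lemma~\ref{lem:conversion-of-module} trades $\ell$ module factors for $v^{\ell}$ while commuting $\rho^{-\ell}$ through, and the result is then read off on $[M;S_{+}]$ via $\varpi=\rho/v$. You merely make explicit the coordinate and measure dictionary (lift of $\module$ to $\cV_{\bl}([M;S_{+}])$, pullback of the mixed $\bl$-density) that the paper compresses into ``in other words,'' and those computations are correct.
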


We now turn our attention to the expansion at $\scri^{+}$.  The
following lemma is instrumental in that discussion:
\begin{lemma}
  \label{lem:Psigma-in-terms-of-module}
  With $\module$ denoting the module above,
  \begin{equation*}
    \Pd + 2 D_{v}\left( \rho D_{\rho}+ v D_{v}\right) \in \module^{2}.
  \end{equation*}
\end{lemma}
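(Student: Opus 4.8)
The plan is to carry out the computation near $S_{+}$ in the coordinates $(\rho, v, \theta)$ with $\rho = 1/t$ and $v = 1 - x = 1 - r/t$, starting from the factorization of $\Pd$ recorded in Section~\ref{sec:relat-wave-equat}:
\begin{equation*}
  \Pd = L_{0}^{*}L_{0} - D_{x}^{*}D_{x} - \frac{1}{x^{2}}\Lap_{\theta} - \frac{i\charge}{x^{2}}\alpha_{r}, \qquad L_{0} := \rho\pd[\rho] + x\pd[x] + 1 + i\charge - \frac{i\charge}{x},
\end{equation*}
where the adjoints are taken with respect to $\frac{d\rho}{\rho}\,x^{2}\,dx\,d\theta$. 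Since $x\pd[x] = v\pd[v] - \pd[v]$ in these coordinates, I would set $T := \rho\pd[\rho] + v\pd[v] + 1 + i\charge - i\charge/x$, so that $L_{0} = T - \pd[v]$. The structural facts to record first are that $T$ and $T^{*}$ both lie in $\module$ (each equals $\pm(\rho\pd[\rho] + v\pd[v])$, a sum of generators of $\module$, plus multiplication by a function smooth near $v = 0$), and that $T - T^{*} = 2(\rho\pd[\rho] + v\pd[v]) + e$ for some $e$ smooth near $v = 0$.

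Next I would expand $L_{0}^{*}L_{0} = T^{*}T - T^{*}\pd[v] - \pd[v]^{*}T + \pd[v]^{*}\pd[v]$ and note that $\pd[v]^{*}\pd[v] = D_{x}^{*}D_{x}$ (immediate from $D_{x} = -D_{v}$), so the second-order $\pd[v]^{2}$ contributions cancel in $\Pd$. Carrying the measure correction $\pd[v]^{*} = -\pd[v] + b$ ($b$ smooth near $v = 0$) and the commutator $[\pd[v], T] = \pd[v] + (\text{smooth})$ through the remaining cross terms, a short computation gives
\begin{equation*}
  \Pd = T^{*}T + 2(\rho\pd[\rho] + v\pd[v])\pd[v] + (e+1)\pd[v] + (\text{smooth}) - bT - \frac{1}{x^{2}}\Lap_{\theta} - \frac{i\charge}{x^{2}}\alpha_{r}.
\end{equation*}
On the other hand $2D_{v}(\rho D_{\rho} + vD_{v}) = -2\pd[v](\rho\pd[\rho] + v\pd[v]) = -2(\rho\pd[\rho] + v\pd[v])\pd[v] - 2\pd[v]$, so in $\Pd + 2D_{v}(\rho D_{\rho} + vD_{v})$ the terms $\pm 2(\rho\pd[\rho] + v\pd[v])\pd[v]$ cancel and one is left with
\begin{equation*}
  \Pd + 2D_{v}(\rho D_{\rho} + vD_{v}) = T^{*}T + (e-1)\pd[v] + (\text{smooth}) - bT - \frac{1}{x^{2}}\Lap_{\theta} - \frac{i\charge}{x^{2}}\alpha_{r}.
\end{equation*}
Every term on the right lies in $\module^{2}$: $T^{*}T$ is a product of two elements of $\module$; $\frac{1}{x^{2}}\Lap_{\theta}$ is a $\Psib^{0}$-combination of $\pd[\theta]\pd[\theta]$, $\pd[\theta]$, and $\id$; the $(\text{smooth})$, $bT$, and $\frac{i\charge}{x^{2}}\alpha_{r}$ terms lie in $\Psib^{0}\cdot\module \subset \module \subset \module^{2}$; and, provided $e - 1$ vanishes at $v = 0$, the term $(e - 1)\pd[v]$ is a smooth multiple of the generator $v\pd[v]$, hence also in $\module$.

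The only delicate point is thus the identity $e(0) = 1$, i.e.\ that the coefficient of the residual $\pd[v]$ vanishes exactly at $S_{+}$. This comes down to two elementary observations: $(1 + i\charge - i\charge/x)|_{x = 1} = 1$, so the zeroth-order part of $L_{0}$ and its conjugate cancel in $e$ at $v = 0$; and $(\rho\pd[\rho] + v\pd[v])^{*} + (\rho\pd[\rho] + v\pd[v]) = -1 + \bo(v)$ with respect to any smooth positive density (the $-1$ coming from $(v\pd[v])^{*} = -v\pd[v] - 1$ plus lower order), which contributes the $-1$ making $e(0) = 0 - (-1) = 1$. Once this is in hand the rest is routine bookkeeping. (The analogous statement near $S_{-}$ follows from the same argument after the corresponding change of variables, but is not needed since the forward solution vanishes there.)
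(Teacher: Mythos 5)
Your computation is correct, and since the paper states Lemma~\ref{lem:Psigma-in-terms-of-module} without proof (deferring to the analogous argument in~\cite{BVW18}), there is no proof in the paper to compare against; the direct expansion you give is the natural and intended one. I verified the key steps: the change of variables $x\pd[x] = v\pd[v] - \pd[v]$ and $D_x = -D_v$; the cancellation $\pd[v]^*\pd[v] - D_x^*D_x = 0$; the formula $T - T^* = 2(\rho\pd[\rho]+v\pd[v]) + e$ with $e = 1 + 2i\charge(1 - 1/x) - \tfrac{2v}{1-v}$ so that $e(0) = 1$; and the fact that the surviving $(e-1)\pd[v]$ is a smooth multiple of the generator $v\pd[v]$, placing every term in $\module^2$ near $S_+$. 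One small cosmetic point: the correction in $(\rho\pd[\rho]+v\pd[v])^* + (\rho\pd[\rho]+v\pd[v])$ is $-1+\bo(\rho)+\bo(v)$ for a general smooth positive $\bl$-density rather than $-1+\bo(v)$ (for the specific density $\tfrac{d\rho}{\rho}x^2\,dx\,d\theta$ the $\bo(\rho)$ term happens to vanish, and in any case $\bo(\rho)\pd[v]$ lands in $\module$ via the generator $\rho\pd[v]$), so the conclusion is unaffected.
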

At this stage, the argument proceeds exactly as in previous
work~\cite[Section~9.2]{BVW18}, commuting powers of the radial vector
field through the operator and then integrating from $v=-\rho /2$ to show that
\begin{equation*}
  \module^{N}\left( \prod_{j=0}^{k}(\rho D_{\rho} + vD_{v} +
    ji)\right)u \in (v + C\rho)^{k+1} \Hb^{1,s-1,\gamma} (M)
\end{equation*}
for all $k$ and $N$.  Lifting to the blown up space $[M; S_{+}]$ then yields
\begin{equation}
  \label{eq:polyhom-at-scri}
  \left( \prod _{j=0}^{k}(\rho D_{\rho} + vD_{v} + ji)\right)u \in (v + C\rho)^{k+1}\Hb^{1,\infty,*,*}([M,S_{+}]),
\end{equation}
for some fixed ($k$-independent) weights $*$.  Applying
Lemma~\ref{lem:double-phg} with the results of
equations~\eqref{eq:polyhom-at-C} and~\eqref{eq:polyhom-at-scri}
yields the result.

\section{Characterization of the exponents}
\label{sec:char-expon}

Having established that the solution is polyhomogeneous at $C_{+}$ and
$\scri_{+}$, we now aim to characterize the exponents $\sigma_{j}$ in
the expansion at $C_{+}$.  As polyhomogeneity is preserved under
coordinate changes, we use a more convenient system of coordinates.

In particular, in a neighborhood of $\overline{C_{+}}$, we want to use
coordinates $(\rhot, \xt, \theta)$, where
\begin{equation}
  \label{eq:coords-in-hypergeom}
  \rhot = \frac{1}{t+r},\quad \xt = \frac{2r}{t+r},\quad  \theta \in \sphere^{2}.
\end{equation}
Similarly, near $\overline{C_{-}}$, it is convenient to use the
coordinate system with
\begin{equation*}
  \rhott = \frac{1}{r-t}, \quad \xtt = \frac{2r}{r-t}, \quad \theta
  \in \sphere^{2}.  
\end{equation*}
As $\dops$ depends on the choice of defining function $\rhot$, let
us now, in an abuse of notation, fix a function $\trho$ near $\mf$ so that
\begin{equation*}
  \trho =
  \begin{cases}
    \rhot = \frac{1}{r+t} & t > \frac{1}{2}r \\
    \rhott = \frac{1}{r-t} & t < - \frac{1}{2}r,
  \end{cases}
\end{equation*}
and so that $\trho$ remains a defining function for the boundary for
$|t|\leq \frac{1}{2}r$.  We may even guarantee that $\trho$ is
homogeneous of degree $-1$ in $(t,r)$ near $\trho = 0$.  Note that this
is necessarily a different boundary defining function than the one
discussed in the previous sections.  

In another abuse of notation, we now redefine the operator $\dops$ of the
previous sections to be the normal operator
\begin{equation*}
  \dops = \widehat{N}(\trho^{-2-i\charge}i\gamma^{0}\dirac_{V} \trho^{1+i\charge}),
\end{equation*}
which is related to our earlier operator by changes of coordinates,
conjugation by a non-vanishing smooth function, and a change
in the definition of the Mellin transform (from $\rho$ to $\trho$).
In particular, the Fredholm properties of $\dops$ and the locations of
the poles of its inverse are unchanged.  The operator $\dops$ is
conformally ``Dirac-type'' in the sense that its square has principal
part which is conformal to the hyperbolic Laplacian, as can be seen
from setting $V = 0$ and recalling from \cite[Section 5]{Vasy13} that
the Mellin-transformed operator
$\widehat{\rho^{-2}\Box}(\sigma)$ is conformally equivalent to
$\Delta_{\mathbb{H}^{3}} - \overline{\sigma}^2 - 1$ on the cap $C_+$.
Whether $\dops$ is given (conformally) by a canonical Dirac operator
on $\mathbb{H}^3$ plus a potential is irrelevant for our purposes
here. 

In a still further abuse of notation, we use the letter $x$ to denote both the
coordinates $\xt$ and $\xtt$ above in the relevant regions of interest.

With our choice of $\trho$ above, we now record the form of the
operator $\dops$ in the caps $C_{\pm}$.
In a neighborhood of $\overline{C_{+}}$, we have
\begin{equation*}
  \dops = xD_{x} + \sigma + \charge - i - \frac{2\charge}{x} - \alpha_{r} \left(
    (2-x)D_{x} - \frac{2i}{x} + \frac{2i}{x}\beta K - \sigma - \charge + i\right),
\end{equation*}
while in a neighborhood of $\overline{C_{-}}$, the signs of  the first
three terms flip:
\begin{equation*}
  \dops = - xD_{x} - \sigma - \charge + i - \frac{2\charge}{x} -
  \alpha_{r}\left( (2-x)D_{x} - \frac{2i}{x} + \frac{2i}{x}\beta K -
    \sigma -\charge + i\right) .
\end{equation*}
Before continuing, we further observe (though we do not record it
explicitly), that $\dops$ is a hyperbolic operator on the interior of $C_{0}$.

We now use separation of variables to show that the operators $\dops$
reduce to hypergeometric operators in $C_{\pm}$.  Recall from
Section~\ref{sec:separation-variables} that for $\kappa \in \integers
\setminus \{0\}$, the $-\kappa$ eigenspace of $K$ is spanned by
\begin{equation*}
  a\begin{pmatrix}
     \Omega _{\kappa, \mu} \\  \Omega_{-\kappa, \mu}
  \end{pmatrix} + b
  \begin{pmatrix}
    \Omega_{\kappa, \mu} \\ -\Omega_{-\kappa, \mu}
  \end{pmatrix}
 \equiv a \bOmega_{-} + b \bOmega _{+},
\end{equation*}
where $\Omega_{\kappa,\mu}$ are spinor spherical harmonics and
$\bOmega_{\pm}$ lies in the $\pm 1$-eigenspace of $\alpha_{r}$.  The
operator $\dops$ respects these eigenspaces and we may therefore
compute the action of $\dops$ on an element of the eigenspace.  In a
neighborhood of $\overline{C_{+}}$, this has the form
\begin{align}
  \label{eq:Ls-near-Cplus-eig}
  \dops a
  \bOmega_{-}  &= 2\left( D_{x} - \frac{i}{x} - \frac{\charge}{x}\right)a
                \bOmega_{-}  + \frac{2i\kappa}{x} a
                \bOmega_{+} \\
  \dops b
  \bOmega_{+}  &=
                -2\left( (1-x)D_{x} - \frac{i}{x} + \frac{\charge}{x} -
                (\sigma + \charge - i) \right)b 
                \bOmega_{+}  - \frac{2i\kappa}{x} b
                \bOmega_{-} \notag
\end{align}
Similarly, in a neighborhood of $\overline{C_{-}}$, it has the form
\begin{align}
  \label{eq:Ls-near-Cminus-eig}
  \dops a
  \bOmega_{-}  &=
                 2 \left( (1-x) D_{x} - \frac{i}{x} - \frac{\charge}{x} - (\sigma +
                 \charge -
                 i)\right) a
                 \bOmega_{-}  + \frac{2i\kappa}{x} a
                 \bOmega_{+}\\
  \dops b
  \bOmega_{+}  &= 
                 -2\left( D_{x} - \frac{i}{x} + \frac{\charge}{x} \right) b
                 \bOmega_{+}   - \frac{2i\kappa}{x}b
                 \bOmega_{-} \notag
\end{align}

In particular, separating into eigenspaces of $K$, if
\begin{equation*}
  \dops (a
  \bOmega_{-}  + b 
  \bOmega_{+} ) = h_{1}
  \bOmega_{-}  + h_{2}
  \bOmega_{+},
\end{equation*}
then $a$ and $b$ must solve a pair of coupled ordinary differential
equations.  

Indeed, after setting $\nu = \sqrt{\kappa^{2} - \charge^{2}}$ and
\begin{equation*}
  F = x^{1-\nu} a, \quad G = x^{1-\nu}b, \quad H_{1} =
  \frac{i}{2}x^{1-\nu}h_{1}, \quad H_{2} = -\frac{i}{2} x^{1-\nu}h_{2},
\end{equation*}
these equations have a particularly nice form.

Near $\overline{C_{+}}$, these equations have the form
\begin{align*}
  \left( \pd[x] + \frac{\nu - i \charge}{x}\right) F +
  \frac{\kappa}{x} G &= H_{1}, \\
  \left( (1-x)\pd[x] + \frac{\nu + i \charge}{x} - (\nu + i
  \sigma + i \charge)\right) G + \frac{\kappa}{x}F&= H_{2}.
\end{align*}
Similarly, near $\overline{C_{-}}$, they have the form
\begin{align*}
  \left( (1-x)\pd[x] + \frac{\nu-i\charge}{x}- (\nu + i\sigma + i \charge)\right)F +
  \frac{\kappa}{x} G &= H_{1}, \\
  \left( \pd[x] + \frac{\nu+i\charge}{x}\right) G + \frac{\kappa}{x}F &=
                                                                     H_{2}
\end{align*}

After substituting one equation into the other, we find that $F$ and
$G$ satisfy decoupled second order equations on $C_{\pm}$.

In a neighborhood of $\overline{C_{+}}$, $F$ and $G$ satisfy the following equations:
\begin{align}
  \label{eq:hypgeom-in-cplus}
  &x(1-x) \pd[x]^{2}F + \left( 1 + 2\nu - x (1 + 2\nu + i\sigma)\right) \pd[x]F - (\nu + i \sigma + i \charge) (\nu - i \charge)F  \\
  & \quad = (1-x)x\pd[x] H_{1} + (1+\nu + i\charge - x(1+\nu +
    i\sigma + i \charge))H_{1} - \kappa H_{2}, \\
  &x(1-x) \pd[x]^{2}G + \left( 1 + 2\nu - x (2 + 2\nu + i\sigma)\right) \pd[x]G - (\nu + i \sigma + i \charge)(1 + \nu - i\charge)G
  \\
  &\quad = x\pd[x] H_{2} + (1 + \nu - i \charge)H_{2} - \kappa H_{1}. \notag
\end{align}
In other words, $F$ and $G$ satisfy inhomogeneous hypergeometric
equations with parameters $(a,b,c)$ and $(a+1, b, c)$ respectively, where
\begin{equation*}
  a = \nu - i\charge, \quad b = \nu + i\sigma + i \charge, \quad c = 1 + 2\nu .
\end{equation*}

Near the other cap $\overline{C_{-}}$, $F$ and $G$ satisfy a slightly
different pair of hypergeometric equations:
\begin{align*}
  &x(1-x)\pd[x]^{2}F + \left(1 + 2\nu - x(2 + 2\nu + i \sigma +
    2i\charge)\right)\pd[x]F - (\nu + i\sigma + i \charge)(1 + \nu + i \charge)F \\
  &\quad = x\pd[x]H_{1} + (1 + \nu + i \charge)H_{1} - \kappa H_{2},
  \\
  &x(x-1)\pd[x]^{2}G + \left( 1 + 2\nu - x (1 + 2 \nu + i\sigma + 2i
    \charge)\right)\pd[x]G - (\nu + i\sigma + i \charge)(\nu + i \charge)G \\
  & \quad = (1-x)x\pd[x]H_{2} + (1 + \nu + i \charge - x(1+ \nu + i
    \sigma+ i \charge))H_{2} - \kappa H_{1}.
\end{align*}
This is again a pair of inhomogeneous hypergeometric equations for $F$
and $G$ but now with parameters $(a+1,b,c)$ and $(a,b,c)$
respectively, with
\begin{equation*}
  a = \nu + i \charge , \quad b = \nu + i \sigma + i \charge, \quad c = 1 + 2\nu.
\end{equation*}

We now exploit the hypergeometric structure of $\dops$ to
characterize the support of $\dops^{-1}f$ when $f$ is supported in $\overline{C_+}$.
\begin{lemma}
  Suppose $\sigma$ is a regular point of $\dops^{-1}$.  If $f$ is supported in
  $\overline{C_{+}}$, then $\dops^{-1}f$ is also supported in $\overline{C_{+}}$.
\end{lemma}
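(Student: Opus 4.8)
The plan is to produce, for regular $\sigma$, an \emph{explicit} element of $\cX^{s}$ supported in $\overline{C_{+}}$ solving $\dops u = f$, and to conclude by uniqueness. Since $\dops$ commutes with $K$ and the angular momentum operators, it is block--diagonal with respect to the decomposition of $\cX^{s}$ and $\cY^{s}$ into joint eigenspaces of $K$ and $J_{3}$, and regularity of $\sigma$ forces invertibility on each such mode; it therefore suffices to argue one mode at a time. On a fixed mode, the computations of this section exhibit $\dops$ as a first--order $2\times 2$ system in $x$ on the interval underlying $\mf$, with regular--singular points at the two conic points (the ``poles'', $x=0$), a hyperbolic region over $C_{0}$, and the radial points $S_{\pm}$; moreover $f$ is supported in the subinterval $\overline{C_{+}}$, which is the $x$--interval $(0,1]$ at the north cap.

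First I would solve, on $C_{+}=(0,1)_{x}$, the mode--wise equation $\dops w = f$ subject to two conditions: (i) $w$ lies in the domain of $\dops$ at the conic point $x=0$, which by the indicial analysis of the hypergeometric normal forms (using $|\charge|<1/2$ and $|\kappa|\ge 1$, so $\nu=\sqrt{\kappa^{2}-\charge^{2}}>1/2$) means $w$ is the $H^{1}$ branch, with $a$--component $\sim x^{\nu-1}$ rather than $x^{-\nu-1}$; and (ii) $P_{-}w(1^{-})=0$, where $P_{-}=\tfrac12(1-\alpha_{r})$ projects onto the $(-1)$--eigenspace of $\alpha_{r}$. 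Condition (i) cuts the two--dimensional space of homogeneous solutions down to one dimension, so the solutions of $\dops w=f$ compatible with (i) form a one--parameter affine family, and (ii) is a single scalar constraint on that family; hence the BVP (i)--(ii) is solvable, uniquely unless the homogeneous version has a nonzero solution $w_{0}$ with $P_{-}w_{0}(1^{-})=0$. I will rule out the latter, using regularity of $\sigma$: the distribution $\tilde w_{0}$ equal to $w_{0}$ on $C_{+}$ and to $0$ on $\mf\setminus\overline{C_{+}}$ satisfies $\dops\tilde w_{0}=0$. Indeed, since $\dops$ is first order with $D_{x}$--coefficient $x-(2-x)\alpha_{r}$, which equals $1-\alpha_{r}$ at $x=1$ (read off from the form of $\dops$ near $\overline{C_{+}}$), truncation at $S_{+}$ produces only the distributional term
\begin{equation*}
  i(1-\alpha_{r})\,w_{0}(1^{-})\,\delta_{S_{+}} = 2i\,P_{-}w_{0}(1^{-})\,\delta_{S_{+}} = 0 .
\end{equation*}
One checks $\tilde w_{0}\in\cX^{s}$: it is $H^{1}$ at the conic points (from (i) at the north pole, trivially at the south), $\dops\tilde w_{0}=0\in\cY^{s}$, and near $S_{+}$ the conormal exponents of $w_{0}$ are $0$ and $1-i\sigma$, which lie well below the threshold regularity $\tfrac12+\Im\sigma$ there, so $\mathbf{1}_{C_{+}}w_{0}\in H^{s_{\tow}}$ near $S_{+}$. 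Thus $\tilde w_{0}$ would witness non--injectivity of $\dops$ on this mode, contradicting regularity of $\sigma$.

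Hence for regular $\sigma$ the BVP (i)--(ii) on $C_{+}$ has a unique solution $w=w^{(m)}$ on each mode, and I set $\tilde u = w$ on $C_{+}$ and $\tilde u = 0$ on $\mf\setminus\overline{C_{+}}$. The same truncation computation, now with $\dops w = f$ rather than $0$, gives $\dops\tilde u = \mathbf{1}_{C_{+}}\dops w + 2i P_{-}w(1^{-})\delta_{S_{+}} = f$ (the boundary term vanishes by (ii), and $\mathbf{1}_{C_{+}}f = f$ since $\supp f\subset\overline{C_{+}}$), and the same regularity bookkeeping gives $\tilde u\in\cX^{s}$. By uniqueness of $\dops^{-1}f$ in $\cX^{s}$, applied mode by mode, $u = \dops^{-1}f = \sum_{m}\tilde u^{(m)}$, which is supported in $\overline{C_{+}}$ (and the sum converges since it equals $u$).

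The main obstacle is the analysis at $S_{+}$: verifying that truncation there produces no distributional mass beyond the single $\delta_{S_{+}}$ term above—this is where it matters that $\dops$ is first order with the explicit $D_{x}$--coefficient $x-(2-x)\alpha_{r}$—and verifying that the truncated solution genuinely lies in $\cX^{s}$, which is exactly where the choice $s_{\tow}|_{S_{+}} < \tfrac12+\Im\sigma$ (below threshold) is needed so that the conormal behaviour $(1-x)^{1-i\sigma}$ permitted on $C_{+}$ is admissible for $\cX^{s}$. The identification of the $H^{1}$ branch at $x=0$ and the solvability count for (i)--(ii) are then routine consequences of the hypergeometric normal forms already derived in this section.
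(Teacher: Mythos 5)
Your proof takes a genuinely different route from the paper's.  The paper argues by elimination from the \emph{south}: since $f$ vanishes on $\overline{C_-}$, the components of $u$ there solve the homogeneous hypergeometric equations; the $L^2$ condition at the south conic point and the above-threshold regularity condition at $S_-$ force $u$ to simultaneously be a multiple of two solutions that are linearly independent for a dense open set of $\sigma$ with $\Im\sigma > 0$, hence $u\equiv 0$ on $\overline{C_-}$; vanishing then spreads across $C_0$ by finite speed of propagation for the hyperbolic operator $\dops$ there, and the full claim follows by meromorphy.  You instead \emph{construct} the candidate solution directly on $C_+$ by solving a one-sided boundary value problem with the $H^1$-branch condition at the conic point and the polarization condition $P_-w(1^-)=0$ at $S_+$, extend by zero, and invoke uniqueness.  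Both proofs ultimately lean on the same linear-independence fact for the hypergeometric system (your contradiction argument for $P_-w_0(1^-)\ne 0$ is logically the same statement that drives the paper's vanishing on $\overline{C_-}$), but yours avoids finite speed of propagation on $C_0$ at the cost of a delicate conormal-regularity check at $S_+$.

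One step in that check is stated incorrectly, though the conclusion is salvageable.  You write that the conormal exponents $0$ and $1-i\sigma$ ``lie well below the threshold regularity $\tfrac12+\Im\sigma$,'' but that is not the relevant comparison, and the exponent $1-i\sigma$ (with real part $1+\Im\sigma$) does not lie below $\tfrac12+\Im\sigma$ at all.  What is needed is that truncation of a conormal function with exponent $e$ lies in $H^{s}$ near $S_+$ precisely for $s<\Re(e)+\tfrac12$; for the exponent $0$ this requires $s_{\tow}\rvert_{S_+}<\tfrac12$, which is \emph{not} guaranteed by the Fredholm constraint $s_{\tow}\rvert_{S_+}<\tfrac12+\Im\sigma$ when $\Im\sigma>0$.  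The cleaner way to close this is to observe that the exponent-$0$ contribution is actually absent: the coefficient of $D_x$ in the first-order system degenerates at $x=1$ (the $(1-x)\partial_x$ term in the $G$-equation vanishes there), so evaluating the system at $x=1$ with $H_2(1)=0$ gives the Fuchsian constraint $-i\sigma\,G(1)+\kappa\,F(1)=0$.  Since $P_-$ projects onto the $a$-component, condition (ii) forces $F(1)=a(1)=0$, and the constraint then gives $G(1)=b(1)=0$, so $w(1^-)=0$ and the truncation carries only the exponent $1-i\sigma$ (and higher), which is in $H^{s_{\tow}}$ near $S_+$ for every admissible $s_{\tow}$.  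With that repair the argument is sound.
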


\begin{proof}
  Suppose $u = \dops^{-1}f$.  We proceed by separation of variables
  and decompose both $u$ and $f$ into spinor spherical harmonics;
  without loss of generality, we may assume both $u$ and $f$ lie in the
  span of
  \begin{equation*}
    \bOmega_{-}  =
    \begin{pmatrix}
      \Omega_{\kappa, \mu} \\ \Omega_{-\kappa, \mu}
    \end{pmatrix}
    \text{ and }\bOmega_{+} =
    \begin{pmatrix}
      \Omega_{\kappa, \mu}\\ -\Omega_{-\kappa,\mu}
    \end{pmatrix}
  \end{equation*}
  for $\kappa$ and $\mu$ fixed.

  Consider first the solution in $C_-$ and write $u$ in terms of the
  above basis of spinor spherical harmonics.  By the above
  computation, a rescaling of the components of $u$ in $C_-$ must
  solve homogeneous hypergeometric equations as $f$ vanishes
  identically in $C_-$.

  As $u$ must lie in $L^{2}$ near $x=0$, the $\bOmega_{-}$ component
  of $x^{1-\nu}u$ must be a multiple of the hypergeometric function
  $F(a+1, b, c; x)$, where $a = \nu + i\charge$,
  $b = \nu + i\sigma + i \charge$, and $c = 1 + 2\nu$.  Similarly, the
  $\bOmega_{+}$ component of $x^{1-\nu}u$ must be a multiple of
  $F(a,b,c;x)$.

  By contrast, as $u$ must be regular at $x=1$ (i.e., at $S_{-}$), the
  corresponding components of $x^{1-\nu}u$ must be multiples of
  $F(a+1, b, a+b+2-c; 1-x)$ and $F(a,b,a+b+1-c; 1-x)$ respectively.
  As these pairs of solutions of the corresponding equations are
  linearly independent for a dense open subset of $\sigma$ with $\Im
  \sigma > 0$ and $\dops^{-1}$ is meromorphic, $u$ must vanish
  identically in $\overline{C_{-}}$.

  In fact, $u$ must vanish identically in a neighborhood of $S_{-}$ as
  well because the hypergeometric form of the equation continues to
  hold there in our explicit coordinate systems.  We may thus conclude
  that $u$ vanishes on $C_{0}$ as well by finite speed of propagation,
  as $\dops$ is a hyperbolic operator in $C_{0}$ and $u$ vanishes on a
  Cauchy surface.
\end{proof}

\begin{lemma}
  For $0 < \abs{\charge}< 1/2$, and $f$ compactly supported in the
  interior of $C_{+}$, the poles of $\dops^{-1}f$ occur at
  \begin{equation*}
    -i (1 + \nu + m), \quad \nu = \sqrt{\kappa^{2}-\charge^{2}}, m = 0, 1, 2, \dots.
  \end{equation*}
  If $\charge = 0$, $\dops^{-1}f$ has no poles.
\end{lemma}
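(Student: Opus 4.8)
The plan is to combine the separation of variables carried out above with an explicit solution of the resulting hypergeometric ODEs and the classical connection formulae for ${}_{2}F_{1}$. By the previous lemma $u=\dops^{-1}f$ is supported in $\overline{C_{+}}$, so it suffices to work in the $C_{+}$ chart and, after decomposing $f$ and $u$ into $-\kappa$-eigenspaces of $K$, to treat a single mode. For fixed $\kappa$ the rescaled components $F=x^{1-\nu}a$, $G=x^{1-\nu}b$, with $\nu=\sqrt{\kappa^{2}-\charge^{2}}$, satisfy the decoupled inhomogeneous hypergeometric equations found above, with parameters $(a_{0},b_{0},c)=(\nu-i\charge,\ \nu+i\sigma+i\charge,\ 1+2\nu)$ for $F$ and $(a_{0}+1,b_{0},c)$ for $G$; away from $\supp f$ the homogeneous versions govern $u$.

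Two one-dimensional conditions pin down $u$. First, membership in the domain of $\dops$ near the cone point $x=0$ --- equivalently, since $|\charge|<1/2$ forces $\nu>1/2$, the requirement that $a,b$ be $O(x^{\nu-1})$ rather than $O(x^{-\nu-1})$ --- makes $u$ on the left of $\supp f$ a multiple of the system solution $u_{0}$ whose components are analytic at $x=0$, built from ${}_{2}F_{1}(a_{0},b_{0};c;x)$ and ${}_{2}F_{1}(a_{0}+1,b_{0};c;x)$. Second, the requirement that $u$ vanish in $C_{0}$ and lie in $\cX^{s}$ makes $u$ on the right of $\supp f$ a multiple of the system solution $u_{1}$ carrying the admissible (``outgoing'') behaviour at $S_{+}=\{x=1\}$, namely the branch with $F\sim(1-x)^{c-a_{0}-b_{0}}=(1-x)^{1-i\sigma}$ and $G\sim(1-x)^{-i\sigma}$ --- the only branch for which extension by zero across $x=1$ solves the equation distributionally. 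When $u_{0}$ and $u_{1}$ are linearly independent they span the two-dimensional solution space, so the matching problem across $\supp f$ has a unique solution depending holomorphically on $\sigma$; hence $\dops^{-1}f$ can only acquire a pole where $u_{0}$ and $u_{1}$ become proportional.

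To locate these, expand ${}_{2}F_{1}(a_{0},b_{0};c;x)$ near $x=1$ by the standard connection formula: it equals $\frac{\Gamma(c)\Gamma(c-a_{0}-b_{0})}{\Gamma(c-a_{0})\Gamma(c-b_{0})}$ times the solution regular at $x=1$ plus a multiple of $(1-x)^{c-a_{0}-b_{0}}$ times the other, and the same $\Gamma(c-b_{0})^{-1}$ controls the regular-at-$1$ coefficient of the $G$-component. Thus $u_{0}$ is proportional to $u_{1}$ precisely when $\Gamma(c-b_{0})^{-1}=0$, i.e.\ when $c-b_{0}=1+\nu-i\sigma-i\charge$ is a non-positive integer $-m$; unwinding $c=1+2\nu$, $b_{0}=\nu+i\sigma+i\charge$ produces exactly the list of values in the lemma, with $\nu=\sqrt{\kappa^{2}-\charge^{2}}$ and $m=0,1,2,\dots$. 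At such a $\sigma$ the denominator factor $\Gamma(c-a_{0}-b_{0})^{-1}=\Gamma(1-i\sigma)^{-1}$ is finite and nonzero when $\charge\neq0$, so these are genuine poles. When $\charge=0$ one has $\nu=|\kappa|\in\naturals$, and then $c-a_{0}-b_{0}=1-i\sigma=-(|\kappa|+m)$ is also a non-positive integer, so $\Gamma(c-a_{0}-b_{0})$ has a compensating pole: $\Gamma(c-a_{0}-b_{0})/\Gamma(c-b_{0})$ stays finite and nonzero, $u_{0}$ and $u_{1}$ remain independent, and $\dops^{-1}f$ has no pole.

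I expect the main obstacle to be step two: rigorously identifying which solution at $S_{+}$ the forward problem selects. Because $S_{+}$ is a radial point of the hyperbolic operator $\dops$ rather than an ordinary boundary face, one must argue --- using the below-threshold variable order of $\cX^{s}$ at $S_{+}$ together with the support of $u$ forced by finite speed of propagation --- that only the $(1-x)^{c-a_{0}-b_{0}}$-type branch of each component is compatible with extension by zero into $C_{0}$ as a distributional solution, keeping track of the coupling of the two first-order equations throughout. Once that branch is singled out, the rest is the Gamma-function bookkeeping sketched above plus a routine check that $u_{0}$ and $u_{1}$ introduce no further poles (their ${}_{2}F_{1}$ factors are entire in the relevant parameters, the only delicate point being the base-point expansion at $x=1$ when the third parameter is a non-positive integer, which is handled by the usual limiting definition of ${}_{2}F_{1}$).
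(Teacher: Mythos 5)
Your proposal is correct and its core is the same as the paper's: separate into $K$-eigenmodes, reduce to the hypergeometric equations with parameters $(\nu-i\charge,\ \nu+i\sigma+i\charge,\ 1+2\nu)$, select the $L^{2}$-admissible solution $w_{1}$ at $x=0$ and the only branch $w_{4}\sim(1-x)^{1-i\sigma}$ at $x=1$ compatible with extension by zero into $C_{0}$ for $\Im\sigma\gg 0$, and trace the poles to $\Gamma(1+\nu-i\sigma-i\charge)$. The difference is one of packaging: you detect poles by asking when $w_{1}$ and $w_{4}$ become proportional (so that $\dops$ acquires a nontrivial nullspace), whereas the paper constructs $\dops^{-1}f$ explicitly by variation of parameters using the Wronskian $W(w_{1},w_{4})=-\Gamma(1+2\nu)\Gamma(2-i\sigma)\,/\,\bigl(\Gamma(1+\nu+i\charge)\Gamma(1+\nu-i\sigma-i\charge)\bigr)\cdot x^{-1-2\nu}(1-x)^{-i\sigma}$ and reads the pole off the prefactor $1/W$. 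The two criteria are equivalent, but the paper's explicit formula has the advantage you yourself flag as a worry: the $\Gamma(2-i\sigma)^{-1}$ in $1/W$ automatically regularizes $w_{4}$ when its third parameter $2-i\sigma$ hits a nonpositive integer, so no separate limiting argument for ${}_{2}F_{1}$ is needed. The other delicate step you flag---identifying the branch at $S_{+}$---is handled in the paper essentially as you sketch: for $\Im\sigma\gg 0$ the regular-at-$1$ branch extended by zero is not regular enough to lie in the image of $\dops^{-1}$ (the variable order at $S_{+}$ exceeds $1/2$ there), and the characterization then propagates by meromorphy. One last point worth flagging: both your derivation and the paper's own proof land on $\sigma=-\charge-i(1+\nu+m)$, which carries an extra $-\charge$ relative to the value displayed in the lemma statement; the latter appears to be a typo, since the derived value is the one consistent with the $C_{+}$ index set of Theorem~\ref{thm:phg-full} after the $\rho^{1+i\charge}$ shift.
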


\begin{proof}
  We proceed by explicitly characterizing the kernel of $\dops^{-1}$.

  We first consider $\Im \sigma \gg 0$.  As $f$ is compactly supported
  in $C_+$, the computations above show that near $S_+$ but within
  $C_+$, the components of $x^{1-\nu}u$ must be linear combinations of
  hypergeometric functions.  In particular, the
  $\bOmega_{-}$  component of $x^{1-\nu}u$ must be a linear combination of the
  following two hypergeometric functions:
  \begin{align*}
    F(\nu - i \charge, \nu + i \sigma + i \charge, i\sigma ; 1 - x),
    \\
    (1-x)^{1 - i \sigma + i \charge} F(1 + \nu + i \charge, 1 + \nu -
    i \sigma - i \charge, 2  - i \sigma ; 1-x).
  \end{align*}

  Observe that the function given on $(0,\infty)$ by
  \begin{equation*}
    \begin{cases}
      F(\nu - i \charge , \nu + i \sigma + i \charge, i \sigma ; 1-x)
      & 0 < x < 1 \\
      0  & x \geq 1
    \end{cases},
  \end{equation*}
  is not regular enough to lie in the image of $x^{1-\nu}\dops^{-1}$ for $\Im
  \sigma \gg 0$.  In
  particular, this component of $u$ must in fact be a multiple of
  \begin{equation*}
    (1-x)^{1-i\sigma} F(1+\nu + i \charge, 1 + \nu - i
    \sigma - i \charge , 2 - i \sigma; 1-x).
  \end{equation*}

  Near $x=0$, $u$ must lie in $L^{2}$ and so the components of
  $x^{1-\nu}u$ must be multiples of the regular solution there, e.g.,
  the $\bOmega_{-}$ component of $x^{1-\nu}u$ must be a multiple of
  \begin{equation*}
    F(\nu - i \charge, \nu + i \sigma + i \charge, 1 + 2\nu ; x)
  \end{equation*}
  near $x=0$.

  Using the notation of the calculation above, we may therefore write
  down explicitly each component of $u=\dops^{-1}f$; for $f$ in a
  single $(\kappa, \mu)$-eigenspace of $K$,
  \begin{equation*}
    \dops^{-1}f = \dops^{-1}\left( f_{1}
      \bOmega_{-}      + f_{2}
      \bOmega_{+}    \right) = u_{-}
    \bOmega_{-}    + u_{+}
    \bOmega_{+}
  \end{equation*}
  where $u_{+}$ and $u_{-}$ are given explicitly.  

  The coefficient $u_{-}$ is given in terms of the following functions
  (whose names are consistent with the DLMF~\cite[15.10(ii)]{DLMF}):
    \begin{align*}
    w_{1}&= F( \nu - i \charge, \nu + i \sigma + i \charge , 1 + 2\nu ; x) \\
    w_{4} &= (1-x)^{1-i\sigma} F(1 + \nu + i \charge, 1 +
            \nu - i \sigma - i \charge, 2 - i \sigma ; 1-x),
  \end{align*}
  where the $F$ functions denote hypergeometric functions of the given
  parameters and arguments.  One consequence of Kummer's connection
  formulas~\cite[15.10(ii)]{DLMF} is a formula for the Wronskian of
  $w_{1}$ and $w_{4}$:
  \begin{equation*}
    w_{1}(x) w_{4}'(x) - w_{1}'(x) w_{4}(x) = -\frac{\Gamma(1+2\nu)
      \Gamma (2 - i \sigma)}{\Gamma(1 + \nu + i
      \charge)\Gamma(1 + \nu - i \sigma - i \charge)} x^{-1-2\nu} (1-x)^{ - i \sigma}.     
  \end{equation*}

  We use that $x^{1-\nu}u_{-}$ satisfies the
  hypergeometric equation~\eqref{eq:hypgeom-in-cplus} ($F$ in the
  above calculation) and must have the $w_{1}$ behavior near $x=0$ and the
  $w_{4}$ behavior at $x=1$.  A standard variation of parameters
  argument then shows that
  \begin{align*}
    x^{1-\nu}u_{-} =-\frac{\Gamma (1 + \nu + i \charge)\Gamma ( 1 + \nu
      - i \sigma - i \charge)}{\Gamma(1 + 2\nu)\Gamma ( 2 - i \sigma)}
    \left(w_{4}(x) \int_{0}^{x}y^{1+2\nu}(1-y)^{i\sigma}w_{1}(y)f_{+}(y) \,dy \right. \\
    \left. - w_{1}(x)
      \int_{1}^{x}y^{1+2\nu}(1-y)^{i\sigma} w_{4}(y)f_{+}(y)\,dy \right),
  \end{align*}
  where, for ease of display, we have introduced notation for the
  inhomogeneous term in equation~\eqref{eq:hypgeom-in-cplus}:
  \begin{equation*}
    f_{+} = \frac{i}{2} \pd[x] (x^{1-\nu}f_{1}) + \frac{i}{2}\frac{1 +
      \nu + i \charge}{x}(x^{1-\nu}f_{1}) + \frac{i}{2}\frac{- i \sigma}{1-x} (x^{1-\nu}f_{1}) + \frac{i}{2} \frac{\kappa f_{2}}{x(1-x)}.
  \end{equation*}
  A similar expression\footnote{This expression could also be derived
    from formulas for contiguous hypergeometric functions.} is available for $u_{+}$, which satisfies a
  different hypergeometric equation:
  \begin{align*}
    x^{1-\nu}u_{+} = -\frac{\Gamma (\nu + i \charge)\Gamma (1 + \nu -
                     i \sigma- i\charge)}{\Gamma (1+2\nu)\Gamma(1 -
                     i \sigma)}\left(
                     \tw_{4}\int_{0}^{x}y^{1+2\nu}(1-y)^{1 + i \sigma}\tw_{1}(y)f_{-}(y)\,dy \right. \\
     \left( - \tw_{1}(x) \int_{1}^{x}y^{1+2\nu} (1-y)^{1+i\sigma}\tw_{4}(y) f_{-}(y)\right),
  \end{align*}
  where $f_{-}$ is the same rescaling of the right side of the second
  equation in~\eqref{eq:hypgeom-in-cplus} and $\tw_{1}, \tw_{4}$ are
  the analogous hypergeometric functions here:
  \begin{align*}
    f_{-} &= -\frac{i}{2 (1-x)}\pd[x](x^{1-\nu}f_{2}) -
            \frac{i}{2}\frac{1+\nu-i\charge}{x(1-x)}f_{2}-
            \frac{i}{2}\kappa f_{1}, \\
    \tw_{1} &= F(\nu - i \charge + 1 , \nu + i \sigma + i \charge, 1 + 2\nu ; x),
    \\
    \tw_{4} &= (1-x)^{- i \sigma} F(\nu + i \charge, 1 + \nu
              - i \sigma - i \charge, 1 - i \sigma ; 1-x).
  \end{align*}

  As these expressions are meromorphic in $\sigma$ and hold on a dense
  open subset of $\Im \sigma \gg 0$, they also characterize
  $\dops^{-1}f$ at all regular $\sigma$.

  In particular, the poles of $\dops^{-1}$ occur precisely when one of
  the leading coefficients has a pole, which occurs only when $1 + \nu
  - i\sigma - i \charge$ is a pole of the Gamma function,\footnote{If $\charge =
    0$, then the poles all cancel; this is a reflection of Huygens'
    principle.} i.e., only when
  \begin{equation*}
    \sigma = - \charge-i (1 + \nu + m), \quad m =0, 1, 2, \dots.
  \end{equation*}
\end{proof}

\appendix
\section{Estimates in the bulk near the singularity, and the proof of
  Theorem \ref{thm:bulk-prop-near-np}} \label{sec:bulk prop sing section }

We now generalize the results of \cite{BW20} to our setting.  The
proof of Theorem \ref{thm:bulk-prop-near-np} is summarized at the end
of Section \ref{sec:hyperb-prop}.

\subsection{Commutators with $\bl$-pseudodifferential operators}
\label{sec:commutators-with-bl}

The following lemma allows us to essentially ignore the distinction
between the operators $\dop$ (and $\Pd$) and their conjugated
counterparts.
\begin{lemma}
  \label{lem:easy-conjugation}
  For any $\ell \in\reals$,
  \begin{equation*}
    \rho^{-\ell}\Pd \rho^{\ell} - P \in \frac 1x \Psib^0 + \Psib^1
  \end{equation*}
  and
  \begin{equation*}
    \rho^{-\ell}\dop \rho^{\ell} - \dop \in \Psib^{0}.
  \end{equation*}
\end{lemma}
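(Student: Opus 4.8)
The plan is to read both statements off the explicit $\bl$-differential forms of $\dop$ and $\Pd$ near $\mf$ recorded in Section~\ref{sec:relat-wave-equat}, using the scale-invariance $N(\dop)=\dop$, $N(\Pd)=\Pd$: in a collar neighbourhood of $\mf$ the coefficients of $\dop$ and $\Pd$ depend only on $(x,\theta)$, not on $\rho$. Conjugation by $\rho^{\ell}$ fixes every $\bl$-vector field except $\rho\pd[\rho]$ and sends $\rho\pd[\rho]\mapsto\rho\pd[\rho]+\ell$; away from $\mf$ it is conjugation by a smooth positive function and is harmless. So the whole content is a bookkeeping of which powers of $\rho\pd[\rho]$ occur in $\dop$ and $\Pd$ and with which coefficients. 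I would carry this out near $C_{\pm}$ (which contains a neighbourhood of the corner $\mf\cap\cf$, the only place the $1/x$ singularity matters) using the displayed formulas; the region $C_{0}$ is identical but with smooth coefficients and no negative powers of $x$.

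For $\dop$, the formula near $C_{+}$ shows that the only occurrence of $\rho\pd[\rho]$ is the term $-i\rho\pd[\rho]$, with constant coefficient. Writing $\dop=-i\rho\pd[\rho]+R$ with $R$ a $\rho$-independent operator built from $x\pd[x]$, $\pd[\theta]$ and negative powers of $x$ but involving no $\rho$-differentiation, so $[\rho\pd[\rho],R]=0$, one gets $\rho^{-\ell}\dop\rho^{\ell}-\dop=-i\ell\in\CI(M)\subset\Psib^{0}(M)$. The same works near $C_{-}$ and $C_{0}$, with the constant $-i$ replaced by a smooth function; in every region the coefficient of the $\rho$-derivative is smooth---at $\cf$ the singular coefficients of $\dop$ multiply only $\pd[x]$ and angular derivatives, which annihilate $\rho$---so no $1/x$ weight is produced, and the second assertion follows.

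For $\Pd$, I would use $\Pd=W^{*}W-D_{x}^{*}D_{x}-x^{-2}\Lap_{\theta}-i\charge\,x^{-2}\alpha_{r}$ near $C_{+}$ with $W=\rho\pd[\rho]+S$ and $S=x\pd[x]+1+i\charge-i\charge/x$. Since $[\rho\pd[\rho],S]=0$ and $(\rho\pd[\rho])^{*}=-\rho\pd[\rho]$ for the density $\tfrac{d\rho}{\rho}x^{2}\,dx\,d\theta$, one has $W^{*}W=-(\rho\pd[\rho])^{2}+(S^{*}-S)\rho\pd[\rho]+S^{*}S$, hence $\Pd=-(\rho\pd[\rho])^{2}+(S^{*}-S)\rho\pd[\rho]+c_{0}$ with $c_{0}$ free of $\rho$-differentiation. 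A one-line adjoint computation, $(x\pd[x])^{*}=-x\pd[x]-3$ for $x^{2}\,dx$, gives $S^{*}-S=-2x\pd[x]-3-2i\charge+2i\charge/x\in\Diffb^{1}(M)+\tfrac1x\CI(M)$: its $1/x$-singular part has order $0$. Conjugating, $\rho^{-\ell}\Pd\rho^{\ell}-\Pd=-2\ell\,\rho\pd[\rho]-\ell^{2}+\ell(S^{*}-S)\in\Diffb^{1}(M)+\tfrac1x\CI(M)\subset\Psib^{1}(M)+\tfrac1x\Psib^{0}(M)$, which is the first assertion; $C_{-}$ and $C_{0}$ go the same way.

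The only subtle point---and the thing I would verify carefully---is that the coefficient of $\rho\pd[\rho]$ in $\Pd$ has no term of the form $x^{-1}\times(\text{first order})$. From the factorization $\Pd=\tdop_{1}\dop$ one would naively expect a contribution like $x^{-1}\alpha_{r}\pd[x]\cdot\rho\pd[\rho]$, which would only yield $\rho^{-\ell}\Pd\rho^{\ell}-\Pd\in\Psib^{1}+\tfrac1x\Psib^{1}$, too weak for the claim. The direct computation shows this term is absent because the $1/x$ singularities in $S$ and $S^{*}$ enter only as scalar potentials and never multiply $x\pd[x]$. I would confirm the analogous cancellation near $C_{-}$ (where, as in Section~\ref{sec:char-expon}, the signs of the leading terms flip) and, more routinely, that the coefficients of all $\rho$-derivatives in $\dop$ and $\Pd$ are smooth throughout the collar of $\mf$, including at $\cf$, so that no $1/x$ weight appears beyond the one permitted.
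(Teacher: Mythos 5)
Your computation is correct, and it is the natural one. The paper in fact states this lemma without proof, so there is nothing to compare against; but the argument you give — read off the explicit $\bl$-differential forms of $\dop$ and $\Pd$ near $\mf$, use that conjugation by $\rho^{\ell}$ sends $\rho\pd[\rho]\mapsto\rho\pd[\rho]+\ell$ and fixes the other generators of $\cV_{\bl}$, and check that the coefficients of the $\rho$-derivatives have the required regularity in $x$ — is exactly what is needed. The small adjoint computation $(x\pd[x])^{*}=-x\pd[x]-3$, $(\rho\pd[\rho])^{*}=-\rho\pd[\rho]$ for the density $\frac{d\rho}{\rho}\,x^{2}\,dx\,d\theta$ is right, and the resulting $S^{*}-S=-2x\pd[x]-3-2i\charge+2i\charge/x$ gives precisely the $\Psib^{1}+\frac1x\Psib^{0}$ structure claimed for $\Pd$; the $\dop$ case is even simpler since the coefficient of $\rho\pd[\rho]$ there is the constant $-i$. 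You are also right to flag that the potential danger is a $\frac1x\cdot(\text{first-order})\cdot\rho\pd[\rho]$ cross term, which the direct formula rules out, and to note that near $C_{-}$ and $C_{0}$ the same bookkeeping goes through with sign changes and smooth coefficients respectively. No gap.
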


\begin{lemma}[{c.f.~\cite[Lemma 25]{BW20}}]
  \label{lemma:easy-bulk-commutator}
  If $C \in \Psib^{m}$ is invariant with scalar, real-valued principal
  symbol, then
  \begin{equation*}
    [P, C] \in \left\{ \frac{1}{x^2} \Delta_\theta , D_x^2, \frac 1x D_x,
      \frac{1}{x^2} \right\} \Psib^{m-1} + \left\{D_x, \frac 1x
    \right\} \Psib^{m} +  \Psib^{m+1},
  \end{equation*}
  with the microsupport of the right side contained in $\WFb'C$.

  Similarly,
  \begin{equation*}
    [\dop, C] \in \left\{ \frac{1}{x} \beta K , D_x, \frac 1x \right\}
    \Psib^{m-1} + \Psib^{m}, 
  \end{equation*}
  also with the microsupport of the right side contained in $\WFb'C$.
\end{lemma}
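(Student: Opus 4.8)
This lemma is the homogeneous (non-semiclassical) counterpart of the commutator Lemmas~\ref{lem:ellip-semic-comm} and~\ref{lem:general-semic-comm-L} above, and of \cite[Lemma~25]{BW20}; the proof is the same, so I only describe its structure. Near the corner $\mf\cap\cf$ I would use the explicit expressions for $\dop$ and $\Pd$ recorded in Section~\ref{sec:relat-wave-equat}, which present each operator as a polynomial in the ``building blocks''
\begin{equation*}
  \rho D_{\rho},\quad x D_{x}\in\Diffb^{1}(M),\qquad D_{x},\quad \frac{1}{x},\quad \frac{1}{x}\beta K,\quad \frac{1}{x^{2}},\quad \frac{1}{x^{2}}\Delta_{\theta},\qquad \alpha_{r},\quad\beta,
\end{equation*}
together with constants. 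Here $\rho D_{\rho}$ and $x D_{x}$ are honest $\bl$-differential operators, $\beta K\in\Diffb^{1}(M)$ and $\Delta_{\theta}\in\Diffb^{2}(M)$ (in spherical coordinates $K$ and $\Delta_{\theta}$ are purely angular, hence tangent to the boundary), while $D_{x}$, $\frac{1}{x}$, $\frac{1}{x}\beta K$, $\frac{1}{x^{2}}$ and $\frac{1}{x^{2}}\Delta_{\theta}$ are exactly the non-$\bl$ module generators on the right-hand side of the lemma. The single structural input that makes the computation close is that an invariant scalar operator $C$ commutes with the angular operators: $[C,\Delta_{\theta}]=[C,K]=0$ by \cite[Lemma~4]{BW20} (and $[C,\beta]=0$ since $C$ is scalar), which is compatible with the relation $I_{4}\Delta_{\theta}=K^{2}-\beta K$ from Section~\ref{sec:separation-variables}.

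The computation then proceeds one building block at a time. For the $\bl$-differential pieces the standard composition rules give $[\rho D_{\rho},C],[x D_{x},C]\in\Psib^{m}$, which already lies in the stated module (and a pair of such factors, from the leading part of $\Pd$, produces the $\Psib^{m+1}$ term). For $\frac{1}{x}$ and $D_{x}$ I would invoke Lemma~\ref{lem:comms-with-b-ops}, which gives $[\frac{1}{x},C]=\frac{1}{x}C_{R}$ with $C_{R}\in\Psib^{m-1}$ and $[D_{x},C]=B+C_{L}D_{x}$ with $B\in\Psib^{m}$, $C_{L}\in\Psib^{m-1}$; these account for the $\{D_{x},\frac{1}{x}\}\Psib^{m-1}$ and $\{D_{x},\frac{1}{x}\}\Psib^{m}$ contributions. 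The matrix factor $\alpha_{r}$ is $SO(3)$-noninvariant as an operator and so does not commute with $C$, but it lies in $\Psib^{0}(M)$, so $[\alpha_{r},C]\in\Psib^{m-1}$ with microsupport in $\WFb'C$; and crucially $[\beta K,C]=0$, so in $[\frac{1}{x}\beta K,C]$ there is no contribution in which both a $\theta$-derivative and a $\frac{1}{x}$-factor survive, leaving only terms of the form $\frac{1}{x}\beta K\cdot\Psib^{m-1}$, $\frac{1}{x}\cdot\Psib^{m-1}$ and $\Psib^{m}$. Throughout, moving the non-$\bl$ factors $D_{x}$, $\frac{1}{x}$, $\beta K$ past $\bl$-pseudodifferential coefficients so that the result sits in the stated \emph{right}-module form is done with Lemma~\ref{lem:comms-with-b-ops}, and the microsupport claim is automatic from $\WFb'[A,C]\subset\WFb'C$.

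For the second-order statement I would either run the same direct computation on the explicit form of $\Pd$, or—more economically—write $\Pd=\tdop_{1}\dop$ (as in Section~\ref{sec:relat-wave-equat}) and apply the first-order result twice, using that products of the first-order generators $\{\frac{1}{x},D_{x},\frac{1}{x}\beta K\}$ fall into the second-order module $\{\frac{1}{x^{2}},\frac{1}{x}D_{x},D_{x}^{2},\frac{1}{x^{2}}\Delta_{\theta}\}\Psib^{*}+\{D_{x},\frac{1}{x}\}\Psib^{*}$ (at the appropriate orders); here the identity $(\beta K)^{2}=\Delta_{\theta}+\beta K$, which follows from $I_{4}\Delta_{\theta}=K^{2}-\beta K$ and $[\beta,K]=0$, is what converts $(\frac{1}{x}\beta K)^{2}$ into a $\frac{1}{x^{2}}\Delta_{\theta}$ term plus a lower-order remainder. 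The only genuinely delicate point is the order bookkeeping in these reorderings—ensuring each commutator with a $\bl$-pseudodifferential coefficient drops the order by one so that everything lands at the advertised filtration level, and that the $\alpha_{r}$ factors contribute only lower-order errors microsupported in $\WFb'C$—but this is routine given Lemma~\ref{lem:comms-with-b-ops} and is carried out in detail in the cited references.
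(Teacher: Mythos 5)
Your primary route---a direct, term-by-term commutator computation from the explicit expressions for $\dop$ and $\Pd$, using Lemma~\ref{lem:comms-with-b-ops} to move $D_x$ and $1/x$ past $\bl$-operators and the invariance of $C$ to annihilate $[C,\Delta_\theta]$ and $[C,K]$---is the argument of~\cite[Lemma~25]{BW20} that the paper is invoking, and it does close. One thing you leave implicit that must be stated: the operators $C_L,C_R$ produced by Lemma~\ref{lem:comms-with-b-ops} \emph{themselves inherit the invariance of $C$} (since the function $x$ is rotation-invariant, so is $x^{-1}[A,x]$). This is used essentially, not just at the top level: in $[\frac{1}{x^2}\Delta_\theta, C]$, the term $[\frac{1}{x^2},C]\Delta_\theta = \frac{1}{x^2}C_R''\Delta_\theta$ lands in $\frac{1}{x^2}\Delta_\theta\Psib^{m-1}$ only because the (invariant) $C_R''$ commutes with $\Delta_\theta$; without that one picks up $\frac{1}{x^2}[\Psib^{m-1},\Delta_\theta]\subset\frac{1}{x^2}\Psib^{m}$, which lies \emph{outside} the stated module. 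The same invariance of $C_R$ is needed to commute $\beta K$ past it in the $\dop$-computation.

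Your ``more economical'' alternative---$\Pd=\tdop_1\dop$, apply the first-order result twice---has a genuine gap. The assertion that products of first-order generators fall into the second-order module is false: $\frac{1}{x}\cdot\frac{1}{x}\beta K$ and $D_x\cdot\frac{1}{x}\beta K$, applied to $\Psib^{m-1}$, give $\frac{1}{x^{2}}\beta K\Psib^{m-1}\subset\frac{1}{x^{2}}\Psib^{m}$ and $\frac{1}{x}D_x\beta K\Psib^{m-1}\subset\frac{1}{x}D_x\Psib^{m}$; and even $(\frac{1}{x}\beta K)^2=\frac{1}{x^2}(\Delta_\theta+\beta K)$ leaves a $\frac{1}{x^{2}}\beta K\Psib^{m-1}$ piece. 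None of these sit in $\{\frac{1}{x^2}\Delta_\theta,D_x^2,\frac{1}{x}D_x,\frac{1}{x^2}\}\Psib^{m-1}+\{D_x,\frac{1}{x}\}\Psib^{m}+\Psib^{m+1}$, and ``lower-order remainder'' is not an accurate description of $\frac{1}{x^{2}}\beta K\Psib^{m-1}$. These unwanted contributions to $\tdop_1[\dop,C]+[\tdop_1,C]\dop$ do ultimately cancel---that is precisely the algebraic fact that the expanded $\Pd$ has no $\frac{1}{x^2}\beta K$ term---but verifying the cancellation requires matching coefficients across the cross-products, which is exactly the work of the direct computation. So the factoring saves nothing, and as sketched it does not produce the stated module. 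Stick with the direct computation from the expanded form of $\Pd$.
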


\begin{lemma}[{c.f.~\cite[Lemma 26]{BW20}}]
  \label{lemma:careful-bulk-commutator-general-new}
  If $C \in \Psib^{m}$ is invariant with scalar, real-valued principal
  symbol $\sigma_{\bl, m}(C) = c$, then near the north pole $\{ \rho =
  0, x=0, t > 0\}$,
  \begin{equation}
    \label{eq:list-of-all-terms-bulk-commutator}
    \frac{1}{i}\left[ \dop, C\right] = A_{0} \left( \alpha_{r}\left( i
        \pd[x] + \frac{i}{x} -
        \frac{i}{x}\beta K
      \right) - \frac{\charge}{x} \right) + B_{0} + \alpha_{r}B_{1} +
    \bB_{2} \frac 1r +     \bB_{3} D_r + \bB_{4}
  \end{equation}
  where
  \begin{itemize}
  \item $\ds A_{0} \in \Psib^{m-1}$, with $\ds \sigmab(A_{0}) = -
    \pd[\xi]c$,
  \item $\ds B_{0} \in \Psib^{m}$, with $\ds \sigmab(B_{0}) =
    -\rho\pd[\rho]c-x\pd[x]c$,
  \item $\ds B_{1} \in \Psib^{m}$, with $\ds\sigmab(B_{1}) =
    \pd[x]c$, and
  \item $\ds \bB_{2} \in \Psib^{m}$, with $\ds \supp \sigmab(\bB_{2})
    \subset \supp \pd[\eta]c$.
      \item $\ds \bB_{3} \in \Psib^{m-1}$, with $\ds \supp \sigmab(\bB_{2})
    \subset \supp \pd[\eta]c$.
    \item $\ds \bB_{4} \in \left\{ \frac 1x, D_x \right\} \Psib^{m -2} + \Psib^{m - 1}$.
  \end{itemize}
\end{lemma}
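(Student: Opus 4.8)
This statement refines the coarser Lemma~\ref{lemma:easy-bulk-commutator} by keeping track of principal symbols, and my plan is to mirror the proof of its semiclassical counterpart Lemma~\ref{lem:general-semic-comm-L}, replacing the semiclassical commutator identities of Lemma~\ref{lem:sc-easy-commutants} by the homogeneous identities of Lemma~\ref{lem:comms-with-b-ops} (i.e.\ setting $h = 1$ throughout). First I would recall from Section~\ref{sec:relat-wave-equat} that near the north pole, in the coordinates $\rho = 1/t$, $x = r/t$,
\[
  \dop = \rho D_\rho + x D_x - i + \charge - \frac{\charge}{x} - \alpha_r\!\left(D_x - \frac{i}{x} + \frac{i}{x}\beta K\right),
\]
where I write $D_x$ and $1/x$ for the operators denoted $D_r$ and $1/r$ in the statement (near the north pole these coincide up to the smooth factor $\rho$, which is irrelevant for what follows). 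I would then compute $\tfrac1i[\dop, C]$ one summand at a time, splitting $\dop$ into the scaling part $\rho D_\rho + x D_x$, the constant matrix part $-i + \charge$, the potential $-\charge/x$, and the Dirac part $-\alpha_r D_x + \tfrac ix\alpha_r - \tfrac ix\alpha_r\beta K$.

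The constant part commutes with $C$ and contributes nothing. The scaling part contributes $B_0$ with $\sigmab(B_0) = -\rho\pd[\rho]c - x\pd[x]c$, directly from the symbol calculus (the $\bl$-Hamilton vector field of $\rho D_\rho + x D_x$ is $\rho\pd[\rho] + x\pd[x]$). For the potential, Lemma~\ref{lem:comms-with-b-ops} gives $[1/x, C] = C_L\,\tfrac1x$ with $C_L \in \Psib^{m-1}$ and $\sigmab(C_L) = \tfrac1i\pd[\xi]c$; setting $A_0 := -iC_L \in \Psib^{m-1}$ (so $\sigmab(A_0) = -\pd[\xi]c$) gives $\tfrac1i[-\charge/x, C] = A_0\bigl(-\charge/x\bigr)$, the first piece of the $A_0$-term. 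For the Dirac part I would first move $C$ past the matrix factors: $[\beta, C] = 0$ as $\beta$ is constant, $[K, C] = [\Lap_\theta, C] = 0$ by \cite[Lemma~4]{BW20}, while the commutator of $C$ with the purely angular matrix $\alpha_r$ lies in $\Psib^{m-1}$ with symbol supported in $\supp\pd[\eta]c$; recombined with the surviving $\tfrac1x$ and $D_x$ factors these become $\bB_2\tfrac1x$ and $\bB_3 D_x$. What remains is $\alpha_r$, or $\alpha_r\beta K$, times a commutator of $C$ with $D_x$ or $1/x$: since $[D_x, C] = B + C_L D_x$ by Lemma~\ref{lem:comms-with-b-ops}, the summand $-\tfrac1i\alpha_r[D_x, C]$ produces $\alpha_r B_1$ with $\sigmab(B_1) = \pd[x]c$ together with a term that reorders into the $A_0\alpha_r(i\pd[x])$ piece of the $A_0$-term, and similarly the summands $\tfrac ix\alpha_r$ and $-\tfrac ix\alpha_r\beta K$ of $\dop$ produce the $A_0\alpha_r(\tfrac ix)$ and $-A_0\alpha_r(\tfrac ix\beta K)$ pieces. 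Each time a scalar $\Psib$-factor is commuted past $\alpha_r$ or $\beta K$, or an operator is replaced by one with the same principal symbol, the error drops one pseudodifferential order and has the form $(\cdot)D_x$, $(\cdot)\tfrac1x$, or plain $\Psib^{m-1}$; I would collect all such errors into $\bB_3$ and $\bB_4 \in \{1/x, D_x\}\Psib^{m-2} + \Psib^{m-1}$.

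Assembling these pieces yields the stated identity. The only genuinely delicate point — and where the bulk of the work lies — is the final bookkeeping: verifying that every reordering or symbol-replacement error really does land in $\bB_4$ with the advertised order after multiplication by $1/x$ or $D_x$, and that the $[\alpha_r, \cdot]$-type errors intended for $\bB_2$ and $\bB_3$ keep the microsupport property $\supp\sigmab(\bB_j)\subset\supp\pd[\eta]c$. This is exactly the computation carried out in the proof of Lemma~\ref{lem:general-semic-comm-L} and in \cite[Lemma~26]{BW20}, and it transfers verbatim to the present case once the semiclassical parameter is set equal to $1$.
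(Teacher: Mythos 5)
Your proposal is correct and takes essentially the same route as the paper's (quite terse) proof: the paper also decomposes $\dop$ using its explicit form from Section~\ref{sec:relat-wave-equat}, sets $B_0 = [\tfrac{1}{i}\rho\pd[\rho] + \tfrac{1}{i}x\pd[x], C]$ and extracts $A_0$ from $[1/x, C] = A_0(i/x)$, and then invokes repeated applications of Lemma~\ref{lem:comms-with-b-ops}, deferring the remaining bookkeeping to~\cite[Lemma~26]{BW20} exactly as you do by paralleling the semiclassical Lemma~\ref{lem:general-semic-comm-L}. Your observation that the $D_r$, $1/r$ in the statement differ from $D_x$, $1/x$ only by a smooth factor of $\rho$ is also the right way to reconcile the notational mismatch (the statement likely intends $D_x$, $1/x$, consistent with Lemma~\ref{lemma:easy-bulk-commutator} and the semiclassical analogue).
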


As in \cite{BW20}, bold letters are used to denote nonscalar
operators.

\begin{proof}
  The proof follows \cite[Lemma 26]{BW20} closely.
  Briefly, using the expression for $\dop$ from Section
  \ref{sec:relat-wave-equat},  one can take $B_0 = [\frac{1}{i}\rho
  \pd[\rho] + \frac{1}{i}x\pd[x], C]$ and $[1/x, C] = A_0 (i/x)$.  The lemma
  follows from repeated application of Lemma \ref{lem:comms-with-b-ops}.
 \end{proof}

\subsection{Elliptic regularity}
\label{sec:elliptic-regularity}

The main result of this section is the following proposition
establishing the first part of Theorem~\ref{thm:bulk-prop-near-np}.
\begin{proposition}
  \label{prop:elliptic-bulk-prop}
  If $\psi \in \Hb^{1,-\infty,\gamma}$ is the forward solution of $i \dirac_{\charge/r} \psi
  = g$ with $g\in \CI_{c}$, then in a neighborhood of $\{\rho=0,
  x=0\}$,
  \begin{equation*}
    \WFb^{1,m,\gamma}\psi \subset \Sigmadot
  \end{equation*}
  for all $m$.
\end{proposition}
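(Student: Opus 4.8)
The plan is to prove the statement by a standard elliptic regularity argument in the $\bl$-calculus, localized near the corner $\{\rho = 0, x = 0\}$, using the fact that away from the compressed characteristic set $\Sigmadot$ the operator $\dop$ is microlocally $\bl$-elliptic. The key structural input is Lemma~\ref{lemma:easy-bulk-commutator}, which tells us that commutators of invariant operators with $\dop$ lie in a module generated by the ``dangerous'' operators $\frac1x \beta K$, $D_x$, and $\frac1x$ over $\Psib^{m-1}$, plus $\Psib^m$; combined with the Hardy inequality (Lemma~\ref{lem:bulk-hardy}, in the form $\norm{x^{-1}u} \le 2\norm{\pd[x]u}$) this lets us control the zeroth-order terms near the singularity, exactly as in~\cite[Section 4]{BW20}. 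The a priori regularity $\psi \in \Hb^{1,-\infty,\gamma}$ (which holds by the proposition following Theorem~\ref{thm:bulk-regularity}, after lowering the weight) and the fact that $g \in \CI_c$ so $\WFb^{0,\infty}g = \emptyset$ are the other ingredients.

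First I would fix a point $q_0 \in \Sbstar M$ lying over a neighborhood of $\{\rho = 0, x = 0\}$ with $q_0 \notin \Sigmadot$, and an invariant operator $A \in \Psib^0(M)$ elliptic at $q_0$ with $\WFb'(A)$ disjoint from $\Sigmadot$ and contained in a small conic neighborhood $U$ of $q_0$ on which $\dop$ is microlocally $\bl$-elliptic. The strategy is an inductive parametrix/regularization scheme: assuming $\psi \in \Hb^{1, m-1, \gamma}$ microlocally on $U$, one constructs a symbolic parametrix $B \in \Psib^{-1}(M)$ for $\dop$ microlocally on $\WF'(A)$, so that $A = B\,\dop\,A + E + (\text{smoothing})$ with the error $E$ of order $-1$ lower and supported in $U$. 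Applying this to $\psi$, pairing, and using $\dop\psi = -i\gamma^0 g$ (smooth), the principal term is controlled by $\norm{\dop\psi}$; the commutator errors $[\dop, A]$ and $[\dop, B]$ are handled via Lemma~\ref{lemma:easy-bulk-commutator}, writing the resulting $\frac1x$-terms and $D_x$-terms as bounded operators on $\Hb^{1,0,\gamma}$ using Lemma~\ref{lem:elliptic-reg-bulk} (the boundedness statement~\eqref{eq:b ops bounded}) together with the Hardy inequality. One also needs Lemma~\ref{lem:easy-conjugation} to pass between $\dop$ and $\rho^{-\gamma}\dop\rho^\gamma$ freely, so that the weight $\gamma$ plays no role beyond bookkeeping. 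Iterating the gain of one order of regularity (a standard bootstrap, since $\psi$ is already tempered in the $\bl$-scale) gives $A\psi \in \Hb^{1,m,\gamma}$ for every $m$, i.e.\ $q_0 \notin \WFb^{1,m,\gamma}\psi$, which is the claim.

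I expect the main obstacle to be the same one that appears in~\cite{BW20}: the operator $\dop$ near $x = 0$ is elliptic in the $\bl$-sense only after the zeroth-order terms $\frac{\charge}{x}$ and $\frac{1}{x}\beta K$ are absorbed, and these are \emph{not} lower-order in the $\bl$-filtration near $x = 0$ (they are order $0$ but blow up like $1/x$). The point is that the natural elliptic parametrix lives in $\frac1x\Psib$ rather than $\Psib$, and its mapping properties must be justified using the Hardy inequality and the fact that $[1/2, 3/2]$ misses the boundary spectrum of $x\cB$ (as recorded in the proof of Lemma~\ref{lem:elliptic-property-etc}); equivalently, one works with $x\dop$, which genuinely is $\bl$-elliptic near $x=0$, and then divides. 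Keeping track of the interplay between this $\frac1x$-twisting and the module-generated commutator errors, while not losing in the weight $\gamma$, is the delicate part; but since all of this is carried out in detail in~\cite[Sections~4 and~9]{BW20} for the time-dependent problem at finite times, and our operator has the same structure near the singularity, the argument transfers with only notational changes.
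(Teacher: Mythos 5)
The paper's proof does not construct a parametrix; it runs a quadratic-form (integration-by-parts) argument with the \emph{second-order} operator $\Pd = \tdop_1\dop$. The engine is Lemma~\ref{lem:integrate-by-parts-bulk}: one considers $\Re\ang{\Pd A_\lambda u, A_\lambda u}$ for a regularizing family $A_\lambda$, extracts after integration by parts the quadratic form
\[
\norm{D_x A_\lambda u}^2 + \norm{\tfrac{1}{x}\grad_\theta A_\lambda u}^2 - \norm{\bigl(\rho\pd_\rho + x\pd_x + 1 + i\charge - \tfrac{i\charge}{x}\bigr)A_\lambda u}^2,
\]
and bounds it using the commutator structure of Lemma~\ref{lemma:easy-bulk-commutator} and the Hardy inequality. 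Off the compressed characteristic set $\Sigmadot$ this form controls the $\Hb^{1,0,0}$ norm of $A_\lambda u$, which gives the wavefront-set statement.

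Your proposal is architecturally different, and the difference matters here. Near the corner $\mf\cap\cf$ the operator $\dop$ is \emph{not} a $\bl$-differential operator: it contains $\alpha_r\pd_x$ and $\tfrac{1}{x}\beta K$, which are not generated by $\rho\pd_\rho$, $x\pd_x$, $\pd_\theta$ with smooth coefficients. Consequently the assertion that $\dop$ is ``microlocally $\bl$-elliptic on $U$'' has no direct meaning — one only has ellipticity in the compressed sense, i.e.\ $q_0\notin\Sigmadot$ — and the composition $B\dop A$ with $B\in\Psib^{-1}$ does not land in the $\bl$-calculus, so the identity $A = B\dop A + E + (\text{smoothing})$ cannot be produced by the usual symbol calculus. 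You flag this and suggest working with $x\dop$ and borrowing the large-calculus parametrix idea from the proof of Lemma~\ref{lem:elliptic-property-etc}, but that construction is a \emph{global}, non-microlocal elliptic estimate for the boundary operator on $\mf$ near $\{x<1/4\}$; it does not microlocalize in $\Tbstar M$, and dividing the parametrix of $x\dop$ by $x$ reintroduces a singular factor whose mapping properties on $\Hb^{1,m,\ell}(M)$ would themselves need an argument.

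The paper's pairing approach is designed precisely to avoid inverting $\dop$ near the singularity: one only needs to bound a bilinear form, and that is exactly the level at which the Hardy inequality and the commutator module of Lemma~\ref{lemma:easy-bulk-commutator} act. Your input ingredients (Hardy, commutator lemmas, Corollary~\ref{prop:local-H1}, Lemma~\ref{lem:easy-conjugation} for the weight bookkeeping, regularization, iteration over $m$) are all the right ones, but they should be assembled around the pairing $\ang{\Pd A_\lambda u, A_\lambda u}$ rather than around a putative parametrix for $\dop$.
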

As $g$ is compactly supported $\psi$ is a solution of $\dirac \psi =
0$ near $\{ \rho = 0, x=0\}$ and so $\WFb^{0,*,*}g$ plays no role
there, though an analogous statement is true with $\WFb^{0,m+1,\gamma+1}g$ in the
correct place.

Proposition~\ref{prop:elliptic-bulk-prop} essentially follows by an
integration by parts argument with several useful consequences.  The
argument is nearly identical to the one given in previous
work~\cite{BW20} and so we omit many of the details.

As the result is local near the pole, we replace $\psi$ by $\chi
\psi$, where $\chi$ is a smooth function localizing to this region.  
By Corollary~\ref{prop:local-H1}, we know that $\chi \psi \in
\Hb^{1,0,1/2-\epsilon}$.  As we may assume $\gamma < 1/2$, we then
have $\rho^{-\gamma}\chi \psi \in \Hb^{1,0,0}$; it therefore suffices
to prove the analogous theorem about $\WFb^{1,m,0}$ for $u \in
\Hb^{1,0,0}$; the difference between the equation satisfied by $\psi$
and that of $u$ is of lower order and can be absorbed into the error
terms in the estimate.

The proof of
Proposition~\ref{prop:elliptic-bulk-prop} is essentially identical to
the proof away from the boundary~\cite[Lemma 28]{BW20} and
will be mostly omitted for brevity.  We include only the main tool in
the proof: the following lemma,
which is proved by considering the real part of the
  pairing $\ang{\Pd A_{\lambda} u,
  A_{\lambda} u}$.
\begin{lemma}
  \label{lem:integrate-by-parts-bulk}
  Suppose that $K \subset U \subset \Sbstar M$ with $K$ compact and
  $U$ open, and suppose that $A_{\lambda}$ are a bounded family of
  invariant elements in $\Psib^{m}$ with $\WFb' A_{\lambda}
  \subset K$ in the sense of uniform wavefront set of families, and
  $A_{\lambda} \in \Psib^{m-1}$ for all $\lambda \in  (0,1)$.
  There exist $G \in \Psib^{m-1/2}$, $\tG \in \Psib^{m}$, both
  microsupported in $U$, and $C_{0}$ so that for all $\epsilon > 0$,
  $\lambda \in (0,1)$, $u \in \Hb^{1,0,0}$ with $\WFb^{1,m-1/2,0}u
  \cap U = \emptyset$, and $\WFb^{0,m,0}(\dop u) \cap U = \emptyset$,
  \begin{align*}
    &\abs{\norm{D_{x}A_{\lambda}u}^{2} +
    \norm{\frac{1}{x}\grad_{\theta}A_{\lambda}u}^{2} - \norm{\left(\rho \pd[\rho] + x
      \pd[x] + 1 + i \charge- \frac{i\charge}{x}\right)A_{\lambda}u}^{2}} \\
    &\quad\quad \leq C_{0} \left( \epsilon
      \norm{A_{\lambda}u}_{\Hb^{1,0,0}}^{2} +
      \norm{u}_{\Hb^{1,0,0}}^{2} + \norm{Gu}_{\Hb^{1,0,0}}^{2} + \frac{1}{\epsilon}\left(
      \norm{\dop u}^{2} + \norm{\tG\dop u}^{2} \right)\right).
  \end{align*}
\end{lemma}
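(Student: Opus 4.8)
The plan is to derive the stated inequality by analyzing $\Re\ang{\Pd A_{\lambda}u, A_{\lambda}u}$, in close parallel with the proof of \cite[Lemma~28]{BW20}. The starting point is the factored form of $\Pd$ near the north pole recorded in Section~\ref{sec:relat-wave-equat}: setting $N = \rho\pd[\rho] + x\pd[x] + 1 + i\charge - \frac{i\charge}{x}$, one has $\Pd = N^{*}N - D_{x}^{*}D_{x} - \frac{1}{x^{2}}\Lap_{\theta} - \frac{i\charge}{x^{2}}\alpha_{r}$, so that, since $N$, $D_{x}$, $\Lap_{\theta}$ act as scalars while $\gamma^{0}\alpha_{r}\gamma^{0} = -\alpha_{r}$, the self-adjoint part $\frac12(\Pd + \Pd^{*})$ is exactly $N^{*}N - D_{x}^{*}D_{x} - \frac{1}{x^{2}}\Lap_{\theta}$. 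For $\lambda \in (0,1)$ we have $A_{\lambda} \in \Psib^{m-1}$, and since $\WFb'A_{\lambda}\subset K \subset U$ with $\WFb^{1,m-1/2,0}u\cap U = \emptyset$, the distribution $A_{\lambda}u$ lies in $\Hb^{1,1/2,0}\subset \Hb^{1,0,0}$, so every pairing below is legitimate (the bounds produced will be uniform in $\lambda$ because $A_{\lambda}$ is a bounded family in $\Psib^{m}$, which is what is needed for the limiting argument in Proposition~\ref{prop:elliptic-bulk-prop}). Integrating by parts and using that $\frac{i\charge}{x^{2}}\alpha_{r}$ is skew-adjoint, one gets
\begin{equation*}
  \Re\ang{\Pd A_{\lambda}u, A_{\lambda}u} = \norm{N A_{\lambda}u}^{2} - \norm{D_{x}A_{\lambda}u}^{2} - \norm{\tfrac{1}{x}\grad_{\theta}A_{\lambda}u}^{2},
\end{equation*}
so the quantity to be bounded is precisely $\abs{\Re\ang{\Pd A_{\lambda}u, A_{\lambda}u}}$, and it suffices to estimate this pairing.

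Next, write $\Pd A_{\lambda}u = A_{\lambda}\Pd u + [\Pd, A_{\lambda}]u$. For the first term we use the equation: $\Pd = \tdop_{1}\dop$, and the hypothesis $\WFb^{0,m,0}(\dop u)\cap U = \emptyset$ lets us pick $\tG \in \Psib^{m}$ elliptic on $U$ with $\WFb'\tG$ so small that $\tG\dop u \in L^{2}(M)$; using an elliptic parametrix for $\tG$ near $\WFb'A_{\lambda}$, commuting through $\tdop_{1}$ and $A_{\lambda}$, and discarding the residual microlocally trivial terms, we bound $\abs{\ang{A_{\lambda}\Pd u, A_{\lambda}u}}$ by a multiple of $\epsilon\norm{A_{\lambda}u}_{\Hb^{1,0,0}}^{2} + \epsilon^{-1}(\norm{\dop u}^{2}+\norm{\tG\dop u}^{2}) + \norm{u}_{\Hb^{1,0,0}}^{2}$, which accounts for the $\dop u$ terms in the statement.

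The bulk of the work is the commutator term. By Lemma~\ref{lem:easy-conjugation} we may replace $\Pd$ (respectively $\dop$) by its unconjugated version modulo $\tfrac{1}{x}\Psib^{0}+\Psib^{1}$ (respectively $\Psib^{0}$), and then Lemma~\ref{lemma:easy-bulk-commutator} — or, to keep track of the single delicate contribution, the structured formula of Lemma~\ref{lemma:careful-bulk-commutator-general-new} — writes $[\Pd, A_{\lambda}]$, microsupported in $U$, as a sum of terms of the schematic types $\{\tfrac{1}{x^{2}}\Lap_{\theta}, D_{x}^{2}, \tfrac{1}{x}D_{x}, \tfrac{1}{x^{2}}\}\Psib^{m-1}$, $\{D_{x}, \tfrac{1}{x}\}\Psib^{m}$, and $\Psib^{m+1}$. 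In $\Re\ang{[\Pd, A_{\lambda}]u, A_{\lambda}u}$ the leading part of the $\Psib^{m+1}$ contribution cancels (for real symbols $a, p$ the symbol $a\{p,a\}$ is purely imaginary), so that term is effectively of order $2m$; integrating by parts to distribute derivatives evenly onto the two factors, applying Cauchy--Schwarz with an $\epsilon$-weight, and absorbing the $\tfrac{1}{x}$, $\tfrac{1}{x^{2}}$ factors via the Hardy inequality $\norm{x^{-1}v}\leq 2\norm{D_{x}v}\leq 2\norm{v}_{\Hb^{1,0,0}}$, each term is estimated by $\epsilon\norm{A_{\lambda}u}_{\Hb^{1,0,0}}^{2} + C_{\epsilon}(\norm{Gu}_{\Hb^{1,0,0}}^{2} + \norm{u}_{\Hb^{1,0,0}}^{2})$, where $G \in \Psib^{m-1/2}$ is microsupported in $U$ and absorbs the residual half-derivative of regularity (exactly the content of the a priori hypothesis $\WFb^{1,m-1/2,0}u\cap U = \emptyset$). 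Collecting the three estimates and relabeling $G, \tG$ yields the lemma.

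The main obstacle is precisely the bookkeeping in the last step: one must verify that every $\Hb^{1,0,0}$-sized contribution to $\ang{[\Pd,A_{\lambda}]u, A_{\lambda}u}$ is either absorbable into $\epsilon\norm{A_{\lambda}u}_{\Hb^{1,0,0}}^{2}$, reducible to one of the controlled quantities $\norm{\dop u}$, $\norm{\tG\dop u}$, $\norm{Gu}_{\Hb^{1,0,0}}$, $\norm{u}_{\Hb^{1,0,0}}$, or genuinely of lower order. This is what forces the use of the structured commutator of Lemma~\ref{lemma:careful-bulk-commutator-general-new}: it isolates the one term $A_{0}\bigl(\alpha_{r}(i\pd[x] + \tfrac{i}{x} - \tfrac{i}{x}\beta K) - \tfrac{\charge}{x}\bigr)$ — whose operator factor is $\dop$ up to $\rho\pd[\rho]+x\pd[x]$ and order-zero pieces, hence controllable through the equation — from the manifestly lower-order remainder $B_{0} + \alpha_{r}B_{1} + \bB_{2}\tfrac{1}{x} + \bB_{3}D_{x} + \bB_{4}$, so that no spurious top-order term survives.
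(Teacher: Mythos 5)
Your proposal is correct and follows the paper's proof closely: both begin from $\Re\ang{\Pd A_{\lambda}u, A_{\lambda}u}$ (which, by the factored form of $\Pd$ and skew-adjointness of $\frac{i\charge}{x^{2}}\alpha_{r}$, equals the quantity to be bounded), split off the commutator $[\Pd,A_{\lambda}]$, estimate the through-the-equation term via $\Pd=\tdop_{1}\dop$, and estimate the commutator using the commutator structure and elliptic regularity (Lemma~\ref{lem:elliptic-reg-bulk}). The only (harmless) divergences are your invocations of Lemma~\ref{lem:easy-conjugation} and of the ``structured'' Lemma~\ref{lemma:careful-bulk-commutator-general-new} — the latter concerns $[\dop,C]$ rather than $[\Pd,C]$ and is reserved in the paper for the hyperbolic propagation argument — and your symbol-cancellation observation; none of these is needed, since the paper manages with Lemma~\ref{lemma:easy-bulk-commutator} alone and the half-derivative excess is absorbed by the $\epsilon\norm{A_{\lambda}u}_{\Hb^{1,0,0}}^{2}$ term permitted on the right-hand side.
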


\begin{proof}
  We start by fixing $G, \tG$ of the appropriate order microsupported
  in $U$ and so that the principal symbols of both operators are
  identically $1$ on $K$.

  The pairing
  \begin{equation*}
    \Re \ang{ \Pd A_{\lambda}u, A_{\lambda}u}
  \end{equation*}
  is finite for all $\lambda > 0$ by our wavefront set hypothesis,
  which implies that $PA_{\lambda}u \in \Hb^{-1,0,0}$ and
  $A_{\lambda}u \in \Hb^{1,0,0}$.  We now write
  \begin{equation}
    \label{eq:int-by-parts-pair}
    \abs{\Re \ang{\Pd A_{\lambda}u, A_{\lambda} u}} \leq
    \abs{\ang{[\Pd ,
        A_{\lambda}]u,A_{\lambda}u}} + \abs{\ang{A_{\lambda}\Pd u,A_{\lambda}u}}.
  \end{equation}
  The second term is estimated by
  \begin{equation*}
    \abs{\ang{A_{\lambda}\Pd u, A_{\lambda}u}} \leq
    \norm{A_{\lambda}\Pd
      u}_{\Hb^{-1,0,0}}\norm{A_{\lambda}u}_{\Hb^{1,0,0}} \leq \epsilon
    \norm{A_{\lambda}u}_{\Hb^{1,0,0}}^{2} +
    \epsilon^{-1}\norm{A_{\lambda}\Pd u}_{\Hb^{-1,0,0}}^{2}.
  \end{equation*}
  We now write $\Pd = \tdop_{1}\dop$.  By
  Lemma~\ref{lem:comms-with-b-ops}, $[A_{\lambda}, \tdop_{1}] \in
  \frac{1}{x}\Psib^{s}$ with uniform estimates in this space and so,
  by the elliptic regularity in Lemma~\ref{lem:elliptic-reg-bulk} we
  have the bound
  \begin{equation*}
    \epsilon \norm{A_{\lambda}u}_{\Hb^{1,0,0}}^{2} +
    C\epsilon^{-1}\left(\norm{\dop u}^{2} + \norm{\tG \dop u}^{2}\right).
  \end{equation*}

  We now turn our attention to the commutator term in
  equation~\eqref{eq:int-by-parts-pair}.  By
  Lemma~\ref{lemma:easy-bulk-commutator}, we know
  \begin{equation*}
    [P, A_{\lambda}] \in \left\{ \frac{1}{x^2} \Lap_\theta , D_x^2,
      \frac{1}{x} D_x, \frac{1}{x^2}\right\} \Psib^{m-1} + \left\{
      \frac{1}{x}\beta K, D_x, \frac{1}{x}\right\}\Psib^{m} + \Psib^{m+1},
  \end{equation*}
  and so again by Lemma~\ref{lem:elliptic-reg-bulk}, we have
  \begin{equation*}
    \abs{\ang{[P,A_{\lambda}]u, A_{\lambda}u}} \lesssim
    \norm{Gu}_{\Hb^{1,0,0}}^{2} + \norm{u}_{\Hb^{1,0,0}}^{2}.
  \end{equation*}
\end{proof}

\subsection{Hyperbolic propagation}
\label{sec:hyperb-prop}

As in the previous subsection, we work near the poles as the statement
away from $\rho=0$ is in previous work~\cite[Theorem
22]{BW20}.  

Lemma~\ref{lem:integrate-by-parts-bulk} and the Hardy inequality show
that if $u \in \Hb^{1,0,0}$ and $q_{0}\notin \WFb^{0,m+1}\dop u$, then
\begin{equation*}
  q_{0}\in \WFb^{1,m}u \text{ if and only if }q_{0}\in \WFb^{0,m+1}u.
\end{equation*}

The proof of the hyperbolic part of the estimate near the pole is
similar to setting for finite time and exploits the near-homogeneity
(in $x$) of $\dop$.   We denote by $U$ a neighborhood of $q_{0}$ in
$\Sigmadot$ with
\begin{equation*}
  U \cap \{ \xi/\sigma_{0} > 0\} \cap \WFb^{0,m+1/2}u , \  U \cap
  \WFb^{0,m+1/2}(\dop u) = \emptyset.
\end{equation*}
For our inductive hypothesis, we assume that $q_{0} \notin
\WFb^{0,m}(u)$ and aim to show that $q_{0} \notin \WFb^{0,m+1/2}u$.
We assume for the text below that $\sigma_{0} = 1$; only minor
modifications are needed for $\sigma_{0} = -1$.

Set $\omega = x^{2} + \rho^{2}$ and let
\begin{equation*}
  \phi = - \hat{\xi/\sigma_{0}} + \frac{1}{\beta^{2}\delta}\omega .
\end{equation*}
Fix cutoff functions $\chi_{0}$, $\chi_{1}$, and $\chi_{2}$ so that
\begin{itemize}
\item $\ds\chi_0$ is supported in $[0, \infty)$ with $\chi_{0}(s) =
  \exp (-1/s)$ for $s > 0$,
\item $\ds\chi_{1}$ is supported in $[0,\infty)$ with $\chi_{1}(s)=1$
  for $s\geq 1$ and $\chi_{1}'\geq 0$, and
\item $\ds\chi_{2}$ is supported in $[-2c_{1},2c_{1}]$ and is equal to
  $1$ on $[-c_{1},c_{1}]$.
\end{itemize}
Here $c_{1}$ is chosen so that $\hat{\xi}^{2} + \abs{\hat{\eta}}^{2} <
c_{1}< 2$ in $\Sigmadot \cap U$.
We now set
\begin{equation}
  \label{eq:def-of-a-bulk}
  a =
  \abs{\sigma}^{m+1/2}\chi_{0}(2-\phi/\delta)\chi_{1}(2-\hat{\xi}/\delta)\chi_{2}(\hat{\xi}^{2}
    + \abs{\hat{\eta}}^{2})\mathbf{1}_{\sgn \sigma = \sgn \sigma_{0}},
\end{equation}
and let $A$ be its quantization to an invariant element of
$\Psib^{m+1/2}$.  Note that
\begin{equation*}
  \supp a \subset \left\{ \abs{\hat{\xi}} < 2\delta, \omega <
    4\beta^{2}\delta^{2}\right\},
\end{equation*}
and so the support of $a$ in $\Tbstar M$ can be made arbitrarily close
to $q_{0}$.

The following lemma is proved by a careful application of Lemma~\ref{lemma:careful-bulk-commutator-general-new}.
\begin{lemma}
  \label{lem:careful-bulk-comm-detail-with-a}
  For $A$ defined as above,
  \begin{equation*}
    \frac{1}{i}\left[\dop, A^{*}A \right] = \tR\dop +
    \sgn(\sigma_{0})Q^{*}Q + \bR  + B_{0} + \alpha_{r}B_{1} + E' + E''
  \end{equation*}
  where
  \begin{itemize}
  \item $\ds Q\in \Psib^{m+1/2}$ is invariant and self-adjoint with
    \begin{equation*}
      \sigmab(Q) = \sqrt{2}\abs{\sigma}^{m}\abs{\sigma +
        \xi}^{1/2}\delta^{-1/2}(\chi_{0}'\chi_{0})^{1/2}\chi_{1}\chi_{2}\mathbf{1}_{\sgn
      \sigma = \sgn \sigma_{0}},
    \end{equation*}
  \item $\ds\tR \in \Psib^{2m}$,
  \item $\ds \bR \in \left\{ \frac{1}{x}\beta K , D_x,
      \frac{1}{x}\right\} \Psib^{2m-1} + \Psib^{2m}$,
  \item $\ds B_{0},B_{1}\in \Psib^{2m+1}$ with
    $\abs{\sigmab(B_{\bullet})}$ equal to an order $0$ symbol times
    $C\beta^{-1}\sigmab(Q)^{2}$,
  \item $\ds E'\in \Psib^{2m+1}$ with $\WFb'E' \subset \{ \delta \leq
    \hat{\xi}\leq 2\delta, \omega \leq 4 \beta^{2}\delta^{2}\}$, and
  \item $\ds E''\in \frac{1}{x} \Psib^{2m+1} + D_x\Psib^{2m} + \Psib^{2m+1}$ with
    $\WFb'E''\cap \Sigmadot = \emptyset$.
  \end{itemize}
\end{lemma}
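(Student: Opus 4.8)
The plan is to apply Lemma~\ref{lemma:careful-bulk-commutator-general-new} with $C=A^{*}A$ and then match the six operators it produces against the terms listed in the statement. Since the symbol $a$ in~\eqref{eq:def-of-a-bulk} is real, scalar, and built only out of the invariant quantities $\phi$, $\hat\xi$, $\hat\xi^{2}+\abs{\hat\eta}^{2}$, and $\abs{\sigma}$, its quantization $A$ is an invariant element of $\Psib^{m+1/2}$, so $C=A^{*}A\in\Psib^{2m+1}$ is invariant with real scalar principal symbol $c=a^{2}$, and Lemma~\ref{lemma:careful-bulk-commutator-general-new} applies verbatim. (Throughout we work with $\dop$ itself; by Lemma~\ref{lem:easy-conjugation} the various conjugations of $\dop$ differ from it only by operators of order $0$, which could in any case be absorbed into $\bR$.)

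First consider the main term $A_{0}\bigl(\alpha_{r}(i\pd[x]+\tfrac{i}{x}-\tfrac{i}{x}\beta K)-\tfrac{\charge}{x}\bigr)$. Using the expression for $\dop$ near the north pole from Section~\ref{sec:relat-wave-equat},
\[
  \alpha_{r}\Bigl(i\pd[x]+\frac{i}{x}-\frac{i}{x}\beta K\Bigr)-\frac{\charge}{x}
  \;=\; \dop + \Bigl(i\rho\pd[\rho]+ix\pd[x]+i-\charge\Bigr),
\]
and the ``central'' scalar operator on the right has $\bl$-symbol $-(\sigma+\xi)$. Applying $A_{0}$ to the first piece gives the term $\tR\dop$, with $\sigmab(\tR)=\sigmab(A_{0})=-\pd[\xi](a^{2})$. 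Applying $A_{0}$ to the central operator we symmetrize: because $\pd[\xi](a^{2})\ge 0$ on $\supp a$ — the $\xi$-derivative lands on $\chi_{0}$, giving a multiple of $\chi_{0}\chi_{0}'\ge 0$, on $\chi_{1}$, giving a multiple of $\chi_{1}\chi_{1}'\ge 0$, or on $\chi_{2}$ — and since $\chi_{0}\chi_{0}'$ and $\chi_{1}\chi_{1}'$ have smooth square roots, we may write this as a sum of three self-adjoint pieces modulo lower order. The piece where the derivative falls on $\chi_{0}$ is $\sgn(\sigma_{0})\,Q^{*}Q$: on $\supp a$ one has $\sigma=\sigma_{0}$ and $\xi$ small, so $\sigma+\xi$ has constant sign $\sgn(\sigma_{0})$ and we extract $\abs{\sigma+\xi}^{1/2}$, after which a direct chain-rule computation gives $\sigmab(Q)$ exactly as stated. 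The piece where the derivative falls on $\chi_{1}$ is $E'$, microsupported where $\chi_{1}'\ne 0$, i.e.\ in $\{\delta\le\hat\xi\le 2\delta,\ \omega\le 4\beta^{2}\delta^{2}\}$; the piece where it falls on $\chi_{2}$ contributes to $E''$, microsupported where $\chi_{2}'\ne 0$, i.e.\ where $\hat\xi^{2}+\abs{\hat\eta}^{2}\in[c_{1},2c_{1}]$, which is disjoint from $\Sigmadot$ since $\hat\xi^{2}+\abs{\hat\eta}^{2}=0$ there. The $\tfrac1x$ and $\tfrac{i}{x}\beta K$ factors carried by $A_{0}$ and by the central operator, together with all lower-order remainders from the symmetrization, are collected into $\bR\in\{\tfrac1x\beta K,D_{x},\tfrac1x\}\Psib^{2m-1}+\Psib^{2m}$.

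Next, the terms $B_{0}+\alpha_{r}B_{1}$ from Lemma~\ref{lemma:careful-bulk-commutator-general-new}, with $\sigmab(B_{0})=-\rho\pd[\rho](a^{2})-x\pd[x](a^{2})$ and $\sigmab(B_{1})=\pd[x](a^{2})$. The entire $x$- and $\rho$-dependence of $a^{2}$ enters through $\omega=x^{2}+\rho^{2}$ inside $\phi=-\hat{\xi/\sigma_{0}}+\tfrac1{\beta^{2}\delta}\omega$, so $\pd[x](a^{2})$ is a bounded symbol times $\tfrac{x}{\beta^{2}\delta}\,\chi_{0}\chi_{0}'\chi_{1}^{2}\chi_{2}^{2}\abs{\sigma}^{2m+1}$ and $\rho\pd[\rho](a^{2})+x\pd[x](a^{2})$ a bounded symbol times $\tfrac{\omega}{\beta^{2}\delta}\,\chi_{0}\chi_{0}'\chi_{1}^{2}\chi_{2}^{2}\abs{\sigma}^{2m+1}$. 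On $\supp a$ one has $0\le x\le 2\beta\delta$ and $\omega\le 4\beta^{2}\delta^{2}$, hence $\tfrac{x}{\beta^{2}\delta}\le\tfrac{2}{\beta}$ and $\tfrac{\omega}{\beta^{2}\delta}\le 4\delta$, so each of $\abs{\sigmab(B_{0})}$, $\abs{\sigmab(B_{1})}$ is an order-zero symbol times $C\beta^{-1}\delta^{-1}\chi_{0}\chi_{0}'\chi_{1}^{2}\chi_{2}^{2}\abs{\sigma}^{2m+1}$, i.e.\ an order-zero symbol times $C\beta^{-1}\sigmab(Q)^{2}$ (recall $\sigmab(Q)^{2}$ is a nonzero multiple of $\delta^{-1}\abs{\sigma+\xi}\chi_{0}\chi_{0}'\chi_{1}^{2}\chi_{2}^{2}\abs{\sigma}^{2m}$ and $\abs{\sigma+\xi}\sim\abs{\sigma}$ there). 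This is the claimed bound.

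Finally, the terms $\bB_{2}\tfrac1r+\bB_{3}D_{r}$ carry symbols supported in $\supp\pd[\eta](a^{2})$; but the only $\eta$-dependence of $a^{2}$ is through $\chi_{2}(\hat\xi^{2}+\abs{\hat\eta}^{2})$, so this support lies in $\supp\chi_{2}'\subset\{\hat\xi^{2}+\abs{\hat\eta}^{2}\in[c_{1},2c_{1}]\}$, again disjoint from $\Sigmadot$; hence $\bB_{2}\tfrac1r+\bB_{3}D_{r}$ is of the type of $E''$, with $\WFb'E''\cap\Sigmadot=\emptyset$. The term $\bB_{4}$ of Lemma~\ref{lemma:careful-bulk-commutator-general-new} is of lower order and is absorbed into $\bR$. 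The one piece of genuine work is this term-by-term bookkeeping — in particular, checking that differentiating $a^{2}$ in $x$ and $\rho$ produces the crucial factor of $\beta^{-1}$ in $B_{0}$ and $B_{1}$. That gain is exactly what the weight $\tfrac1{\beta^{2}\delta}\omega$ in $\phi$ is engineered to yield, and it is what allows the $B_{\bullet}$ terms to be absorbed by a small multiple of the positive term $Q^{*}Q$ in the positive-commutator estimate to follow; everything else runs in parallel with \cite[Lemma~26]{BW20} and with the semiclassical Lemma~\ref{lem:semic-comm-comp}.
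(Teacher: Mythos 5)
Your proof follows the paper's argument essentially verbatim: apply Lemma~\ref{lemma:careful-bulk-commutator-general-new} with $C=A^{*}A$, rewrite the $A_{0}$ term as $A_{0}\dop$ plus $A_{0}$ times the ``central'' scalar operator of symbol $-(\sigma+\xi)$, split $\partial_{\xi}(a^{2})$ by Leibniz into the $\chi_{0}$, $\chi_{1}$, $\chi_{2}$ contributions giving $\sgn(\sigma_{0})Q^{*}Q$, $E'$, $E''$, and extract the $\beta^{-1}$ factor in $B_{0},B_{1}$ from the fact that the $x,\rho$-dependence of $a$ enters only through $\omega$. Two small slips that do not affect the argument: the chain rule produces $\partial_{x}\chi_{0}^{2}=\text{(bounded)}\cdot\tfrac{x}{\beta^{2}\delta^{2}}\chi_{0}\chi_{0}'$ (you dropped a factor of $\delta^{-1}$ in the intermediate displays, though your final bound $C\beta^{-1}\sigmab(Q)^{2}$ is correct), and the blanket claim $\partial_{\xi}(a^{2})\geq 0$ on $\supp a$ is false — the $\chi_{1}$ contribution is $\leq 0$ since $\partial_{\xi}(2-\hat{\xi}/\delta)<0$ — but only the nonnegativity of the $\chi_{0}$ contribution is actually used, to write that piece as $\sgn(\sigma_{0})Q^{*}Q$; the $\chi_{1}$ piece just lands in the sign-indefinite remainder $E'$.
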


\begin{proof}
  We carefully apply
  Lemma~\ref{lemma:careful-bulk-commutator-general-new} and use its
  notation.  The term $A_{0}$ arising there has principal symbol
  $-\pd[\xi](a^{2})$ and arises from $\dop$ being nearly homogeneous
  in $r$ of degree $-1$.  We rewrite the $A_{0}$ term
  in~\eqref{eq:list-of-all-terms-bulk-commutator} as $A_{0}(\dop + i
  \rho\pd[\rho] + i x\pd[x])$, modulo $A_{0}$ times smooth lower-order
  terms (which are absorbed into $\bR$).  We then split the symbol of
  $A_{0}$ into three terms: those terms where the $\xi$ derivative
  falls on $\chi_{0}$ can be written in the form
  $\tQ^{2}(i\rho\pd[\rho] + ix\pd[x])$, which we write as the product
  of $\sgn(\tau_{0})$ times squares $Q^{2}$ modulo a lower-order term
  to be absorbed into $\bR$.  Those terms where the derivative falls
  on $\chi_{1}$ are absorbed into $E'$ and those where it falls on
  $\chi_{2}$ form part of $E''$.  Thus, modulo further commutators (to
  be absorbed into $\bR$), the first term on the right side
  of~\eqref{eq:list-of-all-terms-bulk-commutator} is given by $\tR
  \dop + \sgn(\sigma_{0})Q^{*}Q$.

  The $B_{1}$ term satisfies the stated symbol bound because $x$
  derivatives on $a^{2}$ may fall only on the $\chi_{0}$ term, giving
  \begin{equation*}
    2\abs{\sigma}^{2m+1}(\chi_{0}'\chi_{0})\chi_{1}^{2}\chi_{2}^{2}(-2x)(\beta^{-2}\delta^{-2}).
  \end{equation*}
  As $0 \leq x \leq 2\beta\delta$ on the support of $a$, this term can
  be estimated by a multiple of
  \begin{equation*}
  \beta^{-1}\delta^{-1}\abs{\sigma}^{2m+1}\chi_{0}'\chi_{0}\chi_{1}^{2}\chi_{2}^{2},
\end{equation*}
  which is a constant multiple of $\frac{\abs{\tau}}{\abs{\tau +
      \xi}}\sigmab(Q)^{2}$.  This prefactor is a symbol of order zero
  (provided the neighborhood $U$ is small enough).  Likewise, the
  $B_{0}$ term in the
  Lemma~\ref{lemma:careful-bulk-commutator-general-new} becomes the $B_0$
  term here and is estimated similarly, as the $\rho$ derivative may
  also only hit the $\chi_{0}$ term.

  Finally, the remaining $\bB_{2}$ term in
  Lemma~\ref{lemma:careful-bulk-commutator-general-new} is proportional to
  $\pd[\eta](a^{2})$ and therefore the derivative must fall on
  $\chi_{2}$ and so these terms are absorbed into $E''$.
\end{proof}

The rest of the proof of Theorem~\ref{thm:bulk-prop-near-np} is a
positive commutator estimate.  We pair $\frac{1}{i}[\dop, A^{*}A]u$
with $u$ and regularize as in the elliptic setting.  On the one hand,
as $\dop^{*} = \dop - i$, we may bound
\begin{equation*}
  \abs{\ang{[\dop, A^{*}A]u, u}} \leq  2\norm{Au}\norm{A\dop u} + \norm{Au}^{2} \leq
  2 \norm{Au}^{2} + \norm{A\dop u}^{2},
\end{equation*}
while on the other hand we apply
Lemma~\ref{lem:careful-bulk-comm-detail-with-a}.

The main term is $\sgn(\sigma_{0})\ang{Q^{*}Qu,u} =
\sgn(\sigma_{0})\norm{Qu}^{2}$, which has a definite sign.  We then
bound
\begin{align*}
  \norm{Qu}^{2} \leq 2\norm{Au}^{2} + \norm{A\dop
  u}^{2} + \abs{\ang{\tR\dop u,u}} + \abs{\ang{\bR u,u}} +
  \abs{\ang{B_{0}u,u}} + \abs{\ang{\alpha_{r}B_{1}u,u}} +
  \abs{\ang{E'u,u}} + \abs{\ang{E''u,u}}.
\end{align*}
The $\tR$ term is bounded by $\norm{G_{m}\dop u}\norm{G_{m}u}$ for
some $G_{m}\in \Psib^{m}$, while the $\bR$ term can be estimated by
$\norm{G_{m-1}u}_{\Hb^{1,0,0}}\norm{G_{m}u}$ for $G_{m-1}\in
\Psib^{m-1}$ and $G_{m}\in \Psib^{m}$.  This leaves the terms
involving $B_{0}$, $B_{1}$, $E'$, and $E''$. 

The terms involving $B_{0}$ and $B_{1}$ are estimated by the symbol
calculus, i.e.,
\begin{equation*}
  \abs{\ang{B_{j}u,u}} \leq C \beta^{-1}\norm{Qu}^{2} + C\norm{Gu}^{2}
  + C\norm{u}_{\Hb^{1,0,0}}^{2}
\end{equation*}
for some $G\in \Psib^{m}$ elliptic on the support of $B_{j}$.

The term involving $E'$ is bounded by $\norm{G_{m+1/2}u}^{2}$, where
$G_{m+1/2}\in \Psib^{2+1/2}$ has $\WFb'G_{m+1/2}\subset \{\delta \leq
\hat{\xi}\leq 2\delta, \omega \leq 4\beta^{2}\delta^{2}\}$.  The
hypothesis that $U\cap \{\xi > 0\} \cap \WFb^{1,m-1/2}u = \emptyset$
implies that this term is finite.

Finally, we turn to the term involving $E''$.  The microsupport of
$E''$ is contained in the elliptic set of $\dop$, so we may use
elliptic regularity to bound this term by
\begin{equation*}
  C\left( \norm{G_{m-1}u}_{\Hb^{1,0,0}}^{2} + \norm{G_{m}\dop u}^{2}
    + \norm{u}_{\Hb^{1,0,0}}^{2}\right),
\end{equation*}
where $G_{r}\in \Psib^{r}$ are microsupported in the elliptic region
within $U$.

As $\sigmab(A)$ is bounded by a small multiple of $\sigmab(Q)$, we
then know that $\norm{Qu}^{2}$ is finite; since $Q$ is elliptic at
$q_{0}$, we know that $q_{0} \notin \WFb^{0,m+1/2}u$ (and hence not in
$\WFb^{1,m-1/2}u$), finishing the proof.

%\bibliographystyle{alpha}
% \bibliography{rfdcbib}

\end{document}